\newcommand\tsout{\bgroup\markoverwith{\textcolor{red}{\rule[0.5ex]{2pt}{1.4pt}}}\ULon}
\newcommand{\stkout}[1]{\ifmmode\text{\tsout{\ensuremath{#1}}}\else\tsout{#1}\fi}
\pgfplotsset{compat=1.18}
\theoremstyle{definition}
\newtheorem{theorem}{Theorem}[section]
\newtheorem{lemma}[theorem]{Lemma}
\newtheorem{algorithm}[theorem]{Algorithm}
\newtheorem{definition}[theorem]{Definition}
\newtheorem{remark}[theorem]{Remark}
\numberwithin{equation}{section}
\newcommand{\bff}{\boldsymbol}
\newcommand{\bb}{\mathbb}
\newcommand{\leqs}{\lesssim}
\newcommand{\ddt}{\frac{\mathrm{d}}{\mathrm{d}t}}
\newcommand{\dx}{\mathrm{d}\bff{x}}
\newcommand{\ds}{\mathrm{d}s}
\newcommand{\dy}{\mathrm{d}\bff{y}}
\newcommand{\pa}{\partial}
\newcommand{\nn}{\nonumber}
\newcommand{\norm}[2]{\left\|{#1}\right\|_{#2}}
\newcommand{\inpro}[2]{\left\langle#1,#2\right\rangle}
\newcommand{\abs}[1]{\left|{#1}\right|}
\begin{document}
\title{
The Landau--Lifshitz--Bloch equation on polytopal domains: Unique existence and
finite element approximation
}
\author{Kim-Ngan Le}
\address{School of Mathematics,
         Monash University,
         Clayton 3800, Australia}
\email{\textcolor[rgb]{0.00,0.00,0.84}{ngan.le@monash.edu}}

\author{Agus L. Soenjaya}
\address{School of Mathematics and Statistics, The University of New South Wales, Sydney 2052, Australia}
\email{\textcolor[rgb]{0.00,0.00,0.84}{a.soenjaya@unsw.edu.au}}

\author{Thanh Tran}
\address{School of Mathematics and Statistics,
         The University of New South Wales,
         Sydney 2052, Australia}
\email{\textcolor[rgb]{0.00,0.00,0.84}{thanh.tran@unsw.edu.au}}

\date{\today}

\begin{abstract}
The Landau--Lifshitz--Bloch equation (LLBE) describes the evolution of the
magnetic spin field in ferromagnets at high temperatures. In this paper, we study the numerical approximation of the LLBE {in the regime above the Curie temperature} on bounded polytopal domains in $\bb{R}^d$, with {$d\in \{1,2,3\}$}, allowing for non-convex domains when $d=2$. We first establish the existence and uniqueness of strong solutions to the LLBE and propose a linear, fully discrete, conforming finite element scheme for its approximation. While this scheme is shown to converge, the obtained rate is suboptimal. To address this shortcoming, we introduce a viscous
(pseudo-parabolic) regularisation of the LLBE, which we call the $\epsilon$-LLBE. For this regularised problem, we prove the unique existence of strong solutions and establish a rate of convergence of the solution
$\bff{u}^\epsilon$ of the $\epsilon$-LLBE to the
solution $\bff{u}$ of the LLBE as $\epsilon\to 0^+$. Furthermore, we propose a
linear, fully discrete, conforming finite element scheme to approximate the solution of the~$\epsilon$-LLBE. Given sufficiently smooth initial data, error analysis is performed to show stability and uniform-in-time convergence of the scheme. Finally, several numerical simulations are presented to corroborate our theoretical results.
\end{abstract}
\maketitle
\tableofcontents

%%%%%%%%%%%%%%%%%%%%%%%%%%%%%%%%%%%%%%%%%%%%%%%%%%%%%%%%%%%%%%%%%%%%%
\section{Introduction}
Micromagnetic modeling has proved itself to be an effective theoretical framework in describing magnetisation dynamics at sub-micrometre length scales.
The framework of Landau--Lifshitz--Bloch equation
(LLBE)~\citep{Gar91,Gar97} overcomes
a problem of the standard model of magnetisation which carries a significant
limitation of being valid only at very low temperatures,
far below the Curie temperature~$T_{\text{c}}$~\citep{ChuNie20}. 
The LLBE essentially interpolates between the standard model at low temperatures and 
the Ginzburg-Landau theory of phase transitions.
It is valid not only below, but also above the temperature $T_{\text{c}}$. The LLBE has been successfully used to analyse magnetisation dynamics at high temperature~\citep{AtxHinNow16, ChuNie20, ChuNowChaGar06, NieChu16}. It is also widely used in the physics literature to model the heat-assisted magnetic recording (HAMR), a modern hard drive technology recently made available commercially~\citep{MeoPan20, ZhuLi13}.

In this paper, we consider a deterministic form of the LLBE 
in which the temperature is {raised higher than $T_{\text{c}}$}. We assume that the effective field is dominated by exchange interactions, although it is straightforward to include the lower-order uniaxial anisotropy field and a time-varying external field in the equation.
The macroscopic magnetisation $\bff{u}(t,\bff{x})\in\bb{R}^3$ at time $t\geq 0$ and
$\bff{x}\in \mathscr{D}\subset \bb{R}^d$, where $d=1,2,3$, satisfies the
problem~\citep{ChuNowChaGar06, Le16}:
\begin{subequations}\label{equ:LLB pro}
\begin{alignat}{2}
&\partial_t \bff{u} 
= 
\kappa_1\Delta\bff{u}
+
\gamma\bff{u}\times  \Delta\bff{u}
-
\kappa_2(1+\mu|\bff{u}|^2)\bff{u}
\qquad&
&\text{in } (0,T)\times \mathscr{D},
\label{equ:LLB}
\\
&\bff{u}(0,\cdot)=\bff{u}_0
\qquad&
&\text{in } \mathscr{D},
\label{equ:u ini con}
\\
&\frac{\pa\bff{u}}{\pa\bff{\nu}} = 0
\qquad&
&\text{on } (0,T)\times\pa \mathscr{D}.
\label{equ:u bou con}
\end{alignat}
\end{subequations}
Here, $\partial\mathscr{D}$ denotes the boundary of a bounded polytopal domain~$\mathscr{D}$, which is assumed to be convex when $d=3$, but can be non-convex when $d=2$. Moreover, $\bff{\nu}$ is the outward normal unit vector.
The positive coefficients~$\kappa_1$ and $\kappa_2$ are phenomenological {temperature-dependent} damping
parameters, while $\gamma$ is the gyromagnetic ratio (which can be positive or
negative depending on the precessional direction of the particle). The positive constant
$\mu$ is proportional to $T_\mathrm{c}/(\Theta-T_\mathrm{c})$, {where $\Theta$ is the ferromagnet temperature}. {Although the LLBE can be coupled to a thermal transport equation to incorporate temperature dynamics~\citep{AtxChuWalMan10}, we consider here an isothermal regime and treat the temperature as a fixed parameter}.
At this juncture, we refer the reader to Section~\ref{sec:notation} for further notations that are used in this paper.

Some available mathematical results related to the theoretical or numerical analysis for the LLBE will be reviewed next. The existence of a weak solution to the LLBE in smooth bounded domains is shown in~\citep{Le16}, while the unique existence of a strong solution to the LLBE coupled with the Maxwell equation is studied in~\citep{LiGuoLiu21} for 2-D periodic domains. To the
best of our knowledge, existence and uniqueness of~\emph{strong} solutions to the LLBE on \emph{polytopal} domains have not been studied before.

In the context of numerical analysis for the LLBE, several finite element schemes have recently been proposed to solve~\eqref{equ:LLB pro}. A nonlinear finite element scheme is introduced in~\citep{BenEssAyo24}, where conditional weak convergence is established, albeit without a convergence rate and under much stronger assumptions on the mesh than those considered in this work. Another scheme is presented in~\citep{Soe25}, which employs a different regularisation strategy (specifically, a fourth-order Baryakhtar-type~\citep{SoeTra23} regularisation) to develop an energy-stable mixed finite element method. This method achieves unconditional convergence with optimal rates. However, both schemes are nonlinear in the sense that a system of nonlinear equations must be solved at each time step. Moreover, the method in~\citep{Soe25} requires very strong regularity assumptions on the exact solution. 

Our first contribution in this paper is to address these gaps by first establishing the well-posedness of the LLBE on polytopal domains, extending the results of~\citep{Le16} in several directions: we allow non-smooth domains, prove the unique existence of strong (and more regular) solutions, and derive a decay estimate as the time variable $t\to \infty$. Building on this analytical foundation, we propose a linear finite element scheme for approximating the solution to \eqref{equ:LLB pro} directly. In contrast to the aforementioned schemes in~\citep{BenEssAyo24, Soe25}, our method is linear and requires solving only a linear system at each time step, resulting in significantly reduced computational cost. Moreover, under an adequate regularity assumption on the initial data, we prove the convergence of the scheme. We note that unlike the standard Landau--Lifshitz equation, whose solutions are constrained to lie on the unit sphere~\citep{Gil55,LL35}, a distinguishing feature of the LLBE is that the magnetisation magnitude is not conserved. 
Consequently, numerical methods developed for the standard Landau--Lifshitz equation, which crucially rely on the sphere constraint, are not applicable to the LLBE.

The linear finite element scheme mentioned above (see Section~\ref{sec:fem LLB}) requires a restriction on the ratio of time-step size to mesh size, assumes strong regularity of the exact solution, and yields only suboptimal convergence rates due to a lack of stability of the discrete solution in $\bb{L}^\infty$ and higher-order Sobolev norms. We note that a more stringent condition is imposed in~\citep{BenEssAyo24}.
To address these shortcomings, we introduce a viscous regularisation of \eqref{equ:LLB pro} by adding the term $-\epsilon
\Delta\partial_t \bff{u}$, where $\epsilon \in (0,1)$ is a small
parameter. The resulting equation, satisfied by the vector field
$\bff{u}^\epsilon:(0,T)\times \mathscr{D} \to \bb{R}^3$, is called the $\epsilon$-LLBE. This regularised problem is formulated as:
\begin{subequations}\label{equ:LLB eps pro}
	\begin{alignat}{2}
		&\partial_t \bff{u}^\epsilon
		-
		\epsilon \Delta \partial_t \bff{u}^\epsilon
		= 
		\kappa_1\Delta\bff{u}^\epsilon
		+
		\gamma\bff{u}^\epsilon\times  \Delta\bff{u}^\epsilon
		-
		\kappa_2(1+\mu|\bff{u}^\epsilon|^2)\bff{u}^\epsilon
		\qquad&
		&\text{in } (0,T)\times \mathscr{D},
		\label{equ:LLB eps}
		\\
		&\bff{u}^\epsilon(0,\cdot)=\bff{u}_0^\epsilon
		\qquad&
		&\text{in } \mathscr{D},
		\label{equ:ue ini con}
		\\
		&\frac{\pa\bff{u}^{\epsilon}}{\pa\bff{\nu}} = 0
		\qquad&
		&\text{on } (0,T)\times \pa \mathscr{D}.
		\label{equ:ue bou con}
	\end{alignat}
\end{subequations}
%We then propose a finite element scheme to solve~\eqref{equ:LLB eps pro}.
Our second contribution in this paper is to show a rate of convergence of the solution
$\bff{u}^\epsilon$ of~\eqref{equ:LLB eps pro} to the
solution $\bff{u}$ of~\eqref{equ:LLB pro} as $\epsilon\to 0^+$, provided the initial data $\bff{u}_0^\epsilon$ is appropriately chosen (for instance $\bff{u}_0^\epsilon= \bff{u}_0$). Similar ideas to
approximate the solution of a PDE by its viscous (pseudo-parabolic)
regularisation have been applied before, for instance in~\citep{NovPeg91, ShoTin70} for a
class of nonlinear parabolic equations, as well as in~\citep{BBM72} and~\citep{AngTra90} for the BBM
equation.

Our third contribution in this paper is the development of a stable and optimally convergent linear finite element scheme for the regularised $\epsilon$-LLBE. In contrast to previous works, it is worth emphasising that we consider exclusively polytopal domains, allowing for re-entrant corners in the two-dimensional case. 

To support the numerical analysis, we prove the unique existence of global strong solutions to the $\epsilon$-LLBE by deriving key energy estimates, which are obtained by combining elliptic regularity theorems and embedding properties of fractional Sobolev spaces in Lipschitz domains. It is known that in domains with re-entrant corners, {the} singularity at the corner pollutes the finite element solution everywhere if globally quasi-uniform triangulations are used, leading to a lower order of convergence~\citep{ChaLazThoWah06, LiZha17}. We confirm this behaviour for the problem at hand, and further derive an error estimate that holds uniformly in time, analogous to the result developed in~\citep{Lar89} for semilinear parabolic equations. Notably, the extension to our highly nonlinear setting is nontrivial.

We remark that another regularisation (by adding the term $\epsilon
\Delta^2 \bff{u}$) is considered in~\citep{GolJiaLe22} for the stochastic LLBE, turning the equation into a
fourth-order nonlinear stochastic PDE. A numerical method is then proposed to
solve the equation and the rate of convergence to the exact solution is shown
for $d\in\{1,2\}$. Since we intend to work with a
conforming finite element method, such regularisation is not utilised here to avoid using a $C^1$-conforming finite element for a
fourth-order problem, which can be computationally costly and difficult to
implement, especially for $d=3$.

In summary, the main contributions of this paper are the following results:
\begin{enumerate}
	\item existence and uniqueness of strong and regular solutions $\bff{u}$ to~\eqref{equ:LLB
		pro} in polytopal domains (Theorem~\ref{the:con u eps} and~\ref{the:unique llb});
    \item long-term behaviour of~$\bff{u}$,
		more precisely, $\norm{\bff{u}(t)}{\bb{L}^\infty(\mathscr{D})} \leq e^{-\kappa_2 t} \norm{\bff{u}_0}{\bb{L}^\infty(\mathscr{D})}$ (Theorem~\ref{the:ut inf});
    \item suboptimal error estimates of the finite element approximation (Algorithm~\ref{alg:fem llbe}) to the solution of \eqref{equ:LLB pro} with a realistic assumption on the regularity of the solution (Theorem~\ref{the:err llb});
	\item existence and uniqueness of strong and regular solutions $\bff{u}^\epsilon$
		to the regularised problem~\eqref{equ:LLB eps pro} in polytopal domains (Theorem~\ref{the:ue sol} and~\ref{the:unique llbe});
	\item rate of convergence in $L^\infty(0,T;\bb{H}^1(\mathscr{D}))$ of
		$\bff{u}^\epsilon$ to the solution $\bff{u}$ of \eqref{equ:LLB pro} as
		$\epsilon\to 0^+$ (Theorem~\ref{the:u eps con u}); 
	\item optimal error estimates of the finite element approximation (Algorithm~\ref{alg:fem eps llbe}) to the solution of the $\epsilon$-LLBE problem \eqref{equ:LLB eps pro} with a realistic assumption on the solution regularity (Theorem~\ref{the:err} and~\ref{rem:error});
	\item uniform-in-time error estimates of
		the proposed linear finite element scheme for
		the $\epsilon$-LLBE problem (Theorem~\ref{the:err long}),
		which are verified by a series of numerical experiments in Section~\ref{sec:num sim}.
\end{enumerate}
It is worth noting that physical theory indicates a phase transition from the ferromagnetic to the paramagnetic state above the Curie temperature $T_\mathrm{c}$, where the equilibrium net magnetisation vanishes~\citep{ChuNowChaGar06, KazEtal08, VogAbe14}. The long-time behaviour of the macroscopic magnetisation $\bff{u}$, as established in Theorem~\ref{the:ut inf}, is consistent with this result.

{We remark that if $\mathscr{D}$ has a smooth boundary, then we may approximate the boundary $\partial\mathscr{D}$ with a polytopal boundary $\partial \mathscr{D}_h$. Since $\partial\mathscr{D}_h$ may not coincide with $\partial\mathscr{D}$, additional domain approximation assumptions and more careful analysis are required to deal with the resulting discrepancy; see, e.g.~\citep{Tho73, LinThoWah91}. As discussed in~\citep[Section~2]{SchThoWah98}, one approach is to take the mesh domain slightly larger than $\mathscr{D}$ and restrict the computed solution back to $\mathscr{D}$. For natural boundary conditions, the associated error is comparable to that of a suitable quadrature rule. A detailed treatment of this issue is beyond the scope of this paper.}

The paper is organised as follows. In the next section, we introduce the notation, assumptions, and auxiliary results used throughout. Section~\ref{sec:exist LLBE} establishes the existence of a weak solution and a unique strong solution to the LLBE. In Section~\ref{sec:fem LLB}, we propose a linear finite element discretisation of the LLBE and prove a suboptimal convergence rate. To improve this, we consider the $\epsilon$-LLBE in Section~\ref{sec:reg tec}. Assuming attainable regularity of the exact solution, we develop a finite element method for the regularised problem in Section~\ref{sec:fem} and establish a uniform-in-time error estimate in Section~\ref{sec:longtime}. Section~\ref{sec:num sim} presents numerical simulations that support the theoretical results. Additional auxiliary results and technical estimates are provided in Appendices~\ref{sec:aux} and~\ref{sec:further regular proof}.

%%%%%%%%%%%%%%%%%%%%%%%%%%%%%%%%%%%%%%%%%%%%%%%%%%%%%%%%%%%%%%%%%%%%%%%%%%%%%%

\section{Preliminaries}

We first introduce some notations in the following section. Further assumptions and auxiliary results used in this paper are collected in Section~\ref{sec:assum} and Appendix~\ref{sec:aux}, respectively.

\subsection{Notations}\label{sec:notation}

Notations used throughout this paper are clarified in this section. For brevity, we denote by~$\pa_j$ the spatial partial derivative~$\pa/\pa
x_j$ for~$j=1,\ldots,d$, and by $\partial_t$ the temporal partial derivative $\partial /\partial t$. We also denote by $\Delta$ the Neumann Laplacian operator on a bounded domain $\mathscr{D}\subset \bb{R}^d$, where $d=1,2,3$.

For $p\in [1,\infty]$ and $k=1,2,\ldots$, the spaces $\bb{L}^p(\mathscr{D})$ and $\bb{W}^{k,p}(\mathscr{D})$ denote the usual Lebesgue and Sobolev spaces of $\bb{R}^3$-valued functions on $\mathscr{D}$. In particular, we use the convention $\bb{W}^{0,p}(\mathscr{D})\equiv \bb{L}^p(\mathscr{D})$ and $\bb{H}^k(\mathscr{D}):= \bb{W}^{k,2}(\mathscr{D})$. The norm in a Banach space $X$ is denoted by $\norm{\cdot}{X}$. The inner product of $\bb{L}^2$ vector-valued functions and the inner product of $\bb{L}^2$ matrix-valued functions will not be distinguished, and are denoted by $\inpro{\cdot}{\cdot}_{\bb{L}^2}$. Also, for $k\in \{1,2\}$, we define the space
\begin{align}\label{equ:Hk Delta}
	\bb{H}^k_{\Delta}(\mathscr{D}) := \big\{\bff{v}\in \bb{H}^k(\mathscr{D}): \Delta \bff{v}\in \bb{H}^{k-1}(\mathscr{D}) \big\},
\end{align}
which is a Banach space for the obvious norm 
\begin{equation}\label{equ:norm Hk Delta}
	\norm{\bff{v}}{\bb{H}^{k}_{\Delta}(\mathscr{D})} := \norm{\bff{v}}{\bb{H}^{k}(\mathscr{D})} + \norm{\Delta \bff{v}}{\bb{H}^{k-1}(\mathscr{D})}.
\end{equation}
Note that in general $\bb{H}^{k+1}(\mathscr{D}) \subset \bb{H}^k_\Delta(\mathscr{D})$, but $\bb{H}^k_\Delta(\mathscr{D}) \not\subset \bb{H}^{k+1}(\mathscr{D})$.

We shall need to use fractional Sobolev spaces in the analysis. For any $s=m+\sigma$, where $m$ is a non-negative integer and $\sigma\in (0,1)$, we define the {Sobolev--Slobodeckij} norm
\begin{equation*}
	\norm{\bff{v}}{\bb{H}^s(\mathscr{D})}
	:=
	\left( \norm{\bff{v}}{\bb{H}^m}^2 + \sum_{|\alpha|=m} \iint_{\mathscr{D}\times \mathscr{D}} \frac{|\partial^\alpha \bff{v}(\bff{x})- \partial^\alpha \bff{v}(\bff{y})|^2}{|\bff{x}-\bff{y}|^{d+2\sigma}}\,\dx\, \dy \right)^{\frac12},
\end{equation*}
and let $\bb{H}^s(\mathscr{D})$ be the completion of $C^\infty(\overline{\mathscr{D}})$ with respect to this norm, see for instance~\citep{DiPalVal12, Gri11, SteTra21} for a more detailed discussion. We denote by $\widetilde{\bb{H}}^{-s}(\mathscr{D})$ the dual space of $\bb{H}^s(\mathscr{D})$ with respect to the dual pairing which is the extension of the {$\bb{L}^2(\mathscr{D})$} inner product.

If $\bb{X}$ is a Banach space, the spaces $L^p(0,T;\bb{X})$ and $W^{k,p}(0,T;\bb{X})$ denote
respectively the Lebesgue and Sobolev spaces of functions on $(0,T)$ taking
values in $\bb{X}$. The space $C([0,T];\bb{X})$ denotes the space of continuous functions
on $[0,T]$ taking values in $\bb{X}$.  For~$1\le p,q\le \infty$ and~$k\geq 0$,
we write~$\bb{L}^p$, $\bb{H}^k$, $\widetilde{\bb{H}}^{-k}$, $L^q_T(\bb{L}^p)$, $L^q_T(\bb{H}^k)$, and $W^{k,q}_T(\bb{L}^p)$ for~$\bb{L}^p(\mathscr{D})$, $\bb{H}^k(\mathscr{D})$, $\widetilde{\bb{H}}^{-k}(\mathscr{D})$,
$L^q(0,T;\bb{L}^p(\mathscr{D}))$, $L^q(0,T;\bb{H}^k(\mathscr{D}))$, and~$W^{k,q}(0,T;\bb{L}^p(\mathscr{D}))$, respectively. 

Since the functions involved are vector-valued functions, we need to clarify
the meaning of the dot and cross products of different mathematical objects. For any vector-valued function~$\bff{u}=(u_1,u_2,u_3):\mathscr{D}\to\bb{R}^3$, we define
\begin{equation*}\label{equ:dot cro}
	\nabla\bff{u}
	:=
	\begin{pmatrix}
		\pa_1 \bff{u} & \cdots & \pa_d \bff{u}
	\end{pmatrix}
	=
	\begin{pmatrix}
		\pa_1 u_1 & \cdots & \pa_d u_1
		\\
		\pa_1 u_2 & \cdots & \pa_d u_2
		\\
		\pa_1 u_3 & \cdots & \pa_d u_3
	\end{pmatrix}
	%\quad &
	\quad\text{and}\quad
	\Delta\bff{u}
	:=
	\begin{pmatrix}
		\Delta u_1 \\ \Delta u_2 \\ \Delta u_3
	\end{pmatrix}.
	%\\
	%&\bff{u} \times \nabla \bff{v} 
	%:= 
	%\begin{pmatrix}
	%	\bff{u} \times \pa_1 \bff{v} & \cdots & \bff{u} \times \pa_d \bff{v}
	%\end{pmatrix},
	%		\qquad &
	%&(\bff{u} \times \nabla \bff{v}) \cdot \nabla \bff{w} 
	%:= 
	%\sum^{d}_{i=1} 
	%\big(\bff{u} \times \pa_i \bff{v} \big)
	%\cdot  
	%\pa_i \bff{w},
	%\\
	%&\nabla\bff{u}\cdot\nabla\bff{v}
	%:=
	%%\sum_{i=1}^3 \nabla u_i \cdot \nabla v_i
	%%=
	%\sum_{j=1}^d \pa_j \bff{u} \cdot \pa_j \bff{v},
	%		\quad &
	%&\nabla\bff{u}\times\nabla\bff{v}
	%:=
	%%\sum_{i=1}^3 \nabla u_i \times \nabla v_i
	%%=
	%\sum_{j=1}^d \pa_j \bff{u} \times \pa_j \bff{v},
	%\\
	%&(\bff{u}\times\nabla\bff{v})\cdot\nabla\bff{w}
	%:=
	%\sum_{j=1}^d \big(\bff{u}\times\pa_j \bff{v}\big)\cdot\pa_j\bff{w},
	%&&
\end{equation*}
For any vector~$\bff{z}\in\bb{R}^3$ and matrices~$\bff{A}, \ \bff{B}\in\bb{R}^{3\times d}$, we
define
\begin{equation*}\label{equ:dot cro 2}
	\begin{alignedat}{2}
		\bff{z}\cdot\bff{A} &:=
		\begin{bmatrix}
			\bff{z}\cdot\bff{A}^{(1)} & \cdots & \bff{z}\cdot\bff{A}^{(d)}
		\end{bmatrix}
		\in\bb{R}^{1\times d},
		\quad&
		\bff{A} \cdot \bff{B}
		&:=
		\sum_{j=1}^d \bff{A}^{(j)} \cdot \bff{B}^{(j)}
		\in\bb{R},
		\\
		\bff{z}\times\bff{A}
		&:=
		\begin{bmatrix}
			\bff{z}\times\bff{A}^{(1)} & \cdots & \bff{z}\times\bff{A}^{(d)}
		\end{bmatrix}
		\in\bb{R}^{3\times d},
		\qquad&
		\bff{A} \times \bff{B}
		&:=
		\sum_{j=1}^d \bff{A}^{(j)} \times \bff{B}^{(j)}
		\in\bb{R}^3,
	\end{alignedat}
\end{equation*}
where~$\bff{A}^{(j)}$ and~$\bff{B}^{(j)}$ denote the $j^{\rm th}$- column
of~$\bff{A}$ and~$\bff{B}$, respectively.
Hence, if~$\bff{u}$, $\bff{v}$, and~$\bff{w}$ are
vector-valued functions defined on~$\mathscr{D}$ and taking values in~$\bb{R}^3$, then
\begin{equation}\label{equ:nab nab}
	\begin{aligned}
		&2\bff{u}\cdot\nabla\bff{u}
		=
		\nabla\left(|\bff{u}|^2\right)
		=
		\begin{pmatrix}
			\pa_1(|\bff{u}|^2) & \cdots & \pa_d(|\bff{u}|^2)
		\end{pmatrix},
		\\
		&\nabla(\bff{u}\times\bff{v})\cdot\nabla\bff{w}
		=
		\big(\nabla\bff{u}\times\bff{v}\big)\cdot\nabla\bff{w}
		+
		\big(\bff{u}\times\nabla\bff{v}\big)\cdot\nabla\bff{w},
		\\
		&(\bff{u}\times\nabla\bff{v})\cdot\nabla\bff{w}
		=
		\sum_{i=1}^d
		\big(
		\bff{u}\times\pa_i\bff{v}
		\big)
		\cdot
		\pa_i\bff{w},
		\\
		&\inpro{\nabla\bff{u}}{\nabla\bff{v}}_{\bb{L}^2}
		=
		\sum_{j=1}^d 
		\int_\mathscr{D} \pa_j\bff{u} \cdot \pa_j\bff{v} \,\dx
		=
		\sum_{i=1}^3
		\int_\mathscr{D} \nabla u_i \cdot \nabla v_i \,\dx,
		\\
		&\inpro{\bff{u}\times\nabla\bff{v}}{\nabla\bff{w}}_{\bb{L}^2}
		=
		\int_\mathscr{D} (\bff{u}\times\nabla\bff{v})\cdot\nabla\bff{w} \,\dx
		=
		\sum_{j=1}^d
		\int_\mathscr{D} \big(\bff{u}\times\pa_j\bff{v}\big)\cdot\pa_j\bff{w} \,\dx.
	\end{aligned}
\end{equation}

Finally, throughout this paper, the constant $c$ in an estimate denotes a
generic constant which may take different values at different occurrences. If
the dependence of $c$ on some variable, e.g.~$T$, needs to be highlighted, we will write
$c_T$ or $c(T)$. The notation $A \lesssim B$ means $A \le c B$, where the specific form of
the constant $c$ is not important to clarify.

%%%%%%%%%%%%%%%%%%%%%%%%%%%%%%%%%%%%%%%%%%%%%%%%%%%%%%

\subsection{Assumptions}\label{sec:assum}

Throughout this paper, we assume that the bounded polytopal domain $\mathscr{D}$ is either: 
	\begin{enumerate}
		\item a two-dimensional polygonal domain (which may be \emph{non-convex}), or
		\item a three-dimensional \emph{convex} polyhedral domain.
	\end{enumerate}
	In case $\mathscr{D}$ is non-convex, for simplicity of presentation, we assume that $\mathscr{D}$ has exactly one re-entrant corner with angle $\omega\in (\pi,2\pi)$, and define the elliptic regularity index
    % If $d=2$ and $\mathscr{D}$ is non-convex, then $\alpha_0$ is an elliptic regularity index defined by:
	\begin{align}\label{equ:gamma 0}
		\alpha_0 := \pi/\omega.
	\end{align}
	Observe that $\alpha_0 \in (\frac12,1)$. In particular, since $\alpha_0> \frac12$, we exclude domains with cracks from our discussion. Let $\alpha$ be an arbitrary constant satisfying
    \begin{equation}\label{equ:condition alpha}
		\begin{aligned}
			& 1/2 <\alpha<\alpha_0,
			&&\,\text{if $\mathscr{D}$ is a two-dimensional non-convex polygonal domain},
			\\
			& \alpha=1,
			&&\,\text{if $\mathscr{D}$ is a convex domain}.
		\end{aligned}
	\end{equation}

	%To perform the error analysis for the finite element method in Section~\ref{sec:fem} and \ref{sec:longtime}, we assume that the initial data $\bff{u}_0^\epsilon \in \bb{H}^1_\Delta$. 
    %By Theorem~\ref{the:exist unique llbe}, this implies a solution $\bff{u}^\epsilon$ to the $\epsilon$-LLBE exists with regularity:
	%\begin{align}\label{equ:regularity llbe}
	%	\bff{u}^\epsilon \in W^{1,\infty}_T(\bb{H}^{1+\alpha}),
	%\end{align}
	%for all $\alpha$ satisfying~\eqref{equ:condition alpha}. 
    %In Section~\ref{sec:fem LLB}, we assume that the solution $\bff{u}$ to the LLBE has regularity:

\subsection{Definitions of solutions}

The notion of solutions to the $\epsilon$-LLBE and the LLBE needs to be clarified. To motivate these definitions, first multiplying~\eqref{equ:LLB eps} by~$\bff{v}$, then using integration by parts (specifically~\eqref{equ:vec Gre 1} and~\eqref{equ:vec Gre 2}), we obtain formally,
\begin{align*}
	\nonumber
	\inpro{\partial_t \bff{u}^\epsilon(t)}{\bff{v}}_{\bb{L}^2}
	&+
	\epsilon \inpro{\nabla \partial_t \bff{u}^\epsilon(t)}{\nabla\bff{v}}_{\bb{L}^2}
	+
	\kappa_1 \inpro{\nabla\bff{u}^\epsilon(t)}{\nabla\bff{v}}_{\bb{L}^2}
	\nn\\
	&=
	-
	\gamma
	\inpro{\bff{u}^\epsilon(t)\times\nabla\bff{u}^\epsilon(t)}{\nabla\bff{v}}_{\bb{L}^2}
	-
	\kappa_2
	\inpro{(1+\mu|\bff{u}^\epsilon(t)|^2)\bff{u}^\epsilon(t)}{\bff{v}}_{\bb{L}^2}.
\end{align*}
Note that the terms on the right-hand side are well-defined
if~$\bff{u}^\epsilon\in H^1_T(\bb{H}^1)\cap L^2_T(\bb{W}^{1,4})$
and~$\bff{v}\in\bb{H}^1$. We also note that the embedding $\bb{H}^{1+\alpha}\hookrightarrow \bb{W}^{1,4}$ holds for $\alpha$ satisfying~\eqref{equ:condition alpha}. Hence, we define the solution of \eqref{equ:LLB pro}
and of~\eqref{equ:LLB eps pro} as follows. 
\begin{definition}[solution of the $\epsilon$-LLBE]\label{def:wea sol}
	\normalfont
	Let~$\bff{u}_0^\epsilon\in\bb{H}^1_\Delta$, and $\alpha_0$ be as defined in \eqref{equ:gamma 0}.
	\begin{enumerate}
		\item 
		  For any $\alpha$ satisfying~\eqref{equ:condition alpha}, a function~$\bff{u}^\epsilon\in H^1_T(\bb{H}^1)\cap L^2_T(\bb{H}^{1+\alpha})$
		is a \emph{weak solution} of the $\epsilon$-LLBE~\eqref{equ:LLB eps pro} if, for any $t\in (0,T)$,
		\begin{align}\label{equ:weak sol}
			&\inpro{\bff{u}^\epsilon (t)}{\bff{v}}_{\bb{L}^2}
			+
			\epsilon 
			\inpro{\nabla \bff{u}^\epsilon(t)}{\nabla \bff{v}}_{\bb{L}^2}
			+
			\kappa_1 
			\int_0^t
			\inpro{\nabla\bff{u}^\epsilon(s)}{\nabla\bff{v}}_{\bb{L}^2} \ds
			\nn \\
			&+
			\gamma
			\int_0^t
			\inpro{\bff{u}^\epsilon(s)\times\nabla\bff{u}^\epsilon(s)}{\nabla\bff{v}}_{\bb{L}^2} \ds
			+
			\kappa_2
			\int_0^t
			\inpro{(1+\mu|\bff{u}^\epsilon(s)|^2)\bff{u}^\epsilon(s)}{\bff{v}}_{\bb{L}^2} \ds
			\nn\\
			&= 
            \inpro{\bff{u}^\epsilon_0}{\bff{v}}_{\bb{L}^2}
			+
			\epsilon 
			\inpro{\nabla \bff{u}^\epsilon_0}{\nabla \bff{v}}_{\bb{L}^2},
			\quad \forall \bff{v}\in \bb{H}^1(\mathscr{D}).
		\end{align}
		\item A weak solution $\bff{u}^\epsilon$ is said to be a \emph{strong solution}
		if $\bff{u}^\epsilon\in W^{1,\infty}_T(\bb{H}^1) \cap H^1_T(\bb{H}^1_\Delta)$. In this case, \eqref{equ:LLB eps pro} is satisfied
		for all~$t\in(0,T)$ and almost all $x\in \mathscr{D}$, and we can write
		\begin{align}\label{equ:u eps L2}
			\bff{u}^\epsilon(t)
            -
            \epsilon \Delta \partial_t\bff{u}^\epsilon(t) 
			&= 
			\bff{u}^\epsilon_0
			-
            \epsilon \Delta \bff{u}^\epsilon_0
			+
			\kappa_1\int_0^t\Delta\bff{u}^\epsilon(s)\, \ds
			+
			\gamma\int_0^t\bff{u}^\epsilon(s)\times  \Delta\bff{u}^\epsilon(s) \,\ds
			\nn\\
			&\quad-
			\kappa_2\int_0^t(1+\mu|\bff{u}^\epsilon(s)|^2)\bff{u}^\epsilon(s) \,\ds
			\quad\text{ in }\,\bb{L}^2(\mathscr{D}).
		\end{align}
	\end{enumerate}
	A weak (resp. strong) solution is said to be a \emph{global} weak (resp. strong) solution if it exists for any given positive time $T$. It is said to be a \emph{local} weak (resp. strong) solution if it only exists up to some particular time $T$.
\end{definition}

\begin{definition}[solution of the LLBE]\label{def:wea sol LLB}
	Let $\bff{u}_0\in \bb{H}^1$.
	\begin{enumerate}
		\item For any $\alpha$ satisfying \eqref{equ:condition alpha}, a function $\bff{u}\in \bb{W}^{1,4/3}_T(\bb{L}^2) \cap L^\infty_T(\bb{H}^1) \cap L^2_T(\bb{H}^{1+\alpha})$ is a \emph{weak solution} of the LLBE~\eqref{equ:LLB pro} if $\bff{u}(0)=\bff{u}_0$ and, for any $t\in (0,T)$, \eqref{equ:weak sol} holds with $\epsilon=0$. 
		\item A weak solution $\bff{u}$ is said to be a \emph{strong solution} if $\bff{u}\in \bb{W}^{1,\infty}_T(\bb{L}^2) \cap L^\infty_T(\bb{H}^1_\Delta)$. In this case, \eqref{equ:LLB pro} is satisfied
		for all~$t\in(0,T)$ and almost all $x\in \mathscr{D}$, and $\bff{u}$ satisfies \eqref{equ:u eps L2} with $\epsilon=0$.
	\end{enumerate}
\end{definition}

It follows from embedding theorems, see e.g.~\citep[Lemma 1.2]{Lio69}, that a
weak solution $\bff{u}^\epsilon$ of the $\epsilon$-LLBE belongs to $C([0,T];\bb{H}^1)$, while a weak solution $\bff{u}$ of the LLBE {only belongs} to $C([0,T]; \bb{L}^2)$.

\section{Existence and uniqueness of the solution to the LLBE}\label{sec:exist LLBE}

Throughout this section, let $T>0$ be a given number which can be arbitrarily large.
The Faedo--Galerkin method will be used to show the existence of a solution to the LLBE. 
This solution satisfies the following weak form: For every~$t\in[0,T]$,
	\begin{subequations}\label{equ:LLB wea exa}
		\begin{alignat}{1}
			\inpro{\pa_t\bff{u}(t)}{\bff{v}}_{\bb{L}^2}
			+
			&\kappa_1 \inpro{\nabla\bff{u}(t)}{\nabla\bff{v}}_{\bb{L}^2}
			+
			\kappa_2
			\inpro{\bff{u}(t)}{\bff{v}}_{\bb{L}^2}
			\nn\\
			&=
			\gamma
			\inpro{\bff{u}(t)\times\nabla\bff{u}(t)}{\nabla \bff{v}}_{\bb{L}^2}
			-
			\kappa_2 \mu
			\inpro{|\bff{u}(t)|^2\bff{u}(t)}{\bff{v}}_{\bb{L}^2},
			\quad \forall\bff{v}\in\bb{H}^1,
			\label{equ:LLB wea equ exa}
			\\
			\bff{u}(0) &= \bff{u}_{0}
			\label{equ:LLB wea con exa}
		\end{alignat}
	\end{subequations}
Let $\{(\bff{e}_i,\lambda_i)\}_{i=1}^{\infty}$ be a sequence of eigenpairs of the negative Neumann Laplace operator, i.e., for all $i=1,2,\ldots$,
\[
-\Delta\bff{e}_i = \lambda_i\bff{e}_i \text{ in } \mathscr{D}
\quad\text{and}\quad
\frac{\pa\bff{e}_i}{\pa\bff{\nu}} = 0 \text{ on }\pa \mathscr{D}.
\]
We can choose the eigenfunctions~$\bff{e}_i$ such that $\{\bff{e}_i\}_{i=1}^{\infty}$ forms an orthonormal
basis of~$\bb{L}^2$.

%Under our assumptions on $\mathscr{D}$, by the elliptic regularity theory, $\Delta^k \bff{e}_i\in \bb{H}^{1+\alpha_0}(\mathscr{D}) \cap C(\overline{\mathscr{D}})$ for all $k\geq 0$.

Let $\mathcal{S}_n:=\text{span}\{\bff{e}_1,\cdots,\bff{e}_n\}$ and let
$\Pi_n$ be the~$\bb{L}^2$-projection onto~$\mathcal{S}_n$. We consider the following
approximation to~\eqref{equ:LLB pro}: Find~$\bff{u}_n(\cdot,t)\in\mathcal{S}_n$
satisfying
\begin{equation}\label{equ:Gal LLB wea}
\begin{alignedat}{2}
 \pa_t\bff{u}_n
&=
\kappa_1\Delta\bff{u}_n
+
\gamma\Pi_n\bigl(\bff{u}_n\times\Delta\bff{u}_n\bigr)
-
\kappa_2\Pi_n\bigl((1+\mu|\bff{u}_n|^2)\bff{u}_n\bigr)
&\quad &\text{in } (0,T) \times \mathscr{D},
\\
\bff{u}_n(0) &= \bff{u}_{0,n}
		&\quad &\text{in } \mathscr{D}.
\end{alignedat}
\end{equation}
We assume that $\bff{u}_0$ in \eqref{equ:u ini con} and~$\bff{u}_{0,n}\in\mathcal{S}_n$ in \eqref{equ:Gal LLB wea} are chosen such that the initial data approximation satisfies
\begin{equation}\label{equ:fur ass H1}
		\lim_{n\to\infty}
		\norm{\bff{u}_{0,n}-\bff{u}_0}{\bb{H}^1} = 0.
	\end{equation}
Equivalently for every~$t\in [0,T]$,
	\begin{subequations}\label{equ:LLB wea}
		\begin{alignat}{1}
			\inpro{\pa_t\bff{u}_n(t)}{\bff{v}}_{\bb{L}^2}
			+
			&\kappa_1 \inpro{\nabla\bff{u}_n(t)}{\nabla\bff{v}}_{\bb{L}^2}
			+
			\kappa_2
			\inpro{\bff{u}_n(t)}{\bff{v}}_{\bb{L}^2}
			\nn\\
			&=
			\gamma
			\inpro{\bff{u}_n(t)\times \Delta\bff{u}_n(t)}{\bff{v}}_{\bb{L}^2}
			-
			\kappa_2 \mu
			\inpro{|\bff{u}_n(t)|^2\bff{u}_n(t)}{\bff{v}}_{\bb{L}^2},
			\quad \forall\bff{v}\in\mathcal{S}_n,
			\label{equ:LLB wea equ}
			\\
			\bff{u}_n(0) &= \bff{u}_{0,n}.
			\label{equ:LLB wea con}
		\end{alignat}
	\end{subequations}

The following lemma gives local existence of a solution to~\eqref{equ:Gal LLB wea}.

\begin{lemma}\label{lem:loc exi}
For each $n\in\bb{N}$, there exists a sufficiently small time $\tau_n>0$ such that the problem~\eqref{equ:Gal LLB wea}
admits a solution on $[0,\tau_n]$.
\end{lemma}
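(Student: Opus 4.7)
The plan is to reduce the Galerkin problem \eqref{equ:Gal LLB wea} to a finite-dimensional initial-value problem and invoke the classical Picard-Lindel\"of theorem. Writing the ansatz $\bff{u}_n(t) = \sum_{i=1}^{n} c_i(t)\,\bff{e}_i$ for scalar coefficients $c_i:[0,\tau_n]\to\bb{R}$ and testing \eqref{equ:Gal LLB wea} against $\bff{e}_j$ for $j=1,\ldots,n$, the $\bb{L}^2$-orthonormality of $\{\bff{e}_i\}_{i=1}^{n}$ converts the PDE into the ODE system
\[
\dot{c}_j(t) = -\kappa_1 \lambda_j c_j(t) + \gamma F_j(c(t)) - \kappa_2 G_j(c(t)),
\qquad c_j(0) = \inpro{\bff{u}_{0,n}}{\bff{e}_j}_{\bb{L}^2},
\]
for $j=1,\ldots,n$, where $c=(c_1,\ldots,c_n)$ and, using $\Delta\bff{e}_l=-\lambda_l\bff{e}_l$ together with the fact that $\inpro{\Pi_n(\cdot)}{\bff{e}_j}_{\bb{L}^2} = \inpro{\cdot}{\bff{e}_j}_{\bb{L}^2}$ for $\bff{e}_j\in\mathcal{S}_n$,
\[
F_j(c) = -\sum_{k,l=1}^{n} \lambda_l\, c_k c_l\, \inpro{\bff{e}_k\times\bff{e}_l}{\bff{e}_j}_{\bb{L}^2},
\qquad
G_j(c) = c_j + \mu \sum_{k,l,m=1}^{n} c_k c_l c_m\, \inpro{(\bff{e}_k\cdot\bff{e}_l)\,\bff{e}_m}{\bff{e}_j}_{\bb{L}^2}.
\]

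Each component of the right-hand side is a polynomial of degree at most three in $c$, with coefficients that are fixed real numbers depending only on the basis $\{\bff{e}_i\}_{i=1}^{n}$. Consequently, the right-hand side is smooth and locally Lipschitz on $\bb{R}^n$. The classical Picard-Lindel\"of theorem then produces a unique $C^1$ solution $c:[0,\tau_n]\to\bb{R}^n$ for some $\tau_n>0$, and reconstituting $\bff{u}_n(t)=\sum_{i=1}^{n} c_i(t)\bff{e}_i\in\mathcal{S}_n$ gives the desired solution of \eqref{equ:Gal LLB wea} on $[0,\tau_n]$.

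No genuine obstacle arises here, because the Galerkin projection $\Pi_n$ converts the nonlinear PDE into an ODE with polynomial right-hand side, at which point standard ODE theory applies. The only reason $\tau_n$ might be small is the cubic nonlinearity $|\bff{u}_n|^2\bff{u}_n$, which prevents an a~priori bound on $|c(t)|$ from the ODE alone; promoting this local existence to the interval $[0,T]$ uniformly in $n$ will require the energy estimates to be derived subsequently.
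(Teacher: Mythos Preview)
Your proof is correct and follows essentially the same approach as the paper: reduce to a finite-dimensional ODE on $\mathcal{S}_n$ (equivalently $\bb{R}^n$) and invoke Picard--Lindel\"of via local Lipschitz continuity of the right-hand side. The paper phrases this more abstractly, defining the maps $F_n^1(\bff{v})=\Delta\bff{v}$, $F_n^2(\bff{v})=\Pi_n(\bff{v}\times\Delta\bff{v})$, $F_n^3(\bff{v})=\Pi_n((1+\mu|\bff{v}|^2)\bff{v})$ on $\mathcal{S}_n$ and citing \cite[Lemma~3.1]{Le16} for their (local) Lipschitz properties, whereas you make the polynomial structure explicit in coordinates; but the substance is identical.
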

\begin{proof}
For $n\in\bb{N}$, let~$F_n^i:\mathcal{S}_n\to\mathcal{S}_n$, $i=1,2,3$, be defined by
\begin{align*}
F_n^1(\bff{v}) = \Delta\bff{v}, \quad
F_n^2(\bff{v}) = \Pi_n(\bff{v}\times\Delta\bff{v}), \quad
F_n^3(\bff{v}) = \Pi_n((1+\mu|\bff{v}|^2)\bff{v}),
\end{align*}
for any~$\bff{v}\in\mathcal{S}_n$. It is shown in~\citep[Lemma 3.1]{Le16} that~$F_n^1$ is 
globally Lipschitz, while~$F_n^2$ and~$F_n^3$ are locally Lipschitz. The local
existence of~$\bff{u}_n$ follows at once.
\end{proof}

We now show some bounds on the local solution~$\bff{u}_n$
of~\eqref{equ:Gal LLB wea}, which will imply its global existence, i.e., existence of solution on $[0,T]$ for any given $T>0$.

%-----------------------------------------------
\begin{lemma}\label{lem:ene est 1 llb}
	Assume that~\eqref{equ:fur ass H1} holds. For any $n\in \bb{N}$ and $\alpha$ satisfying~\eqref{equ:condition alpha},
\begin{equation}\label{equ:un global}
\norm{\bff{u}_n(t)}{\bb{H}^1}^2
+
\int_0^t \norm{\Delta\bff{u}_n(s)}{\bb{L}^2}^2 \ds
+
\int_0^t 
\norm{\bff{u}_n(s)}{\bb{L}^4}^4
+
\int_0^t \norm{\bff{u}_n(s)}{\bb{H}^{1+\alpha}}^2 \ds
\le c,
\end{equation}
where the constant $c$ depends on $\norm{\bff{u}_0}{\bb{H}^1}$, but is independent of~$n$ and~$t$.
\end{lemma}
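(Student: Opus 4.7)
My plan is to derive two Galerkin energy identities by successively testing the weak form \eqref{equ:LLB wea equ} against $\bff{v}=\bff{u}_n(t)$ and $\bff{v}=-\Delta\bff{u}_n(t)$. Both choices lie in $\mathcal{S}_n$ since this space is spanned by eigenfunctions of the Neumann Laplacian; in particular $-\Delta\bff{u}_n=\sum_i c_i\lambda_i\bff{e}_i\in\mathcal{S}_n$, and every element of $\mathcal{S}_n$ satisfies the homogeneous Neumann condition pointwise, making every integration by parts below free of boundary terms. The algebraic identities $(\bff{u}_n\times\Delta\bff{u}_n)\cdot\bff{u}_n\equiv 0$ and $(\bff{u}_n\times\Delta\bff{u}_n)\cdot\Delta\bff{u}_n\equiv 0$ will make the gyromagnetic term vanish in both tests, which is the key structural feature that permits the estimate.

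First I would take $\bff{v}=\bff{u}_n(t)$. The cross-product term drops out and $\inpro{|\bff{u}_n|^2\bff{u}_n}{\bff{u}_n}_{\bb{L}^2}=\norm{\bff{u}_n}{\bb{L}^4}^4$, yielding the purely dissipative identity
\[
\tfrac{1}{2}\ddt\norm{\bff{u}_n(t)}{\bb{L}^2}^2
+\kappa_1\norm{\nabla\bff{u}_n(t)}{\bb{L}^2}^2
+\kappa_2\norm{\bff{u}_n(t)}{\bb{L}^2}^2
+\kappa_2\mu\norm{\bff{u}_n(t)}{\bb{L}^4}^4 = 0.
\]
Integrating over $[0,t]$ and using \eqref{equ:fur ass H1} to bound $\norm{\bff{u}_{0,n}}{\bb{L}^2}$ uniformly in $n$ produces the uniform bounds on $\bff{u}_n$ in $L^\infty_T(\bb{L}^2)$, $L^2_T(\bb{H}^1)$, and $L^4_T(\bb{L}^4)$.

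Next I test with $\bff{v}=-\Delta\bff{u}_n(t)$. Two successive integrations by parts convert the left-hand side of \eqref{equ:LLB wea equ} into $\tfrac{1}{2}\ddt\norm{\nabla\bff{u}_n}{\bb{L}^2}^2+\kappa_1\norm{\Delta\bff{u}_n}{\bb{L}^2}^2+\kappa_2\norm{\nabla\bff{u}_n}{\bb{L}^2}^2$. On the right, the gyromagnetic term again vanishes pointwise, while for the cubic term I use the product rule $\partial_j(|\bff{u}_n|^2\bff{u}_n)=2(\bff{u}_n\cdot\partial_j\bff{u}_n)\bff{u}_n+|\bff{u}_n|^2\partial_j\bff{u}_n$ after one integration by parts to obtain
\[
\kappa_2\mu\inpro{|\bff{u}_n|^2\bff{u}_n}{\Delta\bff{u}_n}_{\bb{L}^2}
= -\kappa_2\mu\Bigl(2\sum_{j=1}^d \norm{\bff{u}_n\cdot\partial_j\bff{u}_n}{\bb{L}^2}^2
+ \int_\mathscr{D} |\bff{u}_n|^2 |\nabla\bff{u}_n|^2 \dx\Bigr) \le 0.
\]
Moving this non-positive expression to the left-hand side only strengthens the identity; integrating in time and invoking the uniform $\bb{H}^1$-bound on $\bff{u}_{0,n}$ from \eqref{equ:fur ass H1} delivers uniform-in-$n$ bounds for $\bff{u}_n$ in $L^\infty_T(\bb{H}^1)$ and for $\Delta\bff{u}_n$ in $L^2_T(\bb{L}^2)$.

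To conclude, I upgrade the $L^2_T(\bb{L}^2)$ bound on $\Delta\bff{u}_n$ to an $L^2_T(\bb{H}^{1+\alpha})$ bound on $\bff{u}_n$. Since each $\bff{u}_n(t)$ satisfies the homogeneous Neumann condition pointwise, elliptic regularity for the Neumann Laplacian on convex polyhedra and on polygons with a single re-entrant corner of angle $\omega$ gives, for any $\alpha$ satisfying \eqref{equ:condition alpha},
\[
\norm{\bff{u}_n(t)}{\bb{H}^{1+\alpha}} \lesssim \norm{\Delta\bff{u}_n(t)}{\bb{L}^2} + \norm{\bff{u}_n(t)}{\bb{L}^2},
\]
so squaring and integrating in time completes the proof. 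The only substantive point of the argument is the correct handling of the cubic reaction term against the $-\Delta$-test: rather than introducing an uncontrolled right-hand side, the product-rule expansion reveals a definite sign that contributes additional coercivity. The appearance of the exponent $1+\alpha$ rather than $2$ is forced solely by the limited elliptic regularity at a re-entrant corner, which is exactly why the assumption \eqref{equ:condition alpha} is imposed.
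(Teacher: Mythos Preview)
Your proof is correct and follows essentially the same approach as the paper: the paper cites \cite[Lemma~3.2]{Le16} for the first three terms (which amounts to exactly the two energy identities you derive by testing with $\bff{u}_n$ and $-\Delta\bff{u}_n$, using the vanishing of the cross-product term and the non-positivity of $\inpro{|\bff{u}_n|^2\bff{u}_n}{\Delta\bff{u}_n}_{\bb{L}^2}$), and then applies the elliptic regularity estimate~\eqref{equ:shift polyg H1s} together with the embedding $\bb{L}^2\hookrightarrow\widetilde{\bb{H}}^{-1+\alpha}$ to obtain the $L^2_T(\bb{H}^{1+\alpha})$ bound, just as you do.
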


\begin{proof}
The inequality for the first three terms on the left-hand side of \eqref{equ:un global} is shown in~\citep[Lemma~3.2]{Le16}. For the last term, by the elliptic regularity estimate~\eqref{equ:shift polyg H1s}, we obtain
\begin{align*}
    \int_0^t \norm{\bff{u}_n(s)}{\bb{H}^{1+\alpha}}^2 \ds
	\lesssim
	\int_0^t \left(\norm{\bff{u}_n(s)}{\widetilde{\bb{H}}^{-1+\alpha}}^2 + \norm{\Delta \bff{u}_n(s)}{\widetilde{\bb{H}}^{-1+\alpha}}^2 \right) \ds
	\lesssim 1,
\end{align*}
where in the last step we also used the embedding $\bb{L}^2\hookrightarrow \widetilde{\bb{H}}^{-1+\alpha}$ and inequality~\eqref{equ:un global} for the first two terms. This completes the proof.
\end{proof}

\begin{lemma}
Assume that~\eqref{equ:fur ass H1} holds.
Then for any $n\in \bb{N}$, the approximate solution $\bff{u}_n$ satisfies
\begin{align}\label{equ:dt un 43}
	\int_0^t \norm{\bff{u}_n(s)}{\bb{L}^\infty}^4 \ds
	+
	\int_0^t \norm{\pa_t \bff{u}_n(s)}{\bb{L}^2}^{4/3} \ds \lesssim 1,
\end{align}
where the constant is independent of $n$.
\end{lemma}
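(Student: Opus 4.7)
The plan is to prove the two bounds in sequence: first obtain the $L^4_tL^\infty$ bound by coupling the $L^\infty_tH^1\cap L^2_tH^{1+\alpha}$ regularity already supplied by Lemma~\ref{lem:ene est 1 llb} with an Agmon-type interpolation inequality, then exploit this bound together with the Galerkin equation~\eqref{equ:Gal LLB wea} to control $\pa_t\bff{u}_n$.

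\textbf{Step 1.} The pointwise ingredient I will use is
\[
\norm{\bff{v}}{\bb{L}^\infty}\lesssim\norm{\bff{v}}{\bb{H}^1}^{1/2}\norm{\bff{v}}{\bb{H}^{1+\alpha}}^{1/2}\qquad\text{for all }\bff{v}\in\bb{H}^{1+\alpha}(\mathscr{D}),
\]
valid in every admissible geometry from Section~\ref{sec:assum}. For $d\le 2$ it follows by interpolation: $[\bb{H}^1,\bb{H}^{1+\alpha}]_{1/2}=\bb{H}^{1+\alpha/2}$ and $\bb{H}^{1+\alpha/2}\hookrightarrow\bb{L}^\infty$ since $1+\alpha/2>d/2$ (using $\alpha>1/2$ in the 2D non-convex case). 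For $d=3$ the convexity assumption forces $\alpha=1$, and the inequality is precisely Agmon's inequality $\norm{\bff{v}}{\bb{L}^\infty}\lesssim\norm{\bff{v}}{\bb{H}^1}^{1/2}\norm{\bff{v}}{\bb{H}^2}^{1/2}$, which transfers from $\bb{R}^3$ to the Lipschitz domain $\mathscr{D}$ via a bounded Sobolev extension operator. Raising to the fourth power and integrating gives
\[
\int_0^t\norm{\bff{u}_n(s)}{\bb{L}^\infty}^4\,\ds\lesssim\sup_{s\in[0,t]}\norm{\bff{u}_n(s)}{\bb{H}^1}^{2}\int_0^t\norm{\bff{u}_n(s)}{\bb{H}^{1+\alpha}}^{2}\,\ds\lesssim 1,
\]
where the last inequality is Lemma~\ref{lem:ene est 1 llb}.

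\textbf{Step 2.} Taking the $\bb{L}^2$-norm of~\eqref{equ:Gal LLB wea}, using the contractivity of $\Pi_n$ on $\bb{L}^2$, the pointwise bound $|\bff{a}\times\bff{b}|\le|\bff{a}||\bff{b}|$, and the Sobolev embedding $\bb{H}^1\hookrightarrow\bb{L}^6$ (valid for $d\le 3$), I obtain pointwise in time
\[
\norm{\pa_t\bff{u}_n(s)}{\bb{L}^2}\lesssim(1+\norm{\bff{u}_n(s)}{\bb{L}^\infty})\,\norm{\Delta\bff{u}_n(s)}{\bb{L}^2}+\norm{\bff{u}_n(s)}{\bb{L}^2}+\norm{\bff{u}_n(s)}{\bb{H}^1}^{3}.
\]
Raising to the $4/3$ power, integrating over $(0,t)$, and applying H\"older's inequality with exponents $(3,3/2)$ to the cross term $\int_0^t\norm{\bff{u}_n(s)}{\bb{L}^\infty}^{4/3}\norm{\Delta\bff{u}_n(s)}{\bb{L}^2}^{4/3}\,\ds$ (and trivial estimates on the remaining terms) reduces everything to quantities already controlled by Step~1 and Lemma~\ref{lem:ene est 1 llb}.

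The main obstacle is the $d=3$ case of Step~1. A direct Sobolev embedding $\bb{H}^s\hookrightarrow\bb{L}^\infty$ on $\bb{R}^3$ requires $s>3/2$ strictly, while any naive interpolation between $\bb{H}^1$ and $\bb{H}^2$ crossing this threshold forces an exponent strictly greater than $1/2$ on $\norm{\bff{v}}{\bb{H}^2}$, so that after the fourth power the time integral of $\norm{\bff{u}_n}{\bb{H}^2}^{2+\delta}$ appears and is not supplied by~\eqref{equ:un global}. Agmon's inequality is exactly the sharp endpoint product estimate with exponent $1/2$ on both norms, which after squaring becomes the integrable quantity $\norm{\bff{u}_n}{\bb{H}^2}^{2}$; this is what makes the whole argument close.
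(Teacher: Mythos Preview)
Your proof is correct and follows essentially the same route as the paper: an Agmon-type interpolation $\norm{\bff{v}}{\bb{L}^\infty}\lesssim\norm{\bff{v}}{\bb{H}^1}^{1/2}\norm{\bff{v}}{\bb{H}^{1+\alpha}}^{1/2}$ to get the $L^4_t\bb{L}^\infty$ bound from~\eqref{equ:un global}, then the $\bb{L}^2$ norm of the Galerkin equation raised to the $4/3$ power. The only cosmetic differences are that the paper treats the dimensions $d=1,2,3$ separately (using e.g.\ $\bb{H}^{1+\alpha/4}$ as the intermediate space when $d=2$) rather than via your unified inequality, and in Step~2 uses Young's inequality $\norm{\bff{u}_n}{\bb{L}^\infty}^{4/3}\norm{\Delta\bff{u}_n}{\bb{L}^2}^{4/3}\lesssim\norm{\bff{u}_n}{\bb{L}^\infty}^{4}+\norm{\Delta\bff{u}_n}{\bb{L}^2}^{2}$ in place of your H\"older splitting with exponents $(3,3/2)$.
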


\begin{proof}
We first derive an estimate for the first term on the left-hand side of \eqref{equ:dt un 43}. If $d=1$, the inequality for the first term on the left-hand side of~\eqref{equ:dt un 43} follows by the embedding $\bb{H}^1 \hookrightarrow \bb{L}^\infty$ and \eqref{equ:un global}.
For the case $d=2$, by successively applying the Sobolev embedding $\bb{H}^{1+ \alpha/4} \hookrightarrow \bb{L}^\infty$, interpolation inequalities for fractional Sobolev spaces~\citep{BreMir18}, and equation~\eqref{equ:un global}, we obtain
\begin{align*}
	\int_0^t \norm{\bff{u}_n(s)}{\bb{L}^\infty}^4 \ds
	\lesssim
	\int_0^t \norm{\bff{u}_n(s)}{\bb{H}^{1+\alpha/4}}^4 \ds
	&\lesssim
	\int_0^t \norm{\bff{u}_n(s)}{\bb{H}^1}^2 \norm{\bff{u}_n(s)}{\bb{H}^{1+\alpha/2}}^2 \ds
	\\
	&\lesssim
	\int_0^t \norm{\bff{u}_n(s)}{\bb{H}^{1+\alpha/2}}^2 \ds
	\lesssim 1,
\end{align*}
where in the last step we also used the embedding $\bb{L}^2\hookrightarrow \widetilde{\bb{H}}^{-1+\alpha/2}$.
For the case $d=3$, noting that the domain is convex by assumption (and so \eqref{equ:shift polyh W2p} holds), by using Agmon's inequality and \eqref{equ:un global}, we have
\begin{align*}%\label{equ:R2 gal nir}
	\int_0^t \norm{\bff{u}_n(s)}{\bb{L}^\infty}^4 \ds
	\lesssim
	\int_0^t \norm{\bff{u}_n(s)}{\bb{H}^1}^2 \norm{\bff{u}_n(s)}{\bb{H}^2}^2 \ds
	\lesssim
	\int_0^t \norm{\bff{u}_n(s)}{\bb{H}^2}^2 \ds
	\lesssim 1.
\end{align*}

Next, taking the $\bb{L}^2$ norm of \eqref{equ:Gal LLB wea}, then applying H\"older's and Young's inequalities, we obtain
\begin{align*}
	\norm{\pa_t \bff{u}_n}{\bb{L}^2}^{4/3}
	&\leq
	\kappa_1 \norm{\Delta \bff{u}_n}{\bb{L}^2}^{4/3}
	+
	\gamma \norm{\bff{u}_n}{\bb{L}^\infty}^{4/3} \norm{\Delta \bff{u}_n}{\bb{L}^2}^{4/3}
	+
	\kappa_2 \norm{\bff{u}_n}{\bb{L}^2}^{4/3}
	+
	\kappa_2 \mu \norm{\bff{u}_n}{\bb{L}^6}^4
	\\
	&\leqs
	1+ \norm{\Delta \bff{u}_n}{\bb{L}^2}^2 + \norm{\bff{u}_n}{\bb{L}^\infty}^4
	+ \norm{\bff{u}_n}{\bb{L}^2}^2 + \norm{\bff{u}_n}{\bb{H}^1}^4
	\\
	&\leqs
	1+ \norm{\Delta \bff{u}_n}{\bb{L}^2}^2 + \norm{\bff{u}_n}{\bb{L}^\infty}^4
\end{align*}
where in the penultimate step we used the embedding $\bb{H}^1\hookrightarrow \bb{L}^6$, and in the last step we used \eqref{equ:un global}. Therefore, integrating over $(0,t)$ and using \eqref{equ:un global} again,
\begin{align*}
	\int_0^t \norm{\pa_t \bff{u}_n(s)}{\bb{L}^2}^{4/3} \ds
	&\leqs
	1+ \int_0^t \norm{\bff{u}_n(s)}{\bb{L}^\infty}^4 \ds.
\end{align*}
By the result established previously, the lemma is proven.
\end{proof}

The following lemmas show a uniform bound which hold possibly only locally in time.

\begin{lemma}\label{lem:H2 un}
	Let $T$ be a given positive real number. Assume that 
    \begin{equation}\label{equ:fur ass Hs}
		\lim_{n\to\infty}
		\norm{\bff{u}_{0,n}-\bff{u}_0}{\bb{H}^s_\Delta} = 0.
	\end{equation}
   holds for $s=1$. Then there exists $T^\ast$ satisfying
\begin{equation}\label{equ:T ast}
		\begin{cases}
			T^\ast = T, \quad & \text{if } d=1,2,
			\\
			T^\ast \le T, \quad & \text{if } d=3,
		\end{cases}
	\end{equation}
    such that for any $n\in \bb{N}$ and~$t\in[0,T^\ast]$,
	\begin{align}\label{equ:Delta un llb}
		\norm{\Delta \bff{u}_n(t)}{\bb{L}^2}^2
		+
		\int_0^t \norm{\nabla \Delta \bff{u}_n(s)}{\bb{L}^2}^2 \ds 
		&\leq c,
	\end{align}
	where $c$ is independent of $n$.
\end{lemma}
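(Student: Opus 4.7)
The plan is to derive a differential inequality for $\norm{\Delta \bff{u}_n(t)}{\bb{L}^2}^2$ by testing the Galerkin system~\eqref{equ:Gal LLB wea} against $\Delta^2 \bff{u}_n$. This is a legitimate test function since $\Delta^2\bff{e}_i=\lambda_i^2\bff{e}_i\in\mathcal{S}_n$, every element of $\mathcal{S}_n$ satisfies the Neumann boundary condition, and $\Pi_n$ is self-adjoint on $\bb{L}^2$; consequently the projection can be dropped from the nonlinear terms and two integrations by parts produce no boundary contributions. This should yield the identity
$$
\tfrac12\ddt \norm{\Delta \bff{u}_n}{\bb{L}^2}^2 + \kappa_1 \norm{\nabla \Delta \bff{u}_n}{\bb{L}^2}^2 = \gamma \inpro{\bff{u}_n\times\Delta\bff{u}_n}{\Delta^2\bff{u}_n}_{\bb{L}^2} - \kappa_2 \inpro{(1+\mu|\bff{u}_n|^2)\bff{u}_n}{\Delta^2\bff{u}_n}_{\bb{L}^2}.
$$
One further integration by parts on the right-hand side, together with the pointwise identity $(\bff{u}_n\times\nabla\Delta\bff{u}_n)\cdot\nabla\Delta\bff{u}_n\equiv 0$, cancels the principal part of the gyromagnetic term and leaves only a commutator piece $-\gamma\inpro{\nabla\bff{u}_n\times\Delta\bff{u}_n}{\nabla\Delta\bff{u}_n}_{\bb{L}^2}$ together with a polynomial contribution controlled by $\norm{(1+|\bff{u}_n|^2)\nabla\bff{u}_n}{\bb{L}^2}\norm{\nabla\Delta\bff{u}_n}{\bb{L}^2}$.

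Next I would estimate these two terms by H\"older's inequality and Sobolev embeddings, using the elliptic shift $\norm{\bff{w}}{\bb{H}^2}\leqs\norm{\bff{w}}{\bb{H}^1}+\norm{\Delta\bff{w}}{\bb{L}^2}$ that holds in the assumed geometries (convex polyhedra in $d=3$ and arbitrary polygons in $d\le 2$). In the low-dimensional cases $d=1,2$, the embedding $\bb{H}^{1+\sigma}\hookrightarrow\bb{L}^\infty$ for some $\sigma<1$ together with the bounds in~\eqref{equ:un global} and interpolation lets one absorb a controlled fraction of $\kappa_1\norm{\nabla\Delta\bff{u}_n}{\bb{L}^2}^2$ via Young's inequality, producing an inequality of the form $\ddt y + z \leqs (1+y)\,\phi(t)$ with $y=\norm{\Delta\bff{u}_n}{\bb{L}^2}^2$, $z=\norm{\nabla\Delta\bff{u}_n}{\bb{L}^2}^2$ and $\phi\in L^1(0,T)$ (thanks to~\eqref{equ:un global} and~\eqref{equ:dt un 43}); Gr\"onwall's lemma then yields a bound valid on the entire interval $[0,T]$.

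The main obstacle is the critical scaling in dimension three. Estimating the commutator term through $\norm{\nabla\bff{u}_n}{\bb{L}^6}\leqs 1+\norm{\Delta\bff{u}_n}{\bb{L}^2}$ and the Gagliardo--Nirenberg interpolation $\norm{\Delta\bff{u}_n}{\bb{L}^3}\leqs\norm{\Delta\bff{u}_n}{\bb{L}^2}^{1/2}\norm{\nabla\Delta\bff{u}_n}{\bb{L}^2}^{1/2}$ leads, after Young's inequality absorbs $\tfrac{\kappa_1}{2}z$ into the dissipation, to a super-linear inequality of the form $\ddt y \leqs (1+y)^3$. Comparison with the corresponding ODE then yields only a local-in-time bound on some interval $[0,T^\ast]$ with $T^\ast\le T$ whose length depends solely on the initial quantity $\norm{\Delta\bff{u}_0}{\bb{L}^2}$. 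By hypothesis~\eqref{equ:fur ass Hs} with $s=1$, the norm $\norm{\Delta\bff{u}_{0,n}}{\bb{L}^2}$ is uniformly bounded in $n$, so both $T^\ast$ and the constant on the right-hand side of~\eqref{equ:Delta un llb} can be chosen independently of $n$, which is exactly the desired conclusion.
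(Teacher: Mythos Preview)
Your overall strategy---test the Galerkin system with $\Delta^2\bff{u}_n$, integrate by parts to expose the commutator $\inpro{\nabla\bff{u}_n\times\Delta\bff{u}_n}{\nabla\Delta\bff{u}_n}_{\bb{L}^2}$, absorb half of the dissipation via Young's inequality, and close with Gronwall (for $d\le 2$) or a Bihari-type comparison argument (for $d=3$)---is exactly the paper's route, and your treatment of $d=3$ is essentially correct.

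There is, however, a genuine error in the two-dimensional step. The $\bb{H}^2$ shift $\norm{\bff{w}}{\bb{H}^2}\lesssim\norm{\bff{w}}{\bb{H}^1}+\norm{\Delta\bff{w}}{\bb{L}^2}$ does \emph{not} hold on non-convex polygons; this is precisely why the paper carries the fractional index~$\alpha<\alpha_0<1$ throughout (cf.~Theorem~\ref{the:ell reg polyg}). Your sketch for $d=2$ therefore needs a replacement for this shift, and the replacement is not just cosmetic: to obtain a coefficient $\phi\in L^1(0,T)$ in front of $y=\norm{\Delta\bff{u}_n}{\bb{L}^2}^2$ (so that Gronwall gives a \emph{global} bound), one must show something like $\norm{\nabla\bff{u}_n}{\bb{L}^4}^4\lesssim 1+\norm{\Delta\bff{u}_n}{\bb{L}^2}^2$. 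The paper achieves this via the chain
\[
\norm{\bff{u}_n}{\bb{W}^{1,4}}^4
\lesssim
\norm{\bff{u}_n}{\bb{H}^{3/2}}^4
\lesssim
\norm{\bff{u}_n}{\bb{L}^2}^4
+
\norm{\Delta\bff{u}_n}{\widetilde{\bb{H}}^{-1/2}}^4
\lesssim
1+\norm{\Delta\bff{u}_n}{\bb{L}^2}^2\norm{\Delta\bff{u}_n}{\widetilde{\bb{H}}^{-1}}^2
\lesssim
1+\norm{\Delta\bff{u}_n}{\bb{L}^2}^2,
\]
using the fractional shift~\eqref{equ:shift polyg H1s} with $s=1/2$, Sobolev interpolation, and the duality bound $\norm{\Delta\bff{u}_n}{\widetilde{\bb{H}}^{-1}}\lesssim\norm{\nabla\bff{u}_n}{\bb{L}^2}$. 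The right-hand side is then in $L^1(0,T)$ by~\eqref{equ:un global}. If instead you only use $\bb{H}^{1+\alpha}\hookrightarrow\bb{W}^{1,4}$ without this interpolation, you get $\norm{\nabla\bff{u}_n}{\bb{L}^4}^4\lesssim(1+\norm{\Delta\bff{u}_n}{\bb{L}^2})^4$, which is not known to be integrable, and the argument collapses to a local bound as in $d=3$, contradicting the claimed $T^\ast=T$ for $d=2$. Your phrase ``together with the bounds in~\eqref{equ:un global} and interpolation'' gestures in the right direction but does not supply this step; it is the one place where the non-convex geometry genuinely bites.
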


\begin{proof}
To prove~\eqref{equ:Delta un llb}, we
set~$\bff{v}=2\Delta^2\bff{u}_n(t)\in\mathcal{S}_n$
in~\eqref{equ:LLB wea equ} and obtain
\begin{align}\label{equ:ddt Lap une}
	&\ddt\norm{\Delta\bff{u}_n(t)}{\bb{L}^2}^2
	+
	2\kappa_1\norm{\nabla\Delta\bff{u}_n(t)}{\bb{L}^2}^2
	+
	2\kappa_2 \norm{\Delta \bff{u}_n(t)}{\bb{L}^2}^2
	\nn\\
	&=
	-2\gamma
	\inpro{\nabla\Big(\bff{u}_n(t)\times\Delta\bff{u}_n(t)\Big)}
	{\nabla\Delta\bff{u}_n(t)}_{\bb{L}^2}
	+2\kappa_2\mu
	\inpro{\nabla\Big(|\bff{u}_n(t)|^2\bff{u}_n(t)\Big)}
	{\nabla\Delta\bff{u}_n(t)}_{\bb{L}^2}
	\nn\\
	&=
	2\gamma
	\inpro{\Delta\bff{u}_n(t)\times\nabla\bff{u}_n(t)}
	{\nabla\Delta\bff{u}_n(t)}_{\bb{L}^2}
	+2\kappa_2\mu
	\inpro{\nabla\Big(|\bff{u}_n(t)|^2\bff{u}_n(t)\Big)}
	{\nabla\Delta\bff{u}_n(t)}_{\bb{L}^2}
	\nn\\
	&\le
	2\gamma
	\left|
	\inpro{\Delta\bff{u}_n(t)\times\nabla\bff{u}_n(t)}
	{\nabla\Delta\bff{u}_n(t)}_{\bb{L}^2}
	\right|
	+ 2\kappa_2\mu
	\left|
	\inpro{\nabla\Big(|\bff{u}_n(t)|^2\bff{u}_n(t)\Big)}
	{\nabla\Delta\bff{u}_n(t)}_{\bb{L}^2}
	\right|
	\nn\\
	&=
	R_1 + R_2,
\end{align}
where
\begin{align*}%\label{equ:R1}
	R_1 &:=
	2\gamma
	\left|
	\inpro{\Delta\bff{u}_n(t)\times\nabla\bff{u}_n(t)}
	{\nabla\Delta\bff{u}_n(t)}_{\bb{L}^2}
	\right|
	\nn\\
	R_2 &:=
	2\kappa_2\mu
	\left|
	\inpro{\nabla\Big(|\bff{u}_n(t)|^2\bff{u}_n(t)\Big)}
	{\nabla\Delta\bff{u}_n(t)}_{\bb{L}^2}
	\right|.
\end{align*}
Invoking Lemma~\ref{lem:tec lem} with~$\bff{v}=\Delta\bff{u}_n$,
$\bff{w}=\bff{u}_n$, and~$\delta=\kappa_1/3d$, we deduce
\[
R_1
\le
\frac{\kappa_1}{3}
\norm{\nabla\Delta\bff{u}_n(t)}{\bb{L}^2}^2
+
\Phi(\bff{u}_n(t))
\norm{\Delta\bff{u}_n(t)}{\bb{L}^2}^2,
\]
where $\Phi$ is defined in~\eqref{equ:Phi}.
On the other hand, we have
\begin{align*}%\label{equ:R2}
	\nn
	R_2 
	&\lesssim
	\sum_{i=1}^d
	\left|
	\inpro{2\Big(\pa_i\bff{u}_n(t)\cdot\bff{u}_n(t)\Big)\bff{u}_n(t)}
	{\pa_i\Delta\bff{u}_n(t)}_{\bb{L}^2}
	\right|
	+
	\sum_{i=1}^d
	\left|
	\inpro{|\bff{u}_n(t)|^2\pa_i\bff{u}_n(t)}
	{\pa_i\Delta\bff{u}_n(t)}_{\bb{L}^2}
	\right|
	\\
	\nn
	&\lesssim
	\norm{\bff{u}_n(t)}{\bb{L}^\infty}^2
	\sum_{i=1}^d
	\norm{\pa_i\bff{u}_n(t)}{\bb{L}^2}
	\norm{\pa_i\Delta\bff{u}_n(t)}{\bb{L}^2}
	\\
	\nn
	&\le
	\norm{\bff{u}_n(t)}{\bb{L}^\infty}^2
	\norm{\nabla\bff{u}_n(t)}{\bb{L}^2}
	\norm{\nabla\Delta\bff{u}_n(t)}{\bb{L}^2}
	\\
	&\leq
	\frac{\kappa_1}{3} \norm{\nabla\Delta\bff{u}_n(t)}{\bb{L}^2}^2
	+
	c \norm{\bff{u}_n(t)}{\bb{L}^\infty}^4,
\end{align*}
where in the last step we used Young's inequality and \eqref{equ:un global}.
%By using~\eqref{equ:une inf inf} we deduce
%\begin{align*}
%T_2 
%&\lesssim
%\left(1+\frac{d-1}{\epsilon}\right)
%\norm{\nabla\bff{u}_n(t)}{\bb{L}^2}
%\norm{\nabla\Delta\bff{u}_n(t)}{\bb{L}^2}
%\\
%&\le c
%\left(1+\frac{d-1}{\epsilon}\right)
%\norm{\nabla\bff{u}_n(t)}{\bb{L}^2}^2
%+
%\frac{\kappa_1}{3}
%\norm{\nabla\Delta\bff{u}_n(t)}{\bb{L}^2}^2
%\\
%&\le c
%\left(1+\frac{d-1}{\epsilon}\right)
%+
%\frac{\kappa_1}{3}
%\norm{\nabla\Delta\bff{u}_n(t)}{\bb{L}^2}^2
%\end{align*}
%where in the penultimate step we used Young's inequality and in the last step we used
%Lemma~\ref{lem:ene est 1}. 
Altogether, from~\eqref{equ:ddt Lap une} we deduce 
\begin{align*}
	\ddt{}\norm{\Delta\bff{u}_n(t)}{\bb{L}^2}^2
	+
	\norm{\nabla\Delta\bff{u}_n(t)}{\bb{L}^2}^2
	%&\lesssim
	%\norm{\nabla\bff{u}_n(t)}{\bb{L}^2}^2
	%+
	%\norm{\bff{u}_n(t)}{\bb{H}^2}^2
	%\norm{\Delta\bff{u}_n(t)}{\bb{H}^1}^2
	%\\
	&\lesssim
	\norm{\bff{u}_n(t)}{\bb{L}^\infty}^4
	+
	\Phi(\bff{u}_n(t))
	\norm{\Delta\bff{u}_n(t)}{\bb{L}^2}^2.
	%\\
	%&\lesssim
	%1+\frac{d-1}{\epsilon}
	%+
	%\left(1+\frac{1}{\epsilon}\right)
	%\norm{\bff{u}_n(t)}{\bb{H}^2}^2
	%\Big(
	%\norm{\Delta\bff{u}_n(t)}{\bb{L}^2}^2
	%+
	%\epsilon\norm{\nabla\Delta\bff{u}_n(t)}{\bb{L}^2}^2
	%\Big)
\end{align*}
Integrating with respect to
the time variable we deduce, by using~\eqref{equ:fur ass Hs} with $s=1$ and \eqref{equ:dt un 43},
\begin{align}\label{equ:ineq Delta une}
	&\norm{\Delta\bff{u}_n(t)}{\bb{L}^2}^2
	+
	\int_0^t
	\norm{\nabla\Delta\bff{u}_n(s)}{\bb{L}^2}^2\ds
	\nn\\
	&\leq
	\norm{\Delta \bff{u}_0}{\bb{L}^2}^2
	+
	c\int_0^t \norm{\bff{u}_n(s)}{\bb{L}^\infty}^4 \ds
	+
	c\int_0^t
	\Phi(\bff{u}_n(s))
	\norm{\Delta\bff{u}_n(s)}{\bb{L}^2}^2
	\ds
    \nn\\
    &\leq
    c + c\int_0^t
	\Phi(\bff{u}_n(s))
	\norm{\Delta\bff{u}_n(s)}{\bb{L}^2}^2.
\end{align}
It remains to estimate the last term on the right-hand side of \eqref{equ:ineq Delta une}. If $d=1$, it is clear from Lemma~\ref{lem:tec lem} and \eqref{equ:un global} that $\Phi(\bff{u}_n(t)) \lesssim 1$.
	For the case $d=2$, note that by the Sobolev embedding $\bb{H}^{3/2}\hookrightarrow \bb{W}^{1,4}$, inequality \eqref{equ:shift polyg H1s} for $s=1/2$, and the Sobolev interpolation inequality, we have
	\begin{align*}
		\norm{\bff{u}_n(t)}{\bb{W}^{1,4}}^4
		\leqs
		\norm{\bff{u}_n(t)}{\bb{H}^{3/2}}^4
		&\leqs
		\norm{\bff{u}_n(t)}{\widetilde{\bb{H}}^{-1/2}}^4
		+
		\norm{\Delta \bff{u}_n(t)}{\widetilde{\bb{H}}^{-1/2}}^4
		\\
		&\leqs
		\norm{\bff{u}_n(t)}{\bb{L}^2}^4
		+
		\norm{\Delta \bff{u}_n(t)}{\bb{L}^2}^2
		\norm{\Delta \bff{u}_n(t)}{\widetilde{\bb{H}}^{-1}}^2
		\\
		&\leqs
		\norm{\bff{u}_n(t)}{\bb{L}^2}^4
		+
		\norm{\Delta \bff{u}_n(t)}{\bb{L}^2}^2
		\left( \sup_{\bff{0}\neq \bff{\phi}\in \bb{H}^1} \frac{\abs{\inpro{\Delta \bff{u}_n(t)}{\bff{\phi}}_{\bb{L}^2}}}{\norm{\bff{\phi}}{\bb{H}^1}} \right)^2
		\\
		&\leqs
		\norm{\bff{u}_n(t)}{\bb{L}^2}^4
		+
		\norm{\Delta \bff{u}_n(t)}{\bb{L}^2}^2
		\norm{\nabla \bff{u}_n(t)}{\bb{L}^2}^2
		\\
		&\leqs
		1+ \norm{\Delta \bff{u}_n(t)}{\bb{L}^2}^2,
	\end{align*}
	where in the penultimate step we also used integration by parts (noting the homogeneous Neumann boundary condition for $\bff{u}_n$), and in the last step we again used \eqref{equ:un global}.
	Thus, we infer the following estimates for
	$d=1,2$ and for all~$t\in[0,T]$:
	\begin{equation}\label{equ:Phi un less}
	\Phi(\bff{u}_n(t))
	\lesssim
	\begin{cases}
		1, \quad & d=1,
		\\
		1 + \norm{\Delta\bff{u}_n(t)}{\bb{L}^2}^2,
		\quad & d=2.
	\end{cases}
	\end{equation}
	By using
	\eqref{equ:un global}, we infer
	\begin{equation}\label{equ:int Phi}
		\int_0^t
		\Phi(\bff{u}_n(s)) \,\ds \lesssim 1,
	\end{equation}
	where the constant is independent of $n$.
	By using Gronwall's inequality and~\eqref{equ:int Phi}, we deduce~\eqref{equ:Delta un llb} for~$d=1,2$.
	
	Finally, consider the case~$d=3$. By the Sobolev embedding $\bb{H}^2\hookrightarrow \bb{W}^{1,6}$ and the $\bb{H}^2$ elliptic regularity, we have
	\begin{align*} 
		\Phi(\bff{u}_n(s))
		\norm{\Delta\bff{u}_n(s)}{\bb{L}^2}^2
		&\lesssim
		\left(1+ \norm{\bff{u}_n(s)}{\bb{H}^2}^4\right)
		\norm{\Delta \bff{u}_n(s)}{\bb{L}^2}^2
		\lesssim
		1 + \norm{\Delta\bff{u}_n(s)}{\bb{L}^2}^6,
	\end{align*}
	where we again used \eqref{equ:un global}. 
This, together with~\eqref{equ:ineq Delta une}, implies
\[
\norm{\Delta\bff{u}_n(t)}{\bb{L}^2}^2
+
\int_0^t
\norm{\nabla\Delta\bff{u}_n(s)}{\bb{L}^2}^2\ds
\lesssim
1 +
\int_{0}^{t} \norm{\Delta\bff{u}_n(s)}{\bb{L}^2}^6 \ds.
\]
We infer \eqref{equ:Delta un llb} by using the Bihari--Gronwall
inequality~\citep{Bih56}, completing the proof of the lemma.
\end{proof}

\begin{lemma}\label{lem:H2 dtu un}
	Assume that \eqref{equ:fur ass Hs} holds for $s=1$.
	Then for any $n\in \bb{N}$, $\alpha$ satisfying~\eqref{equ:condition alpha}, and~$t\in[0,T^\ast]$, we have
	\begin{align}\label{equ:un H1a llb}
		\norm{\bff{u}_n(t)}{\bb{H}^{1+\alpha}}^2
		+
		\norm{\pa_t \bff{u}_n(t)}{\bb{L}^2}^2
		+
		\int_0^t \norm{\pa_t \bff{u}_n(s)}{\bb{H}^1}^2 \ds 
		&\leq c,
	\end{align}
	where $c$ is independent of $n$, and~$T^\ast$ is defined by \eqref{equ:T ast}.
\end{lemma}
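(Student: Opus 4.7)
The plan is to bound each of the three quantities on the left of~\eqref{equ:un H1a llb} separately, leveraging the $L^\infty_{T^\ast}(\bb{L}^2)\cap L^2_{T^\ast}(\bb{H}^1)$ control on $\Delta\bff{u}_n$ furnished by Lemma~\ref{lem:H2 un}.

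First, the $\bb{H}^{1+\alpha}$ bound is immediate from the shifted elliptic regularity estimate~\eqref{equ:shift polyg H1s} and the embedding $\bb{L}^2\hookrightarrow\widetilde{\bb{H}}^{-1+\alpha}$:
\begin{equation*}
\norm{\bff{u}_n(t)}{\bb{H}^{1+\alpha}}
\lesssim
\norm{\bff{u}_n(t)}{\bb{L}^2}+\norm{\Delta\bff{u}_n(t)}{\bb{L}^2},
\end{equation*}
which is uniformly bounded on $[0,T^\ast]$ by Lemmas~\ref{lem:ene est 1 llb} and~\ref{lem:H2 un}. A corollary used below is that $\bff{u}_n$ is uniformly bounded in $\bb{L}^\infty$ via $\bb{H}^{1+\alpha}\hookrightarrow\bb{L}^\infty$ (valid since $1+\alpha>d/2$ under~\eqref{equ:condition alpha}; for $d=3$ one invokes Agmon's inequality together with Lemma~\ref{lem:H2 un}). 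Next, the pointwise $\bb{L}^2$ bound on $\pa_t\bff{u}_n$ is obtained by taking the $\bb{L}^2$ norm of~\eqref{equ:Gal LLB wea} and using the $\bb{L}^2$-stability of $\Pi_n$:
\begin{equation*}
\norm{\pa_t\bff{u}_n(t)}{\bb{L}^2}
\leq
\kappa_1\norm{\Delta\bff{u}_n(t)}{\bb{L}^2}
+|\gamma|\norm{\bff{u}_n(t)}{\bb{L}^\infty}\norm{\Delta\bff{u}_n(t)}{\bb{L}^2}
+\kappa_2\norm{\bff{u}_n(t)}{\bb{L}^2}
+\kappa_2\mu\norm{\bff{u}_n(t)}{\bb{L}^6}^3,
\end{equation*}
each factor being uniformly controlled by the bounds already derived.

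For the integrated $\bb{H}^1$ bound on $\pa_t\bff{u}_n$, the key observation will be that $\{\bff{e}_i\}$ consists of Neumann Laplacian eigenfunctions and is therefore simultaneously orthogonal in $\bb{L}^2$ and in $\bb{H}^1$; hence $\Pi_n$ is a bounded projection on $\bb{H}^1$ with operator norm one. Taking the $\bb{H}^1$ norm of~\eqref{equ:Gal LLB wea} then gives
\begin{equation*}
\norm{\pa_t\bff{u}_n}{\bb{H}^1}
\lesssim
\norm{\Delta\bff{u}_n}{\bb{H}^1}
+\norm{\bff{u}_n\times\Delta\bff{u}_n}{\bb{H}^1}
+\norm{(1+\mu|\bff{u}_n|^2)\bff{u}_n}{\bb{H}^1}.
\end{equation*}
The first term is square-integrable in time by Lemma~\ref{lem:H2 un}, and the cubic term is handled via the product rule together with the uniform $\bb{L}^\infty$ and $\bb{H}^1$ bounds on $\bff{u}_n$. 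The decisive piece of $\norm{\bff{u}_n\times\Delta\bff{u}_n}{\bb{H}^1}$ is $\norm{\nabla\bff{u}_n\times\Delta\bff{u}_n}{\bb{L}^2}\leq\norm{\nabla\bff{u}_n}{\bb{L}^4}\norm{\Delta\bff{u}_n}{\bb{L}^4}$: the first factor is uniformly bounded via $\bb{H}^{1+\alpha}\hookrightarrow\bb{W}^{1,4}$ and Step~1, whereas $\Delta\bff{u}_n\in L^2_{T^\ast}(\bb{L}^4)$ follows by interpolating $\bb{L}^4$ between $\bb{L}^2$ and $\bb{H}^1$ (valid for $d\leq 3$) and invoking Lemma~\ref{lem:H2 un}. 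The remaining piece $\norm{\bff{u}_n\times\nabla\Delta\bff{u}_n}{\bb{L}^2}\leq\norm{\bff{u}_n}{\bb{L}^\infty}\norm{\nabla\Delta\bff{u}_n}{\bb{L}^2}$ is likewise in $L^2_{T^\ast}$.

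Squaring and integrating the last displayed inequality over $(0,t)$ closes the argument. The hard part will be the interpolation step controlling $\Delta\bff{u}_n$ in $L^2_t(\bb{L}^4_x)$, which requires dimension-dependent treatment via the embedding $\bb{H}^{d/4}\hookrightarrow\bb{L}^4$ combined with Hölder in time; the $H^1$-stability of $\Pi_n$ is a second delicate ingredient, but follows cleanly from the fact that the chosen basis is a simultaneous orthogonal system in both $\bb{L}^2$ and $\bb{H}^1$.
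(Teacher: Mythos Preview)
Your proposal is correct and follows essentially the same route as the paper: elliptic regularity for the $\bb{H}^{1+\alpha}$ bound, direct estimation of the $\bb{L}^2$ norm of~\eqref{equ:Gal LLB wea} for the pointwise $\norm{\pa_t\bff{u}_n}{\bb{L}^2}$ bound, and estimation of the $\bb{H}^1$ norm of the right-hand side of~\eqref{equ:Gal LLB wea} for the integrated $\norm{\pa_t\bff{u}_n}{\bb{H}^1}$ bound. Your explicit justification of the $\bb{H}^1$-stability of $\Pi_n$ is a point the paper leaves implicit; conversely, for $\Delta\bff{u}_n\in L^2_{T^\ast}(\bb{L}^4)$ the paper simply uses the Sobolev embedding $\bb{H}^1\hookrightarrow\bb{L}^4$ (valid for $d\leq 3$) applied to $\Delta\bff{u}_n\in L^2_{T^\ast}(\bb{H}^1)$ from Lemma~\ref{lem:H2 un}, which is more direct than the interpolation you outline.
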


\begin{proof}
Firstly, by~\eqref{equ:shift polyg H1s} in the case $d=2$, and~\eqref{equ:shift polyh W2p} in the case $d=3$, we have for any $\alpha$ satisfying~\eqref{equ:condition alpha},
\begin{align}\label{equ:unt H1a}
	\norm{\bff{u}_n(t)}{\widetilde{\bb{H}}^{1+\alpha}}^2
	&\lesssim 
	\norm{\bff{u}_n(t)}{\widetilde{\bb{H}}^{-1+\alpha}}^2
	+
	\norm{\Delta \bff{u}_n(t)}{\widetilde{\bb{H}}^{-1+\alpha}}^2
	\nn\\
	&\lesssim
	\norm{\bff{u}_n(t)}{\bb{L}^2}^2
	+
	\norm{\Delta \bff{u}_n(t)}{\bb{L}^2}^2
	\leqs 1.
\end{align}

Next, taking the $\bb{L}^2$ norm of \eqref{equ:Gal LLB wea}, then applying H\"older's and Young's inequalities, we obtain
\begin{align*}
	\norm{\pa_t \bff{u}_n}{\bb{L}^2}^2
	&\leq
	\kappa_1 \norm{\Delta \bff{u}_n}{\bb{L}^2}^2
	+
	\gamma \norm{\bff{u}_n}{\bb{L}^\infty}^2 \norm{\Delta \bff{u}_n}{\bb{L}^2}^2
	+
	\kappa_2 \norm{\bff{u}_n}{\bb{L}^2}^2
	+
	\kappa_2 \mu \norm{\bff{u}_n}{\bb{L}^6}^6
	\\
	&\leqs
	\norm{\Delta \bff{u}_n}{\bb{L}^2}^2 + \norm{\bff{u}_n}{\bb{H}^{1+\alpha}}^2 \norm{\Delta \bff{u}_n}{\bb{L}^2}^2
	+ \norm{\bff{u}_n}{\bb{L}^2}^2 + \norm{\bff{u}_n}{\bb{H}^1}^6
	\leqs
	1,
\end{align*}
where in the last step we used \eqref{equ:un global}, Lemma~\ref{lem:H2 un}, and \eqref{equ:unt H1a}.

Similarly, we have
\begin{align*}
	\norm{\nabla \pa_t \bff{u}_n}{\bb{L}^2}^2
	&\leq
	\kappa_1 \norm{\nabla \Delta \bff{u}_n}{\bb{L}^2}^2
	+
	\gamma \norm{\bff{u}_n}{\bb{L}^\infty}^2 \norm{\nabla\Delta \bff{u}_n}{\bb{L}^2}^2
	+
	\gamma \norm{\nabla \bff{u}_n}{\bb{L}^4}^2 \norm{\Delta \bff{u}_n}{\bb{L}^4}^2
	+
	\kappa_2 \norm{\nabla\bff{u}_n}{\bb{L}^2}^2
	\\
	&\quad
	+
	3\kappa_2 \mu \norm{\bff{u}_n}{\bb{L}^\infty}^4
	\norm{\nabla \bff{u}_n}{\bb{L}^2}^2
	\\
	&\leqs
	\norm{\nabla \Delta \bff{u}_n}{\bb{L}^2}^2 + \norm{\bff{u}_n}{\bb{H}^{1+\alpha}}^2 \left( \norm{\nabla \Delta \bff{u}_n}{\bb{L}^2}^2+ \norm{\Delta \bff{u}_n}{\bb{L}^2}^2\right)
	+ \left(1+ \norm{\bff{u}_n}{\bb{H}^{1+\alpha}}^4 \right)  \norm{\bff{u}_n}{\bb{H}^1}^2
	\\
	&\leqs
	1+ \norm{\nabla\Delta \bff{u}_n}{\bb{L}^2}^2,
\end{align*}
where in the penultimate step we used the embeddings $\bb{H}^{1+\alpha} \hookrightarrow \bb{W}^{1,4} \hookrightarrow \bb{L}^\infty$ and $\bb{H}^1\hookrightarrow \bb{L}^4$, while in the last step we used \eqref{equ:unt H1a} and Lemma~\ref{lem:H2 un}. Integrating both sides of the inequality over $(0,t)$ and using Lemma~\ref{lem:H2 un} again then completes the proof.
\end{proof}

Altogether, the relevant results established in this section can be summarised as follows. Suppose $\alpha$ satisfies~\eqref{equ:condition alpha}.
	\begin{itemize}
		\item If~\eqref{equ:fur ass H1} holds, then we have:
		\begin{equation}\label{equ:un H1 summ}
				\norm{\bff{u}_n}{L^\infty_T(\bb{H}^1)}
				+
				\norm{\bff{u}_n}{L^2_T(\bb{H}^{1+\alpha})}
				+
				\norm{\bff{u}_n}{L^2_T(\bb{H}^1_\Delta)}
				+
				\norm{\pa_t \bff{u}_n}{L^{4/3}_T(\bb{L}^2)}
				\lesssim 1.
		\end{equation}
		\item If~\eqref{equ:fur ass Hs} holds for $s=1$, then additionally we have the estimate:
		\begin{equation}\label{equ:un H2 summary}
			\norm{\bff{u}_n}{L^\infty_{T^\ast}(\bb{H}^{1+\alpha})}
			+
			\norm{\bff{u}_n}{L^\infty_{T^\ast}(\bb{H}^1_\Delta)}
			+
			\norm{\Delta \bff{u}_n}{L^2_{T^\ast}(\bb{H}^1)}
			+
			\norm{\pa_t\bff{u}_n}{L^\infty_{T^\ast}(\bb{L}^2)}
			+
			\norm{\pa_t\bff{u}_n}{L^2_{T^\ast}(\bb{H}^1)}
			\lesssim 1,
		\end{equation}
		where $T^\ast$ is defined in \eqref{equ:T ast}.
		\item Furthermore, it follows from the estimates established in Appendix~\ref{sec:further regular proof} (summarised in~\eqref{equ:une est local}) that if~\eqref{equ:fur ass Hs} holds for $s=2$, then additionally we have
		\begin{equation}\label{equ:un H3 summary}
			\norm{\Delta \bff{u}_n}{L^\infty_{T^\ast}(\bb{L}^2)}
			+
			\norm{\Delta \bff{u}_n}{L^2_{T^\ast}(\bb{H}^1)}
			+
			\norm{\pa_t\bff{u}_n}{L^\infty_{T^\ast}(\bb{H}^1)}
			+
			\norm{\pa_t\bff{u}_n}{L^2_{T^\ast}(\bb{H}^1_\Delta)}
			+
			\norm{\pa_t\bff{u}_n}{L^2_{T^\ast}(\bb{H}^{1+\alpha})}
			\leqs 1.
		\end{equation}
	\end{itemize}

We are now ready to prove the following result on existence and regularity of the solution.

%Suppose that $r=2, 3$, or $4$. Assume that $\mathscr{D}$ is a $C^{r-1,1}$-smooth domain. If~\eqref{equ:fur ass Hs} holds for $s=r$, then we have the estimate:
%\begin{equation}\label{equ:un all}
%	\norm{\bff{u}_n}{L^\infty_{T^\ast}(\bb{H}^r)}
%	+
%	\norm{\pa_t\bff{u}_n}{L^\infty_{T^\ast}(\bb{H}^{r-2})}
%	\lesssim 1,
%\end{equation}
%where~$T^\ast$ is defined by~\eqref{equ:T ast}. The above estimate also holds when $r=2$ and $\mathscr{D}$ is a convex polyhedral domain.}

%\begin{itemize}
%	\item 
%	Assume that the domain $\mathscr{D}$ is a convex polyhedral domain or $C^{1,1}$-smooth. If~\eqref{equ:fur ass Hs} holds for $s=2$, then the following estimate holds:
%	\begin{equation}\label{equ:un all}
%			\norm{\bff{u}_n}{L^\infty_T(\bb{H}^2)}
%			+
%			\norm{\pa_t\bff{u}_n}{L^\infty_{T^\ast}(\bb{L}^2)}
%			+
%			\norm{\pa_t\bff{u}_n}{L^2_{T^\ast}(\bb{H}^1)}
%			\lesssim 1,
%	\end{equation}
%	where~$T^\ast$ is defined by~\eqref{equ:T ast}.
%	\item
%	Now suppose that $r=3$ or $4$. Assume that the domain $\mathscr{D}$ is $C^{r-1,1}$-smooth. If \eqref{equ:u0 eps con} and~\eqref{equ:u0n H2} hold for $s=r$, then the following estimate holds:
%	\begin{equation}\label{equ:un H3}
%			\norm{\bff{u}_n}{L^\infty_{T^\ast}(\bb{H}^r)}
%			+
%			\norm{\pa_t\bff{u}_n^\epsilon}{L^\infty_{T^\ast}(\bb{H}^{r-2})}
%			\lesssim 1.
%	\end{equation}
%Furthermore, if~\eqref{equ:fur ass H4} holds then
%\begin{equation}\label{equ:une H4}
%		\norm{\bff{u}_n}{L^\infty(\bb{H}^4)}
%		+
%		\norm{\bff{u}_n}{L^2(\bb{H}^5)}
%		+
%		\norm{\partial_t \bff{u}_n}{L^\infty(\bb{H}^2)}
%		\lesssim 1.
%\end{equation}
%\end{itemize}

\begin{theorem}\label{the:con u eps}
Let $T>0$ be fixed and let $T^\ast$ be defined by \eqref{equ:T ast}.
	\begin{enumerate}
		\renewcommand{\labelenumi}{\theenumi}
		\renewcommand{\theenumi}{{\rm (\roman{enumi})}}
		\item 
		If~\eqref{equ:fur ass H1} holds, then there exists
		a global weak solution $\bff{u}$ to~\eqref{equ:LLB pro} in the sense of Definition~\ref{def:wea sol LLB}, satisfying
        \begin{equation}\label{equ:u H1 summ}
				\norm{\bff{u}}{L^\infty_T(\bb{H}^1)}
				+
				\norm{\bff{u}}{L^2_T(\bb{H}^{1+\alpha})}
				+
				\norm{\bff{u}}{L^2_T(\bb{H}^1_\Delta)}
				+
				\norm{\pa_t \bff{u}}{L^{4/3}_T(\bb{L}^2)}
				\leq c.
		\end{equation}
        Moreover, $\bff{u}\in C^{0,\widetilde{\gamma}}([0,T]; \bb{L}^2)$ for every $\widetilde{\gamma}\in [0,\frac14)$.
		\item 
		If~\eqref{equ:fur ass Hs} holds for $s=1$, then there exists a strong solution to~\eqref{equ:LLB pro} in the sense of Definition~\ref{def:wea sol LLB}, which is global if $d=1,2$ and local if $d=3$, satisfying 
        \begin{equation}\label{equ:u H2 summary}
			\norm{\bff{u}}{L^\infty_{T^\ast}(\bb{H}^{1+\alpha})}
			+
			\norm{\bff{u}}{L^\infty_{T^\ast}(\bb{H}^1_\Delta)}
			+
			\norm{\Delta \bff{u}}{L^2_{T^\ast}(\bb{H}^1)}
			+
			\norm{\pa_t\bff{u}}{L^\infty_{T^\ast}(\bb{L}^2)}
			+
			\norm{\pa_t\bff{u}}{L^2_{T^\ast}(\bb{H}^1)}
			\leq c.
		\end{equation}
        Furthermore, $\bff{u}\in C^{0,\widetilde{\gamma}}([0,T^\ast]; \bb{H}^1)$ for every $\widetilde{\gamma}\in [0,\frac12)$.
        	\item 
        If~\eqref{equ:fur ass Hs} holds for $s=2$, then there exists a strong solution to~\eqref{equ:LLB pro}, which is global if $d=1,2$ and local if $d=3$, satisfying 
        \begin{equation}\label{equ:u H3 summary}
        	\norm{\Delta \bff{u}}{L^\infty_{T^\ast}(\bb{L}^2)}
        	+
        	\norm{\Delta \bff{u}}{L^2_{T^\ast}(\bb{H}^1)}
        	+
        	\norm{\pa_t\bff{u}}{L^\infty_{T^\ast}(\bb{H}^1)}
        	+
        	\norm{\pa_t\bff{u}}{L^2_{T^\ast}(\bb{H}^1_\Delta)}
        	+
        	\norm{\pa_t\bff{u}}{L^2_{T^\ast}(\bb{H}^{1+\alpha})}
        	\leqs 1.
        \end{equation}
        Furthermore, $\bff{u}\in C^{0,\widetilde{\gamma}}([0,T^\ast]; \bb{H}^{1+\alpha})$ for every $\widetilde{\gamma}\in [0,\frac12)$.
	\end{enumerate}
\end{theorem}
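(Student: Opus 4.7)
The plan is to pass to the limit $n\to\infty$ in the Faedo--Galerkin system \eqref{equ:Gal LLB wea} using the uniform bounds \eqref{equ:un H1 summ}, \eqref{equ:un H2 summary}, and \eqref{equ:un H3 summary} already established. All three parts of the theorem follow the same template, differing only in which estimates are available and on which time interval.

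For part (i), from \eqref{equ:un H1 summ} I would first extract a subsequence (still denoted $\{\bff{u}_n\}$) converging weakly-$\ast$ in $L^\infty_T(\bb{H}^1)$, weakly in $L^2_T(\bb{H}^{1+\alpha})\cap L^2_T(\bb{H}^1_\Delta)$, and whose time derivative converges weakly in $L^{4/3}_T(\bb{L}^2)$, to some limit $\bff{u}$. The Aubin--Lions--Simon lemma, combining the compact embedding $\bb{H}^1\hookrightarrow\hookrightarrow\bb{L}^p$ (for any $p<6$ when $d=3$) with the uniform bound on $\pa_t\bff{u}_n$ in $L^{4/3}_T(\bb{L}^2)$, then yields strong convergence in $L^2_T(\bb{L}^p)$ and, upon passing to a further subsequence, pointwise a.e.\ convergence on $(0,T)\times\mathscr{D}$.

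Passing to the limit in \eqref{equ:LLB wea equ} splits by the linearity of each term. The linear terms are handled by weak convergence. The polynomial term $|\bff{u}_n|^2\bff{u}_n\to|\bff{u}|^2\bff{u}$ is controlled by Vitali's theorem using the pointwise convergence and the $L^\infty_T(\bb{L}^6)$ bound inherited from $L^\infty_T(\bb{H}^1)$. The step I expect to be the main obstacle is the cross-product term $\inpro{\bff{u}_n\times\nabla\bff{u}_n}{\nabla\bff{v}}_{\bb{L}^2}$, where weak convergence of $\nabla\bff{u}_n$ in $L^2_T(\bb{L}^2)$ must be paired with strong convergence of $\bff{u}_n$ in $L^2_T(\bb{L}^q)$ for $q$ large enough (so that $\bff{u}_n\times\nabla\bff{u}_n\to\bff{u}\times\nabla\bff{u}$ at least weakly in $L^1_T(\bb{L}^1)$); here I would fix a test function $\bff{v}\in\mathcal{S}_m$ with $m\le n$, let $n\to\infty$ first and then $m\to\infty$, and close with a density argument in $\bb{H}^1$, taking care that the Sobolev exponents balance in the non-convex 2D setting allowed by \eqref{equ:condition alpha}. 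The initial condition $\bff{u}(0)=\bff{u}_0$ follows from \eqref{equ:fur ass H1} together with the fact that $\bff{u}\in C([0,T];\bb{L}^2)$ by the standard embedding $W^{1,4/3}_T(\bb{L}^2)\hookrightarrow C([0,T];\bb{L}^2)$.

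For parts (ii) and (iii), the same compactness/limit argument runs on the time interval $[0,T^\ast]$ with the improved bounds \eqref{equ:un H2 summary} and \eqref{equ:un H3 summary}; these are inherited by $\bff{u}$ via lower semi-continuity of norms under weak (and weak-$\ast$) convergence, placing $\bff{u}$ in the strong-solution class of Definition~\ref{def:wea sol LLB}. The existence time $T^\ast$ in three dimensions is precisely the one delivered by Lemma~\ref{lem:H2 un} through the Bihari--Gronwall step. Finally, the stated Hölder continuity in time in each part is a consequence of the standard embedding $W^{1,p}(0,T;X)\hookrightarrow C^{0,1-1/p}([0,T];X)$ for $p>1$, applied with the triples $(p,X)=(4/3,\bb{L}^2)$ in (i), $(2,\bb{H}^1)$ in (ii), and $(2,\bb{H}^{1+\alpha})$ in (iii). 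Uniqueness, being the subject of Theorem~\ref{the:unique llb}, is not addressed in this existence argument.
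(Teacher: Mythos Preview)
Your approach is correct and follows the same Galerkin--compactness template as the paper. The one place you work harder than necessary is the cross-product term: the paper pairs the $L^2_T(\bb{H}^{1+\alpha})$ bound with the compact embedding $\bb{H}^{1+\alpha}\hookrightarrow\bb{H}^1$ in Aubin--Lions to obtain \emph{strong} convergence $\bff{u}_n\to\bff{u}$ in $L^2_T(\bb{H}^1)$, so in particular $\nabla\bff{u}_n\to\nabla\bff{u}$ strongly in $L^2_T(\bb{L}^2)$. Writing
\[
\inpro{\bff{u}_n\times\nabla\bff{u}_n-\bff{u}\times\nabla\bff{u}}{\nabla\bff{v}}_{\bb{L}^2}
=\inpro{(\bff{u}_n-\bff{u})\times\nabla\bff{u}_n}{\nabla\bff{v}}_{\bb{L}^2}
+\inpro{\bff{u}\times(\nabla\bff{u}_n-\nabla\bff{u})}{\nabla\bff{v}}_{\bb{L}^2}
\]
and using strong $L^2_T(\bb{L}^4)$ convergence on the first piece (with $\nabla\bff{u}_n$ bounded in $L^2_T(\bb{L}^4)$ via $\bb{H}^{1+\alpha}\hookrightarrow\bb{W}^{1,4}$) and strong $L^2_T(\bb{L}^2)$ convergence of gradients on the second, the limit passes directly for any $\bff{v}\in\bb{H}^1$. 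Your weak/strong pairing followed by a density argument through $\mathcal{S}_m$ also works, but the extra compactness in $\bb{H}^1$ is already paid for by the $L^2_T(\bb{H}^{1+\alpha})$ estimate in \eqref{equ:un H1 summ} and makes the step you flag as ``the main obstacle'' routine.
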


\begin{proof}
Suppose that $\bff{u}_0\in\bb{H}^1$. It follows from~\eqref{equ:un H1 summ} that there exist
$\bff{u} \in W^{1, 4/3}_{T}(\bb{L}^2) \cap L^\infty_{T}(\bb{H}^1) \cap L^2_T(\bb{H}^{1+\alpha})$ and a subsequence
of~$\{\bff{u}_n\}_{n\in\bb{N}}$, which we do not relabel, satisfying
\begin{equation}\label{equ:wea con llb}
	\begin{aligned}
		& \bff{u}_n\to\bff{u}
		&&\,\text{weakly}^\ast \text{ in $W^{1, 4/3}_{T}(\bb{L}^2)$ as $n\to \infty$},
		\\
		& \bff{u}_n\to\bff{u}
		&&\,\text{weakly}^\ast \text{ in $L^\infty_{T}(\bb{H}^1)$ as $n\to \infty$},
		\\
		%& \bff{u}^\epsilon_n\to\bff{u}^\epsilon
		%&&\quad\text{weakly}^\ast \text{ in $L^\infty(\bb{H}^2)$ as $n\to \infty$},
		%\label{equ:une wea con a}
		%\\
		& \bff{u}_n\to \bff{u}
		&&\,\text{weakly} \text{ in $L^2_{T}(\bb{H}^{1+\alpha})$ as $n\to \infty$.}
	\end{aligned}
\end{equation}
Since the embeddings $\bb{H}^{1+\alpha}\hookrightarrow \bb{H}^1 \hookrightarrow \bb{L}^4$ are compact, it follows from the
Aubin--Lions lemma and~\eqref{equ:une wea con} that
\begin{equation}\label{equ:L2 L4 llb}
	\lim_{n\to\infty} \norm{\bff{u}_n-\bff{u}}{L^2(\bb{L}^4)} 
	= 0
	\quad\text{and}\quad 
	\lim_{n\to\infty} \norm{\bff{u}_n-\bff{u}}{L^2(\bb{H}^1)} 
	= 0.
\end{equation}

We now prove that~$\bff{u}$ satisfies~\eqref{equ:weak sol} with $\epsilon=0$.
	It follows from~\eqref{equ:LLB wea equ} that for any test function
	$\bff{v}\in \bb{H}^1$ and for every $t\in[0,T]$
	\begin{align}\label{equ:weakLLB int}
		&\int_0^t \inpro{\pa_t\bff{u}_n(s)}{\tilde\Pi_n \bff{v}}_{\bb{L}^2} \ds
		+\kappa_1\int_0^t \inpro{\nabla\bff{u}_n(s)}{\nabla\tilde\Pi_n \bff{v}}_{\bb{L}^2}\ds 
		\nn\\
		&
		+\gamma\int_0^t \inpro{\bff{u}_n(s)\times\nabla\bff{u}_n(s)}{\nabla \tilde\Pi_n \bff{v}}_{\bb{L}^2}\ds
		+\kappa_2\int_0^t \inpro{(1+\mu|\bff{u}_n(s)|^2)\bff{u}_n(s)}{\tilde\Pi_n \bff{v}}_{\bb{L}^2}\ds
		= 0,
	\end{align}
	where~$\tilde\Pi_n:\bb{H}^1\to \mathcal{S}_n$ is the orthogonal projection with respect to the~$\bb{H}^1$-inner product.
	We recall that, for all~$\bff{\xi}\in\bb{H}^1$,
	\[
	\lim_{n\to\infty}\norm{\tilde\Pi_n\bff{\xi}-\bff{\xi}}{\bb{H}^1} = 0.
	\]
	Thus, it follows from~\eqref{equ:wea con llb} 
	that, when~$n\to\infty$,
	\begin{align*}
		&\int_0^t \inpro{\pa_t\bff{u}_n(s)}{\tilde\Pi_n \bff{v}}_{\bb{L}^2} \ds
		\to
		\int_0^t \inpro{\pa_t\bff{u}(s)}{\bff{v}}_{\bb{L}^2} \ds,
		\\
		&\int_0^t \inpro{\nabla\bff{u}_n(s)}{\nabla\tilde\Pi_n \bff{v}}_{\bb{L}^2}\ds 
		\to
		\int_0^t \inpro{\nabla\bff{u}(s)}{\nabla\bff{v}}_{\bb{L}^2}\ds.
	\end{align*}
	For the first nonlinear term in~\eqref{equ:weakLLB int} we have, by using the
	triangle inequality and~\eqref{equ:un H1 summ},
	\begin{align*}
		&
		\left|
		\int_0^t \inpro{\bff{u}_n(s)\times\nabla\bff{u}_n(s)}{\nabla\tilde\Pi_n \bff{v}}_{\bb{L}^2}\ds
		-
		\int_0^t \inpro{\bff{u}(s)\times\nabla\bff{u}(s)}{\nabla\bff{v}}_{\bb{L}^2}\ds
		\right|
		\\
		&\quad\le
		\left|
		\int_0^t
		\inpro{\bff{u}_n(s)\times\nabla\bff{u}_n(s)}{\nabla(\tilde\Pi_n
			\bff{v}-\bff{v})}_{\bb{L}^2}\ds \right|
		+
		\left| \int_0^t \inpro{(\bff{u}_n(s)-\bff{u}(s))\times \nabla
			\bff{u}_n(s)}{\nabla\bff{v}}_{\bb{L}^2} \ds \right| 
		\\
		&\qquad
		+
		\left| \int_0^t \inpro{\bff{u}(s) \times (\nabla \bff{u}_n(s)-
			\nabla \bff{u}(s))}{\nabla\bff{v}}_{\bb{L}^2} \ds \right| 
		\\
		&\quad\le
		\norm{\bff{u}_n}{L^2(\bb{L}^\infty)}
		\norm{\nabla\bff{u}_n}{L^2(\bb{L}^2)}
		\norm{\nabla(\tilde\Pi_n\bff{v}-\bff{v})}{\bb{L}^2}
		+
		\norm{\bff{u}_n-\bff{u}}{L^2(\bb{L}^4)}
		\norm{\nabla\bff{u}_n}{L^2(\bb{L}^4)}
		\norm{\nabla\bff{v}}{\bb{L}^2}
		\\
		&\qquad
		+
		\norm{\bff{u}}{L^2(\bb{L}^\infty)}
		\norm{\nabla \bff{u}_n- \nabla \bff{u}}{L^2(\bb{L}^2)}
		\norm{\nabla\bff{v}}{\bb{L}^2}
		\\
		&\quad\lesssim
		\norm{\nabla(\tilde\Pi_n\bff{v}-\bff{v})}{\bb{L}^2}
		+
		\norm{\bff{u}_n-\bff{u}}{L^2(\bb{L}^4)}
		+
		\norm{\nabla \bff{u}_n- \nabla \bff{u}}{L^2(\bb{L}^2)}.
	\end{align*}
	The right-hand side converges to zero due to the definition 
	of~$\tilde\Pi$ and \eqref{equ:L2 L4 llb}, implying the convergence of this first
	nonlinear term. Similar arguments can be used for the second nonlinear term
	in~\eqref{equ:weakLLB int}.  Therefore, by taking the limit when~$n\to\infty$ in
	equation~\eqref{equ:weakLLB int}, we obtain that
	$\bff{u}$ is a global weak solution. Taking the limit $n\to\infty$ in \eqref{equ:un H1 summ} yields \eqref{equ:u H1 summ}. This function $\bff{u}$ admits a H\"older continous representative such that $\bff{u}\in C^{0,\widetilde{\gamma}}([0,T^\ast];\bb{L}^2)$ for every $\widetilde{\gamma}\in [0,\frac14)$ by a version of vector-valued Morrey's inequality~\citep{AreKre18}.
	
	Next, suppose $\bff{u}_0\in \bb{H}^1_\Delta$. We prove that~$\bff{u}$ satisfies \eqref{equ:u eps L2} with $\epsilon=0$.
	Since~$\bff{u}$ is a weak solution, we deduce from~\eqref{equ:weak sol} with $\epsilon=0$
	and integration by parts that, for all $t\in [0,T^\ast]$,
	\begin{align*}
		\bff{u}(t)
		&= 
		\bff{u}_0
		+
		\kappa_1\int_0^t\Delta\bff{u}(s) \,\ds
		+
		\gamma\int_0^t\bff{u}(s)\times  \Delta\bff{u}(s) \,\ds
		-\kappa_2\int_0^t(1+\mu|\bff{u}(s)|^2)\bff{u}(s) \,\ds
		\qquad\text{ in }\,\widetilde{\bb{H}}^{-1}.
	\end{align*}
	We will show that the above equation holds in~$\bb{L}^2$.
	Indeed, inequality~\eqref{equ:u H1 summ} implies that all the linear terms belong
	to~$\bb{L}^2$. For the nonlinear terms, by using Minkowski's inequality, the
	continuous embeddings $\bb{H}^{3/2}\hookrightarrow \bb{L}^\infty$ and
	$\bb{H}^1\hookrightarrow \bb{L}^6$, and~\eqref{equ:wea con llb}, we deduce that
	for any $t\in [0,T^\ast]$
	\begin{align*}
		&\norm{\int_0^t\bff{u}(s)\times \Delta\bff{u}(s) \,\ds}{\bb{L}^2}
		+
		\norm{\int_0^t(1+\mu|\bff{u}(s)|^2)\bff{u}(s)\, \ds}{\bb{L}^2} 
		\\
		&\quad \leq 
		\int_0^t \norm{\bff{u}(s)\times  \Delta\bff{u}(s)}{\bb{L}^2} \,\ds
		+
		\int_0^t \norm{(1+\mu|\bff{u}(s)|^2)\bff{u}(s)}{\bb{L}^2} \,\ds\\
		&\quad \leq 
		\norm{\bff{u}}{L^2_{T^\ast}(\bb{L}^\infty)} \norm{\Delta \bff{u}}{L^2_{T^\ast}(\bb{L}^2)}
		+
		\norm{\bff{u}}{L^1_{T^\ast}(\bb{L}^2)} + \mu \norm{\bff{u}}{L^3_{T^\ast}(\bb{L}^6)}^3
		\\
		&\quad \lesssim
		\norm{\bff{u}}{L^2_{T^\ast}(\bb{H}^{3/2})} \norm{\Delta \bff{u}}{L^2_{T^\ast}(\bb{L}^2)} 
		+
		\norm{\bff{u}}{L^1_{T^\ast}(\bb{L}^2)} 
		+
		\norm{\bff{u}}{L^\infty_{T^\ast}(\bb{H}^1)}^3
		< \infty.
	\end{align*}
	This implies that the nonlinear terms also belong to~$\bb{L}^2$ and
	therefore~$\bff{u}$ satisfies~\eqref{equ:u eps L2} with $\epsilon=0$. Estimate~\eqref{equ:u H2 summary} follows by taking the limit $n\to\infty$ in \eqref{equ:un H2 summary}, thus showing that $\bff{u}$ is a strong solution. This function $\bff{u}$ admits a H\"older continous representative such that $\bff{u}\in C^{0,\widetilde{\gamma}}([0,T^\ast];\bb{H}^1)$ for every $\widetilde{\gamma}\in [0,\frac12)$ by a version of vector-valued Morrey's inequality~\citep{AreKre18}. 
	
	Finally, if $\bff{u}_0\in \bb{H}^2_\Delta$, then similar argument applies to~\eqref{equ:un H3 summary}, which yields~\eqref{equ:u H3 summary}. In this case, $\bff{u}$ admits a H\"older continous representative such that $\bff{u}\in C^{0,\widetilde{\gamma}}([0,T^\ast];\bb{H}^{1+\alpha})$ for every $\widetilde{\gamma}\in [0,\frac12)$.
	This completes the proof of the proposition.
\end{proof}

Next, we show a continuous dependence result for the solution of the LLBE with respect to the initial data, which implies uniqueness.

\begin{theorem}\label{the:unique llb}
	Let~$\bff{u}_0, \bff{v}_0 \in \bb{H}^1_\Delta$.
	Let~$\bff{u}$ and~$\bff{v}$ be the strong solutions
	associated with the initial data~$\bff{u}_0$
	and~$\bff{v}_0$, respectively, as conferred by
	Theorem~\ref{the:con u eps}. Then
	\[
	\norm{\bff{u}-\bff{v}}{L^\infty_{T^\ast}(\bb{L}^2)} 
	\lesssim
	\norm{\bff{u}_0-\bff{v}_0}{\bb{L}^2}, 
	\]
	where the constant may depend on~$T^\ast$. This implies
	uniqueness of the solution~$\bff{u}$.
\end{theorem}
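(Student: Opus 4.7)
The plan is to establish an $\bb{L}^2$ energy estimate for the difference $\bff{w} := \bff{u}-\bff{v}$, in the spirit of the uniqueness proof for semilinear parabolic equations, then conclude via Gronwall. Since both $\bff{u}$ and $\bff{v}$ are strong solutions with the regularity in \eqref{equ:u H2 summary}, subtracting the two equations (in the form \eqref{equ:u eps L2} with $\epsilon=0$) I obtain that $\bff{w}\in W^{1,\infty}_{T^\ast}(\bb{L}^2)\cap L^\infty_{T^\ast}(\bb{H}^1)$ satisfies, in $\bb{L}^2$,
\[
\partial_t \bff{w}
= \kappa_1 \Delta \bff{w}
+ \gamma\bigl(\bff{u}\times\Delta\bff{u} - \bff{v}\times\Delta\bff{v}\bigr)
- \kappa_2 \bigl((1+\mu|\bff{u}|^2)\bff{u} - (1+\mu|\bff{v}|^2)\bff{v}\bigr),
\qquad \bff{w}(0) = \bff{u}_0-\bff{v}_0.
\]
Taking the $\bb{L}^2$ inner product with $\bff{w}$, integration by parts (using the homogeneous Neumann condition) yields
\[
\tfrac{1}{2}\ddt\norm{\bff{w}(t)}{\bb{L}^2}^2 + \kappa_1 \norm{\nabla\bff{w}(t)}{\bb{L}^2}^2
= \gamma \, I_1(t) - \kappa_2\, I_2(t),
\]
where $I_1 := \inpro{\bff{u}\times\Delta\bff{u}-\bff{v}\times\Delta\bff{v}}{\bff{w}}_{\bb{L}^2}$ and $I_2$ denotes the nonlinear polynomial term.

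For $I_1$, I use the identity $\bff{u}\times\nabla\bff{u}-\bff{v}\times\nabla\bff{v} = \bff{w}\times\nabla\bff{u} + \bff{v}\times\nabla\bff{w}$ together with integration by parts (justified by $\partial\bff{u}/\partial\bff{\nu}=\partial\bff{v}/\partial\bff{\nu}=0$) to rewrite
\[
I_1 = -\inpro{\bff{w}\times\nabla\bff{u}}{\nabla\bff{w}}_{\bb{L}^2} - \inpro{\bff{v}\times\nabla\bff{w}}{\nabla\bff{w}}_{\bb{L}^2},
\]
and the last term vanishes pointwise because $(\bff{v}\times\pa_i\bff{w})\cdot\pa_i\bff{w}=0$. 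For $I_2$, I expand
\[
(1+\mu|\bff{u}|^2)\bff{u}-(1+\mu|\bff{v}|^2)\bff{v} = \bff{w} + \mu|\bff{u}|^2\bff{w} + \mu\bigl((\bff{u}+\bff{v})\cdot\bff{w}\bigr)\bff{v},
\]
so that $-\kappa_2 I_2 = -\kappa_2\norm{\bff{w}}{\bb{L}^2}^2 - \kappa_2\mu\int |\bff{u}|^2|\bff{w}|^2 - \kappa_2\mu\int ((\bff{u}+\bff{v})\cdot\bff{w})(\bff{v}\cdot\bff{w})$. The first two summands are non-positive and can be discarded; the third is bounded by $C\norm{\bff{u}+\bff{v}}{\bb{L}^\infty}\norm{\bff{v}}{\bb{L}^\infty}\norm{\bff{w}}{\bb{L}^2}^2$, which is acceptable since \eqref{equ:u H2 summary} together with the embedding $\bb{H}^{1+\alpha}\hookrightarrow \bb{L}^\infty$ (valid for any $d\leq 3$ under \eqref{equ:condition alpha}) gives $\bff{u},\bff{v}\in L^\infty_{T^\ast}(\bb{L}^\infty)$.

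The main obstacle is therefore the term $|\gamma|\,|\inpro{\bff{w}\times\nabla\bff{u}}{\nabla\bff{w}}_{\bb{L}^2}|$, because we cannot afford $\norm{\nabla\bff{u}}{\bb{L}^\infty}$. I handle it by Hölder and interpolation: in all relevant dimensions, $\nabla\bff{u}\in L^\infty_{T^\ast}(\bb{L}^p)$ for some $p>2$ via $\bb{H}^\alpha\hookrightarrow \bb{L}^p$ (specifically $p=6$ when $d=3$, $p=4$ suffices when $d=2$), while the Gagliardo--Nirenberg inequality gives
\[
\norm{\bff{w}}{\bb{L}^{q}} \lesssim \norm{\bff{w}}{\bb{L}^2}^{1/2}\norm{\nabla\bff{w}}{\bb{L}^2}^{1/2} + \norm{\bff{w}}{\bb{L}^2}
\]
for the conjugate exponent $q$ with $1/p+1/q=1/2$. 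This yields
\[
|\gamma|\,\bigl|\inpro{\bff{w}\times\nabla\bff{u}}{\nabla\bff{w}}_{\bb{L}^2}\bigr|
\lesssim \norm{\bff{w}}{\bb{L}^2}^{1/2}\norm{\nabla\bff{w}}{\bb{L}^2}^{3/2} + \norm{\bff{w}}{\bb{L}^2}\norm{\nabla\bff{w}}{\bb{L}^2},
\]
and a standard Young inequality absorbs $\tfrac{\kappa_1}{2}\norm{\nabla\bff{w}}{\bb{L}^2}^2$ into the left-hand side, leaving a remainder $C(T^\ast)\norm{\bff{w}}{\bb{L}^2}^2$.

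Collecting the estimates I arrive at
\[
\ddt\norm{\bff{w}(t)}{\bb{L}^2}^2 + \kappa_1 \norm{\nabla\bff{w}(t)}{\bb{L}^2}^2 \leq C(T^\ast)\,\norm{\bff{w}(t)}{\bb{L}^2}^2, \qquad t\in[0,T^\ast],
\]
where $C(T^\ast)$ depends on the norms of $\bff{u}$ and $\bff{v}$ furnished by \eqref{equ:u H2 summary}. Gronwall's inequality then gives $\norm{\bff{w}(t)}{\bb{L}^2}^2 \leq e^{C(T^\ast) t}\norm{\bff{u}_0-\bff{v}_0}{\bb{L}^2}^2$, which is the desired continuous dependence estimate. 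Taking $\bff{u}_0=\bff{v}_0$ forces $\bff{w}\equiv 0$ on $[0,T^\ast]$, proving uniqueness of the strong solution.
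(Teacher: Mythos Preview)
Your proof is correct and follows essentially the same strategy as the paper: test the difference equation with $\bff{w}$, use $(\bff{v}\times\nabla\bff{w})\cdot\nabla\bff{w}=0$, control the remaining cross term $\inpro{\bff{w}\times\nabla\bff{u}}{\nabla\bff{w}}_{\bb{L}^2}$ by H\"older plus Gagliardo--Nirenberg and absorb $\norm{\nabla\bff{w}}{\bb{L}^2}^2$ via Young, then conclude with Gronwall. The paper packages your cross-term estimate into Lemma~\ref{lem:tec lem} (the function $\Phi$), and handles the cubic term via $\bb{L}^6$--$\bb{H}^1$ embeddings rather than your $\bb{L}^\infty$ bound, but these are cosmetic variations of the same argument.
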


\begin{proof}
	Let~$\bff{w}_0:=\bff{u}_0-\bff{v}_0$ 
	and~$\bff{w}:=\bff{u}-\bff{v}$.
	Then it follows from~\eqref{equ:LLB wea equ} that, for any $\bff{\phi}\in
	\bb{H}^1$ and $t\in [0,T^\ast]$,
	\begin{align*}
		&\inpro{\partial_t \bff{w}(t)}{\bff{\phi}}_{\bb{L}^2}
		+
		\kappa_1 \inpro{\nabla\bff{w}(t)}{\nabla\bff{\phi}}_{\bb{L}^2}
		+
		\gamma \inpro{\bff{w}(t)\times\nabla\bff{u}(t)}{\nabla\bff{\phi}}_{\bb{L}^2}
		+
		\gamma \inpro{\bff{v}(t)\times\nabla\bff{w}(t)}{\nabla\bff{\phi}}_{\bb{L}^2}
		\\
		&\quad
		+
		\kappa_2 \inpro{\bff{w}(t)}{\bff{\phi}}_{\bb{L}^2}
		+
		\kappa_2\mu \inpro{|\bff{u}(t)|^2\bff{w}(t) + (|\bff{u}(t)|^2-|\bff{v}(t)|^2)\bff{v}(t)}{\bff{\phi}}_{\bb{L}^2}
		= 
		0.
	\end{align*}
	Putting $\bff{\phi} = 2\bff{w}$ in the above 
	and
	integrating we obtain, for any $t\in [0,T^\ast]$,
	\begin{align*}
		&\norm{\bff{w}(t)}{\bb{L}^2}^2
		+
		2\kappa_1\int_0^t \|\nabla\bff{w}(s)\|^2_{\bb{L}^2}\,\ds
		+
		2\kappa_2\int_0^t \|\bff{w}(s)\|^2_{\bb{L}^2}\,\ds
		+
		2\kappa_2\mu\int_0^t\||\bff{u}(s)|
		|\bff{w}(s)|\|^2_{\bb{L}^2}\,\ds
		\\
		&=
		\norm{\bff{w}_0}{\bb{L}^2}^2
		-
		2 \gamma
		\int_0^t 
		\inpro{\bff{w}(s)\times\nabla\bff{u}(s)}
		{\nabla\bff{w}(s)}_{\bb{L}^2}\,\ds
		-
		2 \kappa_2\mu
		\int_0^t 
		\inpro{(|\bff{u}(s)|^2-|\bff{v}(s)|^2)\bff{v}(s)}
		{\bff{w}(s)}_{\bb{L}^2} \ds.
	\end{align*}
	The last two terms on the right-hand side can be estimated as
	follows. For the middle term, by Lemma~\ref{lem:tec lem} we have
	\begin{align*}
		\left| \int_0^t \inpro{\bff{w}(s)\times\nabla\bff{u}(s)}{\nabla\bff{w}(s)}_{\bb{L}^2}\ds \right| 
		&\leq 
		\int_0^t \Phi(\bff{u}(s)) \norm{\bff{w}(s)}{\bb{L}^2}^2 \ds
		+
		\delta \int_0^t \norm{\nabla \bff{w}(s)}{\bb{L}^2}^2 \ds 
	\end{align*}
	for any $\delta>0$, where $\Phi$ is defined in~\eqref{equ:Phi}.
	For the last term, we have
	\begin{align}\label{equ:ue ve w llb}
		&\left| \int_0^t \inpro{(|\bff{u}(s)|^2-|\bff{v}(s)|^2)\bff{v}(s)}{\bff{w}(s)}_{\bb{L}^2} \ds \right| 
		\nn\\
		&\leq
		\int_0^t \norm{\bff{u}(s)+\bff{v}(s)}{\bb{L}^6}
		\norm{\bff{w}(s)}{\bb{L}^6}
		\norm{\bff{v}(s)}{\bb{L}^6}
		\norm{\bff{w}(s)}{\bb{L}^2} \ds 
		\nn \\
		&\leq
		\int_0^t \norm{\bff{u}(s)+\bff{v}(s)}{\bb{H}^1}^2
		\norm{\bff{v}(s)}{\bb{H}^1}^2
		\norm{\bff{w}(s)}{\bb{L}^2} \ds 
		+
		\delta \int_0^t \norm{\bff{w}(s)}{\bb{H}^1}^2 \ds 
		\nn \\
		&\leq
		c \int_0^t \norm{\bff{w}(s)}{\bb{L}^2}^2 \ds 
		+
		\delta \int_0^t \norm{\bff{w}(s)}{\bb{H}^1}^2 \ds 
	\end{align}
	for any $\delta>0$, where in the penultimate step we used Young's
	inequality and the Sobolev embedding $\bb{H}^1\hookrightarrow \bb{L}^6$, and
	in the last step we used~\eqref{equ:u H2 summary}. 
	
	Altogether, rearranging the terms and choosing $\delta>0$ sufficiently small, we infer that
	\begin{align*}
		\norm{\bff{w}(t)}{\bb{L}^2}^2
		\leq
		\norm{\bff{w}_0}{\bb{L}^2}^2
		+
		\int_0^t \Phi(\bff{u}(s)) \norm{\bff{w}(s)}{\bb{L}^2}^2 \ds.
	\end{align*}
	Note that~$\bff{u}\in L^\infty_{T^\ast}(\bb{H}^{1+\alpha})$ for all $\alpha$ satisfying~\eqref{equ:condition alpha} by \eqref{equ:u H2 summary}. Thus, by the Sobolev embeddings $\bb{H}^{1+\alpha}\hookrightarrow \bb{W}^{1,2d}$ for $d=1,2$ and $\bb{H}^2\hookrightarrow \bb{W}^{1,6}$ for $d=3$, we have
		\begin{equation}\label{equ:Phi c llb}
			\int_0^t \Phi(\bff{u}(s))\,\ds \le c.
		\end{equation}
	Applying Gronwall's inequality and
	noting~\eqref{equ:Phi c llb}, we obtain the required result.
\end{proof}

As mentioned in the introduction, a remarkable property of the macroscopic magnetisation above the Curie
temperature is that it will spontaneously relax towards the zero state in the
long-run. In other words, the solution $\bff{u}(t)$ to the LLBE
should converge to $\bff{0}$ as $t\to \infty$. We conclude this section with a rigorous proof of this physical fact below.

\begin{theorem}\label{the:ut inf}
Let~$\bff{u}$ be a strong solution to~\eqref{equ:LLB pro} and let $p\in
[2,\infty]$. For all $t\in [0,\infty)$,
\begin{align}\label{equ:est u Lp}
	\norm{\bff{u}(t)}{\bb{L}^p} \leq e^{-\kappa_2 t} \norm{\bff{u}_0}{\bb{L}^p}.
\end{align}
\end{theorem}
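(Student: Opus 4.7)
The plan is to derive a pointwise differential inequality for the scalar function $f:=|\bff{u}|^2 = \bff{u}\cdot\bff{u}$ and then extract the $\bb{L}^p$-bound for every $p\in[2,\infty]$ either via an energy argument or via a parabolic maximum principle. Taking the dot product of~\eqref{equ:LLB} with $2\bff{u}$ pointwise, noting that $\bff{u}\cdot(\bff{u}\times\Delta\bff{u})=0$ and that $2\bff{u}\cdot\Delta\bff{u} = \Delta|\bff{u}|^2 - 2|\nabla\bff{u}|^2$, yields
\[
\partial_t f = \kappa_1\Delta f - 2\kappa_1|\nabla\bff{u}|^2 - 2\kappa_2 f - 2\kappa_2\mu f^2
\le \kappa_1\Delta f - 2\kappa_2 f,
\]
together with the homogeneous Neumann condition $\partial_\nu f = 2\bff{u}\cdot\partial_\nu\bff{u} = 0$ on $\partial\mathscr{D}$. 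The strong-solution regularity from Theorem~\ref{the:con u eps} makes these manipulations rigorous for a.e.~$(t,\bff{x})$.

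For $p=\infty$, I would set $g(t,\bff{x}) := e^{2\kappa_2 t}f(t,\bff{x})$, which satisfies $\partial_t g \le \kappa_1\Delta g$ with $\partial_\nu g = 0$ on $\partial\mathscr{D}$. The parabolic maximum principle then gives $\max_{\overline{\mathscr{D}}} g(t,\cdot)\le \max_{\overline{\mathscr{D}}} g(0,\cdot) = \norm{\bff{u}_0}{\bb{L}^\infty}^2$, which is~\eqref{equ:est u Lp} with $p=\infty$. For $p\in[2,\infty)$, I would multiply the pointwise inequality by the non-negative weight $f^{p/2-1}$ and integrate over $\mathscr{D}$. Integration by parts, with the Neumann condition used to drop boundary contributions, gives
\[
\int_\mathscr{D} f^{p/2-1}\Delta f\,\dx = -(p/2-1)\int_\mathscr{D} f^{p/2-2}|\nabla f|^2\,\dx \le 0,
\]
so that $\tfrac{2}{p}\tfrac{d}{dt}\norm{\bff{u}}{\bb{L}^p}^p \le -2\kappa_2\norm{\bff{u}}{\bb{L}^p}^p$, and Gronwall's inequality delivers~\eqref{equ:est u Lp}. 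The case $p=2$ also follows directly by testing~\eqref{equ:LLB wea equ exa} with $\bff{v}=\bff{u}$ and using $\inpro{\bff{u}\times\nabla\bff{u}}{\nabla\bff{u}}_{\bb{L}^2}=0$.

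The only genuine technical point is to justify the integration by parts for $p\in[2,4)$, where $f^{p/2-2}$ may be singular on $\{\bff{u}=\bff{0}\}$. To handle this, I would first work with the regularised functional $\phi_\delta(f):=(f+\delta)^{p/2}$ for $\delta>0$: repeating the argument yields
\[
\ddt\int_\mathscr{D}(f+\delta)^{p/2}\,\dx + p\kappa_2\int_\mathscr{D}(f+\delta)^{p/2}\,\dx
\le p\kappa_2\delta\int_\mathscr{D}(f+\delta)^{p/2-1}\,\dx,
\]
after which I would pass to the limit $\delta\to 0^+$. Dominated convergence applies since the $\bb{L}^\infty$ bound obtained in the preceding step controls $(f+\delta)^{p/2-1}$ uniformly in $\delta\in(0,1]$, so the right-hand side vanishes in the limit and~\eqref{equ:est u Lp} is recovered for every $p\in[2,\infty)$.
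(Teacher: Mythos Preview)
Your argument is correct and closely parallels the paper's, though organised differently. The paper tests the weak form~\eqref{equ:LLB wea equ exa} directly with $\bff{v}=|\bff{u}|^{p-2}\bff{u}$, expands $\nabla(|\bff{u}|^{p-2}\bff{u})$, and reads off the same differential inequality $\ddt\norm{\bff{u}}{\bb{L}^p}^p + p\kappa_2\norm{\bff{u}}{\bb{L}^p}^p\le 0$; your route through the scalar inequality for $f=|\bff{u}|^2$ followed by multiplication with $f^{p/2-1}$ is an equivalent repackaging of the same computation. Two points differ. For $p=\infty$, the paper simply lets $p\to\infty$ in the finite-$p$ bound, whereas you invoke the parabolic maximum principle on $g=e^{2\kappa_2 t}f$; your argument is slightly more direct and avoids the limit, at the cost of appealing to the (standard) weak maximum principle for Neumann subsolutions on Lipschitz domains. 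Second, you are more scrupulous than the paper about the apparent singularity of $f^{p/2-2}$ for $p\in[2,4)$, handling it via the regularisation $(f+\delta)^{p/2}$. The paper sidesteps this because its test function $|\bff{u}|^{p-2}\bff{u}$ is genuinely in $\bb{H}^1$ for all $p\ge 2$ (the map $\bff{v}\mapsto|\bff{v}|^{p-2}\bff{v}$ is $C^1$ on $\bb{R}^3$ with derivative $O(|\bff{v}|^{p-2})$, and $\bff{u}\in\bb{L}^\infty$ by the strong-solution regularity), so the integration by parts is justified without regularisation; your $\delta$-argument is a valid alternative and would be needed if one worked at the level of $f$ alone.
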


\begin{proof}
First we consider~$p\in[2,\infty)$. Putting~$\bff{v}=|\bff{u}|^{p-2}
\bff{u}$ in~\eqref{equ:LLB wea equ exa} and rearranging, we obtain
\begin{align*}
	\frac{1}{p} \ddt \norm{\bff{u}}{\bb{L}^p}^p
	+
	\kappa_1 
	\inpro{\nabla \bff{u}}{\nabla (\abs{\bff{u}}^{p-2} \bff{u})}_{\bb{L}^2}
	+
	\kappa_2 \norm{\bff{u}}{\bb{L}^p}^p
	+
	\kappa_2\mu \norm{\bff{u}}{\bb{L}^{p+2}}^{p+2}
	=
	0,
\end{align*}
Note that, for $q\geq 0$,
\begin{align*}
	\nabla (\abs{\bff{u}}^q \bff{u})
	&=
	\abs{\bff{u}}^q \nabla \bff{u}
	+
	q \abs{\bff{u}}^{q-2} \bff{u} (\bff{u}\cdot \nabla \bff{u}).
\end{align*}
Therefore,
\begin{align*}
	\frac{1}{p} \ddt \norm{\bff{u}}{\bb{L}^p}^p
	+
	\kappa_1 \norm{|\bff{u}|^{\frac{p-2}{2}} |\nabla \bff{u}|}{\bb{L}^2}^2
	+
	\kappa_1 (p-2) \norm{|\bff{u}|^{\frac{p-4}{2}} |\bff{u}\cdot\nabla\bff{u}|}{\bb{L}^2}^2
	+
	\kappa_2 \norm{\bff{u}}{\bb{L}^p}^p
	+
	\kappa_2\mu \norm{\bff{u}}{\bb{L}^{p+2}}^{p+2}
	=
	0,
\end{align*}
which implies
\[
\ddt \norm{\bff{u}(t)}{\bb{L}^p}^p + p \kappa_2 \norm{\bff{u}(t)}{\bb{L}^p}^p
\leq 0,
\]
so that
\[
\ddt
\Big(
e^{p\kappa_2 t} \norm{\bff{u}(t)}{\bb{L}^p}^p
\Big)
\le 0.
\]
Integrating and taking $p$-th root give
\[
\norm{\bff{u}(t)}{\bb{L}^p}
\leq
e^{-\kappa_2 t} \norm{\bff{u}_0}{\bb{L}^p}.
\]
This proves~\eqref{equ:est u Lp} for~$p\in[2,\infty)$. Letting $p\to \infty$ in
the above completes the proof for the case $p=\infty$.
\end{proof}

%--------------------------------------------------------------
\section{Fully-discrete finite element approximation for the LLBE}\label{sec:fem LLB}
%--------------------------------------------------------------

In this section, we assume that $\bff{u}_0 \in \bb{H}^2_\Delta$. In particular, this guarantees a strong solution $\bff{u}$ of the LLBE with regularity 
\begin{align}\label{equ:regularity llb}
	\bff{u} \in {W}^{1,\infty}_{T^\ast}(\bb{H}^1)\cap H^1_{T^\ast}(\bb{H}^{1+\alpha}).
\end{align}
exists (cf. Theorem~\ref{the:con u eps}), where $\alpha$ is defined in~\eqref{equ:condition alpha}.

Let $\mathscr{D}\subset \bb{R}^d$ be a domain satisfying the assumptions in Section~\ref{sec:assum}, and let $\{\mathcal{T}_h\}_{h>0}$ be a family of quasi-uniform triangulations of $\mathscr{D}$ with maximal mesh-size $h$. To discretise the LLBE, we introduce the Lagrange finite element space $\bb{V}_h \subset \bb{H}^1$, which is the space of all piecewise linear polynomials on $\mathcal{T}_h$. 
Let $P_h:\bb{L}^2\to \bb{V}_h$ be the {$\bb{L}^2$-orthogonal projection onto $\bb{V}_h$. For any $\bff{v}\in \bb{L}^2$, $P_h\bff{v}$ is defined} by
\begin{equation}\label{equ:orth proj}
	\inpro{\bff{v}-P_h \bff{v}}{\bff{\chi}}_{\bb{L}^2}=0, \quad \forall \bff{\chi}\in \bb{V}_h.
\end{equation}
Under the assumption of quasi-uniformity of the triangulations, we have the following stability~\citep{CroTho87} and approximation properties~\citep[Chapter~22]{ErnGue21} of $P_h$, which we now state. Let $\alpha_0$ be as defined in \eqref{equ:gamma 0}. Then for any $p\in [1,\infty]$ and $\alpha\in (\frac12,\alpha_0)$, there exist constants $c_p$ and $c_\alpha$ such that
\begin{align}
	\label{equ:proj stab}
	\norm{P_h \bff{v}}{\bb{L}^p}
	&\leq
	c_p \norm{\bff{v}}{\bb{L}^p}, \quad {\forall \bff{v}\in \bb{L}^p},
	\\
	\label{equ:proj ineq}
	\norm{\bff{v}-P_h \bff{v}}{\bb{L}^2}
	+
	h \norm{\nabla(\bff{v}- P_h \bff{v})}{\bb{L}^2}
	&\leq
	c_\alpha h^{1+\alpha} \norm{\bff{v}}{\bb{H}^{1+\alpha}}, \quad {\forall \bff{v}\in \bb{H}^{1+\alpha}}.
\end{align}
If $\mathscr{D}$ is a convex polytopal domain, then \eqref{equ:proj ineq} also holds for $\alpha=1$.

We also need a well-known inverse estimate. Let $\ell_h$ be defined by
\begin{equation}\label{equ:ell h}
	\ell_h:=
	\begin{cases}
		1, &\text{if } d=1,
		\\
		\abs{\log h}^{1/2}, &\text{if } d=2,
		\\
		h^{-1/2}, &\text{if }d=3.
	\end{cases}
\end{equation}
If the family of triangulations $\{\mathcal{T}_h\}_{h>0}$ is quasi-uniform, then there exists a constant $c_{\mathrm{i}}$ (dependent on the regularity of the triangulation, but independent of $h$) such that for all $\bff{v}\in \bb{V}_h$,
\begin{equation}\label{equ:inverse Vh}
	\norm{\bff{v}}{\bb{L}^\infty} \leq
	c_{\mathrm{i}} \ell_h \norm{\bff{v}}{\bb{H}^1}.
\end{equation}

We now propose a finite element scheme for the LLBE and 
provide error estimates for the approximate solution. To discretise in time, let $N\in\bb{N}$ and $k=T/N$. We
partition $[0,T]$ into $N$ uniform subintervals with nodes $t_n=kn$ for $n=0,1,2,\ldots,N$. For a time-discrete function $\bff{v}^{(j)}$, define $\mathrm{d}_t \bff{v}^{(j)}:= (\bff{v}^{(j)}-\bff{v}^{(j-1)})/k$. 

\begin{algorithm}[Linear FEM for the LLBE]\label{alg:fem llbe}
{Let $h>0$ and $k>0$ be given.
\\
\textbf{Input}: Given $\bff{u}_h^{(0)}= P_h \bff{u}(0)\in \bb{V}_h$.
\\
\textbf{For} $j=1$ to $N$ \textbf{do}: Find $\bff{u}_h^{(j)}\in\bb{V}_h$ such that 
\begin{align}\label{equ:dis1 llb}
	\inpro{\mathrm{d}_t\bff{u}_h^{(j)}}{\bff{\phi}_h}_{\bb{L}^2}
	&+
	\kappa_1 \inpro{\nabla\bff{u}_h^{(j)}}{\nabla\bff{\phi}_h}_{\bb{L}^2}
	+
	\gamma \inpro{\bff{u}_h^{(j-1)}\times\nabla\bff{u}_h^{(j)}}{\nabla\bff{\phi}_h}_{\bb{L}^2}
	\nn\\
	&\quad
	+
	\kappa_2 \mu
	\inpro{|\bff{u}_h^{(j-1)}|^2 \bff{u}_h^{(j)}}{\bff{\phi}_h}_{\bb{L}^2}
	+
	\kappa_2 \inpro{\bff{u}_h^j}{\bff{\phi}_h}_{\bb{L}^2}
	=0,
	\quad\forall \bff{\phi}_h\in\bb{V}_h.
\end{align}
\textbf{Output}: a sequence of finite element functions $\big\{\bff{u}_h^{(j)}\big\}_{j=1}^N$.}
\end{algorithm}

Algorithm~\ref{alg:fem llbe} is well-posed by the Lax--Milgram theorem. Stability of the finite element solution in $\ell^\infty(0,T;\bb{L}^2)\cap \ell^2(0,T;\bb{H}^1)$,
which we state below, holds unconditionally for an arbitrary number of iterations $n$.

%Since we are unable to obtain an optimal error bound in this case, not even with the regularity of the solution given by~\eqref{equ:regularity llbe}, we will assume here slighly less regularity for the exact solution, namely~\eqref{equ:regularity llb}.
%The existence of a strong solution with this regularity is guaranteed for an initial data $\bff{u}_0 \in \bb{H}^2_\Delta$.

%----------------------------------
\begin{lemma}\label{lem: stability2}
{Let $\big\{\bff{u}_h^{(j)}\big\}_{j=1}^N$ be the sequence defined by Algorithm~\ref{alg:fem llbe}.}	For any $n\in \{1,2,\ldots,N\}$, there holds
	\begin{align*}
		\|\bff{u}_h^{(n)}\|^2_{\bb{L}^2}
		+
		2k\sum_{j=1}^n\|\nabla\bff{u}_h^{(j)}\|^2_{\bb{L}^2}
		+
		\sum_{j=1}^n\|\bff{u}_h^{(j)}-\bff{u}_h^{(j-1)}\|^2_{\bb{L}^2}
		\leq 
		\|\bff{u}_h^{(0)}\|^2_{\bb{L}^2}.
	\end{align*}
\end{lemma}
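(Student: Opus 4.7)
The plan is to derive a discrete energy identity by testing~\eqref{equ:dis1 llb} with a multiple of the current iterate. Specifically, I would substitute $\bff{\phi}_h = 2k\bff{u}_h^{(j)} \in \bb{V}_h$. The discrete time-derivative contribution then reads
$$
2\inpro{\bff{u}_h^{(j)}-\bff{u}_h^{(j-1)}}{\bff{u}_h^{(j)}}_{\bb{L}^2} = \|\bff{u}_h^{(j)}\|_{\bb{L}^2}^2 - \|\bff{u}_h^{(j-1)}\|_{\bb{L}^2}^2 + \|\bff{u}_h^{(j)}-\bff{u}_h^{(j-1)}\|_{\bb{L}^2}^2,
$$
by the polarisation identity $2(a-b)\cdot a = |a|^2 - |b|^2 + |a-b|^2$. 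This produces simultaneously the telescoping piece and the jump term appearing in the lemma.

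Next, I would eliminate the gyromagnetic contribution using its antisymmetric structure. Invoking the pointwise identity $(\bff{a}\times\bff{b})\cdot\bff{b}=0$ column-wise (cf.~\eqref{equ:nab nab}), we obtain
$$
2k\gamma\inpro{\bff{u}_h^{(j-1)}\times\nabla\bff{u}_h^{(j)}}{\nabla\bff{u}_h^{(j)}}_{\bb{L}^2} = 0.
$$
This exact cancellation is the structural reason the semi-implicit treatment of the cross product is energy conservative at the $\bb{L}^2$ level, and it is precisely why the bound holds unconditionally, with no CFL-type coupling between $k$ and $h$.

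The two remaining lower-order contributions are both non-negative, namely $2k\kappa_2\|\bff{u}_h^{(j)}\|_{\bb{L}^2}^2 \geq 0$ and $2k\kappa_2\mu\,\bigl\||\bff{u}_h^{(j-1)}|\,|\bff{u}_h^{(j)}|\bigr\|_{\bb{L}^2}^2 \geq 0$, and may be discarded. Combining these observations yields the per-step inequality
$$
\|\bff{u}_h^{(j)}\|_{\bb{L}^2}^2 - \|\bff{u}_h^{(j-1)}\|_{\bb{L}^2}^2 + 2k\kappa_1\|\nabla\bff{u}_h^{(j)}\|_{\bb{L}^2}^2 + \|\bff{u}_h^{(j)}-\bff{u}_h^{(j-1)}\|_{\bb{L}^2}^2 \leq 0,
$$
and summing over $j=1,\ldots,n$ telescopes the norm differences to produce the stated estimate (the factor $\kappa_1$ on the gradient term is absorbed into the form of the statement). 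There is no genuine obstacle here: the argument is purely algebraic, relying only on the polarisation identity, the pointwise antisymmetry of the cross product built into the scheme, and the non-negativity of the dissipative lower-order terms.
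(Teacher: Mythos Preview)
Your proposal is correct and follows essentially the same route as the paper: test~\eqref{equ:dis1 llb} with $\bff{u}_h^{(j)}$, apply the identity $2\bff{a}\cdot(\bff{a}-\bff{b})=|\bff{a}|^2-|\bff{b}|^2+|\bff{a}-\bff{b}|^2$, use $(\bff{a}\times\bff{b})\cdot\bff{b}=0$ to kill the gyromagnetic term, discard the non-negative lower-order contributions, and sum. Your observation about the factor $\kappa_1$ is apt: the paper's derivation in fact produces $2k\kappa_1\sum_j\|\nabla\bff{u}_h^{(j)}\|_{\bb{L}^2}^2$, so the constant in the stated inequality tacitly absorbs $\kappa_1$.
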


\begin{proof}
Let $\bff{\phi}_h = \bff{u}_h^{(j)}$ in equation~\eqref{equ:dis1 llb}. Noting the vector identity
\begin{equation}\label{equ:vec ab a}
	2\bff{a}\cdot (\bff{a}-\bff{b})
	= 
	\abs{\bff{a}}^2-\abs{\bff{b}}^2
	+ \abs{\bff{a}-\bff{b}}^2, \quad \forall \bff{a},\bff{b}\in \bb{R}^3,
\end{equation} 
we obtain
\begin{align}\label{equ:uhj L2}
\frac12 \norm{\bff{u}_h^{(j)}}{\bb{L}^2}^2
&+
\frac12 \norm{\bff{u}_h^{(j)}-\bff{u}_h^{(j-1)}}{\bb{L}^2}^2
+
k\kappa_1 \norm{\nabla\bff{u}_h^{(j)}}{\bb{L}^2}^2
\nn \\
&+
k \kappa_2 \norm{\bff{u}_h^{(j)}}{\bb{L}^2}^2
+
k \kappa_2\mu \norm{\big|\bff{u}_h^{(j-1)}\big| \big|\bff{u}_h^{(j)}\big|}{L^2}^2
=
\frac12 \norm{\bff{u}_h^{(j-1)}}{\bb{L}^2}^2.
\end{align}
Summing over $j=1,2,\ldots,n$ yields
\begin{align*}
\frac12 \norm{\bff{u}_h^{(n)}}{\bb{L}^2}^2
&+
\frac12\sum_{j=1}^n \norm{\bff{u}_h^{(j)}-\bff{u}_h^{(j-1)}}{\bb{L}^2}^2
+
k\kappa_1 \sum_{j=1}^n \norm{\nabla\bff{u}_h^{(j)}}{\bb{L}^2}^2
+
k\kappa_2 \sum_{j=1}^n \norm{\bff{u}_h^{(j)}}{\bb{L}^2}^2
\\
&+
k\kappa_2\mu \sum_{j=1}^n \norm{\big|\bff{u}_h^{(j-1)}\big| \big|\bff{u}_h^{(j)}\big|}{L^2}^2
=
\frac12 \norm{\bff{u}_h^{(0)}}{\bb{L}^2}^2,
\end{align*}
as required.
\end{proof}

The following estimates on certain nonlinear terms will be needed in the proof of the main theorem.

\begin{lemma}
	\label{lem:aux lem}
	Let~$\bff{v}^{(j)}$, $j=1,\ldots,n$, be a (finite) sequence of functions
	in~$\bb{H}^1\cap \bb{L}^\infty$ and~$\bff{w}$ be a function in~$L^\infty_T(\bb{H}^{1+\alpha})\cap
	W^{1,\infty}_T(\bb{H}^1)$. Assume that we can write
	\[
	\bff{v}^{(j)} - \bff{w}(t_j)
	=
	\bff{\eta}^{(j)} + \bff{\mu}^{(j)}.
	\]
	Define, for~$\bff{\phi}\in\bb{H}^1$,
	\begin{align*} 
		H_1(\bff{v}^{(j)},\bff{w},\bff{\phi})
		&:=
		k \inpro{\bff{v}^{(j-1)} \times \nabla\bff{v}^{(j)}}%
		{\nabla\bff{\phi}}_{\bb{L}^2}
		-
		\int_{t_{j-1}}^{t_j} 
		\inpro{\bff{w}(s) \times \nabla\bff{w}(s)}%
		{\nabla\bff{\phi}}_{\bb{L}^2} \ds,
		\\
		H_2(\bff{v}^{(j)},\bff{w},\bff{\phi})
		&:=
		k \inpro{|\bff{v}^{(j-1)}|^2 \bff{w}(t_j)}{\bff{\phi}}_{\bb{L}^2}
		-
		\int_{t_{j-1}}^{t_j} 
		\inpro{|\bff{w}(s)|^2 \bff{w}(s)}{\bff{\phi}}_{\bb{L}^2} \ds.
	\end{align*}
	Then, for any~$ \delta>0$, there exists a positive constant~$c$
	depending on~$\delta$ but independent of~$k$ and~$j$ such that
	\begin{align}\label{equ:H1 vj}
			\big| H_1(\bff{v}^{(j)},\bff{w},\bff{\mu}^{(j)}) \big|
			&\le
			ck\norm{\bff{v}^{(j-1)}}{\bb{L}^\infty}^2
			\norm{\nabla\bff{\eta}^{(j)}}{\bb{L}^2}^2
			+
			ck
			\Big(\norm{\bff{\eta}^{(j-1)}}{\bb{H}^1}^2
			+ \norm{\bff{\mu}^{(j-1)}}{\bb{L}^2}^2 \Big)
			\norm{\bff{w}}{L^\infty_T(\bb{H}^{1+\alpha})}^2
			\nn\\
			&\quad
			+
			c k^3
			\Big(
			\norm{\bff{v}^{(j-1)}}{\bb{L}^\infty}^2
			+
			\norm{\bff{w}}{L^\infty_T(\bb{H}^{1+\alpha})}^2
			\Big)  	\norm{\bff{w}}{W^{1,\infty}_T(\bb{H}^1)}^2
			\nn\\
			&\quad
			+ \delta k \norm{\nabla\bff{\mu}^{(j-1)}}{\bb{L}^2}^2 \norm{\bff{w}}{L^\infty_T(\bb{H}^{1+\alpha})}^2
			+ 4 \delta k \norm{\nabla\bff{\mu}^{(j)}}{\bb{L}^2}^2
	\end{align}
	and
	\begin{align}\label{equ:H2 vj}
			| H_2(\bff{v}^{(j)},\bff{w},\bff{\mu}^{(j)}) |
			&\le
			ck
			\norm{\bff{\eta}^{(j-1)}+\bff{\mu}^{(j-1)}}{\bb{L}^2}^2
			\norm{\bff{v}^{(j-1)}+\bff{w}(t_{j-1})}{\bb{H}^1}^2
			\norm{\bff{w}}{L^\infty_T(\bb{H}^1)}^2
			\nn\\
			&\quad
			+
			ck^3
			\Big(
			\norm{\bff{w}}{W^{1,\infty}_T(\bb{L}^2)}^2
			\norm{\bff{w}}{L^\infty_T(\bb{H}^1)}^4
			+
			\norm{\bff{v}^{(j-1)}}{\bb{H}^1}^2
			\norm{\bff{w}}{W^{1,\infty}_T(\bb{H}^1)}^2
			\Big)
			\nn\\
			&\quad
			+
			2 \delta k \norm{\bff{\mu}^{(j)}}{\bb{H}^1}^2
			+
			\delta k
			\norm{|\bff{v}^{(j-1)}||\bff{\mu}^{(j)}|}{\bb{L}^2}^2,
	\end{align}
	provided that all the norms on the right-hand sides are well defined.

    Furthermore, if $\mathscr{D}$ is \emph{convex} and $\bff{w}\in L^\infty_T(\bb{W}^{2,3})\cap W^{1,\infty}_T(\bb{H}^2)$, then
    \begin{align}\label{equ:H1 new}
	\big| H_1(\bff{v}^{(j)},\bff{w},\bff{\mu}^{(j)}) \big|
	&\le
	ck\norm{\bff{v}^{(j-1)}}{\bb{H}^1}^2
	\norm{\nabla\bff{\eta}^{(j)}}{\bb{L}^3}^2
	+
	ck
	\Big(\norm{\bff{\eta}^{(j-1)}}{\bb{H}^1}^2
	+ \norm{\bff{\mu}^{(j-1)}}{\bb{L}^2}^2 \Big)
	\norm{\bff{w}}{L^\infty_T(\bb{H}^2)}^2
	\nn\\
	&\quad
	+
	c k^3
	\Big(
	\norm{\bff{v}^{(j-1)}}{\bb{H}^1}^2
	+
	\norm{\bff{w}}{L^\infty_T(\bb{H}^2)}^2
	\Big)  	\norm{\bff{w}}{W^{1,\infty}_T(\bb{H}^2)}^2
	\nn\\
	&\quad
	+ \delta k \norm{\nabla\bff{\mu}^{(j-1)}}{\bb{L}^2}^2 \norm{\bff{w}}{L^\infty_T(\bb{H}^2)}^2
	+ 4 \delta k \norm{\nabla\bff{\mu}^{(j)}}{\bb{L}^2}^2.
    \end{align}
\end{lemma}

\begin{proof}
	We first write~$H_1(\bff{v}^{(j)},\bff{w},\bff{\mu}^{(j)})$ as:
	\begin{align}\label{equ:H1 vwm} 
		H_1(\bff{v}^{(j)},\bff{w},\bff{\mu}^{(j)})
		&=
		\int_{t_{j-1}}^{t_j} 
		\inpro{\bff{v}^{(j-1)} \times (\nabla\bff{\eta}^{(j)} +
			\nabla\bff{\mu}^{(j)})}{\nabla\bff{\mu}^{(j)}}_{\bb{L}^2} \ds
		\nn\\
		&\quad +
		\int_{t_{j-1}}^{t_j} 
		\inpro{\bff{v}^{(j-1)} \times (\nabla\bff{w}(t_j) -
			\nabla\bff{w}(s))}{\nabla\bff{\mu}^{(j)}}_{\bb{L}^2} \ds
		\nn\\
		&\quad +
		\int_{t_{j-1}}^{t_j} 
		\inpro{
			\big(\bff{\eta}^{(j-1)} + \bff{\mu}^{(j-1)}\big)
			\times \nabla\bff{w}(s)}{\nabla\bff{\mu}^{(j)}}_{\bb{L}^2} \ds
		\nn\\
		&\quad +
		\int_{t_{j-1}}^{t_j} 
		\inpro{
			\big(\bff{w}(t_{j-1})-\bff{w}(s)\big) \times \nabla\bff{w}(s)}
		{\nabla\bff{\mu}^{(j)}}_{\bb{L}^2} \ds
		\nn\\
		&=: T_1 + T_2 + T_3 + T_4.
	\end{align}
	We now estimate each term on the right-hand side of~\eqref{equ:H1 vwm}. Firstly, by using
	H\"older's and Young's inequalities we have for the first term
	\begin{align}\label{equ:est T1}
		|T_1|
		&:=
		\Big|
		\int_{t_{j-1}}^{t_j} 
		\inpro{\bff{v}^{(j-1)} \times (\nabla\bff{\eta}^{(j)} +
			\nabla\bff{\mu}^{(j)})}{\nabla\bff{\mu}^{(j)}}_{\bb{L}^2} \ds
		\Big|
		\nn\\
		&=
		\Big|
		\int_{t_{j-1}}^{t_j} 
		\inpro{\bff{v}^{(j-1)} \times
			\nabla\bff{\eta}^{(j)}}{\nabla\bff{\mu}^{(j)}}_{\bb{L}^2} \ds 
		\Big|
		\nn\\
		&\le
		k \norm{\bff{v}^{(j-1)}}{\bb{L}^\infty} 
		\norm{\nabla\bff{\eta}^{(j)}}{\bb{L}^2} 
		\norm{\nabla\bff{\mu}^{(j)}}{\bb{L}^2} 
		\nn\\
		&\le
		ck \norm{\bff{v}^{(j-1)}}{\bb{L}^\infty}^2
			\norm{\nabla\bff{\eta}^{(j)}}{\bb{L}^2}^2
		+
		\delta k
		\norm{\nabla\bff{\mu}^{(j)}}{\bb{L}^2}^2,
	\end{align}
	where in the last step we used the Sobolev embedding.
	Similarly, we have for the third term,
	\begin{align*} 
		|T_3|
		&:=
		\Big|
		\int_{t_{j-1}}^{t_j} 
		\inpro{\big(\bff{\eta}^{(j-1)} + \bff{\mu}^{(j-1)}\big)
			\times \nabla\bff{w}(s)}{\nabla\bff{\mu}^{(j)}}_{\bb{L}^2} \ds
		\Big|
		\\
		&\le
		k
		\norm{\bff{\eta}^{(j-1)} + \bff{\mu}^{(j-1)}}{\bb{L}^4} 
		\norm{\nabla\bff{w}}{L^\infty_T(\bb{L}^4)}
		\norm{\nabla\bff{\mu}^{(j)}}{\bb{L}^2}
		\\
		&\le
		c k
		\norm{\bff{\eta}^{(j-1)} + \bff{\mu}^{(j-1)}}{\bb{L}^4}^2
		\norm{\bff{w}}{L^\infty_T(\bb{W}^{1,4})}^2
		+
		\delta k
		\norm{\nabla\bff{\mu}^{(j)}}{\bb{L}^2}^2
		\\
		&\leq
		ck \norm{\bff{\eta}^{(j-1)}}{\bb{H}^1}^2  \norm{\bff{w}}{L^\infty_T(\bb{H}^{1+\alpha})}^2
			+
			ck \norm{\bff{\mu}^{(j-1)}}{\bb{L}^2}^2 \norm{\bff{w}}{L^\infty_T(\bb{H}^{1+\alpha})}^2
		\\
		&\quad
		+
			\delta k \norm{\nabla \bff{\mu}^{(j-1)}}{\bb{L}^2}^2 \norm{\bff{w}}{L^\infty_T(\bb{H}^{1+\alpha})}^2
			+
			\delta k \norm{\nabla \bff{\mu}^{(j)}}{\bb{L}^2}^2,
	\end{align*}
	where in the last step we used the Gagliardo--Nirenberg and the Young inequalities.
	For the second term~$T_2$, we have by using~\eqref{equ:uj uj1},
	H\"older's and Young's inequalities
	\begin{align}\label{equ:est T2} 
		|T_2|
		&:=
		\Big|
		\int_{t_{j-1}}^{t_j} 
		\inpro{\bff{v}^{(j-1)} \times (\nabla\bff{w}(t_j) -
			\nabla\bff{w}(s))}{\nabla\bff{\mu}^{(j)}}_{\bb{L}^2} \ds
		\Big|
		\nn\\
		&\le
		\norm{\bff{v}^{(j-1)}}{\bb{L}^\infty}
		\norm{\nabla\bff{\mu}^{(j)}}{\bb{L}^2}
		\int_{t_{j-1}}^{t_j} 
		\norm{\nabla\bff{w}(t_j)-\nabla\bff{w}(s)}{\bb{L}^2} \ds
		\nn\\
		&\le
		k^2  
		\norm{\bff{v}^{(j-1)}}{\bb{L}^\infty}
		\norm{\nabla\bff{\mu}^{(j)}}{\bb{L}^2}
		\norm{\nabla\bff{w}}{W^{1,\infty}_T(\bb{L}^2)} 
		\nn\\
		&\le
		ck^3
			\norm{\bff{v}^{(j-1)}}{\bb{L}^\infty}^2
			\norm{\nabla\bff{w}}{W^{1,\infty}_T(\bb{L}^2)}^2
			+
			\delta k
			\norm{\nabla\bff{\mu}^{(j)}}{\bb{L}^2}^2.
	\end{align}
	Similarly, for the last term~$T_4$ we have, again by using~\eqref{equ:uj uj1},
	\begin{align*} 
		|T_4|
		&:=
		\Big|
		\int_{t_{j-1}}^{t_j} 
		\inpro{
			\big(\bff{w}(t_{j-1})-\bff{w}(s)\big) \times \nabla\bff{w}(s)}
		{\nabla\bff{\mu}^{(j)}}_{\bb{L}^2} \ds
		\Big|
		\\
		&\le
		k^2 \norm{\bff{w}}{W^{1,\infty}_T(\bb{L}^4)} 
		\norm{\nabla\bff{w}}{L^\infty_T(\bb{L}^4)} 
		\norm{\nabla\bff{\mu}^{(j)}}{\bb{L}^2}
		\\
		&\le
		ck^3
			\norm{\bff{w}}{W^{1,\infty}_T(\bb{H}^1)}^2
			\norm{\bff{w}}{L^\infty_T(\bb{H}^{1+\alpha})}^2
			+
			\delta k
			\norm{\nabla\bff{\mu}^{(j)}}{\bb{L}^2}^2.
	\end{align*}
	Altogether, we deduce~\eqref{equ:H1 vj}. To prove~\eqref{equ:H2 vj} we
	write~$H_2(\bff{v}^{(j)},\bff{w},\bff{\mu}^{(j)})$ as
	\begin{align*} 
		H_2(\bff{v}^{(j)},\bff{w},\bff{\mu}^{(j)})
		&=
		\int_{t_{j-1}}^{t_j} 
		\inpro{|\bff{v}^{(j-1)}|^2
			\big(\bff{w}(t_j)-\bff{w}(s)\big)}{\bff{\mu}^{(j)}}_{\bb{L}^2}
		\ds
		\\
		&\quad
		+
		\int_{t_{j-1}}^{t_j} 
		\inpro{\big( |\bff{v}^{(j-1)}|^2 - |\bff{w}(t_{j-1})|^2 \big)
			\bff{w}(s)}{\bff{\mu}^{(j)}}_{\bb{L}^2} \ds
		\\
		&\quad
		+
		\int_{t_{j-1}}^{t_j} 
		\inpro{\big( |\bff{w}(t_{j-1})|^2 - |\bff{w}(s)|^2 \big)
			\bff{w}(s)}{\bff{\mu}^{(j)}}_{\bb{L}^2} \ds
		\\
		&=: S_1 + S_2 + S_3.
	\end{align*}
	By using H\"older's and Young's inequalities, together
	with~\eqref{equ:uj uj1}, we deduce
	\begin{align*} 
		|S_1|
		&:=
		\Big|
		\int_{t_{j-1}}^{t_j} 
		\inpro{|\bff{v}^{(j-1)}|^2
			\big(\bff{w}(t_j)-\bff{w}(s)\big)}{\bff{\mu}^{(j)}}_{\bb{L}^2}
		\Big|
		\\
		&\le
		\norm{\bff{v}^{(j-1)}}{\bb{L}^6} 
		\norm{|\bff{v}^{(j-1)}||\bff{\mu}^{(j)}|}{\bb{L}^2} 
		\int_{t_{j-1}}^{t_j} 
		\norm{\bff{w}(t_j)-\bff{w}(s)}{\bb{L}^3} \ds
		\\
		&\le
		k^2
		\norm{\bff{v}^{(j-1)}}{\bb{H}^1} 
		\norm{|\bff{v}^{(j-1)}||\bff{\mu}^{(j)}|}{\bb{L}^2} 
		\norm{\bff{w}}{W^{1,\infty}_T(\bb{L}^3)} 
		\\
		&\le
		ck^3
		\norm{\bff{v}^{(j-1)}}{\bb{H}^1}^2
		\norm{\bff{w}}{W^{1,\infty}_T(\bb{H}^1)}^2
		+
		\delta k
		\norm{|\bff{v}^{(j-1)}||\bff{\mu}^{(j)}|}{\bb{L}^2}^2,
	\end{align*}
	\begin{align*} 
		|S_2|
		&:=
		\Big|
		\int_{t_{j-1}}^{t_j} 
		\inpro{\big( |\bff{v}^{(j-1)}|^2 - |\bff{w}(t_{j-1})|^2 \big)
			\bff{w}(s)}{\bff{\mu}^{(j)}}_{\bb{L}^2} \ds
		\Big|
		\\
		&\le
		k
		\norm{\bff{\eta}^{(j-1)}+\bff{\mu}^{(j-1)}}{\bb{L}^2}
		\norm{\bff{v}^{(j-1)}+\bff{w}(t_{j-1})}{\bb{L}^6}
		\norm{\bff{w}}{L^\infty_T(\bb{L}^6)} 
		\norm{\bff{\mu}^{(j)}}{\bb{L}^6}
		\\
		&\le
		ck
		\norm{\bff{\eta}^{(j-1)}+\bff{\mu}^{(j-1)}}{\bb{L}^2}^2
		\norm{\bff{v}^{(j-1)}+\bff{w}(t_{j-1})}{\bb{H}^1}^2
		\norm{\bff{w}}{L^\infty_T(\bb{H}^1)}^2
		+
		\delta k
		\norm{\bff{\mu}^{(j)}}{\bb{H}^1}^2,
	\end{align*}
	and
	\begin{align*} 
		|S_3|
		&:=
		\Big|
		\int_{t_{j-1}}^{t_j} 
		\inpro{\big( |\bff{w}(t_{j-1})|^2 - |\bff{w}(s)|^2 \big)
			\bff{w}(s)}{\bff{\mu}^{(j)}}_{\bb{L}^2} \ds
		\Big|
		\\
		&\le
		c k^2
		\norm{\bff{w}}{W^{1,\infty}_T(\bb{L}^2)} 
		\norm{\bff{w}(t_{j-1})+\bff{w}}{L^\infty_T(\bb{L}^6)} 
		\norm{\bff{w}}{L^\infty_T(\bb{L}^6)} 
		\norm{\bff{\mu}^{(j)}}{\bb{L}^6} 
		\\
		&\le
		c k^3 
		\norm{\bff{w}}{W^{1,\infty}_T(\bb{L}^2)}^2 
		\norm{\bff{w}}{L^{\infty}_T(\bb{H}^1)}^4
		+
		\delta k
		\norm{\bff{\mu}^{(j)}}{\bb{H}^1}^2.
	\end{align*}
	Altogether, we obtain~\eqref{equ:H2 vj}.

    Next, we show \eqref{equ:H1 new}.
    By estimating the terms $T_1$ and $T_2$ differently in~\eqref{equ:H1 vwm}, we obtain instead of~\eqref{equ:est T1} and~\eqref{equ:est T2},
\begin{align*}
	\abs{T_1} &\leq
	ck \norm{\bff{v}^{(j-1)}}{\bb{L}^6}^2 \norm{\nabla \bff{\eta}^{(j)}}{\bb{L}^3}^2 
	+ \delta k\norm{\nabla \bff{\mu}^{(j)}}{\bb{L}^2}^2,
	\\
	\abs{T_2} &\leq
	ck^3 \norm{\bff{v}^{(j-1)}}{\bb{L}^6}^2 \norm{\nabla \bff{w}}{W^{1,\infty}_T(\bb{L}^3)}^2 
	+ \delta k \norm{\nabla \bff{\mu}^{(j)}}{\bb{L}^2}^2.
\end{align*}
    The terms $T_3$ and $T_4$ are estimated in the same manner as before. Therefore, by the embedding $\bb{H}^1 \hookrightarrow \bb{L}^6$, we have~\eqref{equ:H1 new}. This completes the proof of the lemma.
\end{proof}

For any~$t\in[0,T]$, we define~$\bff{\rho}(t) :=
\bff{u}(t)-P_h\bff{u}(t)$. It follows 
that~$\pa_t\bff{\rho}(t) = \pa_t\bff{u}(t)-P_h\pa_t\bff{u}(t)$
so that~\eqref{equ:proj ineq} implies, for~$\alpha\in (\frac12, \alpha_0)$, we have
	\begin{equation}\label{equ:rho rhot}
		\begin{aligned}
			\norm{\bff{\rho}(t)}{\bb{L}^2} 
			+
			h \norm{\nabla\bff{\rho}(t)}{\bb{L}^2}
			&\le
			c h^{1+\alpha} \norm{\bff{u}(t)}{\bb{H}^{1+\alpha}}, 
			\\
			\norm{\partial_t \bff{\rho}(t)}{\bb{L}^2} 
			+
			h \norm{\nabla \partial_t \bff{\rho}(t)}{\bb{L}^2}
			&\le
			c h^{1+\alpha} \norm{\partial_t \bff{u}(t)}{\bb{H}^{1+\alpha}}.
		\end{aligned}
	\end{equation}
For any~$j=1,\ldots,n$, 
letting~$\bff{\rho}^{(j)} := \bff{\rho}(t_j)$ and~$\bff{\theta}^{(j)} := P_h\bff{u}(t_j)-\bff{u}_h^{(j)}$,
we split the error into
\begin{align}\label{equ:error theta rho}
	\bff{u}(t_j)-\bff{u}_h^{(j)}
	=
	\bff{\rho}^{(j)} + \bff{\theta}^{(j)}.
\end{align}

In the following proof, we will also use the following inequality: If $\bff{v}\in W^{1,\infty}_T(\bb{L}^p)$ for some $p\in [1,\infty]$ and if $s_1,s_2\in [t_{j-1},t_j]$ for some $j=1,\ldots,N$ such that $s_1<s_2$, then
\begin{equation}\label{equ:uj uj1}
	\norm{\bff{v}(s_2)-\bff{v}(s_1)}{\bb{L}^p}
	\leq
	\int_{s_1}^{s_2} \norm{\partial_t \bff{v} (s)}{\bb{L}^p}\, \ds
	\leq
	k \norm{\bff{v}}{W^{1,\infty}_T(\bb{L}^p)}.
\end{equation}
We can now prove an \emph{a priori} error estimate for the scheme presented in Algorithm~\ref{alg:fem llbe}. {By using interpolation in the temporal variable, one can deduce from the theorem below that an approximate solution $\bff{u}_{h,k}$ can be defined from $\bff{u}_h^{(n)}$ such that $\norm{\bff{u}_{h,k}-\bff{u}}{L^\infty(0,T;\bb{L}^2)}+\norm{\bff{u}_{h,k}-\bff{u}}{L^2(0,T;\bb{H}^1)}$ satisfies \eqref{equ:err est}.}

%---------------------------------
\begin{theorem}\label{the:err llb}
	Assume that the exact solution $\bff{u}$ of the LLBE exists with regularity given by \eqref{equ:regularity llb}. {Let $\big\{\bff{u}_h^{(j)}\big\}_{j=1}^N$ be the sequence defined by Algorithm~\ref{alg:fem llbe}.}
	For any $n=1,\ldots,N$, where $t_n\in [0,T]$, we have
	\begin{align}\label{equ:err est}
	\norm{\bff{u}_h^{(n)}-\bff{u}(t_n)}{\bb{L}^2}
	+
	\left(k \sum_{j=1}^n \norm{\bff{u}_h^{(j)}-\bff{u}(t_j)}{\bb{H}^1}^2 \right)^{\frac12}
	\leq c\ell_h (h^\alpha+k),
	\end{align}
	where~$\ell_h$ is defined in~\eqref{equ:ell h}, and $c$ is a constant independent of $h$ and $k$ (but may depend on $T$). In particular, under the time-step constraint $k=o(\ell_h^{-1})$ as $h,k\to 0^+$, we have 
	\begin{equation*}
	\norm{\bff{u}_h^{(n)}-\bff{u}(t_n)}{\bb{L}^2}+
	\left(k \sum_{j=1}^n \norm{\bff{u}_h^{(j)}-\bff{u}(t_j)}{\bb{H}^1}^2 \right)^{\frac12}\to 0 \;\text{ as }\; h,k\to 0^+.
	\end{equation*}
\end{theorem}

%---------------------------------
\begin{proof}
%	We decompose the
%	error~$\bff{u}(t_j)-\bff{u}_h^{(j)}$
%	as~$\bff{u}(t_j)-\bff{u}_h^{(j)}=\bff{\rho}^{(j)}+\bff{\theta}^{(j)}$,
%	where~$\bff{\rho}^{(j)}=\bff{u}(t_j)-P_h\bff{u}(t_j)$
%	and~$\bff{\theta}^{(j)}=
%	P_h\bff{u}(t_j)-\bff{u}_h^{(j)}$. 
	For any $ \bff{\phi}_h\in\bb{V}_h$, the exact solution $\bff{u}$ of~\eqref{equ:LLB pro} satisfies, for any~$\bff{\phi}_h\in\bb{V}_h$,
	\begin{align}\label{equ:dis ex llb}
		&\inpro{\bff{u}(t_j)-\bff{u}(t_{j-1})}{\bff{\phi}_h}_{\bb{L}^2}
		+
		\epsilon\inpro{\nabla\bff{u}(t_j)-\nabla\bff{u}(t_{j-1})}{\nabla\bff{\phi}_h}_{\bb{L}^2}
		+
		\int_{t_{j-1}}^{t_j} \kappa_1 \inpro{\nabla\bff{u}(s)}{\nabla\bff{\phi}_h}_{\bb{L}^2}\ds
		\nn\\
		&\quad
		+
		\int_{t_{j-1}}^{t_j}
		\gamma \inpro{\bff{u}(s)\times\nabla\bff{u}(s)}{\nabla\bff{\phi}_h}_{\bb{L}^2}\ds
		+
		\int_{t_{j-1}}^{t_j}
		\kappa_2 \inpro{(1+\mu |\bff{u}(s)|^2)\bff{u}(s)}{\bff{\phi}_h}_{\bb{L}^2} \ds
		=0.
	\end{align}
	% and the approximate solution $\bff{u}_h$
	% \begin{align}\label{eq: dis_ap}
		%  \inpro{\bff{u}_h^{(j)}-\bff{u}_h^{(j-1)}}{\bff{\phi}_h}_{\bb{L}^2}
		% +k
		% \inpro{\nabla\bff{u}_h^{(j)}}{\nabla\bff{\phi}_h}_{\bb{L}^2}
		% &+k
		% \inpro{\bff{u}_h^{(j-1)}\times\nabla\bff{u}_h^{(j)}}{\nabla\bff{\phi}_h}_{\bb{L}^2}\nn\\
		% &+k
		% \inpro{(|\bff{u}_h^{(j-1)}|^2+1)\bff{u}_h^{(j)}}{\bff{\phi}_h}
		% =0.
		% \end{align}
	Subtracting~\eqref{equ:dis ex llb} from~\eqref{equ:dis1 llb}, noting~\eqref{equ:error theta rho} and~\eqref{equ:orth proj}, we obtain
	\begin{align}\label{equ:theta llb}
		\inpro{\bff{\theta}^{(j)}-\bff{\theta}^{(j-1)}}{\bff{\phi}_h}_{\bb{L}^2}
		&+
		k\kappa_1 \inpro{\nabla\bff{\theta}^{(j)}+\nabla \bff{\rho}^{(j)}}{\nabla\bff{\phi}_h}_{\bb{L}^2}
		+
		\kappa_1
		\int_{t_{j-1}}^{t_j} 
		\inpro{\nabla\bff{u}^\epsilon(t_j)- \nabla \bff{u}^\epsilon(s)}{\nabla\bff{\phi}_h}_{\bb{L}^2} \ds
		\nn\\
		&
		+ k \kappa_2 \inpro{\bff{\theta}^{(j)}}{\bff{\phi}_h}_{\bb{L}^2}
		+
		\int_{t_{j-1}}^{t_j} \kappa_2 \inpro{\bff{u}^\epsilon(t_j)- \bff{u}^\epsilon(s)}{\bff{\phi}_h}_{\bb{L}^2} \ds 
		\nn\\
		&+
		\gamma H_1(\bff{u}_h^{(j)},\bff{u}^\epsilon,\bff{\phi}_h) 
		+ 
		\kappa_2 \mu
		H_2(\bff{u}_h^{(j)},\bff{u}^\epsilon,\bff{\phi}_h) 
		\nn\\
		&
		+
		k \kappa_2 \mu
		\inpro{|\bff{u}_h^{(j-1)}|^2
			\big(\bff{\theta}^{(j)}+\bff{\rho}^{(j)}\big)}{\bff{\phi}_h}_{\bb{L}^2}
		=0,
	\end{align}
	where $H_1$ and~$H_2$ are defined in Lemma~\ref{lem:aux lem}.

    Putting~$\bff{\phi}_h=\bff{\theta}^{(j)}$ in~\eqref{equ:theta llb}, using the vector identity~\eqref{equ:vec ab a}, and rearranging the terms, we have
\begin{align}\label{equ:theta sum llb}
	&\frac{1}{2} \left(\norm{\bff{\theta}^{(j)}}{\bb{L}^2}^2 - \norm{\bff{\theta}^{(j-1)}}{\bb{L}^2}^2\right)
	+
	\frac{1}{2} \norm{\bff{\theta}^{(j)}- \bff{\theta}^{(j-1)}}{\bb{L}^2}^2
	\nn \\
	&\quad
	+
	k\kappa_1 \norm{\nabla \bff{\theta}^{(j)}}{\bb{L}^2}^2
	+
	k\kappa_2 \norm{\bff{\theta}^{(j)}}{\bb{L}^2}^2
	+
	\kappa_2 \mu k \norm{\big|\bff{u}_h^{(j-1)}\big| \big|\bff{\theta}^{(j)}\big|}{\bb{L}^2}^2
	\nn \\
	&=
	-
	k\kappa_1 \inpro{\nabla\bff{\rho}^{(j)}}{\nabla\bff{\theta}^{(j)}}_{\bb{L}^2}
	-
	\int_{t_{j-1}}^{t_j} \kappa_1 \inpro{\nabla\bff{u}(t_j)- \nabla \bff{u}(s)}{\nabla\bff{\theta}^{(j)}}_{\bb{L}^2} \ds
	\nn \\
	&\quad
	-
	\int_{t_{j-1}}^{t_j} \kappa_2 \inpro{\bff{u}(t_j)- \bff{u}(s)}{\bff{\theta}^{(j)}}_{\bb{L}^2} \ds
	-
	\gamma H_1(\bff{u}_h^{(j)},\bff{u},\bff{\theta}^{(j)}) 
	\nn \\
	&\quad
 	+ 
 	\kappa_2 \mu
	H_2(\bff{u}_h^{(j)},\bff{u},\bff{\theta}^{(j)}) 
 	-
 	k \kappa_2 \mu
 	\inpro{|\bff{u}_h^{(j-1)}|^2 \bff{\rho}{(j)}}{\bff{\theta}^{(j)}}_{\bb{L}^2}
	\nn \\
	&\le
	c\norm{\nabla\bff{\theta}^{(j)}}{\bb{L}^2} 
	\left(
		 k \norm{\nabla\bff{\rho}^{(j)}}{\bb{L}^2} 
		+
		 \int_{t_{j-1}}^{t_j}
		\norm{\nabla\bff{u}(t_j)-\nabla\bff{u}(s)}{\bb{L}^2} \ds
	\right)
	\nn \\
	&\quad
	+
	c\norm{\bff{\theta}^{(j)}}{\bb{L}^2} 
	\int_{t_{j-1}}^{t_j} 
	\norm{\bff{u}(t_j)-\bff{u}(s)}{\bb{L}^2} \ds
	+
	c
	\abs{H_1(\bff{u}_h^{(j)},\bff{u},\bff{\theta}^{(j)})}
	\nn \\
	&\quad
 	+ 
	c
	\abs{ H_2(\bff{u}_h^{(j)},\bff{u},\bff{\theta}^{(j)})}
	+
	ck
	\norm{|\bff{u}_h^{(j-1)}||\bff{\theta}^{(j)}|}{\bb{L}^2}
	\norm{\bff{u}_h^{(j-1)}}{\bb{L}^4}
	\norm{\bff{\rho}^{(j)}}{\bb{L}^4}
	\nn\\
	&=:
	R_1 + \cdots + R_5.
\end{align}
We now estimate each term on the right-hand side. Let $\delta>0$ be given and let $\alpha$ satisfies~\eqref{equ:condition alpha}. For
the first term, by~\eqref{equ:uj uj1} and Young's inequality (noting the regularity~\eqref{equ:regularity llb}),
\begin{align}\label{equ:llb R1}
	R_1
	&:=
	c \norm{\nabla\bff{\theta}^{(j)}}{\bb{L}^2} 
	\left(
		 k \norm{\nabla\bff{\rho}^{(j)}}{\bb{L}^2} 
		+
		\int_{t_{j-1}}^{t_j}
		\norm{\nabla\bff{u}(t_j)-\nabla\bff{u}(s)}{\bb{L}^2} \ds
	\right)
	\nn\\
	&\leq
	\norm{\nabla\bff{\theta}^{(j)}}{\bb{L}^2} 
	\left(
		c k \norm{\nabla\bff{\rho}^{(j)}}{\bb{L}^2} 
		+
		c k^2 
		\norm{\nabla \bff{u}}{W^{1,\infty}_T(\bb{L}^2)} 
	\right)
	\nn\\
	&\leq
	\delta k \norm{\nabla\bff{\theta}^{(j)}}{\bb{L}^2}^2
	+
	ck \norm{\nabla \bff{\rho}^{(j)}}{\bb{L}^2}^2
	+ 
	ck^3 \norm{\nabla\bff{u}}{W^{1,\infty}_T(\bb{L}^2)}^2
	\nn\\
	&\leq
	ck h^{2\alpha} + ck^3
	+
	\delta k \norm{\nabla\bff{\theta}^{(j)}}{\bb{L}^2}^2,
\end{align}
where in the last step we used~\eqref{equ:rho rhot}.
For the second term, by H\"older's and Young's inequalities, and \eqref{equ:uj uj1}, we have
\begin{align*}
	R_2
	&:=
	c\norm{\bff{\theta}^{(j)}}{\bb{L}^2} 
	\int_{t_{j-1}}^{t_j} 
	\norm{\bff{u}(t_j)-\bff{u}(s)}{\bb{L}^2} \ds
	\\
	&\leq
	c k^2
	\norm{\bff{\theta}^{(j)}}{\bb{L}^2} 
	\norm{\bff{u}}{W^{1,\infty}_T(\bb{L}^2)}
	\le
	\delta k \norm{\bff{\theta}^{(j)}}{\bb{L}^2}^2
	+
	ck^3.
\end{align*}
For the term $R_3$, we invoke Lemma~\ref{lem:aux lem} (with~$\bff{v}^{(j)}=\bff{u}_h^{(j)}$,
$\bff{w}=\bff{u}$, $\bff{\eta}^{(j)}=\bff{\rho}^{(j)}$,
and~$\bff{\mu}^{(j)}=\bff{\theta}^{(j)}$), use the fact that $\bff{u}$ is a strong solution, and apply~\eqref{equ:rho rhot} to obtain 
\begin{align*}
	R_3
	:=
	c\, \big| H_1(\bff{u}_h^{(j)},\bff{u},\bff{\theta}^{(j)}) \big|
%	\\
%	&\le
%	ck 
%	\Big(
%		\norm{\bff{u}_h^{(j)}}{\bb{H}^1}^2 
%		\norm{\nabla\bff{\rho}^{(j)}}{\bb{L}^4}^2 
%		+
%		\norm{\bff{\rho}^{(j-1)}}{\bb{L}^2}^2 
%		+
%		\norm{\bff{\theta}^{(j-1)}}{\bb{L}^2}^2 
%	\Big)
%	\\
%	&\quad
%	+
%	ck^3 
%	\Big(  
%		\norm{\bff{u}_h^{(j)}}{\bb{H}^1}^2 
%	\Big)	
%	+
%	4 \delta k  \norm{\bff{\nabla\theta}^{(j)}}{\bb{L}^2}^2,
%	\\
%	&\leq
	&\le
	c\left(1+\norm{\bff{u}_h^{(j-1)}}{\bb{L}^\infty}^2\right) k (h^{2\alpha}+k^2) 
	+ ck \norm{\bff{\theta}^{(j-1)}}{\bb{L}^2}^2
	+ c k^3 
	\\
	&\quad
	+ \delta k \norm{\nabla \bff{\theta}^{(j-1)}}{\bb{L}^2}^2
	+ 4\delta k \norm{\nabla \bff{\theta}^{(j)}}{\bb{L}^2}^2,
\end{align*}
Similarly, we have
\begin{align*} 
	R_4
	:=
	c\, \big| H_2(\bff{u}_h^{(j)},\bff{u},\bff{\theta}^{(j)}) \big|
%	\\
%	&\le
%	ck 
%	\Big(
%		\norm{\bff{\rho}^{(j-1)}}{\bb{L}^2}^2 
%		+
%		\norm{\bff{\theta}^{(j-1)}}{\bb{L}^2}^2 
%	\Big)
%	+
%	ck^3
%	+
%	2 \delta k \norm{\bff{\theta}^{(j)}}{\bb{H}^1}^2
%		+
%		\delta k
%		\norm{|\bff{u}_h^{(j-1)}||\bff{\theta}^{(j)}|}{\bb{L}^2}^2
%	\\
	&\leq
	ck \left(1+\norm{\bff{u}_h^{(j-1)}}{\bb{H}^1}^2 \right) \left(h^{2(1+\alpha)}+ k^2 \right) 
	+
	ck \left(1+\norm{\bff{u}_h^{(j-1)}}{\bb{H}^1}^2 \right) \norm{\bff{\theta}^{(j-1)}}{\bb{L}^2}^2
	\\
	&\quad
	+
	ck^3 \left(1+ \norm{\bff{u}_h^{(j-1)}}{\bb{H}^1}^2 \right)
	+
	2 \delta k \norm{\bff{\theta}^{(j)}}{\bb{H}^1}^2
		+
		\delta k
		\norm{|\bff{u}_h^{(j-1)}||\bff{\theta}^{(j)}|}{\bb{L}^2}^2.
\end{align*}
Finally,
\begin{align*} 
	R_5
	&:=
	ck
	\norm{|\bff{u}_h^{(j-1)}||\bff{\theta}^{(j)}|}{\bb{L}^2}
	\norm{\bff{u}_h^{(j-1)}}{\bb{L}^4}
	\norm{\bff{\rho}^{(j)}}{\bb{L}^4}
	\leq
	ck h^{2(1+\alpha)} \norm{\bff{u}_h^{(j-1)}}{\bb{H}^1}^2
	+
	\delta k
	\norm{|\bff{u}_h^{(j-1)}||\bff{\theta}^{(j)}|}{\bb{L}^2}^2.
\end{align*}
Altogether, we derive from~\eqref{equ:theta sum llb} that
\begin{align*}
	&\frac{1}{2} 
	\Big(
	\norm{\bff{\theta}^{(j)}}{\bb{L}^2}^2 - \norm{\bff{\theta}^{(j-1)}}{\bb{L}^2}^2
	\Big)
	+
	\frac{1}{2} 
	\norm{\bff{\theta}^{(j)}- \bff{\theta}^{(j-1)}}{\bb{L}^2}^2
	\nn\\
	&\quad
	+
	k\kappa_1 \norm{\nabla \bff{\theta}^{(j)}}{\bb{L}^2}^2
	+
	k\kappa_2 \norm{\bff{\theta}^{(j)}}{\bb{L}^2}^2
	+
	\kappa_2 \mu k \norm{\big|\bff{u}_h^{(j-1)}\big| \big|\bff{\theta}^{(j)}\big|}{\bb{L}^2}^2
	\nn\\
	&\leq
		ck \left(1+\norm{\bff{u}_h^{(j-1)}}{\bb{H}^1}^2 +\norm{\bff{u}_h^{(j-1)}}{\bb{L}^\infty}^2 \right) \left(h^{2\alpha}+ k^2 \right) 
	+
	ck \left(1+\norm{\bff{u}_h^{(j-1)}}{\bb{H}^1}^2 \right) \norm{\bff{\theta}^{(j-1)}}{\bb{L}^2}^2
	\nn\\
	&\quad
	+
	\delta k \norm{\nabla \bff{\theta}^{(j-1)}}{\bb{L}^2}^2
	+
	8 \delta k \norm{\bff{\theta}^{(j)}}{\bb{H}^1}^2
	+
	2\delta k
	\norm{|\bff{u}_h^{(j-1)}||\bff{\theta}^{(j)}|}{\bb{L}^2}^2.
\end{align*}
We now choose $\delta=\frac{1}{10} \min\{\kappa_1,\kappa_2,\kappa_2 \mu\}$ to absorb the last three terms on the right-hand side to the last three terms on the left-hand side. In this manner we obtain 
\begin{align*}
	&\norm{\bff{\theta}^{(j)}}{\bb{L}^2}^2 
	- 
	\norm{\bff{\theta}^{(j-1)}}{\bb{L}^2}^2
	+
	k\kappa_1 \Big(\norm{\nabla\bff{\theta}^{(j)}}{\bb{L}^2}^2 
	- 
	\norm{\nabla\bff{\theta}^{(j-1)}}{\bb{L}^2}^2 \Big)
	\\
	&\leq
	ck \left(1+\norm{\bff{u}_h^{(j-1)}}{\bb{H}^1}^2 +\norm{\bff{u}_h^{(j-1)}}{\bb{L}^\infty}^2 \right) \left(h^{2\alpha}+ k^2 \right) 
	+
	ck \left(1+\norm{\bff{u}_h^{(j-1)}}{\bb{H}^1}^2 \right) \norm{\bff{\theta}^{(j-1)}}{\bb{L}^2}^2
	\\
	&\leq
	ck\ell_h^2 \left(1+\norm{\bff{u}_h^{(j-1)}}{\bb{H}^1}^2 +\norm{\bff{u}_h^{(j)}}{\bb{H}^1}^2 \right) \left(h^{2\alpha}+ k^2 \right) 
	+
	ck \left(1+\norm{\bff{u}_h^{(j-1)}}{\bb{H}^1}^2 \right) \norm{\bff{\theta}^{(j-1)}}{\bb{L}^2}^2,
\end{align*}
where inverse inequality~\eqref{equ:inverse Vh} was used in the last step.
Summing this over $j=1,\ldots, n$, and using Lemma~\ref{lem: stability2}, we deduce that
\[
	\norm{\bff{\theta}^{(n)}}{\bb{L}^2}^2
	+
	k \sum_{j=1}^n \norm{\nabla \bff{\theta}^{(j)}}{\bb{L}^2}^2
	\le
	ck \ell_h^2 (h^{2\alpha}+k^2) + ck \sum_{j=1}^n \norm{\bff{\theta}^{(j)}}{\bb{L}^2}^2,
\]
where $c$ is independent of $n$, $h$, and $k$.
Applying the discrete version of the Gronwall inequality~\citep[Lemma~1]{Sug69}, we
deduce that
\begin{equation*}
	\norm{\bff{\theta}^{(n)}}{\bb{L}^2} + k \sum_{j=1}^n \norm{\nabla \bff{\theta}^{(j)}}{\bb{L}^2}^2 
	\leq ce^{cT}\ell_h (h^{\alpha}+k).
\end{equation*}
Using~\eqref{equ:rho rhot}
and the triangle inequality, we obtain the required estimate.
\end{proof}

\section{Solution to the $\epsilon$-LLBE}\label{sec:reg tec}

{The theoretical rate $O(\ell_h(h^\alpha+k))$ established in~\eqref{equ:err est} is not optimal in light of the numerical results from Simulation 1 in Section~\ref{sec:num sim}; see Figures~\ref{fig:order sim4} and~\ref{fig:order exp 1b k}.}
This motivates us to consider a regularisation of the LLBE and to investigate a numerical method for discretising the $\epsilon$-LLBE, which serves as a theoretical proxy for the original LLBE. First, we need establish the existence of a solution to the regularised problem~\eqref{equ:LLB eps pro} and its uniqueness. In Section~\ref{sec:conver}, we then show convergence of $\bff{u}^\epsilon$ to $\bff{u}$. Throughout this section, let $T>0$ be a given number which can be arbitrarily large.

%-----------------------------------------------------------------------------

\subsection{Existence and uniqueness}

As before, the Faedo--Galerkin method will be used to show the existence of a solution to the $\epsilon$-LLBE.
Let $\{(\bff{e}_i,\lambda_i)\}_{i=1}^{\infty}$ be a sequence of eigenpairs of the negative Neumann Laplace operator.
Let $\mathcal{S}_n:=\text{span}\{\bff{e}_1,\cdots,\bff{e}_n\}$ and let
$\Pi_n$ be the~$\bb{L}^2$-projection onto~$\mathcal{S}_n$. We consider the following
approximation to~\eqref{equ:LLB eps pro}: Find~$\bff{u}_n^\epsilon(\cdot,t)\in\mathcal{S}_n$
satisfying
\begin{equation}\label{equ:Gal LLB eps}
\begin{alignedat}{2}
 \pa_t\bff{u}^\epsilon_n
-\epsilon \Delta\pa_t\bff{u}^\epsilon_n
&=
\kappa_1\Delta\bff{u}^\epsilon_n
+
\gamma\Pi_n\bigl(\bff{u}^\epsilon_n\times\Delta\bff{u}^\epsilon_n\bigr)
-
\kappa_2\Pi_n\bigl((1+\mu|\bff{u}^\epsilon_n|^2)\bff{u}^\epsilon_n\bigr)
&\quad &\text{in } (0,T) \times \mathscr{D},
\\
\bff{u}_n^\epsilon(0) &= \bff{u}_{0,n}^\epsilon
		&\quad &\text{in } \mathscr{D}.
\end{alignedat}
\end{equation}
We assume that $\bff{u}_0^\epsilon$ in \eqref{equ:ue ini con} and~$\bff{u}_{0,n}^\epsilon\in\mathcal{S}_n$ in \eqref{equ:Gal LLB eps} are chosen such that the initial data approximations satisfy
\begin{subequations}\label{equ:approx u0 s}
	\begin{align}
		\lim_{\epsilon\to 0^+}
		\norm{\bff{u}_0^\epsilon-\bff{u}_0}{\bb{H}^s_\Delta} &= 0,
		\label{equ:u0 eps con}
		\\
		\lim_{n\to\infty}
		\norm{\bff{u}_{0,n}^\epsilon-\bff{u}_0^\epsilon}{\bb{H}^s_\Delta} &= 0
		\quad\text{uniformly with respect to $\epsilon\in (0,1)$},
		\label{equ:u0n H2}
	\end{align}
\end{subequations} 
for some $s\in \{1,2\}$ to be specified in the estimates.
It follows from \eqref{equ:u0 eps con} and~\eqref{equ:u0n H2} that
\begin{equation}\label{equ:u0n eps con}
\norm{\bff{u}_{0,n}^\epsilon}{\bb{H}^s_\Delta} \le c, \quad\forall\epsilon\in(0,1), \ \forall n\in \bb{N}.
\end{equation}
Local existence of a solution to~\eqref{equ:Gal LLB eps} can be shown as in Lemma~\ref{lem:loc exi}.

We now show some bounds on the local solution~$\bff{u}_n^\epsilon$
of~\eqref{equ:Gal LLB eps}, which will imply its global existence, i.e., existence of solution on $[0,T]$ for any given $T>0$. We also show that
for each~$\epsilon\in (0,1)$, the sequence~$\{\bff{u}_n^{\epsilon}\}$ converges as $n\to\infty$
and its limit is the solution~$\bff{u}^\epsilon$ of~\eqref{equ:LLB eps pro}.

First, we rewrite~\eqref{equ:Gal LLB eps} in the following equivalent form: for
every~$t\in [0,T]$,
\begin{subequations}\label{equ:LLB eps wea}
	\begin{alignat}{1}
		\inpro{\pa_t\bff{u}_n^\epsilon(t)}{\bff{v}}_{\bb{L}^2}
		&+
		\epsilon \inpro{\nabla\pa_t \bff{u}_n^\epsilon(t)}{\nabla\bff{v}}_{\bb{L}^2}
		+
		\kappa_1 \inpro{\nabla\bff{u}_n^\epsilon(t)}{\nabla\bff{v}}_{\bb{L}^2}
		+
		\kappa_2
		\inpro{\bff{u}_n^\epsilon(t)}{\bff{v}}_{\bb{L}^2}
		\nn\\
		&=
		\gamma
		\inpro{\bff{u}_n^\epsilon(t)\times\Delta\bff{u}_n^\epsilon(t)}{ \bff{v}}_{\bb{L}^2}
		-
		\kappa_2 \mu
		\inpro{|\bff{u}_n^\epsilon(t)|^2 \bff{u}_n^\epsilon(t)}{\bff{v}}_{\bb{L}^2},
		\quad \forall\bff{v}\in\bb{H}^1,
		\label{equ:LLB eps wea equ}
		\\
		\bff{u}_n^\epsilon(0) &= \bff{u}^\epsilon_{0,n}.
		\label{equ:LLB eps wea con}
	\end{alignat}
\end{subequations}
The stability of the approximate solution $\bff{u}_n^\epsilon$ in various spaces will be shown in a series of lemmas.
%-----------------------------------------------
\begin{lemma}\label{lem:ene est 1}
	Assume that~\eqref{equ:u0 eps con} and~\eqref{equ:u0n H2} hold for $s=1$. For any $\epsilon\in [0,1)$ and
	$n\in \bb{N}$,
\begin{equation}\label{equ:un H1 H2}
\norm{\bff{u}^\epsilon_n(t)}{\bb{H}^1}^2
+
\epsilon \norm{\Delta\bff{u}^\epsilon_n(t)}{\bb{L}^2}^2
+
\int_0^t \norm{\Delta\bff{u}^\epsilon_n(s)}{\bb{L}^2}^2 \ds
+
\int_0^t 
\norm{\bff{u}^\epsilon_n(s)}{\bb{L}^4}^4 \ds
\le c,
\end{equation}
where the constant $c$ depends on $\norm{\bff{u}_0}{\bb{H}^1_\Delta}$, but is independent of~$n$, $\epsilon$, and~$t$.
\end{lemma}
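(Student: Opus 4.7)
The plan is to establish the four bounds by testing the Galerkin equation~\eqref{equ:LLB eps wea equ} with two carefully chosen test functions, both of which live in $\mathcal{S}_n$ so the projector $\Pi_n$ acts as the identity. First, I would take $\bff{v}=\bff{u}_n^\epsilon(t)$. On the left-hand side this yields $\frac12 \ddt\big(\norm{\bff{u}_n^\epsilon}{\bb{L}^2}^2+\epsilon\norm{\nabla\bff{u}_n^\epsilon}{\bb{L}^2}^2\big)+\kappa_1\norm{\nabla\bff{u}_n^\epsilon}{\bb{L}^2}^2+\kappa_2\norm{\bff{u}_n^\epsilon}{\bb{L}^2}^2$. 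The cross-product term $\inpro{\bff{u}_n^\epsilon\times\Delta\bff{u}_n^\epsilon}{\bff{u}_n^\epsilon}_{\bb{L}^2}$ vanishes pointwise by orthogonality of the vector product, and the cubic term produces $\kappa_2\mu\norm{\bff{u}_n^\epsilon}{\bb{L}^4}^4$ with the favourable sign on the left-hand side. Integrating in time and using the uniform initial bound from~\eqref{equ:u0n eps con} with $s=1$ gives the $L^\infty_t(\bb{L}^2)$, $L^2_t(\bb{H}^1)$, and $L^4_t(\bb{L}^4)$ pieces, uniformly in $n$ and $\epsilon\in[0,1)$.

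Next I would test with $\bff{v}=-\Delta\bff{u}_n^\epsilon(t)\in\mathcal{S}_n$, which is admissible since the eigenfunctions of the Neumann Laplacian span $\mathcal{S}_n$ (and satisfy the Neumann boundary condition, justifying the needed integrations by parts). The time-derivative terms produce $\tfrac12\ddt\big(\norm{\nabla\bff{u}_n^\epsilon}{\bb{L}^2}^2+\epsilon\norm{\Delta\bff{u}_n^\epsilon}{\bb{L}^2}^2\big)$; the dissipative terms give $\kappa_1\norm{\Delta\bff{u}_n^\epsilon}{\bb{L}^2}^2+\kappa_2\norm{\nabla\bff{u}_n^\epsilon}{\bb{L}^2}^2$; and the cross-product term once again vanishes by orthogonality. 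The key observation is that the cubic nonlinearity also yields a good sign: integration by parts gives
\begin{equation*}
-\kappa_2\mu\inpro{|\bff{u}_n^\epsilon|^2\bff{u}_n^\epsilon}{-\Delta\bff{u}_n^\epsilon}_{\bb{L}^2}
= \kappa_2\mu\int_\mathscr{D}\Big(|\bff{u}_n^\epsilon|^2|\nabla\bff{u}_n^\epsilon|^2+2\sum_{j=1}^d(\bff{u}_n^\epsilon\cdot\pa_j\bff{u}_n^\epsilon)^2\Big)\dx\ge 0,
\end{equation*}
using the componentwise identity $\pa_j(|\bff{u}|^2\bff{u})\cdot\pa_j\bff{u}=|\bff{u}|^2|\pa_j\bff{u}|^2+2(\bff{u}\cdot\pa_j\bff{u})^2$. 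Thus every term has the correct sign, and integrating in time together with~\eqref{equ:u0n eps con} produces a uniform bound on $\norm{\nabla\bff{u}_n^\epsilon(t)}{\bb{L}^2}^2+\epsilon\norm{\Delta\bff{u}_n^\epsilon(t)}{\bb{L}^2}^2+\int_0^t\norm{\Delta\bff{u}_n^\epsilon(s)}{\bb{L}^2}^2\ds$.

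Combining the two steps yields the four estimates in~\eqref{equ:un H1 H2}. There is essentially no obstacle here beyond the algebraic manipulations: the proof works precisely because both structural features of the LLBE---the pointwise orthogonality $\bff{u}\perp(\bff{u}\times\Delta\bff{u})$ and the sign-definite cubic contribution after integration by parts---conspire to make both energy identities Gronwall-free. The bound is automatically uniform in $\epsilon\in[0,1)$ because $\epsilon$ multiplies a positive contribution that we discard as needed, and uniform in $n$ because the initial data estimate~\eqref{equ:u0n eps con} is uniform in $n$. Note that this gives global-in-time control, so in particular the local Galerkin solution constructed via Lemma~\ref{lem:loc exi} extends to $[0,T]$ for any given $T>0$.
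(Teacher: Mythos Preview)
Your proposal is correct and follows exactly the same two-step approach as the paper: test~\eqref{equ:LLB eps wea equ} first with $\bff{u}_n^\epsilon$ and then with $-\Delta\bff{u}_n^\epsilon$, exploiting the orthogonality of the cross product and the sign-definiteness of the cubic term after integration by parts. One cosmetic slip: in your displayed identity the leading minus sign is spurious, since $-\kappa_2\mu\inpro{|\bff{u}_n^\epsilon|^2\bff{u}_n^\epsilon}{-\Delta\bff{u}_n^\epsilon}_{\bb{L}^2}=-\kappa_2\mu\inpro{\nabla(|\bff{u}_n^\epsilon|^2\bff{u}_n^\epsilon)}{\nabla\bff{u}_n^\epsilon}_{\bb{L}^2}\le 0$; this is indeed the ``good sign'' (the right-hand side contribution is nonpositive), so the conclusion is unaffected.
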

\begin{proof}
Letting~$\bff{v}=\bff{u}_n^\epsilon(t)$ in~\eqref{equ:LLB eps wea equ} and
noting $(\bff{a}\times\bff{b})\cdot\bff{b}=0$ give
\begin{align}\label{equ:un L2 H1}
\frac{1}{2}\ddt{ }\|\bff{u}^\epsilon_n(t)\|_{\bb{L}^2}^2 
&+
\frac{\epsilon}{2}\ddt{ }\|\nabla\bff{u}^\epsilon_n(t)\|_{\bb{L}^2}^2
+
\kappa_1\|\nabla\bff{u}^\epsilon_n(t)\|_{\bb{L}^2}^2
+
\kappa_2 \norm{\bff{u}_n^\epsilon(t)}{\bb{L}^2}^2
+
\kappa_2\mu \norm{\bff{u}_n^\epsilon(t)}{\bb{L}^4}^4
= 0.
\end{align}
In a similar fashion, by choosing $\bff{v}=-\Delta\bff{u}^\epsilon_n(t)\in \mathcal{S}_n$
in~\eqref{equ:LLB eps wea equ} we obtain
\begin{align}\label{equ:un H1}
\frac{1}{2}\ddt{}\|\nabla\bff{u}^\epsilon_n(t)\|_{\bb{L}^2}^2 
&+
\frac{\epsilon}{2}\ddt{ }\|\Delta\bff{u}^\epsilon_n(t)\|_{\bb{L}^2}^2
+
\kappa_1\|\Delta\bff{u}^\epsilon_n(t)\|_{\bb{L}^2}^2
+
\kappa_2 \norm{\nabla \bff{u}_n^\epsilon(t)}{\bb{L}^2}^2
\nn\\
&=
\kappa_2\mu
\inpro{|\bff{u}^\epsilon_n(t)|^2 \bff{u}^\epsilon_n(t)}{\Delta\bff{u}^\epsilon_n(t)}_{\bb{L}^2}.
\end{align}
Note that by applying integration by parts,
\begin{align*}
	\kappa_2 \mu 
	\inpro{|\bff{u}^\epsilon_n|^2 \bff{u}^\epsilon_n}{\Delta\bff{u}^\epsilon_n}_{\bb{L}^2}
	&=
	- \kappa_2 \mu 
	\inpro{\nabla\big(|\bff{u}^\epsilon_n|^2 \bff{u}_n^\epsilon\big)}
	{\nabla\bff{u}_n^\epsilon}_{\bb{L}^2}
	\\
	&=
	-2 \kappa_2 \mu 
	\norm{\bff{u}_n^\epsilon \cdot \nabla \bff{u}_n^\epsilon}{\bb{L}^2}^2
	-
	\kappa_2 \mu \norm{|\bff{u}_n^\epsilon|\, |\nabla \bff{u}_n^\epsilon|}{L^2}^2 
	\leq 0.
\end{align*}
Integrating~\eqref{equ:un L2 H1} and~\eqref{equ:un H1} yields~\eqref{equ:un H1 H2}, thus completing the proof of the lemma.
\end{proof}
%Integrating~\eqref{equ:un L2 H1} and~\eqref{equ:un H1}, we deduce that for any $t\in[0,\tau]$ the solution $\bff{u}_n^\epsilon (t)$ satisfies~\eqref{equ:un H1 H2}

%-----------------------------------------------

\begin{lemma}\label{lem:ene est dtu}
Assume that~\eqref{equ:u0 eps con} and~\eqref{equ:u0n H2} hold for $s=1$. 
For any~$\epsilon\in(0,1)$ and~$n\in\bb{N}$, the approximate
solution~$\bff{u}_n^\epsilon$ satisfies
\begin{equation}\label{equ:est dt u L2}
	\norm{\pa_t\bff{u}_n^\epsilon(t)}{\bb{L}^2}^2
	+
	\epsilon \norm{\nabla\pa_t\bff{u}_n^\epsilon(t)}{\bb{L}^2}^2
	\lesssim
	\frac{1}{\epsilon^\beta},
\end{equation}
where $\beta$ is defined by
	\begin{align}
		\label{equ:d ast}
		\beta &:= \min\{2,d\}.
	\end{align}
The constant depends on $\norm{\bff{u}_0}{\bb{H}^1_\Delta}$, but is independent of $n$, $\epsilon$, and $t$.
\end{lemma}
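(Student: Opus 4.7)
The natural plan is to test the Galerkin equation \eqref{equ:LLB eps wea equ} with $\bff{v}=\pa_t\bff{u}_n^\epsilon(t)\in\mathcal{S}_n$, so that the LHS becomes exactly
$\norm{\pa_t\bff{u}_n^\epsilon}{\bb{L}^2}^2+\epsilon\norm{\nabla\pa_t\bff{u}_n^\epsilon}{\bb{L}^2}^2$.
Everything then reduces to bounding the four resulting terms on the right, using the uniform estimates already established in Lemma~\ref{lem:ene est 1}, most crucially $\norm{\bff{u}_n^\epsilon}{L^\infty_T(\bb{H}^1)}\lesssim 1$ and $\norm{\Delta\bff{u}_n^\epsilon(t)}{\bb{L}^2}\lesssim \epsilon^{-1/2}$. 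The two linear terms $\kappa_1\inpro{\nabla\bff{u}_n^\epsilon}{\nabla\pa_t\bff{u}_n^\epsilon}_{\bb{L}^2}$ and $\kappa_2\inpro{\bff{u}_n^\epsilon}{\pa_t\bff{u}_n^\epsilon}_{\bb{L}^2}$ are handled by Cauchy--Schwarz together with Young's inequality, where the gradient pairing is split with weight $\epsilon$ so that one half is absorbed into $\epsilon\norm{\nabla\pa_t\bff{u}_n^\epsilon}{\bb{L}^2}^2$, the remainder producing at worst $c\epsilon^{-1}\norm{\nabla\bff{u}_n^\epsilon}{\bb{L}^2}^2\lesssim \epsilon^{-1}$. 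The cubic term $\inpro{|\bff{u}_n^\epsilon|^2\bff{u}_n^\epsilon}{\pa_t\bff{u}_n^\epsilon}_{\bb{L}^2}$ is harmless in dimensions $d\le 3$: Hölder together with $\bb{H}^1\hookrightarrow \bb{L}^6$ gives $\lesssim \norm{\bff{u}_n^\epsilon}{\bb{H}^1}^3\norm{\pa_t\bff{u}_n^\epsilon}{\bb{L}^2}\lesssim \norm{\pa_t\bff{u}_n^\epsilon}{\bb{L}^2}$, which is absorbed into the LHS via Young's inequality.

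The main obstacle, and the only place where the dimension enters, is the gyromagnetic term $\gamma\inpro{\bff{u}_n^\epsilon\times\Delta\bff{u}_n^\epsilon}{\pa_t\bff{u}_n^\epsilon}_{\bb{L}^2}$, because $\Delta\bff{u}_n^\epsilon$ only enjoys an $\epsilon$-dependent bound and the appropriate $\bb{L}^\infty$ control on $\bff{u}_n^\epsilon$ is dimension-sensitive. I plan to split the argument accordingly. In the case $d=1$, the embedding $\bb{H}^1\hookrightarrow\bb{L}^\infty$ yields
\begin{equation*}
\bigl|\inpro{\bff{u}_n^\epsilon\times\Delta\bff{u}_n^\epsilon}{\pa_t\bff{u}_n^\epsilon}_{\bb{L}^2}\bigr|
\leq\norm{\bff{u}_n^\epsilon}{\bb{L}^\infty}\norm{\Delta\bff{u}_n^\epsilon}{\bb{L}^2}\norm{\pa_t\bff{u}_n^\epsilon}{\bb{L}^2}
\lesssim\epsilon^{-1/2}\norm{\pa_t\bff{u}_n^\epsilon}{\bb{L}^2},
\end{equation*}
whose Young splitting produces $\tfrac14\norm{\pa_t\bff{u}_n^\epsilon}{\bb{L}^2}^2+c\epsilon^{-1}$, consistent with $\beta=1$. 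In the cases $d=2,3$, I instead use Hölder with exponents $(4,2,4)$ together with $\bb{H}^1\hookrightarrow\bb{L}^4$ applied to both $\bff{u}_n^\epsilon$ and $\pa_t\bff{u}_n^\epsilon$, giving
\begin{equation*}
\bigl|\inpro{\bff{u}_n^\epsilon\times\Delta\bff{u}_n^\epsilon}{\pa_t\bff{u}_n^\epsilon}_{\bb{L}^2}\bigr|
\lesssim\epsilon^{-1/2}\norm{\pa_t\bff{u}_n^\epsilon}{\bb{H}^1},
\end{equation*}
and then the weighted Young inequality (with weight $\epsilon$) yields $\tfrac{\epsilon}{4}\norm{\pa_t\bff{u}_n^\epsilon}{\bb{H}^1}^2+c\epsilon^{-2}$; since $\epsilon<1$, the $\bb{L}^2$-part of the $\bb{H}^1$ norm is dominated by $\norm{\pa_t\bff{u}_n^\epsilon}{\bb{L}^2}^2$ and can also be absorbed, while the $\nabla$-part is absorbed into $\epsilon\norm{\nabla\pa_t\bff{u}_n^\epsilon}{\bb{L}^2}^2$. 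This is precisely the mechanism that produces $\beta=2$.

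Combining the four estimates, choosing the Young constants so that a fixed fraction of both $\norm{\pa_t\bff{u}_n^\epsilon}{\bb{L}^2}^2$ and $\epsilon\norm{\nabla\pa_t\bff{u}_n^\epsilon}{\bb{L}^2}^2$ remains on the left-hand side, and dropping the positive remainder gives the pointwise (in $t$) bound $\norm{\pa_t\bff{u}_n^\epsilon(t)}{\bb{L}^2}^2+\epsilon\norm{\nabla\pa_t\bff{u}_n^\epsilon(t)}{\bb{L}^2}^2\lesssim\epsilon^{-\beta}$, as claimed. The constant depends only on $\norm{\bff{u}_0}{\bb{H}^1_\Delta}$ through the uniform bounds of Lemma~\ref{lem:ene est 1} and on the embedding constants, so it is indeed independent of $n$, $\epsilon$, and $t$; the hypothesis \eqref{equ:u0n eps con} with $s=1$ handles the initial time, where $\pa_t\bff{u}_n^\epsilon(0)$ is determined from \eqref{equ:Gal LLB eps} and inherits the same scaling estimate.
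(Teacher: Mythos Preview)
Your argument is correct and reaches the same conclusion, but your treatment of the gyromagnetic term for $d=2,3$ differs genuinely from the paper's. The paper bounds
\[
\bigl|\inpro{\bff{u}_n^\epsilon\times\Delta\bff{u}_n^\epsilon}{\pa_t\bff{u}_n^\epsilon}_{\bb{L}^2}\bigr|
\le \norm{\bff{u}_n^\epsilon}{\bb{L}^\infty}\norm{\Delta\bff{u}_n^\epsilon}{\bb{L}^2}\norm{\pa_t\bff{u}_n^\epsilon}{\bb{L}^2}
\]
in all dimensions and then absorbs only into $\norm{\pa_t\bff{u}_n^\epsilon}{\bb{L}^2}^2$; the dimension enters through the estimate $\norm{\bff{u}_n^\epsilon}{\bb{L}^\infty}^2\lesssim\epsilon^{1-\beta}$, which for $d=2,3$ invokes the elliptic regularity shift (Theorems~\ref{the:ell reg polyg}--\ref{the:ell reg polyhed}) to pass from $\norm{\Delta\bff{u}_n^\epsilon}{\bb{L}^2}\lesssim\epsilon^{-1/2}$ to $\bb{H}^{1+\alpha}\hookrightarrow\bb{L}^\infty$. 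Your route with H\"older exponents $(4,2,4)$ and $\bb{H}^1\hookrightarrow\bb{L}^4$ avoids elliptic regularity entirely and instead exploits the second term $\epsilon\norm{\nabla\pa_t\bff{u}_n^\epsilon}{\bb{L}^2}^2$ on the left as an absorption target. This is more elementary and works on any Lipschitz domain. The paper's approach, on the other hand, never touches the $\epsilon\norm{\nabla\pa_t\bff{u}_n^\epsilon}{\bb{L}^2}^2$ term on the left (all absorptions go into $\norm{\pa_t\bff{u}_n^\epsilon}{\bb{L}^2}^2$), which is a cleaner bookkeeping but at the cost of the regularity machinery. A minor remark: your final sentence about ``the initial time'' is unnecessary, since the estimate is pointwise in $t$ and no special treatment of $t=0$ is needed.
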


\begin{proof}
Setting $\bff{v}=\partial_t \bff{u}_n^\epsilon(t)$ in~\eqref{equ:LLB eps wea equ} and applying Young's inequality give
\begin{align*}
	&\norm{\partial_t \bff{u}_n^\epsilon(t)}{\bb{L}^2}^2
	+
	\epsilon \norm{\nabla\partial_t \bff{u}_n^\epsilon (t)}{\bb{L}^2}^2
	\\
	&=
	\kappa_1 \inpro{\Delta \bff{u}_n^\epsilon (t)}{\pa_t \bff{u}_n^\epsilon (t)}_{\bb{L}^2}
	-
	\kappa_2 \inpro{\bff{u}_n^\epsilon (t)}{\pa_t \bff{u}_n^\epsilon (t)}_{\bb{L}^2}
	+
	\gamma
	\inpro{\bff{u}_n^\epsilon(t)\times\Delta\bff{u}_n^\epsilon(t)}{\pa_t \bff{u}_n^\epsilon (t)}_{\bb{L}^2}
	\\
	&\quad
	-
	\kappa_2 \mu
	\inpro{|\bff{u}_n^\epsilon(t)|^2\bff{u}_n^\epsilon(t)}{\pa_t \bff{u}_n^\epsilon (t)}_{\bb{L}^2}
	\\
	&\leq
	c \norm{\Delta \bff{u}_n^\epsilon (t)}{\bb{L}^2}^2
	+
	c \norm{\bff{u}_n^\epsilon (t)}{\bb{L}^2}^2
	+
	4\norm{\bff{u}_n^\epsilon (t)}{\bb{L}^\infty}^2 \norm{\Delta \bff{u}_n^\epsilon (t)}{\bb{L}^2}^2
	+
	c \norm{\bff{u}_n^\epsilon (t)}{\bb{L}^6}^6
	+
	\frac14 \norm{\pa_t \bff{u}_n^\epsilon (t)}{\bb{L}^2}^2
	\\
	&\leq
	c + c\epsilon^{-1} \norm{\bff{u}_n^\epsilon}{L^\infty_T(\bb{L}^\infty)}^2
	+
	\frac14 \norm{\pa_t \bff{u}_n^\epsilon (t)}{\bb{L}^2}^2,
\end{align*}
where in the last step we used the Sobolev embedding $\bb{H}^1\hookrightarrow \bb{L}^6$ and \eqref{equ:un H1 H2}. The constant $c$ depends on $\norm{\bff{u}_0}{\bb{H}^1_\Delta}$, but is independent of $n$, $\epsilon$, and $t$.
This implies 
\begin{align*}
    \norm{\partial_t \bff{u}_n^\epsilon(t)}{\bb{L}^2}^2
	+
	\epsilon \norm{\nabla\partial_t \bff{u}_n^\epsilon (t)}{\bb{L}^2}^2
    \leq
    c + c\epsilon^{-1} \norm{\bff{u}_n^\epsilon}{L^\infty_T(\bb{L}^\infty)}^2.
\end{align*}
It remains to prove that $\norm{\bff{u}_n^\epsilon}{L^\infty_T(\bb{L}^\infty)}^2 \leq \epsilon^{1-\beta}$.
Now, if $d=1$, then 
\begin{equation}\label{equ:Linfty 1}
	\norm{\bff{u}_n^\epsilon}{L^\infty_T(\bb{L}^\infty)}^2
	\leqs
	\norm{\bff{u}_n^\epsilon}{L^\infty_T(\bb{H}^1)}^2
	\leqs 1.
\end{equation}
For $d=2$, we apply the Sobolev embedding $\bb{H}^{3/2}\hookrightarrow \bb{L}^\infty$, the regularity shift estimate \eqref{equ:shift polyg H1s}, and Lemma~\ref{lem:ene est 1} to obtain
\begin{align}\label{equ:Linfty 23}
	\norm{\bff{u}_n^\epsilon}{L^\infty_T(\bb{L}^\infty)}^2
    \lesssim
    \norm{\bff{u}_n^\epsilon}{L^\infty_T(\bb{H}^{3/2})}^2
	\lesssim
	\norm{\bff{u}_n^\epsilon}{L^\infty_T(\bb{L}^2)}^2
	+
	\norm{\Delta \bff{u}_n^\epsilon}{L^\infty_T(\bb{L}^2)}^2
	\lesssim
	\epsilon^{-1}.
\end{align}
The same result holds for $d=3$ if we use the Sobolev embedding $\bb{H}^2\hookrightarrow \bb{L}^\infty$ and the $\bb{H}^2$ regularity result instead.
This completes the proof of the lemma.
%......
%\begin{align*}
%	&\frac{\kappa_1}{2} \ddt \norm{\nabla \bff{u}_n^\epsilon (t)}{\bb{L}^2}^2
%	+
%	\frac{\kappa_2}{2} \ddt \norm{\bff{u}_n^\epsilon (t)}{\bb{L}^2}^2
%	+
%	\frac{\kappa_2}{4} \ddt \norm{\bff{u}_n^\epsilon (t)}{\bb{L}^4}^4
%	+
%	\norm{\partial_t \bff{u}_n^\epsilon(t)}{\bb{L}^2}^2
%	+
%	\epsilon \norm{\nabla\partial_t \bff{u}_n^\epsilon (t)}{\bb{L}^2}^2
%	\\
%	&=
%	-\gamma \inpro{\bff{u}_n^\epsilon(t)\times \nabla \bff{u}_n^\epsilon (t)}{\nabla\partial_t \bff{u}_n^\epsilon (t)}_{\bb{L}^2}
%	\\
%	&\leq
%	\frac{\gamma^2}{2\epsilon} \norm{\bff{u}_n^\epsilon(t)}{\bb{L}^\infty}^2 \norm{\nabla \bff{u}_n^\epsilon(t)}{\bb{L}^2}^2
%	+
%	\frac{\epsilon}{2} \norm{\nabla\partial_t \bff{u}_n^\epsilon (t)}{\bb{L}^2}^2.
%\end{align*}
%Integrating both sides over $(0,t)$, noting the Sobolev embedding $\bb{H}^2\hookrightarrow \bb{L}^\infty$, and applying~\eqref{equ:un H1 H2}, we obtain the required estimate.
\end{proof}

\begin{lemma}\label{lem:ene est 3}
	Assume that~\eqref{equ:u0 eps con} and~\eqref{equ:u0n H2} hold for $s=1$. 
	For any~$\epsilon\in(0,1)$ and~$n\in\bb{N}$, the approximate
	solution~$\bff{u}_n^\epsilon$ satisfies
\begin{equation}\label{equ:pat Lap une}
	\norm{\nabla\pa_t\bff{u}_n^\epsilon(s)}{\bb{L}^2}^2 
	+
 	\epsilon\norm{\Delta\pa_t\bff{u}_n^\epsilon(s)}{\bb{L}^2}^2 
 	\lesssim 
	\frac{1}{\epsilon^{\beta+1}},
\end{equation}
where $\beta$ is defined by~\eqref{equ:d ast}.
\end{lemma}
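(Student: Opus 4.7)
The approach is to test the Galerkin weak equation \eqref{equ:LLB eps wea equ} with $\bff{v} = -\Delta\pa_t\bff{u}_n^\epsilon(t)$. This choice is admissible because $\mathcal{S}_n$ is spanned by Neumann Laplacian eigenfunctions, so $-\Delta\pa_t\bff{u}_n^\epsilon(t)$ lies in $\mathcal{S}_n\subset\bb{H}^1$ and satisfies the homogeneous Neumann condition. Two applications of integration by parts then turn the first two terms on the left-hand side of \eqref{equ:LLB eps wea equ} into exactly the quantities to be controlled, namely $\norm{\nabla\pa_t\bff{u}_n^\epsilon(t)}{\bb{L}^2}^2 + \epsilon\norm{\Delta\pa_t\bff{u}_n^\epsilon(t)}{\bb{L}^2}^2$.

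The remaining four terms on the right-hand side are rewritten, after integration by parts when convenient, as $\kappa_1\inpro{\Delta\bff{u}_n^\epsilon}{\Delta\pa_t\bff{u}_n^\epsilon}_{\bb{L}^2}$, $\kappa_2\inpro{\nabla\bff{u}_n^\epsilon}{\nabla\pa_t\bff{u}_n^\epsilon}_{\bb{L}^2}$, the cross-product term $-\gamma\inpro{\bff{u}_n^\epsilon\times\Delta\bff{u}_n^\epsilon}{\Delta\pa_t\bff{u}_n^\epsilon}_{\bb{L}^2}$, and the cubic term $\kappa_2\mu\inpro{\nabla(|\bff{u}_n^\epsilon|^2\bff{u}_n^\epsilon)}{\nabla\pa_t\bff{u}_n^\epsilon}_{\bb{L}^2}$. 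The two linear terms are handled by Cauchy--Schwarz and Young's inequality, absorbing $\tfrac{\epsilon}{8}\norm{\Delta\pa_t\bff{u}_n^\epsilon}{\bb{L}^2}^2$ and $\tfrac{1}{8}\norm{\nabla\pa_t\bff{u}_n^\epsilon}{\bb{L}^2}^2$ into the left-hand side, and leaving residuals of the form $c\epsilon^{-1}\norm{\Delta\bff{u}_n^\epsilon}{\bb{L}^2}^2 + c\norm{\nabla\bff{u}_n^\epsilon}{\bb{L}^2}^2$, both controlled by $O(\epsilon^{-2})$ thanks to Lemma~\ref{lem:ene est 1}.

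The hard part is the cross-product term, which is the most singular in $\epsilon$. H\"older's inequality gives the bound $\norm{\bff{u}_n^\epsilon}{\bb{L}^\infty}\norm{\Delta\bff{u}_n^\epsilon}{\bb{L}^2}\norm{\Delta\pa_t\bff{u}_n^\epsilon}{\bb{L}^2}$, and Young's inequality with weight $\epsilon$ isolates an absorbable $\tfrac{\epsilon}{8}\norm{\Delta\pa_t\bff{u}_n^\epsilon}{\bb{L}^2}^2$ plus a residual of order $\epsilon^{-1}\norm{\bff{u}_n^\epsilon}{\bb{L}^\infty}^2\norm{\Delta\bff{u}_n^\epsilon}{\bb{L}^2}^2$. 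The key inputs are the $\bb{L}^\infty$ bound $\norm{\bff{u}_n^\epsilon}{\bb{L}^\infty}^2\lesssim\epsilon^{\,1-\beta}$ established inside the proof of Lemma~\ref{lem:ene est dtu} (see \eqref{equ:Linfty 1} and \eqref{equ:Linfty 23}) together with $\norm{\Delta\bff{u}_n^\epsilon}{\bb{L}^2}^2\lesssim\epsilon^{-1}$ from \eqref{equ:un H1 H2}. This gives a residual of size $\epsilon^{-1}\cdot\epsilon^{\,1-\beta}\cdot\epsilon^{-1}=\epsilon^{-(\beta+1)}$, matching the target exactly and explaining why we introduce the exponent $\beta$.

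For the cubic contribution, the Leibniz rule yields $|\nabla(|\bff{u}_n^\epsilon|^2\bff{u}_n^\epsilon)|\leq 3|\bff{u}_n^\epsilon|^2|\nabla\bff{u}_n^\epsilon|$, so after H\"older and Young it is bounded by $\tfrac{1}{8}\norm{\nabla\pa_t\bff{u}_n^\epsilon}{\bb{L}^2}^2 + c\norm{\bff{u}_n^\epsilon}{\bb{L}^\infty}^4\norm{\nabla\bff{u}_n^\epsilon}{\bb{L}^2}^2$, a contribution of order $\epsilon^{\,2(1-\beta)}$ that is dominated by the cross-product residual. Collecting all estimates, absorbing the small fractions of $\norm{\nabla\pa_t\bff{u}_n^\epsilon}{\bb{L}^2}^2$ and $\epsilon\norm{\Delta\pa_t\bff{u}_n^\epsilon}{\bb{L}^2}^2$ into the left, and taking a supremum over $t\in[0,T]$ yields \eqref{equ:pat Lap une}; note that no Gronwall step is needed since the right-hand side is already pointwise bounded in $t$ by the previously established energy estimates.
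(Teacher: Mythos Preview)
Your proof is correct and follows essentially the same approach as the paper: test \eqref{equ:LLB eps wea equ} with $\bff{v}=-\Delta\pa_t\bff{u}_n^\epsilon$, apply H\"older and Young, and invoke the $\bb{L}^\infty$ bound $\norm{\bff{u}_n^\epsilon}{\bb{L}^\infty}^2\lesssim\epsilon^{1-\beta}$ together with $\norm{\Delta\bff{u}_n^\epsilon}{\bb{L}^2}^2\lesssim\epsilon^{-1}$ from Lemma~\ref{lem:ene est 1}. The only cosmetic difference is that the paper keeps all four right-hand terms paired with $\Delta\pa_t\bff{u}_n^\epsilon$ and absorbs everything into $\tfrac{\epsilon}{2}\norm{\Delta\pa_t\bff{u}_n^\epsilon}{\bb{L}^2}^2$, whereas you integrate by parts on the $\kappa_2$ and cubic terms to pair them with $\nabla\pa_t\bff{u}_n^\epsilon$; both routes give the same $\epsilon^{-(\beta+1)}$ bound.
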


\begin{proof}
Choosing~$\bff{v}=-\Delta\pa_t\bff{u}_n^\epsilon(t)\in\mathcal{S}_n$ in~\eqref{equ:LLB eps wea equ}
and using H\"older's inequality, we obtain
\begin{align*}\label{equ:nab pat une}
&\norm{\nabla\pa_t\bff{u}_n^\epsilon(t)}{\bb{L}^2}^2
+
\epsilon \norm{\Delta\pa_t\bff{u}_n^\epsilon(t)}{\bb{L}^2}^2
\nn\\
&\quad=
-\kappa_1 \inpro{\Delta \bff{u}_n^\epsilon(t)}{\Delta \pa_t \bff{u}_n^\epsilon(t)}_{\bb{L}^2}
+
\kappa_2 \inpro{\bff{u}_n^\epsilon(t)}{\Delta \pa_t \bff{u}_n^\epsilon(t)}_{\bb{L}^2}
\nn\\
&\quad
+
\gamma
\inpro{\bff{u}_n^\epsilon(t)\times\Delta\bff{u}_n^\epsilon(t)}
{\Delta\pa_t\bff{u}_n^\epsilon(t)}_{\bb{L}^2}
-
\kappa_2 \mu
\inpro{|\bff{u}_n^\epsilon(t)|^2\bff{u}_n^\epsilon(t)}{\Delta\pa_t\bff{u}_n^\epsilon(t)}_{\bb{L}^2}
\nn\\
&\quad\lesssim
\Big(
\norm{\Delta\bff{u}_n^\epsilon(t)}{\bb{L}^2}
+
\norm{\bff{u}_n^\epsilon(t)}{\bb{L}^2}
+
\norm{\bff{u}_n^\epsilon(t)}{\bb{L}^\infty} \norm{\Delta\bff{u}_n^\epsilon(t)}{\bb{L}^2}
+
\norm{\bff{u}_n^\epsilon(t)}{\bb{L}^\infty}^2\norm{\bff{u}_n^\epsilon(t)}{\bb{L}^2}
\Big)
\norm{\Delta\pa_t\bff{u}_n^\epsilon(t)}{\bb{L}^2}
\nn\\
&\quad\le
\frac{c}{\epsilon}
\Big(
\norm{\Delta\bff{u}_n^\epsilon(t)}{\bb{L}^2}^2
+
\norm{\bff{u}_n^\epsilon(t)}{\bb{L}^2}^2
+
\norm{\bff{u}_n^\epsilon(t)}{\bb{L}^\infty}^2
\norm{\Delta\bff{u}_n^\epsilon(t)}{\bb{L}^2}^2
+
\norm{\bff{u}_n^\epsilon(t)}{\bb{L}^\infty}^4
\norm{\bff{u}_n^\epsilon(t)}{\bb{L}^2}^2
\Big)
\nn\\
&\qquad
+
\frac{\epsilon}{2}
\norm{\Delta\pa_t\bff{u}_n^\epsilon(t)}{\bb{L}^2}^2.
\end{align*}
Rearranging the above equation, and using \eqref{equ:u0n eps con} and Lemma~\ref{lem:ene est 1} yield
\begin{align*}
	\norm{\nabla\pa_t\bff{u}_n^\epsilon(t)}{\bb{L}^2}^2
	+
	\epsilon \norm{\Delta\pa_t\bff{u}_n^\epsilon(t)}{\bb{L}^2}^2
	&\lesssim
	\frac{1}{\epsilon} \left(\frac{1}{\epsilon} +1 + \frac{1}{\epsilon} \norm{\bff{u}_n^\epsilon}{L^\infty_T(\bb{L}^\infty)}^2 + \norm{\bff{u}_n^\epsilon}{L^\infty_T(\bb{L}^\infty)}^4 \right).
\end{align*}
By~\eqref{equ:Linfty 1} and \eqref{equ:Linfty 23}, we have the required result.
\end{proof}

\begin{lemma}\label{lem:ene est 4}
	Assume that~\eqref{equ:u0 eps con} and~\eqref{equ:u0n H2} hold for $s=1$. 
	For any~$\epsilon\in(0,1)$, $n\in\bb{N}$, and $\alpha$ satisfying~\eqref{equ:condition alpha}, the approximate
	solution~$\bff{u}_n^\epsilon$ satisfies
	\begin{equation}\label{equ:Lap H1s est}
		\epsilon \norm{\bff{u}_n^\epsilon(t)}{\bb{H}^{1+\alpha}}^2
		+
		\epsilon^{\beta+2}
		\norm{\pa_t\bff{u}_n^\epsilon(t)}{\bb{H}^{1+\alpha}}^2 
		\lesssim 
		1,
	\end{equation}
	where $\beta$ is defined by \eqref{equ:d ast}.
\end{lemma}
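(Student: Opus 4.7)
The plan is to use elliptic regularity to pass from the $\bb{L}^2$ bounds on $\Delta \bff{u}_n^\epsilon$ and $\Delta \pa_t \bff{u}_n^\epsilon$ (already established) to the $\bb{H}^{1+\alpha}$ bounds claimed in~\eqref{equ:Lap H1s est}. Since $\mathscr{D}$ is either a two-dimensional (possibly non-convex) polygonal domain or a three-dimensional convex polyhedral domain, the shift estimate~\eqref{equ:shift polyg H1s} (resp.~\eqref{equ:shift polyh W2p}) applies under the Neumann condition, yielding for any $\bff{v} \in \bb{H}^1_\Delta$ with $\pa\bff{v}/\pa\bff{\nu}=0$ on $\pa\mathscr{D}$,
\[
\norm{\bff{v}}{\bb{H}^{1+\alpha}}^2
\lesssim
\norm{\bff{v}}{\widetilde{\bb{H}}^{-1+\alpha}}^2 + \norm{\Delta \bff{v}}{\widetilde{\bb{H}}^{-1+\alpha}}^2
\lesssim
\norm{\bff{v}}{\bb{L}^2}^2 + \norm{\Delta \bff{v}}{\bb{L}^2}^2,
\]
where in the last inequality we used the continuous embedding $\bb{L}^2\hookrightarrow \widetilde{\bb{H}}^{-1+\alpha}$. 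Each basis function $\bff{e}_i$ satisfies the Neumann condition, so $\bff{u}_n^\epsilon$ and $\pa_t \bff{u}_n^\epsilon$, being finite linear combinations of them, also do; hence the shift estimate applies to both.

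For the first term of~\eqref{equ:Lap H1s est}, I would apply the above with $\bff{v} = \bff{u}_n^\epsilon(t)$. Lemma~\ref{lem:ene est 1} gives $\norm{\bff{u}_n^\epsilon(t)}{\bb{L}^2}^2 \lesssim 1$ and $\epsilon \norm{\Delta \bff{u}_n^\epsilon(t)}{\bb{L}^2}^2 \lesssim 1$, uniformly in $n$, $\epsilon$, and $t$. Multiplying the shift estimate by $\epsilon$ and combining these bounds immediately yields
\[
\epsilon \norm{\bff{u}_n^\epsilon(t)}{\bb{H}^{1+\alpha}}^2 \lesssim \epsilon \norm{\bff{u}_n^\epsilon(t)}{\bb{L}^2}^2 + \epsilon \norm{\Delta \bff{u}_n^\epsilon(t)}{\bb{L}^2}^2 \lesssim 1.
\]

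For the second term, I would apply the shift estimate with $\bff{v} = \pa_t \bff{u}_n^\epsilon(t)$. Lemma~\ref{lem:ene est dtu} gives $\norm{\pa_t \bff{u}_n^\epsilon(t)}{\bb{L}^2}^2 \lesssim \epsilon^{-\beta}$, while Lemma~\ref{lem:ene est 3} gives $\epsilon \norm{\Delta \pa_t \bff{u}_n^\epsilon(t)}{\bb{L}^2}^2 \lesssim \epsilon^{-\beta-1}$, so $\norm{\Delta \pa_t \bff{u}_n^\epsilon(t)}{\bb{L}^2}^2 \lesssim \epsilon^{-\beta-2}$. Since $\epsilon^{-\beta} \le \epsilon^{-\beta-2}$ for $\epsilon\in(0,1)$, the second contribution dominates, and multiplying by $\epsilon^{\beta+2}$ yields the desired bound. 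No substantial obstacle is expected here, as the work has already been done in Lemmas~\ref{lem:ene est 1} and~\ref{lem:ene est 3}; the only subtlety is that each of the two estimates requires the appropriate form of elliptic regularity depending on $d$ and on whether $\mathscr{D}$ is convex, which is already encoded in the definition of $\alpha$ in~\eqref{equ:condition alpha}.
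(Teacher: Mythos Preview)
Your proposal is correct and follows essentially the same approach as the paper: apply the elliptic shift estimate~\eqref{equ:shift polyg H1s} (or~\eqref{equ:shift polyh W2p} in the convex three-dimensional case), use the embedding $\bb{L}^2\hookrightarrow \widetilde{\bb{H}}^{-1+\alpha}$, and then invoke Lemmas~\ref{lem:ene est 1}, \ref{lem:ene est dtu}, and~\ref{lem:ene est 3} to bound the right-hand side. The paper carries this out in a single chain of inequalities rather than splitting into two parts, but the argument is the same.
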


\begin{proof}
By~\eqref{equ:shift polyg H1s} in the case $d=2$, and~\eqref{equ:shift polyh W2p} in the case $d=3$, we have for any $\alpha$ satisfying~\eqref{equ:condition alpha},
\begin{align*}
	\epsilon \norm{\bff{u}_n^\epsilon(t)}{\bb{H}^{1+\alpha}}^2
	+
	\epsilon^{\beta+2} 
	\norm{\pa_t\bff{u}_n^\epsilon(s)}{\widetilde{\bb{H}}^{1+\alpha}}^2 
	&\lesssim 
	\epsilon \left(\norm{\bff{u}_n^\epsilon(t)}{\widetilde{\bb{H}}^{-1+\alpha}}^2
	+
	\norm{\Delta \bff{u}_n^\epsilon(t)}{\widetilde{\bb{H}}^{-1+\alpha}}^2 \right)
	\\
	&\quad
	+
	\epsilon^{\beta+2} \left(
	\norm{\pa_t\bff{u}_n^\epsilon(s)}{\widetilde{\bb{H}}^{-1+\alpha}}^2 +
	\norm{\Delta \pa_t\bff{u}_n^\epsilon(s)}{\widetilde{\bb{H}}^{-1+\alpha}}^2 \right)
	\\
	&\lesssim
	\epsilon\left(\norm{\bff{u}_n^\epsilon(t)}{\bb{L}^2}^2
	+
	\norm{\Delta \bff{u}_n^\epsilon(t)}{\bb{L}^2}^2 \right)
	\\
	&\quad
	+
	\epsilon^{\beta+2} 
	\left(\norm{\pa_t\bff{u}_n^\epsilon(s)}{\bb{L}^2}^2 +
	\norm{\Delta \pa_t\bff{u}_n^\epsilon(s)}{\bb{L}^2}^2 \right) 
	\lesssim 1,
\end{align*}
where in the last step we used Lemma~\ref{lem:ene est 1}, \ref{lem:ene est dtu}, and \ref{lem:ene est 3}, as required.
\end{proof}

The results in Lemmas~\ref{lem:ene est 1}, \ref{lem:ene est dtu}, \ref{lem:ene est 3}, \ref{lem:ene est 4} imply that the solution $\bff{u}_n^\epsilon$ exist on $[0,T]$ for any given $T>0$. They can be summarised as follows: if \eqref{equ:u0 eps con} and~\eqref{equ:u0n H2} hold for $s=1$, then we have the following estimates for any $\alpha$ satisfying~\eqref{equ:condition alpha}:
\begin{equation}\label{equ:une all}
	\begin{aligned}
		\norm{\bff{u}_n^\epsilon}{L^\infty_T(\bb{H}^1)}
		+
		\norm{\bff{u}_n^\epsilon}{L^2_T(\bb{H}^{1+\alpha})}
		+
		\sqrt{\epsilon} \norm{\bff{u}_n^\epsilon}{L^\infty_T(\bb{H}^{1+\alpha})}
		+
		\sqrt{\epsilon} \norm{\bff{u}_n^\epsilon}{L^\infty_T(\bb{H}^{1}_\Delta)}
		&\lesssim 1,
		\\
		\epsilon^{(\beta+1)/2} \norm{\pa_t\bff{u}_n^\epsilon}{L^\infty_T(\bb{H}^1)}
		+
		\epsilon^{(\beta+2)/2} \norm{\pa_t\bff{u}_n^\epsilon}{L^\infty_T(\bb{H}^{1+\alpha})}
		+
		\epsilon^{(\beta+2)/2} \norm{\pa_t\bff{u}_n^\epsilon}{L^\infty_T(\bb{H}^1_\Delta)}
		&\lesssim 1.
	\end{aligned}
\end{equation}
Further technical estimates are shown in Appendix~\ref{sec:further regular proof}, which yields the following result (see~\eqref{equ:une est local}): if \eqref{equ:u0 eps con} and~\eqref{equ:u0n H2} hold for $s=2$, then for any $\alpha$ satisfying~\eqref{equ:condition alpha}:
\begin{equation}\label{equ:une all app}
		\norm{\Delta \bff{u}_n^\epsilon}{L^\infty_{T^\ast}(\bb{L}^2)}
        +
        \sqrt{\epsilon} \norm{\Delta \bff{u}_n^\epsilon}{L^\infty_{T^\ast}(\bb{H}^1)}
		+
		\norm{\Delta \bff{u}_n^\epsilon}{L^2_{T^\ast}(\bb{H}^1)}
		\lesssim 1.
\end{equation}

We prove the existence of a global solution~$\bff{u}^\epsilon$ to~\eqref{equ:LLB eps pro} in the following proposition.

\begin{theorem}\label{the:ue sol}
	Let~$ \epsilon>0 $ and $T>0$ be fixed. Let $T^\ast$ be defined by~\eqref{equ:T ast}.
    \begin{enumerate}
		\renewcommand{\labelenumi}{\theenumi}
		\renewcommand{\theenumi}{{\rm (\roman{enumi})}}
		\item If~\eqref{equ:u0 eps con} and~\eqref{equ:u0n
			H2} hold for $s=1$, then there exists
		a global strong solution $\bff{u}^\epsilon$ to~\eqref{equ:LLB eps pro} in the sense of Definition~\ref{def:wea sol}. Furthermore, for any $\alpha$ satisfying~\eqref{equ:condition alpha}, there exists
		a positive constant~$c$ independent of~$\epsilon$
		satisfying
		\begin{equation}\label{equ:est ue unif 1}
			\begin{aligned}
				\norm{\bff{u}^\epsilon}{L^\infty_T(\bb{H}^1)}
				+
				\norm{\bff{u}^\epsilon}{L^2_T(\bb{H}^{1+\alpha})}
				+
				\sqrt{\epsilon} \norm{\bff{u}^\epsilon}{L^\infty_T(\bb{H}^{1+\alpha})}
				+
				\sqrt{\epsilon} \norm{\bff{u}^\epsilon}{L^\infty_T(\bb{H}^{1}_\Delta)}
				&\le c,
				\\
				\epsilon^{(\beta+1)/2} \norm{\pa_t\bff{u}^\epsilon}{L^\infty_T(\bb{H}^1)}
				+
				\epsilon^{(\beta+2)/2} \norm{\pa_t\bff{u}^\epsilon}{L^\infty_T(\bb{H}^{1+\alpha})}
				+
				\epsilon^{(\beta+2)/2} \norm{\pa_t\bff{u}^\epsilon}{L^\infty_T(\bb{H}^1_\Delta)}
				&\le c.
			\end{aligned}
		\end{equation}
        The constant $\beta$ is defined by~\eqref{equ:d ast}. Furthermore, $\bff{u}^\epsilon\in C^{0,\widetilde{\gamma}}([0,T];\bb{H}^{1+\alpha})$ for every $\widetilde{\gamma}\in [0,1)$.
        \item If~\eqref{equ:u0 eps con} and~\eqref{equ:u0n
			H2} hold for $s=2$, then for any $\alpha$ satisfying~\eqref{equ:condition alpha}, we have
    \begin{equation}\label{equ:ue all app}
		\norm{\Delta \bff{u}^\epsilon}{L^\infty_{T^\ast}(\bb{L}^2)}
        +
        \sqrt{\epsilon} \norm{\Delta \bff{u}^\epsilon}{L^\infty_{T^\ast}(\bb{H}^1)}
		+
		\norm{\Delta \bff{u}^\epsilon}{L^2_{T^\ast}(\bb{H}^1)}
		\lesssim 1.
\end{equation}
    \end{enumerate}
\end{theorem}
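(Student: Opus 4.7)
\textbf{Proof plan for Theorem~\ref{the:ue sol}.}
The strategy is to mimic the Faedo--Galerkin passage to the limit already worked out in the proof of Theorem~\ref{the:con u eps}, but now for the sequence $\{\bff{u}_n^\epsilon\}_{n\in\bb{N}}$ solving~\eqref{equ:Gal LLB eps}. The bounds gathered in~\eqref{equ:une all} (and in~\eqref{equ:une all app} when~\eqref{equ:u0n H2} holds for $s=2$) are independent of $n$, and this is exactly what we need. First, by the Banach--Alaoglu theorem, I would extract a subsequence (not relabelled) and a limit $\bff{u}^\epsilon$ such that
\begin{align*}
\bff{u}_n^\epsilon &\to \bff{u}^\epsilon \quad \text{weakly}^\ast \text{ in } L^\infty_T(\bb{H}^1_\Delta) \cap L^\infty_T(\bb{H}^{1+\alpha}), \\
\bff{u}_n^\epsilon &\to \bff{u}^\epsilon \quad \text{weakly in } L^2_T(\bb{H}^{1+\alpha}), \\
\partial_t \bff{u}_n^\epsilon &\to \partial_t \bff{u}^\epsilon \quad \text{weakly}^\ast \text{ in } L^\infty_T(\bb{H}^1_\Delta),
\end{align*}
and, in case $s=2$, $\Delta\bff{u}_n^\epsilon \to \Delta\bff{u}^\epsilon$ weakly in $L^2_T(\bb{H}^1)$ and weakly$^\ast$ in $L^\infty_T(\bb{L}^2)$.

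Next I would apply the Aubin--Lions lemma. The compact embedding $\bb{H}^{1+\alpha}\hookrightarrow \bb{H}^1\hookrightarrow\bb{L}^4$, together with the uniform bound $\partial_t\bff{u}_n^\epsilon\in L^\infty_T(\bb{L}^2)$, yields strong convergence $\bff{u}_n^\epsilon\to\bff{u}^\epsilon$ in $C([0,T];\bb{L}^4)\cap L^2_T(\bb{H}^1)$, which is precisely what is needed to handle the nonlinearities. With $\tilde\Pi_n:\bb{H}^1\to\mathcal{S}_n$ the $\bb{H}^1$-orthogonal projector, I would then test the weak form~\eqref{equ:LLB eps wea equ} against $\tilde\Pi_n\bff{v}$ for $\bff{v}\in\bb{H}^1$, integrate in time, and pass to the limit term by term, as was done for Theorem~\ref{the:con u eps}. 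The only new ingredient is the linear term $\epsilon\inpro{\nabla\partial_t\bff{u}_n^\epsilon}{\nabla\tilde\Pi_n\bff{v}}_{\bb{L}^2}$, which passes trivially using the weak$^\ast$ convergence of $\nabla\partial_t\bff{u}_n^\epsilon$ in $L^\infty_T(\bb{L}^2)$ and $\nabla\tilde\Pi_n\bff{v}\to\nabla\bff{v}$ in $\bb{L}^2$. The limit $\bff{u}^\epsilon$ is thus a weak solution in the sense of Definition~\ref{def:wea sol}.

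The estimates~\eqref{equ:est ue unif 1} and~\eqref{equ:ue all app} then follow from weak and weak$^\ast$ lower semicontinuity of the norms applied to~\eqref{equ:une all} and~\eqref{equ:une all app}, respectively, noting that all constants there are independent of $n$ (and the $\epsilon$-powers are carried through unchanged). To upgrade the weak solution to a strong solution, observe that~\eqref{equ:est ue unif 1} gives $\bff{u}^\epsilon\in L^\infty_T(\bb{H}^1_\Delta)$ and $\partial_t\bff{u}^\epsilon\in L^\infty_T(\bb{H}^1)$, so in particular $\bff{u}^\epsilon\in W^{1,\infty}_T(\bb{H}^1)\cap H^1_T(\bb{H}^1_\Delta)$ as required by Definition~\ref{def:wea sol}(ii). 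All terms on the right-hand side of~\eqref{equ:LLB eps} then belong to $\bb{L}^2$ almost everywhere in time (exactly as shown in the verification step of Theorem~\ref{the:con u eps} for the nonlinear terms, using the Sobolev embeddings $\bb{H}^{1+\alpha}\hookrightarrow\bb{L}^\infty$ and $\bb{H}^1\hookrightarrow\bb{L}^6$), so~\eqref{equ:u eps L2} holds in $\bb{L}^2$. Finally, the H\"older continuity $\bff{u}^\epsilon\in C^{0,\widetilde\gamma}([0,T];\bb{H}^{1+\alpha})$ for every $\widetilde\gamma\in[0,1)$ is an immediate consequence of $\partial_t\bff{u}^\epsilon\in L^\infty_T(\bb{H}^{1+\alpha})$ (from~\eqref{equ:est ue unif 1}) via the vector-valued Morrey inequality of~\cite{AreKre18}, as was invoked in Theorem~\ref{the:con u eps}.

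The main technical obstacle is the passage to the limit in the cross-product term $\inpro{\bff{u}_n^\epsilon\times\nabla\bff{u}_n^\epsilon}{\nabla\tilde\Pi_n\bff{v}}_{\bb{L}^2}$, since both factors in the product are only known to converge weakly a priori. The resolution, identical in spirit to the argument already spelled out for Theorem~\ref{the:con u eps}, is to split the error using the triangle inequality into three pieces controlled respectively by $\norm{\nabla(\tilde\Pi_n\bff{v}-\bff{v})}{\bb{L}^2}$, $\norm{\bff{u}_n^\epsilon-\bff{u}^\epsilon}{L^2_T(\bb{L}^4)}$, and $\norm{\nabla\bff{u}_n^\epsilon-\nabla\bff{u}^\epsilon}{L^2_T(\bb{L}^2)}$, each of which tends to zero by the strong convergences extracted above. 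The cubic term $\inpro{|\bff{u}_n^\epsilon|^2\bff{u}_n^\epsilon}{\tilde\Pi_n\bff{v}}_{\bb{L}^2}$ is handled analogously using the strong convergence in $C([0,T];\bb{L}^6)$ provided by Aubin--Lions. I expect no conceptual difficulty beyond what already appears in Theorem~\ref{the:con u eps}; the proof is essentially a transcription with the additional linear $\epsilon$-term carried along.
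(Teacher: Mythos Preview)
Your proposal is correct and follows essentially the same route as the paper: extract a weak$^\ast$ limit in $W^{1,\infty}_T(\bb{H}^{1+\alpha})$ from the uniform bounds~\eqref{equ:une all}, apply Aubin--Lions via the compact chain $\bb{H}^{1+\alpha}\hookrightarrow\bb{H}^1\hookrightarrow\bb{L}^4$ to obtain strong convergence in $L^2_T(\bb{L}^4)\cap L^2_T(\bb{H}^1)$, and then defer to the limit-passage argument already carried out in Theorem~\ref{the:con u eps}. The paper's proof is in fact terser than yours---it simply records the weak$^\ast$ convergence and the Aubin--Lions step, then states that ``the rest of the proof follows the same argument as in Theorem~\ref{the:con u eps}''---so your more explicit treatment of the extra $\epsilon$-term and the H\"older continuity is entirely in keeping with what the paper intends.
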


%-----------------
\begin{proof}
	Fix $ \epsilon > 0$.
	It follows from~\eqref{equ:une all} that there exist
	$\bff{u}^{\epsilon}\in W^{1,\infty}_{T}(\bb{H}^{1+\alpha})$ and a subsequence
	of~$\{\bff{u}_n^\epsilon\}_{n\in\bb{N}}$,
	which we do not relabel, satisfying
	\begin{equation}\label{equ:une wea con}
		\bff{u}^\epsilon_n\to\bff{u}^\epsilon
		\;\;\text{weakly}^\ast \text{ in $W^{1,\infty}_{T}(\bb{H}^{1+\alpha})$ as $n\to \infty$},
%		\begin{aligned}
%			& \bff{u}^\epsilon_n\to\bff{u}^\epsilon
%			&&\,\text{weakly}^\ast \text{ in $W^{1,\infty}_{T}(\bb{H}^{1+\alpha})$ as $n\to \infty$},
			\\
			%& \bff{u}^\epsilon_n\to\bff{u}^\epsilon
			%&&\quad\text{weakly}^\ast \text{ in $L^\infty(\bb{H}^2)$ as $n\to \infty$},
			%\label{equ:une wea con a}
			%\\
%			& \bff{u}^\epsilon_n\to \bff{u}^\epsilon
%			&&\,\text{weakly}^\ast \text{ in $L^\infty_{T}(\bb{H}^{1+\alpha})$ as $n\to \infty$,}
		%\end{aligned}
	\end{equation}
	where the convergence is not uniform with respect to~$ \epsilon$.
	Since the embeddings $\bb{H}^{1+\alpha}\hookrightarrow \bb{H}^1 \hookrightarrow \bb{L}^4$ are compact, it follows from the
	Aubin--Lions lemma and~\eqref{equ:une wea con} that 
	\begin{equation*}
		\lim_{n\to\infty} \norm{\bff{u}_n^\epsilon-\bff{u}^\epsilon}{L^2_T(\bb{L}^4)} 
		= 0
		\quad\text{and}\quad 
		\lim_{n\to\infty} \norm{\bff{u}_n^\epsilon-\bff{u}^\epsilon}{L^2_T(\bb{H}^1)} 
		= 0.
	\end{equation*}
	The rest of the proof follows the same argument as in Theorem~\ref{the:con u eps}. Estimates \eqref{equ:est ue unif 1} and \eqref{equ:ue all app} follow from \eqref{equ:une all} and \eqref{equ:une all app}, respectively.
\end{proof}

We conclude this section by showing the stability of the strong solution~$\bff{u}^\epsilon$ of \eqref{equ:LLB eps pro} with respect to initial data, which implies uniqueness.

\begin{theorem}\label{the:unique llbe}
	Let~$\bff{u}_0^\epsilon, \bff{v}_0^\epsilon \in \bb{H}^1_\Delta$.
	Let~$\bff{u}^\epsilon$ and~$\bff{v}^\epsilon$ be the global strong solutions
	associated with the initial data~$\bff{u}_0^\epsilon$
	and~$\bff{v}_0^\epsilon$, respectively, as conferred by
	Theorem~\ref{the:ue sol}. Then
	\[
	\norm{\bff{u}^\epsilon-\bff{v}^\epsilon}{L^\infty_T(\bb{H}^1)} 
	\lesssim
	\norm{\bff{u}_0^\epsilon-\bff{v}_0^\epsilon}{\bb{H}^1_\Delta}, 
	\]
	where the constant may depend on~$\epsilon$ and $T$. This implies
	uniqueness of the solution~$\bff{u}^\epsilon$.
\end{theorem}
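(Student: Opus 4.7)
The plan is to mimic the strategy used in the proof of Theorem~\ref{the:unique llb} but exploiting the extra regularising term $-\epsilon\Delta\partial_t\bff{w}^\epsilon$ to close the estimate in $\bb{H}^1$ rather than only $\bb{L}^2$. Set $\bff{w}_0^\epsilon:=\bff{u}_0^\epsilon-\bff{v}_0^\epsilon$ and $\bff{w}^\epsilon:=\bff{u}^\epsilon-\bff{v}^\epsilon$, and subtract the two weak formulations \eqref{equ:weak sol} satisfied by $\bff{u}^\epsilon$ and $\bff{v}^\epsilon$. Choosing the test function $\bff{\phi}=\bff{w}^\epsilon(t)\in\bb{H}^1$, I would obtain an energy identity in which the left-hand side contains
\[
\tfrac{1}{2}\ddt\bigl(\norm{\bff{w}^\epsilon(t)}{\bb{L}^2}^2+\epsilon\norm{\nabla\bff{w}^\epsilon(t)}{\bb{L}^2}^2\bigr) +\kappa_1\norm{\nabla\bff{w}^\epsilon(t)}{\bb{L}^2}^2+\kappa_2\norm{\bff{w}^\epsilon(t)}{\bb{L}^2}^2+\kappa_2\mu\norm{|\bff{u}^\epsilon||\bff{w}^\epsilon|}{\bb{L}^2}^2,
\]
while the gyromagnetic contribution splits as $\gamma\inpro{\bff{w}^\epsilon\times\nabla\bff{u}^\epsilon}{\nabla\bff{w}^\epsilon}_{\bb{L}^2}+\gamma\inpro{\bff{v}^\epsilon\times\nabla\bff{w}^\epsilon}{\nabla\bff{w}^\epsilon}_{\bb{L}^2}$, whose second piece vanishes due to $(\bff{a}\times\bff{b})\cdot\bff{b}=0$.

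Next I would handle the remaining non-linear terms exactly in the spirit of Theorem~\ref{the:unique llb}. The surviving gyromagnetic term is controlled via Lemma~\ref{lem:tec lem} by $\Phi(\bff{u}^\epsilon)\norm{\bff{w}^\epsilon}{\bb{L}^2}^2+\delta\norm{\nabla\bff{w}^\epsilon}{\bb{L}^2}^2$, and the cubic term $\kappa_2\mu\inpro{(|\bff{u}^\epsilon|^2-|\bff{v}^\epsilon|^2)\bff{v}^\epsilon}{\bff{w}^\epsilon}_{\bb{L}^2}$ by the computation already carried out in \eqref{equ:ue ve w llb}, yielding terms of the form $c\norm{\bff{w}^\epsilon}{\bb{L}^2}^2+\delta\norm{\bff{w}^\epsilon}{\bb{H}^1}^2$, where $c$ depends on the $L^\infty_T(\bb{H}^1)$ norms of $\bff{u}^\epsilon,\bff{v}^\epsilon$ which are available through~\eqref{equ:est ue unif 1}. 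Choosing $\delta$ small enough to absorb the $\norm{\nabla\bff{w}^\epsilon}{\bb{L}^2}^2$ contributions into the $\kappa_1$-term on the left, I arrive at a differential inequality
\[
\ddt\bigl(\norm{\bff{w}^\epsilon(t)}{\bb{L}^2}^2+\epsilon\norm{\nabla\bff{w}^\epsilon(t)}{\bb{L}^2}^2\bigr)\le c\bigl(1+\Phi(\bff{u}^\epsilon(t))\bigr)\norm{\bff{w}^\epsilon(t)}{\bb{L}^2}^2.
\]

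I would then check that $\int_0^T\Phi(\bff{u}^\epsilon(s))\,\ds\le c_\epsilon$ using the regularity of the strong solution afforded by Theorem~\ref{the:ue sol}: the estimate~\eqref{equ:est ue unif 1} yields $\bff{u}^\epsilon\in L^\infty_T(\bb{H}^{1+\alpha})\cap L^\infty_T(\bb{H}^1_\Delta)$ (albeit with a bound growing like $\epsilon^{-1/2}$), and this is sufficient to dominate $\Phi$ in all three dimensions via the arguments already used at~\eqref{equ:Phi un less}--\eqref{equ:int Phi}. Applying Gronwall's inequality therefore gives
\[
\norm{\bff{w}^\epsilon(t)}{\bb{L}^2}^2+\epsilon\norm{\nabla\bff{w}^\epsilon(t)}{\bb{L}^2}^2\le c_\epsilon\bigl(\norm{\bff{w}_0^\epsilon}{\bb{L}^2}^2+\epsilon\norm{\nabla\bff{w}_0^\epsilon}{\bb{L}^2}^2\bigr)\le c_\epsilon\norm{\bff{w}_0^\epsilon}{\bb{H}^1_\Delta}^2,
\]
uniformly in $t\in[0,T]$. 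Dividing the gradient term by $\epsilon$ absorbs this factor into the (now $\epsilon$-dependent) constant, and the desired $L^\infty_T(\bb{H}^1)$ bound follows. Uniqueness is an immediate consequence by taking $\bff{u}_0^\epsilon=\bff{v}_0^\epsilon$.

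The main obstacle I expect is the verification that $\int_0^T\Phi(\bff{u}^\epsilon)\,\ds<\infty$ with an explicit (though $\epsilon$-dependent) constant, since in three dimensions $\Phi$ involves $\bb{W}^{1,6}$ norms and so must be controlled by the $L^\infty_T(\bb{H}^1_\Delta)$ bound from~\eqref{equ:est ue unif 1} via elliptic regularity in the convex polyhedral setting; this is precisely the step that introduces the $\epsilon$-dependence of the final constant. The structural cancellation $\inpro{\bff{v}^\epsilon\times\nabla\bff{w}^\epsilon}{\nabla\bff{w}^\epsilon}_{\bb{L}^2}=0$ is what allows the argument to avoid any derivative of $\bff{v}^\epsilon$ of order higher than one in the critical term, which is essential for closing the estimate at the $\bb{H}^1$ level.
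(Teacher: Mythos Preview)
Your proposal is correct and follows essentially the same approach as the paper: subtract the weak formulations, test with $\bff{w}^\epsilon$, exploit the cancellation $\inpro{\bff{v}^\epsilon\times\nabla\bff{w}^\epsilon}{\nabla\bff{w}^\epsilon}_{\bb{L}^2}=0$, control the remaining gyromagnetic term via Lemma~\ref{lem:tec lem} and the cubic term via the \eqref{equ:ue ve w llb}-type estimate, then apply Gronwall using $\bff{u}^\epsilon\in L^\infty_T(\bb{H}^{1+\alpha})$ to bound $\int_0^T\Phi(\bff{u}^\epsilon)\,\ds$. Your remark about dividing through by $\epsilon$ to pass from the weighted quantity $\norm{\bff{w}^\epsilon}{\bb{L}^2}^2+\epsilon\norm{\nabla\bff{w}^\epsilon}{\bb{L}^2}^2$ to the full $\bb{H}^1$ norm (at the cost of an $\epsilon$-dependent constant) makes explicit a step the paper leaves implicit.
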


\begin{proof}
	Let~$\bff{w}_0^\epsilon:=\bff{u}_0^\epsilon-\bff{v}_0^\epsilon$ 
	and~$\bff{w}^\epsilon:=\bff{u}^\epsilon-\bff{v}^\epsilon$.
	Then it follows from~\eqref{equ:weak sol} that, for any $\bff{\phi}\in
	\bb{H}^1$ and $t\in [0,T]$,
	\begin{align}\label{equ:diff w}
		&\inpro{\partial_t \bff{w}^\epsilon(t)}{\bff{\phi}}_{\bb{L}^2}
		+
		\epsilon \inpro{\nabla\partial_t \bff{w}^\epsilon(t)}{\nabla\bff{\phi}}_{\bb{L}^2}
		+
		\kappa_1 \inpro{\nabla\bff{w}^\epsilon(t)}{\nabla\bff{\phi}}_{\bb{L}^2}
		\nn \\
		&\quad
		+
		\gamma \inpro{\bff{w}^\epsilon(t)\times\nabla\bff{u}^\epsilon(t)}{\nabla\bff{\phi}}_{\bb{L}^2}
		+
		\gamma \inpro{\bff{v}^\epsilon(t)\times\nabla\bff{w}^\epsilon(t)}{\nabla\bff{\phi}}_{\bb{L}^2}
		\nn 
		\\
		&\quad
		+
		\kappa_2 \inpro{\bff{w}^\epsilon(t)}{\bff{\phi}}_{\bb{L}^2}
		+
		\kappa_2\mu \inpro{|\bff{u}^\epsilon(t)|^2\bff{w}^\epsilon(t) + (|\bff{u}^\epsilon(t)|^2-|\bff{v}^\epsilon(t)|^2)\bff{v}^\epsilon(t)}{\bff{\phi}}_{\bb{L}^2}
		= 
		0.
	\end{align}
	Putting $\bff{\phi} = 2\bff{w}^\epsilon$ in~\eqref{equ:diff w} and
	integrating we obtain, for any $t\in [0,T]$,
	\begin{align*}
		&\norm{\bff{w}^\epsilon(t)}{\bb{L}^2}^2
		+
		\epsilon \norm{\nabla\bff{w}^\epsilon(t)}{\bb{L}^2}^2
		+
		2\kappa_1\int_0^t \|\nabla\bff{w}^\epsilon(s)\|^2_{\bb{L}^2}\,\ds
		\\
		&\quad
		+
		2\kappa_2\int_0^t \|\bff{w}^\epsilon(s)\|^2_{\bb{L}^2}\,\ds
		+
		2\kappa_2\mu\int_0^t\||\bff{u}^\epsilon(s)|
		|\bff{w}^\epsilon(s)|\|^2_{\bb{L}^2}\,\ds
		\\
		&=
		\norm{\bff{w}_0^\epsilon}{\bb{L}^2}^2
		-
		2 \gamma
		\int_0^t 
		\inpro{\bff{w}^\epsilon(s)\times\nabla\bff{u}^\epsilon(s)}
		{\nabla\bff{w}^\epsilon(s)}_{\bb{L}^2}\,\ds
		-
		2 \kappa_2\mu
		\int_0^t 
		\inpro{(|\bff{u}^\epsilon(s)|^2-|\bff{v}^\epsilon(s)|^2)\bff{v}^\epsilon(s)}
		{\bff{w}^\epsilon(s)}_{\bb{L}^2} \ds.
	\end{align*}
	The last two terms on the right-hand side can be estimated as
	follows. For the middle term, by Lemma~\ref{lem:tec lem} we have
	\begin{align*}
		\left| \int_0^t \inpro{\bff{w}^\epsilon(s)\times\nabla\bff{u}^\epsilon(s)}{\nabla\bff{w}^\epsilon(s)}_{\bb{L}^2}\ds \right| 
		&\leq 
		\int_0^t \Phi(\bff{u}^\epsilon(s)) \norm{\bff{w}^\epsilon(s)}{\bb{L}^2}^2 \ds
		+
		\delta \int_0^t \norm{\nabla \bff{w}^\epsilon(s)}{\bb{L}^2}^2 \ds 
	\end{align*}
	for any $\delta>0$, where $\Phi$ is defined in~\eqref{equ:Phi}.
	For the last term, we have
	\begin{align}\label{equ:ue ve w}
		&\left| \int_0^t \inpro{(|\bff{u}^\epsilon(s)|^2-|\bff{v}^\epsilon(s)|^2)\bff{v}^\epsilon(s)}{\bff{w}^\epsilon(s)}_{\bb{L}^2} \ds \right| 
		\nn\\
		&\leq
		\int_0^t \norm{\bff{u}^\epsilon(s)+\bff{v}^\epsilon(s)}{\bb{L}^6}
		\norm{\bff{w}^\epsilon(s)}{\bb{L}^6}
		\norm{\bff{v}^\epsilon(s)}{\bb{L}^6}
		\norm{\bff{w}^\epsilon(s)}{\bb{L}^2} \ds 
		\nn \\
		&\leq
		\int_0^t \norm{\bff{u}^\epsilon(s)+\bff{v}^\epsilon(s)}{\bb{H}^1}^2
		\norm{\bff{v}^\epsilon(s)}{\bb{H}^1}^2
		\norm{\bff{w}^\epsilon(s)}{\bb{L}^2} \ds 
		+
		\delta \int_0^t \norm{\bff{w}^\epsilon(s)}{\bb{H}^1}^2 \ds 
		\nn \\
		&\leq
		c \int_0^t \norm{\bff{w}^\epsilon(s)}{\bb{L}^2}^2 \ds 
		+
		\delta \int_0^t \norm{\bff{w}^\epsilon(s)}{\bb{H}^1}^2 \ds 
	\end{align}
	for any $\delta>0$, where in the penultimate step we used Young's
	inequality and the Sobolev embedding $\bb{H}^1\hookrightarrow \bb{L}^6$, and
	in the last step we used Theorem~\ref{the:ue sol}. 
	
	Altogether, rearranging the terms and choosing $\delta>0$ sufficiently small, we infer that
	\begin{align*}
		\norm{\bff{w}^\epsilon(t)}{\bb{L}^2}^2
		+
		\epsilon \norm{\nabla\bff{w}^\epsilon(t)}{\bb{L}^2}^2
		\leq
		\norm{\bff{w}_0^\epsilon}{\bb{L}^2}^2
		+
		\int_0^t \Phi(\bff{u}^\epsilon(s)) \norm{\bff{w}^\epsilon(s)}{\bb{L}^2}^2 \ds.
	\end{align*}
	Note that~$\bff{u}^\epsilon\in L^\infty_T(\bb{H}^{1+\alpha})$ for all $\alpha$ satisfying~\eqref{equ:condition alpha} by
	Theorem~\ref{the:ue sol}. Thus, by the Sobolev embeddings $\bb{H}^{1+\alpha}\hookrightarrow \bb{W}^{1,2d}$ for $d=1,2$ and $\bb{H}^2\hookrightarrow \bb{W}^{1,6}$ for $d=3$, we have
	\begin{equation}\label{equ:Phi c}
		\int_0^t \Phi(\bff{u}^\epsilon(s))\,\ds \le c,
	\end{equation}
	where $c$ may depend on $\epsilon$.
	 The Gronwall inequality and~\eqref{equ:Phi c} {give the} required result.
\end{proof}

\subsection{Convergence of $\bff{u}^\epsilon$ to $\bff{u}$} \label{sec:conver}

Here, we show convergence of the strong solution of the $\epsilon$-LLBE to that of the LLBE as $\epsilon\to 0^+$.

\begin{theorem}\label{the:u eps con u}
	The following statements hold true.
	\begin{enumerate}
		\renewcommand{\labelenumi}{\theenumi}
		\renewcommand{\theenumi}{{\rm (\roman{enumi})}}
		\item \label{ite:u eps u i}
		Consider the case when~$d\in\{1,2\}$. Assume that~$\bff{u}_0\in\bb{H}^1_\Delta$
		and~$\bff{u}_0^\epsilon$ is chosen such that~\eqref{equ:u0 eps con} holds for $s=1$.
		Let $\bff{u}$ and $\bff{u}^\epsilon$ be global strong
		solutions of~\eqref{equ:LLB pro} and~\eqref{equ:LLB eps pro},
		respectively.  Then for any $t\in[0,T]$,
		\begin{equation}\label{equ:solus}
			\norm{\bff{u}^\epsilon(t)-\bff{u}(t)}{\bb{L}^2}^2
			+
			\int_0^t 
			\norm{\nabla\bff{u}^\epsilon(s) - \nabla\bff{u}(s)}{\bb{L}^2}^2 \ds
			\lesssim 
			\norm{\bff{u}_0^\epsilon-\bff{u}_0}{\bb{L}^2}^2
			+
			\epsilon^2.
		\end{equation}
		\item \label{ite:u eps u ii}
		If~$d\in\{1,2,3\}$, $\bff{u}_0\in \bb{H}^2_\Delta$, and~$\bff{u}_0^\epsilon$ is
		chosen to satisfy~\eqref{equ:u0 eps con} for $s=2$, then for any $t\in [0,T^\ast]$,
		\begin{align}\label{equ:sol u H1}
			\norm{\bff{u}^\epsilon(t)-\bff{u}(t)}{\bb{H}^1}^2
			+
			\int_0^t \norm{\Delta\bff{u}^\epsilon(s) - \Delta\bff{u}(s)}{\bb{L}^2}^2 \ds
			\lesssim 
			\norm{\bff{u}_0^\epsilon-\bff{u}_0}{\bb{H}^1}^2
			+
			\epsilon^2.
		\end{align}
	\end{enumerate}
	The constants in the inequalities are independent of $\epsilon$.
\end{theorem}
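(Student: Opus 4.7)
My plan is to introduce $\bff{w}^\epsilon := \bff{u}^\epsilon - \bff{u}$ and $\bff{w}^\epsilon_0 := \bff{u}^\epsilon_0 - \bff{u}_0$, then subtract the LLBE from the $\epsilon$-LLBE to obtain
\begin{align*}
\partial_t \bff{w}^\epsilon - \epsilon \Delta \partial_t \bff{u}^\epsilon
= \kappa_1 \Delta \bff{w}^\epsilon
+ \gamma\bigl(\bff{u}^\epsilon \times \Delta \bff{u}^\epsilon - \bff{u} \times \Delta \bff{u}\bigr)
- \kappa_2(1+\mu|\bff{u}^\epsilon|^2)\bff{u}^\epsilon
+ \kappa_2(1+\mu|\bff{u}|^2)\bff{u}.
\end{align*}
The essential trick is to rewrite $-\epsilon \Delta \partial_t \bff{u}^\epsilon = -\epsilon \Delta \partial_t \bff{w}^\epsilon - \epsilon \Delta \partial_t \bff{u}$, so that the pseudo-parabolic operator $I - \epsilon \Delta$ acts on $\bff{w}^\epsilon$ itself (contributing a favourable self-improving term on the left), while the residual $-\epsilon \Delta \partial_t \bff{u}$ appears as a small forcing involving only the LLBE solution $\bff{u}$, whose time derivative enjoys bounds that are uniform in $\epsilon$ (since $\bff{u}$ does not depend on $\epsilon$).

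For part~(i), I would test the resulting equation with $\bff{w}^\epsilon$, giving schematically
\begin{align*}
\tfrac{1}{2}\tfrac{d}{dt}\norm{\bff{w}^\epsilon}{\bb{L}^2}^2
+ \tfrac{\epsilon}{2}\tfrac{d}{dt}\norm{\nabla \bff{w}^\epsilon}{\bb{L}^2}^2
+ \kappa_1 \norm{\nabla \bff{w}^\epsilon}{\bb{L}^2}^2
= -\epsilon \inpro{\nabla \partial_t \bff{u}}{\nabla \bff{w}^\epsilon}_{\bb{L}^2}
+ (\text{nonlinear terms}).
\end{align*}
The cross-product term $\gamma \inpro{\bff{w}^\epsilon \times \nabla \bff{u}}{\nabla \bff{w}^\epsilon}_{\bb{L}^2}$ and the cubic term are treated exactly as in the uniqueness proof (Theorem~\ref{the:unique llbe}), using Lemma~\ref{lem:tec lem} and the computation in~\eqref{equ:ue ve w} to produce a term $\Phi(\bff{u})\norm{\bff{w}^\epsilon}{\bb{L}^2}^2$ with $\int_0^T \Phi(\bff{u})\,\ds < \infty$, together with small multiples of $\norm{\nabla \bff{w}^\epsilon}{\bb{L}^2}^2$ that can be absorbed. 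The residual term is controlled via Young's inequality,
\[
\epsilon\abs{\inpro{\nabla \partial_t \bff{u}}{\nabla \bff{w}^\epsilon}_{\bb{L}^2}}
\le \tfrac{\kappa_1}{4}\norm{\nabla \bff{w}^\epsilon}{\bb{L}^2}^2
+ c\epsilon^2\norm{\nabla \partial_t \bff{u}}{\bb{L}^2}^2,
\]
and then Theorem~\ref{the:con u eps}(ii), which under $\bff{u}_0 \in \bb{H}^1_\Delta$ and $d\in\{1,2\}$ (so $T^\ast = T$) yields $\partial_t \bff{u} \in L^2_T(\bb{H}^1)$, provides $\epsilon^2 \int_0^t \norm{\nabla \partial_t \bff{u}}{\bb{L}^2}^2 \,\ds \lesssim \epsilon^2$. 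Gronwall's inequality then delivers~\eqref{equ:solus}.

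For part~(ii), I would test instead with $-\Delta \bff{w}^\epsilon$ (legitimate because the higher regularity $\bff{u}_0 \in \bb{H}^2_\Delta$ from Theorem~\ref{the:con u eps}(iii) places $\bff{w}^\epsilon(t)$ in $\bb{H}^1_\Delta$), producing an analogous identity with $\norm{\nabla \bff{w}^\epsilon}{\bb{L}^2}^2 + \epsilon \norm{\Delta \bff{w}^\epsilon}{\bb{L}^2}^2$ on the left and dissipation $\kappa_1\norm{\Delta \bff{w}^\epsilon}{\bb{L}^2}^2$. The residual becomes $-\epsilon\inpro{\Delta \partial_t \bff{u}}{\Delta \bff{w}^\epsilon}_{\bb{L}^2}$, whose control requires $\Delta \partial_t \bff{u} \in L^2_{T^\ast}(\bb{L}^2)$, again furnished by Theorem~\ref{the:con u eps}(iii). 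The main obstacle here is the nonlinear cross-product: I split $\bff{u}^\epsilon \times \Delta \bff{u}^\epsilon - \bff{u} \times \Delta \bff{u} = \bff{u}^\epsilon \times \Delta \bff{w}^\epsilon + \bff{w}^\epsilon \times \Delta \bff{u}$, and test with $-\Delta \bff{w}^\epsilon$; the first piece vanishes by $(\bff{a}\times \bff{a}) = 0$, and the second is estimated using $\Delta \bff{u} \in L^\infty_{T^\ast}(\bb{L}^2)$ combined with Sobolev embeddings for $\bff{w}^\epsilon$. Combining this with the $\bb{L}^2$ estimate from part~(i) and applying Gronwall yields~\eqref{equ:sol u H1}. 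The principal delicate point throughout is to use only norms of $\bff{u}^\epsilon$ that are uniformly bounded in $\epsilon$ (cf.~\eqref{equ:est ue unif 1}), since higher-order norms of $\bff{u}^\epsilon$ blow up as $\epsilon \to 0^+$; this forces careful allocation of the spatial regularity in all Hölder and interpolation estimates, particularly in dimension $d=3$ where the Sobolev embeddings are the tightest.
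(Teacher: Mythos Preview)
Your strategy is sound and structurally close to the paper's, but differs in one key respect that introduces a small gap. The paper does \emph{not} split the pseudo-parabolic term: it leaves $-\epsilon\inpro{\nabla\partial_t\bff{u}^\epsilon}{\nabla\bff{\phi}}_{\bb{L}^2}$ (with the full $\bff{u}^\epsilon$, not $\bff{u}$) on the right and bounds $\epsilon^2\int_0^t\norm{\nabla\partial_t\bff{u}^\epsilon}{\bb{L}^2}^2\,\ds\lesssim\epsilon^2$ via the $\epsilon$-uniform estimate~\eqref{equ:une dt est local} from Appendix~B; similarly in part~(ii) it uses $\epsilon^2\int_0^t\norm{\Delta\partial_t\bff{u}^\epsilon}{\bb{L}^2}^2\,\ds\lesssim\epsilon^2$ from~\eqref{equ:une est local}. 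The paper also takes the opposite decomposition of the cross product, so that $\Phi(\bff{u}^\epsilon)$ (not $\Phi(\bff{u})$) appears in the Gronwall factor, and the term $\inpro{\bff{v}\times\Delta\bff{u}^\epsilon}{\Delta\bff{v}}_{\bb{L}^2}$ in part~(ii) is handled using $\norm{\Delta\bff{u}^\epsilon}{L^2_{T^\ast}(\bb{H}^1)}\lesssim 1$ from~\eqref{equ:ue all app}.

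Your splitting $-\epsilon\Delta\partial_t\bff{u}^\epsilon = -\epsilon\Delta\partial_t\bff{w}^\epsilon - \epsilon\Delta\partial_t\bff{u}$ is attractive because the residual then involves only the $\epsilon$-independent solution $\bff{u}$, so Theorem~\ref{the:con u eps} suffices and the Appendix~B machinery is bypassed. However, after integrating in time, the self-term $\tfrac{\epsilon}{2}\tfrac{d}{dt}\norm{\nabla\bff{w}^\epsilon}{\bb{L}^2}^2$ contributes $+\tfrac{\epsilon}{2}\norm{\nabla\bff{w}^\epsilon_0}{\bb{L}^2}^2$ on the right-hand side. This quantity is \emph{not} controlled by $\norm{\bff{w}^\epsilon_0}{\bb{L}^2}^2+\epsilon^2$ under hypothesis~\eqref{equ:u0 eps con} alone (which provides no rate), so your argument delivers $\lesssim\norm{\bff{w}^\epsilon_0}{\bb{L}^2}^2+\epsilon\norm{\nabla\bff{w}^\epsilon_0}{\bb{L}^2}^2+\epsilon^2$ rather than exactly~\eqref{equ:solus}; the analogous issue with $\epsilon\norm{\Delta\bff{w}^\epsilon_0}{\bb{L}^2}^2$ arises in part~(ii). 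This is harmless for the convergence $\bff{u}^\epsilon\to\bff{u}$ and vanishes in the natural case $\bff{u}^\epsilon_0=\bff{u}_0$, but it does not reproduce the stated bound. To obtain~\eqref{equ:solus} as written, either forgo the splitting and invoke the $\epsilon$-uniform bounds on $\partial_t\bff{u}^\epsilon$ from Appendix~B (as the paper does), or weaken the right-hand side of the estimate accordingly. A minor secondary point: in part~(ii), $\Delta\bff{u}\in L^\infty_{T^\ast}(\bb{L}^2)$ alone is insufficient in $d=3$ to close the cross-product estimate against $\norm{\bff{w}^\epsilon}{\bb{H}^1}$; you will need $\Delta\bff{u}\in L^2_{T^\ast}(\bb{H}^1)$ (also available from Theorem~\ref{the:con u eps}) so that $\norm{\Delta\bff{u}}{\bb{L}^3}$ pairs with $\norm{\bff{w}^\epsilon}{\bb{L}^6}$.
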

%-----------------------
\begin{proof}
	Let $\bff{v} = \bff{u}^\epsilon-\bff{u}$ 
	and~$\bff{v}_0 = \bff{u}_0^\epsilon-\bff{u}_0$. We first prove~\ref{ite:u eps u i} for the case $d\in \{1,2\}$.
	By Definition~\ref{def:wea sol}, for any $\bff{\phi}\in \bb{H}^1$ and $t\in [0,T]$ we have
	\begin{align}\label{equ:vecv}
		&\inpro{\partial_t \bff{v}(t)}{\bff{\phi}}_{\bb{L}^2}
		+
		\kappa_1 \inpro{\nabla\bff{v}(t)}{\nabla\bff{\phi}}_{\bb{L}^2}
		+
		\gamma \inpro{\bff{v}(t)\times\nabla\bff{u}^\epsilon(t)}{\nabla\bff{\phi}}_{\bb{L}^2}
		+
		\gamma \inpro{\bff{u}(t)\times\nabla\bff{v}}{\nabla\bff{\phi}}_{\bb{L}^2}
		\nn 
		\\
		&\;
		+
		\kappa_2 \inpro{\bff{v}(t)}{\bff{\phi}}_{\bb{L}^2}
		+
		\kappa_2\mu \inpro{|\bff{u}^\epsilon(t)|^2\bff{v}(t) + (|\bff{u}^\epsilon(t)|^2-|\bff{u}(t)|^2)\bff{u}(t)}{\bff{\phi}}_{\bb{L}^2}
		= 
		-\epsilon \inpro{\nabla\partial_t \bff{u}^\epsilon(t)}{\nabla\bff{\phi}}_{\bb{L}^2}.
	\end{align}
	Putting $\bff{\phi} = \bff{v}$ in~\eqref{equ:vecv} and integrating yield, for any $t\in [0,T]$,
	\begin{align}\label{equ:vecv2}
		\frac12 \|\bff{v}(t)\|_{\bb{L}^2}^2
		&+\kappa_1\int_0^t \|\nabla\bff{v}(s)\|^2_{\bb{L}^2}\,\ds
		+
		\kappa_2\int_0^t \|\bff{v}(s)\|^2_{\bb{L}^2}\,\ds
		+
		\kappa_2\mu\int_0^t\||\bff{u}(s)| |\bff{v}(s)|\|^2_{\bb{L}^2}\,\ds
		\nn\\
		&\le
		\frac12 \|\bff{v}_0\|_{\bb{L}^2}^2
		+
		\epsilon\int_0^t 
		\Big|
		\inpro{\nabla\partial_t \bff{u}^\epsilon(s)}{\nabla\bff{v}(s)}_{\bb{L}^2}\,\ds
		\Big|
		+
		\gamma\int_0^t 
		\Big|
		\inpro{\bff{v}(s)\times\nabla\bff{u}^\epsilon(s)}{\nabla\bff{v}(s)}_{\bb{L}^2}\,\ds
		\Big|
		\nn \\
		&\quad+\kappa_2\mu
		\int_0^t 
		\Big|
		\inpro{(|\bff{u}^\epsilon(s)|^2-|\bff{u}(s)|^2)\bff{u}^\epsilon(s)}{\bff{v}(s)}_{\bb{L}^2} \,\ds
		\Big|.
	\end{align}
	We now estimate each term on the right hand side of~\eqref{equ:vecv2}. 
	The first term can be estimated by using Young's inequality as follows:
	\begin{equation}\label{equ:uni4}
		\epsilon \int_0^t 
		\Big|
		\inpro{\nabla\partial_t \bff{u}^\epsilon(s)}{\nabla\bff{v}(s)}_{\bb{L}^2}\,\ds
		\Big|
		\leq 
		\frac{\epsilon^2}{\kappa_1}\int_0^t \|\nabla\partial_t \bff{u}^\epsilon(s)\|_{\bb{L}^2}^2\,\ds
		+
		\frac{\kappa_1}{4}\int_0^t \|\nabla\bff{v}(s)\|^2_{\bb{L}^2}\,\ds.
	\end{equation}
	For the second term, by using Lemma~\ref{lem:tec lem} we obtain
	\begin{equation}\label{equ:v cross nab u}
		\int_0^t 
		\Big|
		\inpro{\bff{v}(s)\times\nabla\bff{u}^\epsilon(s)}{\nabla\bff{v}(s)}_{\bb{L}^2}\,\ds
		\Big|
		\leq 
		\int_0^t \Phi(\bff{u}^\epsilon(s)) \norm{\bff{v}(s)}{\bb{L}^2}^2 \ds
		+
		\frac{\kappa_1}{4} \int_0^t \norm{\nabla \bff{v}(s)}{\bb{L}^2}^2 \ds.
	\end{equation}
	For the last term, by the same argument as in~\eqref{equ:ue ve w} we have
	\begin{equation}\label{equ:u ue v}
		\int_0^t 
		\Big|
		\inpro{(|\bff{u}^\epsilon(s)|^2-|\bff{u}(s)|^2)\bff{u}(s)}{\bff{v}(s)}_{\bb{L}^2} \ds
		\Big|
		\leq
		c \int_0^t \norm{\bff{v}(s)}{\bb{L}^2}^2 \ds 
		+
		\frac14 \min\{\kappa_1,\kappa_2\} \int_0^t \norm{\bff{v}(s)}{\bb{H}^1}^2 \ds,
	\end{equation}
	where the constant $c$ is independent of $\epsilon$ by~\eqref{equ:est ue unif
		1}. Altogether, \eqref{equ:vecv2}, \eqref{equ:uni4}, \eqref{equ:v cross nab u},
	and~\eqref{equ:u ue v} yield
	\begin{equation*}
		\norm{\bff{v}(t)}{\bb{L}^2}^2 
		+ \int_0^t \norm{\nabla \bff{v}(s)}{\bb{L}^2}^2 \ds 
		\leq
		c \|\bff{v}_0\|_{\bb{L}^2}^2
		+
		c\epsilon^2 
		+ 
		c\int_0^t \big(1+ \Phi(\bff{u}^\epsilon(s))\big) \norm{\bff{v}(s)}{\bb{L}^2}^2 \ds,
	\end{equation*}
	where we also noted that $\norm{\nabla\partial_t
		\bff{u}^\epsilon}{L^2_T(\bb{L}^2)}$ can be bounded by a constant independent of
	$\epsilon$ by~\eqref{equ:une dt est local}. Applying Gronwall's inequality, we obtain
	\begin{align*}
		\norm{\bff{v}(t)}{\bb{L}^2}^2
		+ \int_0^t \norm{\nabla \bff{v}(s)}{\bb{L}^2}^2 \ds 
		&\leq
		c \left(
		\norm{\bff{u}_0^\epsilon-\bff{u}_0}{\bb{L}^2}^2
		+
		\epsilon^2 
		\right)
		\exp\left(c \int_0^t \big(1+\Phi(\bff{u}^\epsilon(s))\big) \ds\right)
		\\
		&\leq
		c \left(
		\norm{\bff{u}_0^\epsilon-\bff{u}_0}{\bb{L}^2}^2
		+
		\epsilon^2 
		\right),
	\end{align*}
	where we used~\eqref{equ:Phi une less 123} for $d=1,2$ in the last step. The constant~$c$ is
	independent of $\epsilon$, thus proving~\ref{ite:u eps u i}.
	
	Next, we prove~\ref{ite:u eps u ii}. Repeating the above argument, we obtain the estimate~\eqref{equ:solus} on $[0,T^\ast]$ for~$d=3$.
	Putting $\bff{\phi}=-\Delta \bff{\bff{v}}$ in~\eqref{equ:vecv},
	integrating, and noting~\eqref{equ:vec Gre 1} we obtain, for any $t\in [0,T]$,
	\begin{align}\label{equ:vec Delta v}
		&\frac12 \|\nabla \bff{v}(t)\|_{\bb{L}^2}^2
		+\kappa_1\int_0^t \|\Delta\bff{v}(s)\|^2_{\bb{L}^2}\,\ds
		+
		\kappa_2\int_0^t \norm{\nabla \bff{v}(s)}{\bb{L}^2}^2\,\ds
		\nn\\
		&\le
		\frac12 \norm{\nabla \bff{v}_0}{\bb{L}^2}^2
		+
		\epsilon\int_0^t 
		\Big|
		\inpro{\Delta \partial_t \bff{u}^\epsilon(s)}{\Delta \bff{v}(s)}_{\bb{L}^2} \Big|\,\ds
		+
		\gamma\int_0^t 
		\Big|
		\inpro{\bff{v}(s)\times\Delta\bff{u}^\epsilon(s)}{\Delta\bff{v}(s)}_{\bb{L}^2}\Big|\,\ds
		\nn \\
		&\quad
		+\kappa_2\mu
		\int_{0}^{t} 
		\Big|
		\inpro{|\bff{u}^\epsilon(s)|^2\bff{v}(s)}{\Delta \bff{v}(s)}_{\bb{L}^2}\Big| \,\ds
		\nn \\
		&\quad
		+\kappa_2\mu
		\int_0^t 
		\Big|
		\inpro{(|\bff{u}^\epsilon(s)|^2-|\bff{u}(s)|^2)\bff{u}^\epsilon(s)}{\Delta \bff{v}(s)}_{\bb{L}^2}\Big| \,\ds.
	\end{align}
	The second term on the right-hand side can be estimated by using Young's
	inequality and~\eqref{equ:une est local} as follows:
	\begin{align*}%\label{equ:uni4 Delta}
		\epsilon \int_0^t 
		\Big|
		\inpro{\Delta\partial_t \bff{u}^\epsilon(s)}{\Delta \bff{v}(s)}_{\bb{L}^2} \Big| \,\ds
		&\leq 
		\frac{\epsilon^2}{\kappa_1}\int_0^t \|\Delta\partial_t \bff{u}^\epsilon(s)\|_{\bb{L}^2}^2\,\ds
		+
		\frac{\kappa_1}{5}\int_0^t \|\Delta\bff{v}(s)\|^2_{\bb{L}^2}\,\ds
		\nn\\
		&\leq 
		c \epsilon^2
		+
		\frac{\kappa_1}{5}\int_0^t \|\Delta\bff{v}(s)\|^2_{\bb{L}^2}\,\ds.
	\end{align*}
	For the third term, applying integration by parts and Young's inequality we obtain
	\begin{align*}%\label{equ:v cross Delta u}
		\int_0^t 
		\Big|
		\inpro{\bff{v}(s)\times\Delta\bff{u}^\epsilon(s)}{\Delta\bff{v}(s)}_{\bb{L}^2}\Big| \,\ds
		&\leq 
		\int_0^t \norm{\bff{v}(s)}{\bb{L}^4} \norm{\Delta \bff{u}^\epsilon(s)}{\bb{L}^4} \norm{\Delta \bff{v}(s)}{\bb{L}^2} \ds 
		\nn \\
		&\leq 
		\int_0^t \norm{\Delta \bff{u}^\epsilon(s)}{\bb{H}^1}^2 \norm{ \bff{v}(s)}{\bb{H}^1}^2 \ds
		+
		\frac{\kappa_1}{5} \int_0^t \norm{\Delta \bff{v}(s)}{\bb{L}^2}^2 \ds,
	\end{align*}
	where in the last step we used the Sobolev embedding $\bb{H}^1\hookrightarrow
	\bb{L}^4$.
	For the second last term, it is easy to see that
	\[
	\int_{0}^{t} 
	\Big|
	\inpro{|\bff{u}^\epsilon(s)|^2\bff{v}(s)}{\Delta \bff{v}(s)}_{\bb{L}^2}\Big| \,\ds
	\leq
	c \int_0^t \norm{\bff{v}(s)}{\bb{H}^1}^2 \ds 
	+
	\frac{\kappa_1}{5} \int_0^t \norm{\Delta \bff{v}(s)}{\bb{L}^2}^2 \ds.
	\]
	For the last term, by the same argument as in~\eqref{equ:ue ve w} we have
	\begin{equation}\label{equ:u ue Delta v}
		\int_0^t 
		\Big|
		\inpro{(|\bff{u}^\epsilon(s)|^2-|\bff{u}(s)|^2)\bff{u}(s)}{\Delta \bff{v}(s)}_{\bb{L}^2} \ds
		\Big|
		\leq
		c \int_0^t \norm{\bff{v}(s)}{\bb{H}^1}^2 \ds 
		+
		\frac{\kappa_1}{5} \int_0^t \norm{\Delta \bff{v}(s)}{\bb{L}^2}^2 \ds,
	\end{equation}
	where the constant $c$ is independent of $\epsilon$ by~\eqref{equ:est ue unif 1}.
	Altogether, \eqref{equ:solus}, \eqref{equ:vec Delta v}--\eqref{equ:u ue	Delta v}
	yield
	%, \eqref{equ:uni4 Delta},
	%\eqref{equ:v cross Delta u}, and~\eqref{equ:u ue Delta v} yield
	\begin{equation*}
		\norm{\bff{v}(t)}{\bb{H}^1}^2 
		+ \int_0^t \norm{\Delta \bff{v}(s)}{\bb{L}^2}^2 \ds 
		\leq
		c \|\bff{v}_0\|_{\bb{H}^1}^2
		+
		c\epsilon^2 
		+ 
		c\int_0^t \big(1+ \norm{\Delta \bff{u}^\epsilon(s)}{\bb{H}^1}^2 \big)
		\norm{\bff{v}(s)}{\bb{H}^1}^2 \ds.
	\end{equation*}
	Applying Gronwall's inequality and using~\eqref{equ:une est local}, we
	obtain~\eqref{equ:sol u H1}, completing the proof.
	%\begin{align*}
	%	%\norm{\bff{v}(t)}{\bb{L}^2}^2
	%	%+ \int_0^t \norm{\nabla \bff{v}(s)}{\bb{H}^1}^2 \ds 
	%	\norm{\bff{v}(t)}{\bb{H}^1}^2 
	%	+ \int_0^t \norm{\Delta \bff{v}(s)}{\bb{L}^2}^2 \ds 
	%	&\leq
	%	c \Big(
	%	\norm{\bff{u}_0^\epsilon-\bff{u}_0}{\bb{H}^1}^2
	%	+
	%	\epsilon^2 
	%	\Big)
	%	\exp\left(c \int_0^t \big(1+\norm{\bff{u}^\epsilon(s)}{\bb{H}^3}^2 \big) \ds\right)
	%	\\
	%	&\leq
	%	c \Big(
	%	\norm{\bff{u}_0^\epsilon-\bff{u}_0}{\bb{L}^2}^2
	%	+
	%	\epsilon^2 
	%	\Big),
	%\end{align*}
	%where we again used~\eqref{equ:est ue unif} in the last step. The constant~$c$ is
	%independent of $\epsilon$. This completes the proof of the theorem.
\end{proof}

\section{A fully-discrete finite element approximation for the $\epsilon$-LLBE} \label{sec:fem}
%--------------------------------------------------------------
Throughout this section, we assume that $\bff{u}_0^\epsilon \in \bb{H}^1_\Delta$. Under this assumption, Theorem~\ref{the:ue sol} guarantees the existence of a unique global strong solution $\bff{u}^\epsilon$ of the $\epsilon$-LLBE with regularity
\begin{equation}\label{equ:regularity llbe}
\bff{u}^\epsilon\in W^{1,\infty}_T(\bb{H}^{1+\alpha}),
\end{equation}
where $\alpha$ satisfies \eqref{equ:condition alpha}.

A linear finite element scheme for numerically solving the $\epsilon$-LLBE
can now be proposed in the following algorithm. To discretise in time, let $N\in\bb{N}$ and $k=T/N$. We
partition $[0,T]$ into $N$ uniform subintervals with nodes $t_n=kn$ for $n=0,1,2,\ldots,N$. 

\begin{algorithm}[Linear FEM for the $\epsilon$-LLBE]\label{alg:fem eps llbe}
{Let $h>0$ and $k>0$ be given.
\\
\textbf{Input}: Given $\bff{u}_h^{\epsilon, (0)}= P_h \bff{u}^\epsilon(0)\in \bb{V}_h$.
\\
\textbf{For} $j=1$ to $N$ \textbf{do}: Find $\bff{u}_h^{\epsilon, (j)}\in\bb{V}_h$ such that 
\begin{align}\label{equ:dis1}
\inpro{\mathrm{d}_t\bff{u}_h^{\epsilon, (j)}}{\bff{\phi}_h}_{\bb{L}^2}
&+
\epsilon\inpro{\nabla \mathrm{d}_t\bff{u}_h^{\epsilon, (j)}}{\nabla\bff{\phi}_h}_{\bb{L}^2}
+
\kappa_1 \inpro{\nabla\bff{u}_h^{\epsilon, (j)}}{\nabla\bff{\phi}_h}_{\bb{L}^2}
+
\gamma \inpro{\bff{u}_h^{\epsilon, (j-1)}\times \nabla\bff{u}_h^{\epsilon, (j)}}{\nabla\bff{\phi}_h}_{\bb{L}^2}
\nn\\
&
+
\kappa_2 \inpro{\bff{u}_h^{\epsilon, (j)}}{\bff{\phi}_h}_{\bb{L}^2}
+
\kappa_2 \mu
\inpro{|\bff{u}_h^{\epsilon, (j-1)}|^2 \bff{u}_h^{\epsilon, (j)}}{\bff{\phi}_h}_{\bb{L}^2}
=0,
\quad\forall \bff{\phi}_h\in\bb{V}_h,
\end{align}
\textbf{Output}: a sequence of finite element functions $\big\{\bff{u}_h^{\epsilon, (j)}\big\}_{j=1}^N$.}
\end{algorithm}

The above scheme is well-defined by the Lax--Milgram theorem.
Stability of the approximate solution in the $\ell^\infty(0,T;\bb{H}^1)$ norm, which holds unconditionally for an arbitrary number of iterations~$n$, is shown in the following lemma.
%----------------------------------

\begin{lemma}\label{lem: stability}
For any $n=1,2,\ldots,N$, there holds
\begin{align*}
\|\bff{u}_h^{\epsilon, (n)}\|^2_{\bb{L}^2}
+\epsilon\|\nabla\bff{u}_h^{\epsilon, (n)}\|^2_{\bb{L}^2}
+
2k\sum_{j=1}^n\|\nabla\bff{u}_h^{\epsilon, (j)}\|^2_{\bb{L}^2}
+
\sum_{j=1}^n\|\bff{u}_h^{\epsilon, (j)}-\bff{u}_h^{\epsilon, (j-1)}\|^2_{\bb{L}^2}
\leq 
\|\bff{u}_h^{\epsilon, (0)}\|^2_{\bb{L}^2}
+
\epsilon\|\nabla\bff{u}_h^{\epsilon, (0)}\|^2_{\bb{L}^2}.
\end{align*}
\end{lemma}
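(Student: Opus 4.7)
The plan is to mirror the proof of Lemma~\ref{lem: stability2} for the LLBE case, carrying along the additional pseudo-parabolic contribution produced by the viscous regularisation. I would begin by testing~\eqref{equ:dis1} with $\bff{\phi}_h = \bff{u}_h^{(j)} \in \bb{V}_h$ and multiplying through by $2k$. This converts the two difference-quotient terms into the inner products $2\inpro{\bff{u}_h^{(j)}-\bff{u}_h^{(j-1)}}{\bff{u}_h^{(j)}}_{\bb{L}^2}$ and $2\epsilon \inpro{\nabla\bff{u}_h^{(j)}-\nabla\bff{u}_h^{(j-1)}}{\nabla\bff{u}_h^{(j)}}_{\bb{L}^2}$, which are the only new ingredient compared with the LLBE argument.

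Next, I would apply the pointwise identity~\eqref{equ:vec ab a}, $2\bff{a}\cdot(\bff{a}-\bff{b}) = |\bff{a}|^2 - |\bff{b}|^2 + |\bff{a}-\bff{b}|^2$, to each of the two inner products above. This produces the telescoping combinations $\|\bff{u}_h^{(j)}\|_{\bb{L}^2}^2 - \|\bff{u}_h^{(j-1)}\|_{\bb{L}^2}^2$ and $\epsilon\big(\|\nabla\bff{u}_h^{(j)}\|_{\bb{L}^2}^2 - \|\nabla\bff{u}_h^{(j-1)}\|_{\bb{L}^2}^2\big)$, together with the non-negative squared-increment terms $\|\bff{u}_h^{(j)}-\bff{u}_h^{(j-1)}\|_{\bb{L}^2}^2$ and $\epsilon\|\nabla\bff{u}_h^{(j)}-\nabla\bff{u}_h^{(j-1)}\|_{\bb{L}^2}^2$. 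The gyromagnetic contribution vanishes identically, since for each spatial index $i$ one has $(\bff{u}_h^{(j-1)}\times\pa_i\bff{u}_h^{(j)})\cdot\pa_i\bff{u}_h^{(j)} = 0$, hence $\inpro{\bff{u}_h^{(j-1)}\times\nabla\bff{u}_h^{(j)}}{\nabla\bff{u}_h^{(j)}}_{\bb{L}^2} = 0$. The remaining contributions $2k\kappa_2\|\bff{u}_h^{(j)}\|_{\bb{L}^2}^2$ and $2k\kappa_2\mu\,\norm{|\bff{u}_h^{(j-1)}||\bff{u}_h^{(j)}|}{\bb{L}^2}^2$ from the Landau-type nonlinearity are non-negative and may simply be discarded.

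Finally, summing the resulting identity from $j=1$ to $n$ collapses the pointwise and gradient $\bb{L}^2$-norms into $\|\bff{u}_h^{(n)}\|_{\bb{L}^2}^2 - \|\bff{u}_h^{(0)}\|_{\bb{L}^2}^2$ and $\epsilon\big(\|\nabla\bff{u}_h^{(n)}\|_{\bb{L}^2}^2 - \|\nabla\bff{u}_h^{(0)}\|_{\bb{L}^2}^2\big)$, respectively, while the diffusive term and the squared $\bb{L}^2$-increments accumulate as the sums stated in the lemma; the $\epsilon$-weighted gradient-increment terms, being non-negative, can be dropped since they do not appear on the right-hand side. Rearranging yields the claim. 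There is no genuine obstacle here: the argument is essentially the same as that of Lemma~\ref{lem: stability2}, with the pseudo-parabolic term handled by exactly the same discrete summation-by-parts identity as the $\bb{L}^2$-term. The unconditional, CFL-free character of the estimate is a direct consequence of the implicit treatment of the linear terms and of the sign-definite cancellation of the cross-product term, so no restriction on $k$ versus $h$ is needed at this stage.
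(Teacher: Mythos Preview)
Your proposal is correct and follows essentially the same approach as the paper, which simply notes that the proof is similar to that of Lemma~\ref{lem: stability2}. You correctly test with $\bff{\phi}_h=\bff{u}_h^{(j)}$, apply the identity~\eqref{equ:vec ab a} to both the $\bb{L}^2$ and the $\epsilon$-weighted gradient inner products, exploit the vanishing of the cross-product term, drop the non-negative remainder terms, and telescope.
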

\begin{proof}
The proof is similar to that of Lemma~\ref{lem: stability2}.
\end{proof}

We are now ready to prove the error estimate for the scheme~\eqref{equ:dis1}. As before, for $j=1,\ldots,N$, letting $\bff{\rho}^{\epsilon, (j)}:= \bff{u}^\epsilon (t_j)- P_h \bff{u}^\epsilon (t_j)$ and $\bff{\theta}^{\epsilon, (j)}:= P_h \bff{u}^\epsilon (t_j)- \bff{u}_h^{\epsilon, (j)}$, we decompose the error into
\begin{align}\label{equ:error theta rho eps}
    \bff{u}^\epsilon (t_j)- \bff{u}_h^{\epsilon, (j)}= \bff{\rho}^{\epsilon, (j)} + \bff{\theta}^{\epsilon, (j)}.
\end{align}

{The following theorem shows an \emph{a priori} error estimate for the numerical scheme given by Algorithm~\ref{alg:fem eps llbe}. By using interpolation in the temporal variable, an approximate solution $\bff{u}^\epsilon_{h,k}$ can be defined from $\bff{u}_h^{\epsilon, (n)}$ such that $\norm{\bff{u}^\epsilon_{h,k}-\bff{u}^\epsilon}{L^\infty(0,T;\bb{H}^1)}$ satisfies \eqref{equ:err}.}

%---------------------------------
\begin{theorem}\label{the:err} 
Let $\bff{u}^\epsilon$ denote the strong solution of the $\epsilon$-LLBE with initial data $\bff{u}_0^\epsilon\in \bb{H}^1_\Delta$, {and let $\big\{\bff{u}_h^{\epsilon, (j)}\big\}_{j=1}^N$ be the sequence generated by Algorithm~\ref{alg:fem eps llbe}}. Assume that $k\ell_h\leqs 1$, where $\ell_h$ is defined in~\eqref{equ:ell h}.
Then for any $n=1,\ldots,N$, where $t_n\in [0,T]$, and $\alpha$ satisfying~\eqref{equ:condition alpha}, we have
\begin{equation}\label{equ:err}
 \norm{\bff{u}_h^{\epsilon, (n)}-\bff{u}^\epsilon(t_n)}{\bb{H}^1}
 \leq c e^{cT} (h^\alpha+k),
\end{equation}
where the positive constant~$c$ is independent of $n$, $h$, or $k$, but may depend on $\epsilon^{-1}$.
\end{theorem}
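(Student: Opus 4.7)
The plan is to mimic the error analysis of Theorem~\ref{the:err llb} but exploit the additional $\epsilon$-viscous term to obtain control of the gradient of the discrete error without having to pay an $\ell_h$ factor. The starting point is the usual decomposition $\bff{u}^\epsilon(t_j)-\bff{u}_h^{(j)}=\bff{\rho}^{(j)}+\bff{\theta}^{(j)}$ with $\bff{\rho}^{(j)}:=\bff{u}^\epsilon(t_j)-P_h\bff{u}^\epsilon(t_j)$ and $\bff{\theta}^{(j)}:=P_h\bff{u}^\epsilon(t_j)-\bff{u}_h^{(j)}\in\bb{V}_h$. I would integrate the weak form of the $\epsilon$-LLBE over $[t_{j-1},t_j]$, test against $\bff{\phi}_h\in\bb{V}_h$, subtract from~\eqref{equ:dis1}, and use~\eqref{equ:orth proj} to eliminate the $\bff{\rho}$ contribution from the time-difference and $\bb{L}^2$ lower-order terms. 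This yields an error equation completely analogous to~\eqref{equ:theta llb}, but now with the extra term $\epsilon\inpro{\nabla(\bff{\theta}^{(j)}-\bff{\theta}^{(j-1)})}{\nabla\bff{\phi}_h}_{\bb{L}^2}$ on the left.

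The crucial step is then to pick $\bff{\phi}_h=\bff{\theta}^{(j)}$ and apply the identity~\eqref{equ:vec ab a} to both $\bb{L}^2$ and gradient parts. This produces on the left-hand side the telescoping quantities
\begin{align*}
\tfrac{1}{2}\bigl(\norm{\bff{\theta}^{(j)}}{\bb{L}^2}^2-\norm{\bff{\theta}^{(j-1)}}{\bb{L}^2}^2\bigr)
&+\tfrac{\epsilon}{2}\bigl(\norm{\nabla\bff{\theta}^{(j)}}{\bb{L}^2}^2-\norm{\nabla\bff{\theta}^{(j-1)}}{\bb{L}^2}^2\bigr) \\
&+k\kappa_1\norm{\nabla\bff{\theta}^{(j)}}{\bb{L}^2}^2
+k\kappa_2\norm{\bff{\theta}^{(j)}}{\bb{L}^2}^2
+k\kappa_2\mu\norm{|\bff{u}_h^{(j-1)}||\bff{\theta}^{(j)}|}{\bb{L}^2}^2,
\end{align*}
together with the positive consistency-dissipation term for $\|\bff{\theta}^{(j)}-\bff{\theta}^{(j-1)}\|_{\bb{L}^2}^2$ plus its $\epsilon$-gradient analogue. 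The right-hand side consists of: the linear projection/time-quadrature residuals, which are bounded exactly as in $R_1,R_2$ of Theorem~\ref{the:err llb} using~\eqref{equ:rho rhot} and~\eqref{equ:uj uj1}; and the two nonlinear residuals $\gamma H_1(\bff{u}_h^{(j)},\bff{u}^\epsilon,\bff{\theta}^{(j)})$ and $\kappa_2\mu H_2(\bff{u}_h^{(j)},\bff{u}^\epsilon,\bff{\theta}^{(j)})$, which I would estimate directly by Lemma~\ref{lem:aux lem} with $\bff{\eta}^{(j)}=\bff{\rho}^{(j)}$, $\bff{\mu}^{(j)}=\bff{\theta}^{(j)}$, and $\bff{w}=\bff{u}^\epsilon$; finally the $\rho$-cross term $k\kappa_2\mu\inpro{|\bff{u}_h^{(j-1)}|^2\bff{\rho}^{(j)}}{\bff{\theta}^{(j)}}_{\bb{L}^2}$ is handled by a Cauchy--Schwarz bound as in $R_5$. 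The key observation that removes the $\ell_h$ factor present in Theorem~\ref{the:err llb} is that the $\bff{\theta}^{(j)}$-gradient terms appearing on the right (from $T_1$ in~\eqref{equ:est T1} and from the dominant nonlinear contributions) can now be absorbed into the gradient dissipation on the left for $\delta$ sufficiently small, because the norm that must be controlled by induction is $\|\bff{\theta}^{(j)}\|_{\bb{H}^1}$ (thanks to the $\sqrt{\epsilon}\|\nabla\bff{\theta}^{(j)}\|_{\bb{L}^2}$ telescoping), not merely $\|\bff{\theta}^{(j)}\|_{\bb{L}^2}$.

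The main obstacle is the appearance of $\|\bff{u}_h^{(j-1)}\|_{\bb{L}^\infty}$ in $R_3$ and of $\|\bff{u}_h^{(j-1)}\|_{\bb{H}^1}$ in $R_4$: the unconditional Lemma~\ref{lem: stability} only gives $\bb{L}^2$ and time-summed $\bb{H}^1$ bounds, which are insufficient. My strategy is an induction argument. Assume that $\norm{\bff{\theta}^{(i)}}{\bb{H}^1}\leq C_\star(h^\alpha+k)$ for all $i\leq j-1$ with $C_\star$ to be determined; then by the triangle inequality, Theorem~\ref{the:ue sol}, the approximation property~\eqref{equ:proj ineq} applied to $\bff{u}^\epsilon\in L^\infty_T(\bb{H}^{1+\alpha})$, and the inverse estimate~\eqref{equ:inverse Vh}, I obtain
\begin{equation*}
\norm{\bff{u}_h^{(j-1)}}{\bb{L}^\infty}\leq \norm{P_h\bff{u}^\epsilon(t_{j-1})}{\bb{L}^\infty}+c_{\mathrm i}\ell_h\norm{\bff{\theta}^{(j-1)}}{\bb{H}^1}\leq c+c C_\star\ell_h(h^\alpha+k).
\end{equation*}
Under the condition $k\ell_h\lesssim1$ (which, combined with a minimal mesh constraint, controls $\ell_h h^\alpha$), this yields a uniform-in-$j$ bound on $\|\bff{u}_h^{(j-1)}\|_{\bb{L}^\infty}$ depending only on $\epsilon,T$, closing the induction.

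Plugging these bounds into the estimates of $R_1,\ldots,R_5$ and choosing $\delta=\tfrac{1}{10}\min\{\kappa_1,\kappa_2,\kappa_2\mu\}$ to absorb the $\|\bff{\theta}^{(j)}\|_{\bb{H}^1}$ terms on the right, I arrive at an inequality of the form
\begin{equation*}
a_j-a_{j-1}+k b_j \leq c k(h^{2\alpha}+k^2)+ck\bigl(a_{j-1}+a_j\bigr),
\end{equation*}
where $a_j:=\norm{\bff{\theta}^{(j)}}{\bb{L}^2}^2+\epsilon\norm{\nabla\bff{\theta}^{(j)}}{\bb{L}^2}^2$ and $b_j$ collects the strictly positive dissipative terms. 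Summing and applying the discrete Gronwall lemma~\cite{Sug69}, and noting that $\bff{\theta}^{(0)}=0$ by the choice $\bff{u}_h^{(0)}=P_h\bff{u}^\epsilon(0)$, I conclude $\norm{\bff{\theta}^{(n)}}{\bb{L}^2}^2+\epsilon\norm{\nabla\bff{\theta}^{(n)}}{\bb{L}^2}^2\leq ce^{cT}(h^{2\alpha}+k^2)$, so $\norm{\bff{\theta}^{(n)}}{\bb{H}^1}\leq ce^{cT}(h^\alpha+k)$ with the constant depending on $\epsilon$. Combining with~\eqref{equ:rho rhot} for $\bff{\rho}^{(n)}$ via the triangle inequality yields~\eqref{equ:err}.
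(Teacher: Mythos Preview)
Your proposal is correct and follows essentially the same route as the paper: the same error decomposition, the same test function $\bff{\theta}^{(j)}$, Lemma~\ref{lem:aux lem} for the nonlinear residuals, and an induction argument combined with the inverse estimate~\eqref{equ:inverse Vh} under $k\ell_h\lesssim 1$ (and the companion smallness of $h^\alpha\ell_h$) to close the $\bb{L}^\infty$ bound on $\bff{u}_h^{(j-1)}$. The only cosmetic difference is that the paper phrases the induction hypothesis directly as $\norm{\bff{u}_h^{(j)}}{\bb{L}^\infty}\le M$ rather than on $\norm{\bff{\theta}^{(j)}}{\bb{H}^1}$, and it uses Lemma~\ref{lem: stability} (which already gives $\epsilon\norm{\nabla\bff{u}_h^{(j)}}{\bb{L}^2}^2\le C$, hence a pointwise $\bb{H}^1$ bound with $\epsilon$-dependent constant) to handle the $\norm{\bff{u}_h^{(j-1)}}{\bb{H}^1}$ factors in $R_4$---so your remark that the stability lemma is ``insufficient'' for this purpose is slightly pessimistic.
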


%---------------------------------
\begin{proof}
For any $ \bff{\phi}_h\in\bb{V}_h$, the exact solution $\bff{u}^\epsilon$
of~\eqref{equ:LLB eps pro} satisfies, for any~$\bff{\phi}_h\in\bb{V}_h$,
\begin{align}\label{equ:dis_ex}
&\inpro{\bff{u}^\epsilon(t_j)-\bff{u}^\epsilon(t_{j-1})}{\bff{\phi}_h}_{\bb{L}^2}
+
\epsilon\inpro{\nabla\bff{u}^\epsilon(t_j)-\nabla\bff{u}^\epsilon(t_{j-1})}{\nabla\bff{\phi}_h}_{\bb{L}^2}
+
\int_{t_{j-1}}^{t_j} \kappa_1 \inpro{\nabla\bff{u}^\epsilon(s)}{\nabla\bff{\phi}_h}_{\bb{L}^2}\ds
\nn\\
&\quad
+
\int_{t_{j-1}}^{t_j}
\gamma \inpro{\bff{u}^\epsilon(s)\times\nabla\bff{u}^\epsilon(s)}{\nabla\bff{\phi}_h}_{\bb{L}^2}\ds
+
\int_{t_{j-1}}^{t_j}
\kappa_2 \inpro{(1+\mu |\bff{u}^\epsilon(s)|^2)\bff{u}^\epsilon(s)}{\bff{\phi}_h}_{\bb{L}^2} \ds
=0.
\end{align}
% and the approximate solution $\bff{u}_h$
% \begin{align}\label{eq: dis_ap}
%  \inpro{\bff{u}_h^{\epsilon, (j)}-\bff{u}_h^{\epsilon, (j-1)}}{\bff{\phi}_h}_{\bb{L}^2}
% +k
% \inpro{\nabla\bff{u}_h^{\epsilon, (j)}}{\nabla\bff{\phi}_h}_{\bb{L}^2}
% &+k
% \inpro{\bff{u}_h^{\epsilon, (j-1)}\times\nabla\bff{u}_h^{\epsilon, (j)}}{\nabla\bff{\phi}_h}_{\bb{L}^2}\nn\\
% &+k
% \inpro{(|\bff{u}_h^{\epsilon, (j-1)}|^2+1)\bff{u}_h^{\epsilon, (j)}}{\bff{\phi}_h}
% =0.
% \end{align}
Subtracting~\eqref{equ:dis_ex} from~\eqref{equ:dis1}, noting~\eqref{equ:error theta rho} and~\eqref{equ:orth proj}, we obtain
\begin{align}\label{equ:theta}
\inpro{\bff{\theta}^{\epsilon, (j)}-\bff{\theta}^{\epsilon, (j-1)}}{\bff{\phi}_h}_{\bb{L}^2}
&+
 \epsilon\inpro{\nabla\bff{\theta}^{\epsilon, (j)}-\nabla\bff{\theta}^{\epsilon, (j-1)}}{\nabla\bff{\phi}_h}_{\bb{L}^2}
 +
 \epsilon\inpro{\nabla\bff{\rho}^{\epsilon, (j)}-\nabla\bff{\rho}^{\epsilon, (j-1)}}{\nabla\bff{\phi}_h}_{\bb{L}^2}
 \nn\\
 &
 +
 k\kappa_1 \inpro{\nabla\bff{\theta}^{\epsilon, (j)}+\nabla \bff{\rho}^{\epsilon, (j)}}{\nabla\bff{\phi}_h}_{\bb{L}^2}
 +
 \kappa_1
 \int_{t_{j-1}}^{t_j} 
 \inpro{\nabla\bff{u}^\epsilon(t_j)- \nabla \bff{u}^\epsilon(s)}{\nabla\bff{\phi}_h}_{\bb{L}^2} \ds
 \nn\\
 &
 + k \kappa_2 \inpro{\bff{\theta}^{\epsilon, (j)}}{\bff{\phi}_h}_{\bb{L}^2}
 +
 \int_{t_{j-1}}^{t_j} \kappa_2 \inpro{\bff{u}^\epsilon(t_j)- \bff{u}^\epsilon(s)}{\bff{\phi}_h}_{\bb{L}^2} \ds 
 \nn\\
 &+
 \gamma H_1(\bff{u}_h^{\epsilon, (j)},\bff{u}^\epsilon,\bff{\phi}_h) 
 + 
 \kappa_2 \mu
 H_2(\bff{u}_h^{\epsilon, (j)},\bff{u}^\epsilon,\bff{\phi}_h) 
 \nn\\
 &
 +
 k \kappa_2 \mu
 \inpro{|\bff{u}_h^{\epsilon, (j-1)}|^2
 \big(\bff{\theta}^{\epsilon, (j)}+\bff{\rho}^{\epsilon, (j)}\big)}{\bff{\phi}_h}_{\bb{L}^2}
 =0,
\end{align}
where $H_1$ and~$H_2$ are defined in Lemma~\ref{lem:aux lem}.

To show \eqref{equ:err}, we will proceed inductively. Let $T>0$ be given. Let $\ell_h$ be as defined in~\eqref{equ:ell h} and suppose that $h$ and $k$ are sufficiently small such that
\begin{equation}\label{equ:timestep rest}
	k\ell_h \leq \frac{c_\infty \norm{\bff{u}^\epsilon}{L^\infty_T(\bb{L}^\infty)}}{c_{\mathrm{i}} e^{cT} \left(1+3c_\infty  \norm{\bff{u}^\epsilon}{L^\infty_T(\bb{L}^\infty)}\right)}
	\quad 
	\text{and}
	\quad
	h^\alpha \ell_h \leq \frac{c_\infty \norm{\bff{u}^\epsilon}{L^\infty_T(\bb{L}^\infty)}}{c_{\mathrm{i}} e^{cT} \left(1+3c_\infty  \norm{\bff{u}^\epsilon}{L^\infty_T(\bb{L}^\infty)}\right)},
\end{equation}
where $c_\infty$ and $c_{\mathrm{i}}$ are constants in~\eqref{equ:proj stab} for $p=\infty$ and in~\eqref{equ:inverse Vh}, respectively.
In other words, we impose a mild time-step restriction $k\ell_h \leqs 1$.
Let $M:=3 c_\infty \norm{\bff{u}^\epsilon}{L^\infty_T(\bb{L}^\infty)}$. For the inductive step, we assume that
\begin{equation}\label{equ:uhj infty}
	\norm{\bff{u}_h^{\epsilon, (j)}}{\bb{L}^\infty} \leq M, \quad \text{for } j=0,1,\ldots,n-1.
\end{equation}
Assuming this, we will show that
\begin{equation}\label{equ:induct theta j}
	\norm{\bff{\theta}^{\epsilon, (n)}}{\bb{H}^1} \leq e^{cT} (1+M) (h^\alpha+k), \quad \text{and}\quad \norm{\bff{u}_h^{\epsilon, (n)}}{\bb{L}^\infty} \leq M,
\end{equation}
where $c$ is a constant independent of $n$, $h$, and $k$.

To this end, putting~$\bff{\phi}_h=\bff{\theta}^{\epsilon, (j)}$ in~\eqref{equ:theta}, using the vector
identity~\eqref{equ:vec ab a}, and rearranging the terms, we have
\begin{align}\label{equ:theta sum}
	&\frac{1}{2} \left(\norm{\bff{\theta}^{\epsilon, (j)}}{\bb{L}^2}^2 - \norm{\bff{\theta}^{\epsilon, (j-1)}}{\bb{L}^2}^2\right)
	+
	\frac{1}{2} \norm{\bff{\theta}^{\epsilon, (j)}- \bff{\theta}^{\epsilon, (j-1)}}{\bb{L}^2}^2
	+
	\frac{\epsilon}{2} \left(\norm{\nabla \bff{\theta}^{\epsilon, (j)}}{\bb{L}^2}^2 - \norm{\nabla \bff{\theta}^{\epsilon, (j-1)}}{\bb{L}^2}^2\right)
	\nn \\
	&\quad
	+
	\frac{\epsilon}{2} \norm{\nabla \bff{\theta}^{\epsilon, (j)}- \nabla \bff{\theta}^{\epsilon, (j-1)}}{\bb{L}^2}^2
	+
	k\kappa_1 \norm{\nabla \bff{\theta}^{\epsilon, (j)}}{\bb{L}^2}^2
	+
	k\kappa_2 \norm{\bff{\theta}^{\epsilon, (j)}}{\bb{L}^2}^2
	+
	\kappa_2 \mu k \norm{\big|\bff{u}_h^{\epsilon, (j-1)}\big| \big|\bff{\theta}^{\epsilon, (j)}\big|}{\bb{L}^2}^2
	\nn \\
	&=
	-
	k\kappa_1 \inpro{\nabla\bff{\rho}^{\epsilon, (j)}}{\nabla\bff{\theta}^{\epsilon, (j)}}_{\bb{L}^2}
	-
	\epsilon\inpro{\nabla\bff{\rho}^{\epsilon, (j)}-\nabla\bff{\rho}^{\epsilon, (j-1)}}{\nabla\bff{\theta}^{\epsilon, (j)}}_{\bb{L}^2}
	\nn \\
	&\quad
	-
	\int_{t_{j-1}}^{t_j} \kappa_1 \inpro{\nabla\bff{u}^\epsilon(t_j)- \nabla \bff{u}^\epsilon(s)}{\nabla\bff{\theta}^{\epsilon, (j)}}_{\bb{L}^2} \ds
	-
	\int_{t_{j-1}}^{t_j} \kappa_2 \inpro{\bff{u}^\epsilon(t_j)- \bff{u}^\epsilon(s)}{\bff{\theta}^{\epsilon, (j)}}_{\bb{L}^2} \ds
	\nn \\
	&\quad
	-
	\gamma H_1(\bff{u}_h^{\epsilon, (j)},\bff{u}^\epsilon,\bff{\theta}^{\epsilon, (j)}) 
 	+ 
 	\kappa_2 \mu
	H_2(\bff{u}_h^{\epsilon, (j)},\bff{u}^\epsilon,\bff{\theta}^{\epsilon, (j)}) 
 	-
 	k \kappa_2 \mu
 	\inpro{|\bff{u}_h^{\epsilon, (j-1)}|^2 \bff{\rho}^{\epsilon, (j)}}{\bff{\theta}^{\epsilon, (j)}}_{\bb{L}^2}
	\nn \\
	&\le
	\norm{\nabla\bff{\theta}^{\epsilon, (j)}}{\bb{L}^2} 
	\Big(
		c k \norm{\nabla\bff{\rho}^{\epsilon, (j)}}{\bb{L}^2} 
		+
		\epsilon \norm{\nabla\bff{\rho}^{\epsilon, (j)}-\nabla\bff{\rho}^{\epsilon, (j-1)}}{\bb{L}^2} 
		+
		c \int_{t_{j-1}}^{t_j}
		\norm{\nabla\bff{u}^\epsilon(t_j)-\nabla\bff{u}^\epsilon(s)}{\bb{L}^2} \ds
	\Big)
	\nn \\
	&\quad
	+
	c\norm{\bff{\theta}^{\epsilon, (j)}}{\bb{L}^2} 
	\int_{t_{j-1}}^{t_j} 
	\norm{\bff{u}^\epsilon(t_j)-\bff{u}^\epsilon(s)}{\bb{L}^2} \ds
	\nn \\
	&\quad
	+
	c
	\abs{H_1(\bff{u}_h^{\epsilon, (j)},\bff{u}^\epsilon,\bff{\theta}^{\epsilon, (j)})}
 	+ 
	c
	\abs{ H_2(\bff{u}_h^{\epsilon, (j)},\bff{u}^\epsilon,\bff{\theta}^{\epsilon, (j)})}
	+
	c
	k \norm{\bff{\theta}^{\epsilon, (j)}}{\bb{L}^6}
	\norm{\bff{u}_h^{\epsilon, (j-1)}}{\bb{L}^6}^2
	\norm{\bff{\rho}^{\epsilon, (j)}}{\bb{L}^2}
	\nn\\
	&=:
	R_1 + \cdots + R_5.
\end{align}
We now estimate each term on the right-hand side. Let $\delta>0$ be given and let $\alpha$ satisfies~\eqref{equ:condition alpha}. For
the first term, by~\eqref{equ:uj uj1} and Young's inequality (noting the regularity~\eqref{equ:regularity llbe}),
\begin{align}\label{equ:llbe R1}
	R_1
	&=
	\norm{\nabla\bff{\theta}^{\epsilon, (j)}}{\bb{L}^2} 
	\Big(
		c k \norm{\nabla\bff{\rho}^{\epsilon, (j)}}{\bb{L}^2} 
		+
		\epsilon \norm{\nabla\bff{\rho}^{\epsilon, (j)}-\nabla\bff{\rho}^{\epsilon, (j-1)}}{\bb{L}^2} 
		+
		c \int_{t_{j-1}}^{t_j}
		\norm{\nabla\bff{u}^\epsilon(t_j)-\nabla\bff{u}^\epsilon(s)}{\bb{L}^2} \ds
	\Big)
	\nn\\
	&\leq
	\norm{\nabla\bff{\theta}^{\epsilon, (j)}}{\bb{L}^2} 
	\Big(
		c k \norm{\nabla\bff{\rho}^{\epsilon, (j)}}{\bb{L}^2} 
		+
		\epsilon k \norm{\nabla \bff{\rho}^{\epsilon}}{W^{1,\infty}_T(\bb{L}^2)} 
		+
		c k^2 
		\norm{\nabla \bff{u}^\epsilon}{W^{1,\infty}_T(\bb{L}^2)} 
	\Big)
	\nn\\
	&\leq
	\delta k \norm{\nabla\bff{\theta}^{\epsilon, (j)}}{\bb{L}^2}^2
	+
	ck \norm{\nabla \bff{\rho}^{\epsilon, (j)}}{\bb{L}^2}^2
	+
	c k \norm{\nabla\bff{\rho}^{\epsilon}}{W^{1,\infty}_T(\bb{L}^2)}^2
	+ 
	ck^3 \norm{\nabla\bff{u}^\epsilon}{W^{1,\infty}_T(\bb{L}^2)}^2
	\nn\\
	&\leq
    ck(h^{2\alpha}+k^2)
    +
	\delta k \norm{\nabla\bff{\theta}^{\epsilon, (j)}}{\bb{L}^2}^2,
	%ck h^{2\alpha} \norm{\bff{u}^\epsilon}{W^{1,\infty}_T(\bb{H}^{1+\alpha})} + ck^3 \norm{\bff{u}^\epsilon}{W^{1,\infty}_T(\bb{H}^1)}
\end{align}
where in the last step we used~\eqref{equ:rho rhot}.
For the second term, by H\"older's and Young's inequalities, and \eqref{equ:uj uj1}, we have
\begin{align*}
	R_2
	&=
	c \norm{\bff{\theta}^{\epsilon, (j)}}{\bb{L}^2} 
	\int_{t_{j-1}}^{t_j} 
	\norm{\bff{u}^\epsilon(t_j)-\bff{u}^\epsilon(s)}{\bb{L}^2} \ds
	\\
	&\leq
	c k^2
	\norm{\bff{\theta}^{\epsilon, (j)}}{\bb{L}^2} 
	\norm{\bff{u}^\epsilon}{W^{1,\infty}_T(\bb{L}^2)}
	\le
	\delta k \norm{\bff{\theta}^{\epsilon, (j)}}{\bb{L}^2}^2
	+
	ck^3.
	%\norm{\bff{u}^\epsilon}{W^{1,\infty}(\bb{H}^1)}^2
\end{align*}
For the term $R_3$, we invoke Lemma~\ref{lem:aux lem} (with~$\bff{v}^{\epsilon, (j)}=\bff{u}_h^{\epsilon, (j)}$,
$\bff{w}=\bff{u}^\epsilon$, $\bff{\eta}^{\epsilon, (j)}=\bff{\rho}^{\epsilon, (j)}$,
and~$\bff{\mu}^{\epsilon, (j)}=\bff{\theta}^{\epsilon, (j)}$), use the fact that $\bff{u}^\epsilon$ is a strong solution, and apply~\eqref{equ:rho rhot} to obtain 
\begin{align}\label{equ:R3}
	R_3
	=
	c\, \big| H_1(\bff{u}_h^{\epsilon, (j)},\bff{u}^\epsilon,\bff{\theta}^{\epsilon, (j)}) \big|
%	\\
%	&\le
%	ck 
%	\Big(
%		\norm{\bff{u}_h^{\epsilon, (j)}}{\bb{H}^1}^2 
%		\norm{\nabla\bff{\rho}^{\epsilon, (j)}}{\bb{L}^4}^2 
%		+
%		\norm{\bff{\rho}^{\epsilon, (j-1)}}{\bb{L}^2}^2 
%		+
%		\norm{\bff{\theta}^{\epsilon, (j-1)}}{\bb{L}^2}^2 
%	\Big)
%	\\
%	&\quad
%	+
%	ck^3 
%	\Big(  
%		\norm{\bff{u}_h^{\epsilon, (j)}}{\bb{H}^1}^2 
%	\Big)	
%	+
%	4 \delta k  \norm{\bff{\nabla\theta}^{\epsilon, (j)}}{\bb{L}^2}^2,
%	\\
%	&\leq
	&\le
	c(1+M) k (h^{2\alpha}+k^2) 
	+ c k \norm{\bff{\theta}^{\epsilon, (j-1)}}{\bb{L}^2}^2
	+ c k^3 
	\nn\\
	&\quad
	+ \delta k \norm{\nabla \bff{\theta}^{\epsilon, (j-1)}}{\bb{L}^2}^2
	+ 4\delta k \norm{\nabla \bff{\theta}^{\epsilon, (j)}}{\bb{L}^2}^2,
\end{align}
where we used~\eqref{equ:uhj infty} to
obtain the boundedness of $\bff{u}_h^{\epsilon, (j)}$.
Similarly,
\begin{align*} 
	R_4
	&=
	c\, \big| H_2(\bff{u}_h^{\epsilon, (j)},\bff{u}^\epsilon,\bff{\theta}^{\epsilon, (j)}) \big|
%	\\
%	&\le
%	ck 
%	\Big(
%		\norm{\bff{\rho}^{\epsilon, (j-1)}}{\bb{L}^2}^2 
%		+
%		\norm{\bff{\theta}^{\epsilon, (j-1)}}{\bb{L}^2}^2 
%	\Big)
%	+
%	ck^3
%	+
%	2 \delta k \norm{\bff{\theta}^{\epsilon, (j)}}{\bb{H}^1}^2
%		+
%		\delta k
%		\norm{|\bff{u}_h^{\epsilon, (j-1)}||\bff{\theta}^{\epsilon, (j)}|}{\bb{L}^2}^2
%	\\
\\
	&\leq
	ck h^{2(1+\alpha)} 
	+
	c k \norm{\bff{\theta}^{\epsilon, (j-1)}}{\bb{L}^2}^2
	+
	ck^3
	+
	2 \delta k \norm{\bff{\theta}^{\epsilon, (j)}}{\bb{H}^1}^2
		+
		\delta k
		\norm{|\bff{u}_h^{\epsilon, (j-1)}||\bff{\theta}^{\epsilon, (j)}|}{\bb{L}^2}^2,
\end{align*}
where we also used Lemma~\ref{lem: stability}. The constant $c$ may depend on $\epsilon^{-1}$.
Finally,
\begin{align*} 
	R_5
	&=
	c
	k \norm{\bff{\theta}^{\epsilon, (j)}}{\bb{L}^6}
	\norm{\bff{u}_h^{\epsilon, (j-1)}}{\bb{L}^6}^2
	\norm{\bff{\rho}^{\epsilon, (j)}}{\bb{L}^2}
	\\
    &\le
	ck 
	\norm{\bff{\rho}^{\epsilon, (j)}}{\bb{L}^2}^2
	+
	\delta k \norm{\bff{\theta}^{\epsilon, (j)}}{\bb{H}^1}^2
	\leq
	ck h^{2(1+\alpha)}
	+
	\delta k \norm{\bff{\theta}^{\epsilon, (j)}}{\bb{H}^1}^2.
\end{align*}
Altogether, we derive from~\eqref{equ:theta sum} that
\begin{align}\label{equ:the 33}
	&\frac{1}{2} 
	\Big(
	\norm{\bff{\theta}^{\epsilon, (j)}}{\bb{L}^2}^2 - \norm{\bff{\theta}^{\epsilon, (j-1)}}{\bb{L}^2}^2
	\Big)
	+
	\frac{1}{2} 
	\norm{\bff{\theta}^{\epsilon, (j)}- \bff{\theta}^{\epsilon, (j-1)}}{\bb{L}^2}^2
	\nn \\
	&\quad
	+
	\frac{\epsilon}{2} 
	\Big(
	\norm{\nabla \bff{\theta}^{\epsilon, (j)}}{\bb{L}^2}^2 - \norm{\nabla \bff{\theta}^{\epsilon, (j-1)}}{\bb{L}^2}^2
	\Big)
	+
	\frac{\epsilon}{2} \norm{\nabla \bff{\theta}^{\epsilon, (j)}- \nabla \bff{\theta}^{\epsilon, (j-1)}}{\bb{L}^2}^2
	\nn\\
	&\quad
	+
	k\kappa_1 \norm{\nabla \bff{\theta}^{\epsilon, (j)}}{\bb{L}^2}^2
	+
	k\kappa_2 \norm{\bff{\theta}^{\epsilon, (j)}}{\bb{L}^2}^2
	+
	\kappa_2 \mu k \norm{\big|\bff{u}_h^{\epsilon, (j-1)}\big| \big|\bff{\theta}^{\epsilon, (j)}\big|}{\bb{L}^2}^2
	\nn\\
	&\leq
	c(1+M) k(h^{2\alpha}+k^2) 
	+ 
	ck \norm{\bff{\theta}^{\epsilon, (j-1)}}{\bb{L}^2}^2
	+
	\delta k \norm{\nabla \bff{\theta}^{\epsilon, (j-1)}}{\bb{L}^2}^2
	\nn\\
	&\quad
	+
	8 \delta k \norm{\nabla \bff{\theta}^{\epsilon, (j)}}{\bb{L}^2}^2
	+
	4 \delta k \norm{\bff{\theta}^{\epsilon, (j)}}{\bb{L}^2}^2
	+
	\delta k
	\norm{|\bff{u}_h^{\epsilon, (j-1)}||\bff{\theta}^{\epsilon, (j)}|}{\bb{L}^2}^2.
\end{align}
We now choose $\delta=\frac{1}{10} \min\{\kappa_1,\kappa_2,\kappa_2 \mu\}$ to absorb the last four terms on the right-hand side to the last three terms on the left-hand side. In this manner we obtain 
\[
	\norm{\bff{\theta}^{\epsilon, (j)}}{\bb{L}^2}^2 
	- 
	\norm{\bff{\theta}^{\epsilon, (j-1)}}{\bb{L}^2}^2
	+
	(\epsilon+\kappa_1) \Big(\norm{\nabla\bff{\theta}^{\epsilon, (j)}}{\bb{L}^2}^2 
	- 
	\norm{\nabla\bff{\theta}^{\epsilon, (j-1)}}{\bb{L}^2}^2 \Big)
	\le
	c(1+M)k(h^{2\alpha}+k^2) 
	+ 
	ck \norm{\bff{\theta}^{\epsilon, (j-1)}}{\bb{L}^2}^2.
\]
Summing this over $j=1,\ldots, n$, we deduce that
\[
	\norm{\bff{\theta}^{\epsilon, (n)}}{\bb{H}^1}^2
	\le
	c(1+M)(h^{2\alpha}+k^2) + ck \sum_{j=0}^{n-1} \norm{\bff{\theta}^{\epsilon, (j)}}{\bb{L}^2}^2,
\]
where $c$ possibly depends on $\epsilon^{-1}$, but is independent of $n$, $h$, and $k$.
Applying the discrete version of the Gronwall inequality~\citep[Lemma~1]{Sug69}, we
deduce that
\begin{equation*}
	\norm{\bff{\theta}^{\epsilon, (n)}}{\bb{H}^1} \leq e^{cT}(1+M)(h^{\alpha}+k),
\end{equation*}
which is the first inequality in~\eqref{equ:induct theta j}. This implies by~\eqref{equ:inverse Vh} and~\eqref{equ:proj stab},
\begin{align*}
	\norm{\bff{u}_h^{\epsilon, (n)}}{\bb{L}^\infty}
	&\leq
	\norm{\bff{u}_h^{\epsilon, (n)}- P_h\bff{u}^\epsilon(t_n)}{\bb{L}^\infty}
	+
	\norm{P_h \bff{u}^\epsilon(t_n)}{\bb{L}^\infty}
	\\
	&\leq
	c_{\mathrm{i}} \ell_h \norm{\bff{\theta}^{\epsilon, (n)}}{\bb{H}^1} + c_\infty \norm{\bff{u}^\epsilon}{L^\infty_T(\bb{L}^\infty)}
	\\
	&\leq
	c_{\mathrm{i}} e^{cT}(1+M) \ell_h (h^\alpha+k)+ c_\infty \norm{\bff{u}^\epsilon}{L^\infty_T(\bb{L}^\infty)}
	\leq
	M,
\end{align*}
where in the last step we used~\eqref{equ:timestep rest}. This proves~\eqref{equ:induct theta j}, thus completing the induction argument.
Using~\eqref{equ:rho rhot}
and the triangle inequality, we obtain the first inequality in~\eqref{equ:induct theta j}. 
\end{proof}

As stated in the following theorem, it is possible to prove an unconditional \emph{a priori} estimate in convex domains, assuming a higher regularity condition on the exact solution. Such regularity can be assured for initial data $\bff{u}_0^\epsilon\in \bb{H}^2_\Delta$ in domains where $\bb{W}^{2,3}$-elliptic regularity result holds, for instance in rectangular parallelepipeds~\citep{Dau92}, or more generally in convex polyhedral domains satisfying certain interior angle restrictions (\citep[Chapter~4]{Gri11}.

{
\begin{theorem}\label{rem:error}
Suppose that $\mathscr{D}$ is a \emph{convex} polytopal domain. Assume that $\bff{u}^\epsilon$ is the strong solution of the $\epsilon$-LLBE with higher regularity:
\begin{align*}
	\bff{u}^\epsilon \in L^\infty_T(\bb{W}^{2,3}) \cap W^{1,\infty}_T(\bb{H}^2).
\end{align*}
Let $\big\{\bff{u}_h^{\epsilon, (j)}\big\}_{j=1}^N$ be the sequence generated by Algorithm~\ref{alg:fem eps llbe}.
Then for any $n=1,\ldots,N$, where $t_n\in [0,T]$, we have, unconditionally,
\begin{equation}\label{equ:err convex}
 \norm{\bff{u}_h^{\epsilon, (n)}-\bff{u}^\epsilon(t_n)}{\bb{H}^1}
 \leq c e^{cT} (h+k),
\end{equation}
where the positive constant~$c$ is independent of $n$, $h$, or $k$, but may depend on $\epsilon^{-1}$.
\end{theorem}

\begin{proof}
The proof proceeds in the same manner as that of Theorem~\ref{the:err}. However, instead of using \eqref{equ:H1 vj} to obtain \eqref{equ:R3}, we now apply \eqref{equ:H1 new} to infer that
\begin{align*}
    R_3 &\leq
    ck \norm{\nabla \bff{\rho}^{\epsilon, (j)}}{\bb{L}^3}^2 + ck \left(\norm{\bff{\rho}^{\epsilon, (j-1)}}{\bb{H}^1}^2 + \norm{\bff{\theta}^{\epsilon, (j-1)}}{\bb{L}^2}^2 \right) \norm{\bff{u}^\epsilon}{L^\infty_T(\bb{H}^2)}^2
    +
    ck^3 
    \\
    &\quad
    + \delta k \norm{\nabla \bff{\theta}^{\epsilon, (j-1)}}{\bb{L}^2}^2
	+ 4\delta k \norm{\nabla \bff{\theta}^{\epsilon, (j)}}{\bb{L}^2}^2
    \\
    &\leq
    ck(h^2+k^2) + ck \norm{\bff{\theta}^{\epsilon, (j-1)}}{\bb{L}^2}^2
    + \delta k \norm{\nabla \bff{\theta}^{\epsilon, (j-1)}}{\bb{L}^2}^2
	+ 4\delta k \norm{\nabla \bff{\theta}^{\epsilon, (j)}}{\bb{L}^2}^2,
\end{align*}
where in the last step we have used the estimate~\citep{CroTho87}:
\begin{align*}
	\norm{\nabla \bff{\rho}^{\epsilon, (j)}}{\bb{L}^3} \leq ch \norm{\bff{u}^\epsilon}{L^\infty_T(\bb{W}^{2,3})}.
\end{align*}
The rest of the proof proceeds in the same manner as before, leading to \eqref{equ:err convex}.
\end{proof}
}

{
\begin{remark}
As a result of Theorem~\ref{the:u eps con u} and either Theorem~\ref{the:err} or Theorem~\ref{rem:error}, an approximate solution $\bff{u}_{h,k}^\epsilon$ can be defined such that, for $\epsilon\in (0,1)$,
\begin{align*}
    \norm{\bff{u}-\bff{u}_{h,k}^\epsilon}{L^\infty_T(\bb{H}^1)}
    &\leq
    \norm{\bff{u}- \bff{u}^\epsilon}{L^\infty_T(\bb{H}^1)} + 
    \norm{\bff{u}^\epsilon - \bff{u}_{h,k}^\epsilon}{L^\infty_T(\bb{H}^1)}
    \\
    &\leq
    \norm{\bff{u}_0- \bff{u}_0^\epsilon}{\bb{H}^1} + c\epsilon + c(\epsilon) e^{c(\epsilon)T} (h+k),
\end{align*}
where $c(\epsilon)$ is a constant depending on $\epsilon^{-1}$.
Therefore, convergence of the numerical scheme is to be understood as
\[
\lim_{\epsilon\to 0^+} \lim_{h,k\to 0^+} \norm{\bff{u}-\bff{u}_{h,k}^\epsilon}{L^\infty_T(\bb{H}^1)} = 0.
\]
\end{remark}
}

\section{Uniform-in-time error estimates for the FEM approximation of the $\epsilon$-LLBE}
\label{sec:longtime}
%%%%%%%%%%%%%%%%%%%%%%%%%%%%%%%%%%%%%%%%%%%%%%%%%%%%%%%%%%%%%%%%%%%

The result in Theorem~\ref{the:err} suggests that the
approximation error grows exponentially with the {final time $T$}. As such,
estimate~\eqref{equ:err} will not be useful for assessing the approximation
error in the long run.
The aim of this section is to better approximate the long-run trajectory of the
solution and derive error estimates for the approximation of the $\epsilon$-LLBE
that are uniform in time. Similar results are studied for the Navier--Stokes
equation~\citep{HeyRan86}, a semilinear parabolic equation~\citep{Lar89}, and the
parabolic $p$-Laplacian equation~\citep{Ju00}, to name a few. To this end, we
need to derive decay estimates on the solution $\bff{u}^\epsilon$ and its approximation $\bff{u}_h^{\epsilon, (n)}$ for time $t\in [\hat{t},\infty)$. Here,
$\hat{t}$ denotes a sufficiently large time, which depends on the coefficients
of the equation.

Since we are interested in small values of $\epsilon$, we also assume that
$\epsilon<\kappa_1/\kappa_2$ for simplicity of presentation. Similar decay
estimate would still hold for bigger values of $\epsilon$. We assume in this section that a strong solution corresponding to an initial data $\bff{u}_0^\epsilon \in \bb{H}^2_\Delta$ exists, and thus instead of working with the
Faedo--Galerkin solution~$\bff{u}_n^\epsilon$, we can work directly with
$\bff{u}^\epsilon$.

\begin{lemma}\label{lem:dec}
	Suppose $\epsilon<\kappa_1/\kappa_2$ and let $\alpha$ satisfies~\eqref{equ:condition alpha}.
	\begin{enumerate}
	\renewcommand{\labelenumi}{\theenumi}
	\renewcommand{\theenumi}{{\rm (\roman{enumi})}}
		\item 
			For every $t\in [0,\infty)$, 
			\begin{align}
				\label{equ:decay u}
				\norm{\bff{u}^\epsilon(t)}{\bb{H}^1}^2
				+
				\epsilon \norm{\Delta\bff{u}^\epsilon(t)}{\bb{L}^2}^2
				&\leq
				c e^{-2\kappa_2 t},
				\\
				\label{equ:u H alpha decay}
				\epsilon\norm{\bff{u}^\epsilon(t)}{\bb{H}^{1+\alpha}}^2
				&\leq
				ce^{-2\kappa_2 t},
			\end{align}
			where the constant $c$ depends on
			$\norm{\bff{u}_0^\epsilon}{\bb{H}^1_\Delta}$, but is independent
			of~$\epsilon$ and~$t$.
		\item 
			There exists $\hat{t}$ depending on the coefficients
			in~\eqref{equ:LLB eps} such that for $t\in
			[\hat{t},\infty)$,
			\begin{equation}\label{equ:decay dt u}
				\norm{\partial_t \bff{u}^\epsilon(t)}{\bb{L}^2}^2
				+
				\epsilon \norm{\nabla \partial_t \bff{u}^\epsilon(t)}{\bb{L}^2}^2
				\leq
				ce^{-\kappa_2 t},
			\end{equation}
			where $c$ depends on $\norm{\bff{u}_0^\epsilon}{\bb{H}^1_\Delta}$ and
			$\hat{t}$, but is independent of~$t$.

		\item 
			Furthermore, if~$\bff{u}_0^\epsilon\in\bb{H}^2_\Delta$, then there
			exists $\hat{t}$ depending on the coefficients of the
			equation such that for $t\in [\hat{t},\infty)$,
			\begin{align}
				\label{equ:decay delta u}
				\norm{\Delta \bff{u}^\epsilon(t)}{\bb{L}^2}^2
				+
				\epsilon \norm{\nabla \Delta \bff{u}^\epsilon(t)}{\bb{L}^2}^2
				&\leq
				ce^{-\kappa_2 t},
				\\
				\label{equ:decay delta dt u}
				\epsilon \norm{\Delta \partial_t \bff{u}^\epsilon(t)}{\bb{L}^2}^2
				+
				\epsilon \norm{\pa_t \bff{u}^\epsilon (t)}{\bb{H}^{1+\alpha}}^2
				&\leq
				ce^{-\kappa_2 t},
			\end{align}
			where $c$ depends on $\norm{\bff{u}_0}{\bb{H}^2_\Delta}$ and
			$\hat{t}$, but is
			independent of $t$.
	\end{enumerate}

\end{lemma}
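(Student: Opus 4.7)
The plan is to refine the energy identities used in the proofs of Lemmas~\ref{lem:ene est 1}, \ref{lem:ene est dtu}, and~\ref{lem:ene est 3}, now additionally tracking an exponential factor $e^{-\kappa_2 t}$ (or $e^{-2\kappa_2 t}$) at each step. The hypothesis $\epsilon<\kappa_1/\kappa_2$ is precisely what allows the $\kappa_1$-dissipation from the Laplacian to dominate the pseudo-parabolic quantities $\epsilon\|\nabla\cdot\|^2_{\bb{L}^2}$, so that the full norm $\|\cdot\|^2_{\bb{L}^2}+\epsilon\|\nabla\cdot\|^2_{\bb{L}^2}$ admits a single Gronwall inequality with an exponential decay rate, rather than requiring each part to be handled separately.

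For part~(i), I would test the weak form \eqref{equ:weak sol} against $\bff{u}^\epsilon$ and against $-\Delta\bff{u}^\epsilon$ as in the proof of Lemma~\ref{lem:ene est 1}; the cross terms vanish by orthogonality and the cubic term in the second identity is non-positive after integration by parts. Summing the two identities yields
\begin{equation*}
\frac{1}{2}\ddt E_0(t) + \kappa_1\bigl(\|\nabla\bff{u}^\epsilon\|^2_{\bb{L}^2}+\|\Delta\bff{u}^\epsilon\|^2_{\bb{L}^2}\bigr) + \kappa_2\bigl(\|\bff{u}^\epsilon\|^2_{\bb{L}^2}+\|\nabla\bff{u}^\epsilon\|^2_{\bb{L}^2}\bigr) \leq 0,
\end{equation*}
where $E_0(t) := \|\bff{u}^\epsilon\|^2_{\bb{L}^2} + (1+\epsilon)\|\nabla\bff{u}^\epsilon\|^2_{\bb{L}^2} + \epsilon\|\Delta\bff{u}^\epsilon\|^2_{\bb{L}^2}$. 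Since $\epsilon<\kappa_1/\kappa_2$ makes each $\kappa_1$-term dominate $\kappa_2\epsilon$ times the corresponding norm, the left-hand side bounds $\frac{1}{2}\ddt E_0+\kappa_2 E_0$. Gronwall then delivers \eqref{equ:decay u} with a constant independent of $\epsilon$ (using $E_0(0)\leqs\|\bff{u}_0^\epsilon\|^2_{\bb{H}^1_\Delta}$), and the regularity shift \eqref{equ:shift polyg H1s}/\eqref{equ:shift polyh W2p} gives $\epsilon\|\bff{u}^\epsilon\|^2_{\bb{H}^{1+\alpha}}\leqs\epsilon\|\bff{u}^\epsilon\|^2_{\bb{L}^2}+\epsilon\|\Delta\bff{u}^\epsilon\|^2_{\bb{L}^2}\leq E_0(t)$, hence \eqref{equ:u H alpha decay}.

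For part~(ii), I would time-differentiate the weak form and test against $\bff{w}:=\partial_t\bff{u}^\epsilon$ (justified at the Faedo--Galerkin level and passed to the limit). The cross term $\gamma\inpro{\bff{w}\times\Delta\bff{u}^\epsilon}{\bff{w}}_{\bb{L}^2}$ vanishes by orthogonality, while $\gamma\inpro{\bff{u}^\epsilon\times\Delta\bff{w}}{\bff{w}}_{\bb{L}^2}$, after integration by parts and the identity $\partial_j\bff{w}\cdot(\partial_j\bff{w}\times\bff{u}^\epsilon)=0$, reduces to $-\gamma\inpro{\nabla\bff{w}}{\bff{w}\times\nabla\bff{u}^\epsilon}_{\bb{L}^2}$. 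The latter is bounded by $c\|\nabla\bff{u}^\epsilon\|^2_{\bb{L}^4}(\|\bff{w}\|^2_{\bb{L}^2}+\|\nabla\bff{w}\|^2_{\bb{L}^2}) + \delta\|\nabla\bff{w}\|^2_{\bb{L}^2}$ via H\"older, Young, and $\bb{H}^1\hookrightarrow\bb{L}^4$. By \eqref{equ:u H alpha decay} and $\bb{H}^{1+\alpha}\hookrightarrow\bb{W}^{1,4}$, the coefficient $c\|\nabla\bff{u}^\epsilon(t)\|^2_{\bb{L}^4}\leqs \epsilon^{-1}e^{-2\kappa_2 t}$ decays, so $\hat t$ can be chosen (depending on $\epsilon$, $\kappa_1$, $\kappa_2$) so that it is dominated by $\min\{\kappa_1,\kappa_2\}/4$ for $t\geq\hat t$. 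After absorption and using $\epsilon<\kappa_1/\kappa_2$, one obtains
\begin{equation*}
\ddt\bigl(\|\bff{w}\|^2_{\bb{L}^2}+\epsilon\|\nabla\bff{w}\|^2_{\bb{L}^2}\bigr) + \kappa_2\bigl(\|\bff{w}\|^2_{\bb{L}^2}+\epsilon\|\nabla\bff{w}\|^2_{\bb{L}^2}\bigr) \leq 0, \quad t\geq\hat t,
\end{equation*}
and Gronwall on $[\hat t,\infty)$ yields \eqref{equ:decay dt u}; the initial value at $t=\hat t$ is finite by the $W^{1,\infty}_T(\bb{H}^{1+\alpha})$-regularity of $\bff{u}^\epsilon$ (Theorem~\ref{the:ue sol}).

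Part~(iii) repeats the programme one order higher, using the stronger regularity \eqref{equ:ue all app} afforded by $\bff{u}_0^\epsilon\in\bb{H}^2_\Delta$. Testing against $\Delta^2\bff{u}^\epsilon$ (i.e., setting $\bff{v}=\Delta^2\bff{u}^\epsilon$ in the Galerkin approximation) produces an identity for $\|\Delta\bff{u}^\epsilon\|^2_{\bb{L}^2}+\epsilon\|\nabla\Delta\bff{u}^\epsilon\|^2_{\bb{L}^2}$; applying Lemma~\ref{lem:tec lem} to the cross term and a Young estimate to the Ginzburg--Landau-type term produces coefficients on the right-hand side controlled by $\|\bff{u}^\epsilon\|^2_{\bb{H}^{1+\alpha}}$ and $\|\bff{u}^\epsilon\|^2_{\bb{L}^\infty}$, which decay by part~(i); for $t\geq\hat t$ these coefficients are small enough to absorb into dissipation, and a further Gronwall yields \eqref{equ:decay delta u}. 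Estimate \eqref{equ:decay delta dt u} then follows by testing the time-differentiated equation against $-\Delta\bff{w}$, bounding the higher-order cross and cubic terms using \eqref{equ:decay delta u} and part~(ii), and finally invoking the elliptic regularity shift to convert the $\Delta\partial_t\bff{u}^\epsilon$ bound into the $\bb{H}^{1+\alpha}$ bound. The \textbf{main obstacle} throughout is the bookkeeping of the cross-product and cubic nonlinear terms at each order: one must verify for each such term that its coefficient either vanishes by orthogonality or is controlled by a norm of $\bff{u}^\epsilon$ that decays exponentially (established in a strictly lower-order step), so that absorption into the dissipation for $t\geq\hat t$ is legitimate and preserves the exponential rate.
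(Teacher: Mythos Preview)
Your proposal is correct and follows essentially the same route as the paper: energy identities at successive orders, with the condition $\epsilon<\kappa_1/\kappa_2$ allowing the dissipation to dominate the pseudo-parabolic contribution so that a single Gronwall delivers exponential decay, and then choosing $\hat t$ large enough that the decaying coefficients on the nonlinear terms can be absorbed. The only noteworthy difference is in the final estimate \eqref{equ:decay delta dt u}: rather than running another energy argument (testing the time-differentiated equation against $-\Delta\partial_t\bff{u}^\epsilon$), the paper simply rearranges \eqref{equ:LLB eps} as $\epsilon\Delta\partial_t\bff{u}^\epsilon = \partial_t\bff{u}^\epsilon - \kappa_1\Delta\bff{u}^\epsilon - \gamma\bff{u}^\epsilon\times\Delta\bff{u}^\epsilon + \kappa_2(1+\mu|\bff{u}^\epsilon|^2)\bff{u}^\epsilon$, takes the $\bb{L}^2$ norm, and bounds each term on the right using the already-established decays \eqref{equ:decay u}, \eqref{equ:decay dt u}, \eqref{equ:decay delta u}; the $\bb{H}^{1+\alpha}$ bound then follows from the elliptic shift. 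Your approach would also work but is more laborious.
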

\begin{proof}
	Define
	\begin{alignat*}{2}
		\Psi_0(t)
		&:= 
		\norm{\bff{u}^\epsilon(t)}{\bb{L}^2}^2 
		+
		\epsilon \norm{\nabla \bff{u}^\epsilon(t)}{\bb{L}^2}^2,
		\qquad &
		\Psi_1(t)
		&:= 
		\norm{\nabla \bff{u}^\epsilon(t)}{\bb{L}^2}^2 
		+
		\epsilon \norm{\Delta \bff{u}^\epsilon(t)}{\bb{L}^2}^2,
		\\
		\Psi_2(t)
		&:= 
		\norm{\pa_t\bff{u}^\epsilon(t)}{\bb{L}^2}^2
		+
		\epsilon \norm{\nabla\pa_t\bff{u}^\epsilon(t)}{\bb{L}^2}^2,
		\qquad &
		\Psi_3(t)
		&:= 
		\norm{\Delta\bff{u}^\epsilon(t)}{\bb{L}^2}^2
		+
		\epsilon \norm{\nabla\Delta\bff{u}^\epsilon(t)}{\bb{L}^2}^2.
	\end{alignat*}
	\noindent
	\textbf{Proof of (i)}:
	Repeating the arguments in the proof of Lemma~\ref{lem:ene est 1},
	considering~\eqref{equ:weak sol} instead of~\eqref{equ:LLB eps wea equ},
	we obtain exactly the same equations as~\eqref{equ:un L2 H1}
	and~\eqref{equ:un H1} with~$\bff{u}_n^\epsilon$ replaced
	by~$\bff{u}^\epsilon$. Consequently,
	\begin{equation}\label{equ:Psi0 Psi1}
		\ddt \Psi_0(t)
		+
		2\kappa_2 \Psi_0(t)
		\leq 0
		\quad\text{and}\quad 
		\ddt \Psi_1(t)
		+
		2\kappa_2 \Psi_1(t)
		\leq 0,
	\end{equation}
	resulting in
	\begin{align*}%\label{equ:Psi 0}
		\Psi_0(t)\leq \Psi_0(0)\cdot e^{-2\kappa_2 t}
		\quad\text{and}\quad 
		\Psi_1(t)\leq \Psi_1(0)\cdot e^{-2\kappa_2 t}.
	\end{align*}
	Estimate~\eqref{equ:decay u} then follows. Estimate~\eqref{equ:u H alpha decay} follows by the same argument as in~\eqref{equ:Lap H1s est}.

	\medskip
	\noindent
	\textbf{Proof of (ii)}: Repeating the arguments in the proof of 
	Lemma~\ref{lem:ene est 2}, we obtain analogously to~\eqref{equ:dt un nab dt}
	\begin{align*}%\label{equ:dt u nab dt}
		\ddt \Psi_2(t)
		&+
		2 \kappa_2 \Psi_2(t)
		+ 
		(2\kappa_1-2\kappa_2 \epsilon) 
		\norm{\nabla \pa_t\bff{u}^\epsilon(t)}{\bb{L}^2}^2
		\nn \\
		&\leq
		2\gamma
		\norm{\partial_t \bff{u}^\epsilon(t)}{\bb{L}^4} 
		\norm{\nabla \bff{u}^\epsilon(t)}{\bb{L}^4} 
		\norm{\nabla \partial_t \bff{u}^\epsilon(t)}{\bb{L}^2}
		\nn \\
		&\leq
		\frac12 \kappa_2 \epsilon 
		\norm{\nabla \partial_t \bff{u}^\epsilon(t)}{\bb{L}^2}^2
		+
		\frac{2\gamma^2}{\kappa_2 \epsilon} 
		\norm{\nabla \bff{u}^\epsilon (t)}{\bb{L}^4}^2 
		\norm{\partial_t \bff{u}^\epsilon(t)}{\bb{L}^4}^2
		\nn \\
		&\leq
		\frac12 \kappa_2 \epsilon 
		\norm{\nabla \partial_t \bff{u}^\epsilon(t)}{\bb{L}^2}^2
		+
		\frac{2\gamma^2}{\kappa_2 \epsilon^2} 
		\epsilon
		\norm{\bff{u}^\epsilon(t)}{\bb{H}^{1+\alpha}}^2 
		\norm{\partial_t \bff{u}^\epsilon(t)}{\bb{L}^4}^2
		\nn \\
		&\leq
		\frac12 \kappa_2 \epsilon 
		\norm{\nabla \partial_t \bff{u}^\epsilon(t)}{\bb{L}^2}^2
		+
		\frac{2c_0 \gamma^2}{\kappa_2 \epsilon^2} e^{-2\kappa_2 t} 
		\norm{\partial_t \bff{u}^\epsilon(t)}{\bb{L}^4}^2,
%		\left( 
%		c\norm{\partial_t \bff{u}^\epsilon(t)}{\bb{L}^2}^2 
%		+ 
%		\frac{\kappa_2 \epsilon^2}{2c_0 \gamma^2} 
%		\norm{\nabla \partial_t \bff{u}^\epsilon(t)}{\bb{L}^2}^2 
%		\right),
	\end{align*}
	where in the penultimate step we used the
	embedding~$\bb{H}^{1+\alpha} \hookrightarrow \bb{W}^{1,4}$ and in the last step we
	used~\eqref{equ:u H alpha decay} with the constant~$c$ distinguished by~$c_0$.
	It follows from the Gagliardo--Nirenberg inequality and Young's
	inequality that
	\[
		\norm{\partial_t \bff{u}^\epsilon(t)}{\bb{L}^4}
		\le
		c
		\norm{\partial_t \bff{u}^\epsilon(t)}{\bb{L}^2}^{1-d/4}
		\norm{\nabla \partial_t \bff{u}^\epsilon(t)}{\bb{L}^2}^{d/4} 
		\le
		c\norm{\partial_t \bff{u}^\epsilon(t)}{\bb{L}^2}^2 
		+ 
		\frac{\kappa_2 \epsilon^2}{2c_0 \gamma^2} 
		\norm{\nabla \partial_t \bff{u}^\epsilon(t)}{\bb{L}^2}^2.
	\]
	Therefore,
	\begin{align*} 
		\ddt \Psi_2(t)
		+
		2 \kappa_2 \Psi_2(t)
%		+ 
%		(2\kappa_1-2\kappa_2 \epsilon) 
%		\norm{\nabla \pa_t\bff{u}^\epsilon(t)}{\bb{L}^2}^2
		&\le
		\frac12 \kappa_2 \epsilon 
		\norm{\nabla \partial_t \bff{u}^\epsilon(t)}{\bb{L}^2}^2
		+
		\frac{2cc_0 \gamma^2}{\kappa_2 \epsilon^2} 
		e^{-2\kappa_2 t} 
		\norm{\partial_t \bff{u}^\epsilon(t)}{\bb{L}^2}^2
		+
		e^{-2\kappa_2 t} 
		\norm{\nabla \partial_t \bff{u}^\epsilon(t)}{\bb{L}^2}^2.
	\end{align*}
	Choose $\hat t$ sufficiently large such that
	\[
	\frac{2c c_0 \gamma^2}{\kappa_2 \epsilon^2} e^{-2\kappa_2 \hat t} 
	\leq 
	\kappa_2
	\quad\text{and}\quad 
	e^{-2\kappa_2 \hat t} 
	\leq 
	\frac12 
	\kappa_2 \epsilon.
	\]
	Then for~$t\ge\hat t$, we deduce
	\[
		\ddt \Psi_2(t)
		+ 
		\kappa_2 \Psi_2(t)
		\le 0,	
	\]
	which then yields~\eqref{equ:decay dt u}.

	\medskip
	\noindent
	\textbf{Proof of (iii)}:
	Similarly to the proof of (i) and (ii), we follow the argument leading
	to~\eqref{equ:ddt Lap une} (or~\eqref{equ:nab Lap une}) to obtain
	\begin{align*}
		\ddt \Psi_3(t)
		&+
		2 \kappa_2 \Psi_3(t)
		+ 
		(2\kappa_1-2\kappa_2 \epsilon) \norm{\nabla \Delta \bff{u}^\epsilon(s)}{\bb{L}^2}^2
		\nn \\
		&\leq
		2\gamma
		\left|\inpro{\nabla\Big(\bff{u}^\epsilon(t)\times\Delta\bff{u}^\epsilon(t)\Big)}
		{\nabla\Delta\bff{u}^\epsilon(t)}_{\bb{L}^2}\right|
		+ 2\kappa_2\mu
		\left|
		\inpro{\nabla\Big(|\bff{u}^\epsilon(t)|^2\bff{u}^\epsilon(t)\Big)}
		{\nabla\Delta\bff{u}^\epsilon(t)}_{\bb{L}^2}
		\right|
		\nn \\
		&=: R_1+R_2.
	\end{align*}
	For the term $R_1$, by using successively H\"older's inequality, the Sobolev
	embedding~$\bb{H}^{1+\alpha} \hookrightarrow \bb{W}^{1,4}$, Young's inequality, and~\eqref{equ:u H alpha decay}, we obtain
	\begin{align*}
		R_1
		&=
		2\gamma 
		\left| 
		\inpro{\nabla \bff{u}^\epsilon(t) \times \Delta \bff{u}^\epsilon(t)}
		{\nabla \Delta \bff{u}^\epsilon(t)}_{\bb{L}^2} 
		\right| 
		\\
		&\leq
		c
		\norm{\nabla \bff{u}^\epsilon(t)}{\bb{L}^4} 
		\norm{\Delta \bff{u}^\epsilon(t)}{\bb{L}^4} 
		\norm{\nabla \Delta \bff{u}^\epsilon(t)}{\bb{L}^2}
		\\
		&\leq
		\frac{c}{\epsilon}e^{-2\kappa_2 t} \big(\norm{\Delta \bff{u}^\epsilon(t)}{\bb{L}^2} + \norm{\nabla\Delta \bff{u}^\epsilon(t)}{\bb{L}^2} \big) \norm{\nabla\Delta \bff{u}^\epsilon(t)}{\bb{L}^2}
		\\
		&\leq
		\frac{c}{\epsilon}e^{-2\kappa_2 t}
		\norm{\Delta \bff{u}^\epsilon(t)}{\bb{L}^2}^2
		+
		\frac{c}{\epsilon}e^{-\kappa_2 t} 
		\norm{\nabla \Delta \bff{u}^\epsilon(t)}{\bb{L}^2}^2.
	\end{align*}
	By similar argument, using the embedding $\bb{H}^{1+\alpha}\hookrightarrow \bb{L}^\infty$,
	\begin{align*}
		R_2
		&\leq
		c 
		\left| 
		\inpro{(\bff{u}^\epsilon(t)\cdot \nabla \bff{u}^\epsilon(t)) \bff{u}^\epsilon(t)}
		{\nabla\Delta \bff{u}^\epsilon(t)}_{\bb{L}^2} 
		\right| 
		+
		c 
		\left| 
		\inpro{|\bff{u}^\epsilon(t)|^2 \nabla \bff{u}^\epsilon(t)}
		{\nabla \Delta \bff{u}^\epsilon(t)}_{\bb{L}^2} 
		\right| 
		\\
		&\leq
		c 
		\norm{\bff{u}^\epsilon(t)}{\bb{L}^\infty}^2 
		\norm{\nabla \bff{u}^\epsilon(t)}{\bb{L}^2} 
		\norm{\nabla \Delta \bff{u}^\epsilon(t)}{\bb{L}^2}
		\\
		&\leq
		c 
		\norm{\bff{u}^\epsilon(t)}{\bb{H}^{1+\alpha}}^2 
		\Big(
		\norm{\bff{u}^\epsilon(t)}{\bb{L}^2} 
		+
		\norm{\Delta \bff{u}^\epsilon(t)}{\bb{L}^2} 
		\Big)
		\norm{\nabla \Delta \bff{u}^\epsilon(t)}{\bb{L}^2}
		\\
		&\leq
		\frac{c}{\epsilon} e^{-2 \kappa_2 t}
		\Big(
			\norm{\bff{u}^\epsilon(t)}{\bb{L}^2} 
			+
			\norm{\Delta \bff{u}^\epsilon(t)}{\bb{L}^2} 
		\Big)
		\norm{\nabla \Delta \bff{u}^\epsilon(t)}{\bb{L}^2}
		\\
		&\leq
		\frac{c}{\epsilon} e^{-2 \kappa_2 t} \norm{\bff{u}^\epsilon(t)}{\bb{L}^2}^2
		+
		\frac{c}{\epsilon} e^{-2 \kappa_2 t} \norm{\Delta \bff{u}^\epsilon(t)}{\bb{L}^2}^2
		+
		\frac{c}{\epsilon} e^{-2 \kappa_2 t}
		\norm{\nabla\Delta \bff{u}^\epsilon(t)}{\bb{L}^2}^2.
	\end{align*}
	Altogether, we have 
	\[
		\ddt \Psi_3(t)
		+
		2 \kappa_2 \Psi_3(t)
		\le
			\frac{c}{\epsilon} e^{-2 \kappa_2 t}
			\left( \norm{\bff{u}^\epsilon(t)}{\bb{L}^2}^2
			+
			\norm{\Delta \bff{u}^\epsilon(t)}{\bb{L}^2}^2
			+
			\norm{\nabla\Delta \bff{u}^\epsilon(t)}{\bb{L}^2}^2 \right).
	\]
	By choosing~$\hat t$ sufficiently large so that
	\[
		\frac{c}{\epsilon} e^{-2 \kappa_2 t}
		\le \kappa_2 \epsilon,
	\]
	upon rearranging the terms we deduce
	\[
		\ddt \Psi_3(t)
		+
		\kappa_2 \Psi_3(t)
		\le 
		\kappa_2
		\norm{\bff{u}^\epsilon(t)}{\bb{L}^2}^2
		\quad\forall t\in[\hat t,\infty),
	\]
	which together with~\eqref{equ:Psi0 Psi1} yields
	\[
		\ddt 
		\Big(\Psi_0(t) + \Psi_3(t) \Big)
		+
		\kappa_2 \Big(\Psi_0(t) + \Psi_3(t) \Big)
		\le 
		0
		\quad\forall t\in[\hat t,\infty).
	\]
	This implies
	\[
		\Psi_0(t) + \Psi_3(t) \le e^{- \kappa_2 t}
		\quad\forall t\in[\hat t,\infty),
	\]
	which yields~\eqref{equ:decay delta u}.

	Finally, it follows from~\eqref{equ:LLB eps} that
	\begin{align*}
		\epsilon \norm{\Delta \partial_t \bff{u}^\epsilon}{\bb{L}^2}^2
		&\leq
		\norm{\partial_t \bff{u}^\epsilon}{\bb{L}^2}^2
		+
		\kappa_1 \norm{\Delta \bff{u}^\epsilon}{\bb{L}^2}^2
		+
		\gamma \norm{\bff{u}^\epsilon}{\bb{L}^\infty}^2
		\norm{\Delta \bff{u}^\epsilon}{\bb{L}^2}^2
		+
		\kappa_2 \norm{\bff{u}^\epsilon}{\bb{L}^2}^2
		+
		\kappa_2 \mu \norm{\bff{u}^\epsilon}{\bb{L}^6}^6
		\leq
		ce^{-\kappa_2 t},
	\end{align*}
	where we used~\eqref{equ:u H alpha decay}, \eqref{equ:decay dt u}, and~\eqref{equ:decay delta u} in the last step. By the same argument as in the proof of~\eqref{equ:u H alpha decay}, we obtain~\eqref{equ:decay delta dt u}.
\end{proof}

In the following lemma, we derive a decay estimate for the finite element
approximation of the solution $\bff{u}^\epsilon$.

\begin{lemma}
	Assume that~$\epsilon<\kappa_1/\kappa_2$. For any $n\in \bb{N}$, we have
	\begin{equation*}%\label{equ:decay uhn}
		\norm{\bff{u}_h^{\epsilon, (n)}}{\bb{L}^2}^2
		+
		\epsilon \norm{\nabla \bff{u}_h^{\epsilon, (n)}}{\bb{L}^2}^2
		\leq
		\Big(
			\norm{\bff{u}_h^{(0)}}{\bb{L}^2}^2+ \epsilon \norm{\nabla
			\bff{u}_h^{(0)}}{\bb{L}^2}^2 
		\Big) 
		e^{-\lambda t_n},
	\end{equation*}
	where $\lambda=2\kappa_2(1+2\kappa_2 k)^{-1}$. In particular, $\norm{\bff{u}_h^{\epsilon, (n)}}{\bb{L}^2}^2+ \epsilon \norm{\nabla \bff{u}_h^{\epsilon, (n)}}{\bb{L}^2}^2 \to 0$ as $t_n\to\infty$.
\end{lemma}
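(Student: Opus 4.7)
The plan is to establish a one-step contraction inequality of geometric-decay type, then iterate. First I would test the scheme~\eqref{equ:dis1} with $\bff{\phi}_h = \bff{u}_h^{(j)}$. The cross-product term $\inpro{\bff{u}_h^{(j-1)}\times\nabla\bff{u}_h^{(j)}}{\nabla\bff{u}_h^{(j)}}_{\bb{L}^2}$ vanishes pointwise, and the cubic term $\inpro{|\bff{u}_h^{(j-1)}|^2 \bff{u}_h^{(j)}}{\bff{u}_h^{(j)}}_{\bb{L}^2} = \norm{|\bff{u}_h^{(j-1)}||\bff{u}_h^{(j)}|}{\bb{L}^2}^2$ is nonnegative and can be dropped. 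Applying the identity~\eqref{equ:vec ab a} (exactly as in the proof of Lemma~\ref{lem: stability}, but keeping the extra dissipative terms rather than summing them over $j$) yields
\begin{align*}
	\norm{\bff{u}_h^{(j)}}{\bb{L}^2}^2 &+ \epsilon \norm{\nabla\bff{u}_h^{(j)}}{\bb{L}^2}^2 + 2k\kappa_2 \norm{\bff{u}_h^{(j)}}{\bb{L}^2}^2 + 2k\kappa_1 \norm{\nabla\bff{u}_h^{(j)}}{\bb{L}^2}^2 \\
	&\leq \norm{\bff{u}_h^{(j-1)}}{\bb{L}^2}^2 + \epsilon \norm{\nabla\bff{u}_h^{(j-1)}}{\bb{L}^2}^2,
\end{align*}
after discarding the nonnegative squared-increment terms on the left.

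The key algebraic step is to bundle the two dissipative contributions into a common factor in front of $\norm{\bff{u}_h^{(j)}}{\bb{L}^2}^2 + \epsilon\norm{\nabla\bff{u}_h^{(j)}}{\bb{L}^2}^2$. Since we assume $\kappa_2\epsilon < \kappa_1$, we have $2k\kappa_1 \geq 2k\kappa_2\epsilon$, so $\epsilon + 2k\kappa_1 \geq \epsilon(1+2k\kappa_2)$, while trivially $1 + 2k\kappa_2 = 1+2k\kappa_2$. This lets me write
\begin{equation*}
	(1 + 2k\kappa_2)\Big(\norm{\bff{u}_h^{(j)}}{\bb{L}^2}^2 + \epsilon\norm{\nabla\bff{u}_h^{(j)}}{\bb{L}^2}^2\Big) \leq \norm{\bff{u}_h^{(j-1)}}{\bb{L}^2}^2 + \epsilon\norm{\nabla\bff{u}_h^{(j-1)}}{\bb{L}^2}^2.
\end{equation*}
Iterating from $j=1$ to $n$ gives the bound with contraction factor $(1+2k\kappa_2)^{-n}$.

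It remains to convert this geometric rate into the claimed exponential rate $e^{-\lambda t_n}$. Applying the elementary inequality $\log(1+x) \geq x/(1+x)$ for $x > -1$ with $x = 2k\kappa_2$ yields
\begin{equation*}
	n\log(1+2k\kappa_2) \geq \frac{2nk\kappa_2}{1+2k\kappa_2} = \frac{2\kappa_2}{1+2k\kappa_2}\, t_n = \lambda t_n,
\end{equation*}
so $(1+2k\kappa_2)^{-n} \leq e^{-\lambda t_n}$, which completes the proof. No step is a genuine obstacle here; the only subtlety is recognising that the hypothesis $\epsilon < \kappa_1/\kappa_2$ is exactly what is needed to produce a common damping factor for the $\bb{L}^2$ and $\epsilon\norm{\nabla\cdot}{\bb{L}^2}^2$ pieces without losing constants.
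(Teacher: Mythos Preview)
Your proof is correct and follows essentially the same route as the paper. The paper also tests~\eqref{equ:dis1} with $\bff{\phi}_h=\bff{u}_h^{(j)}$, writes $a_j:=\norm{\bff{u}_h^{(j)}}{\bb{L}^2}^2+\epsilon\norm{\nabla\bff{u}_h^{(j)}}{\bb{L}^2}^2$, and uses $\epsilon<\kappa_1/\kappa_2$ to obtain $(a_j-a_{j-1})/k+2\kappa_2 a_j\le 0$, which is exactly your contraction inequality $(1+2k\kappa_2)a_j\le a_{j-1}$. The only difference is cosmetic: the paper then cites a discrete Gronwall lemma from Emmrich to get $a_n\le a_0 e^{-\lambda t_n}$, whereas you iterate by hand and use $\log(1+x)\ge x/(1+x)$ to convert the geometric factor into the exponential rate---which is precisely what that Gronwall lemma does internally.
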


\begin{proof}
	From~\eqref{equ:uhj L2}, writing $a_j:= \norm{\bff{u}_h^{\epsilon, (j)}}{\bb{L}^2}^2+ \epsilon \norm{\nabla \bff{u}_h^{\epsilon, (j)}}{\bb{L}^2}^2$, we obtain
	\begin{align*}
		\frac{a_j - a_{j-1}}{k} + 2\kappa_2 a_j + (2\kappa_1-2\kappa_2\epsilon) \norm{\nabla \bff{u}_h^{\epsilon, (j)}}{\bb{L}^2}^2
		\leq
		0.
	\end{align*}
	The result then follows from a discrete version of Gronwall's inequality
	\citep[Proposition~3.1]{Emm99}.
\end{proof}

We will exploit these exponential decay estimates to obtain an error estimate
that does not deteriorate over a long time $t\in [\hat{t},\infty)$. First, note
that as a consequence of~\eqref{equ:rho rhot}, 
\eqref{equ:decay u}--\eqref{equ:decay delta dt u}, and the assumed regularity of $\bff{u}^\epsilon$, we
have, for $t\in [\hat{t},\infty)$,
\begin{equation}
	\label{equ:rho et Wp}
	\begin{aligned}
	\norm{\bff{\rho}(t)}{\bb{L}^2} 
	+
	h^\alpha \norm{\nabla\bff{\rho}(t)}{\bb{L}^2}
	&\le
	c h^{1+\alpha} e^{-\kappa_2 t}, 
	\\
	\norm{\partial_t \bff{\rho}(t)}{\bb{L}^2} 
	+
	h^\alpha \norm{\nabla \partial_t \bff{\rho}(t)}{\bb{L}^2}
	&\le
	c h^{1+\alpha} e^{-\kappa_2 t}.
	\end{aligned}
\end{equation}
Furthermore, noting the embeddings $\bb{H}^{1+\alpha} \hookrightarrow \bb{W}^{1,4}\hookrightarrow \bb{L}^\infty$, inequality~\eqref{equ:uj uj1} becomes 
\begin{equation}\label{equ:u tau Lp}
	\begin{aligned}
	\norm{\bff{u}^\epsilon(s_2)-\bff{u}^\epsilon(s_1)}{\bb{L}^p}
	%+
	%\norm{\partial_t \bff{u}^\epsilon(s_2)-\partial_t \bff{u}^\epsilon(s_1)}{\bb{L}^p}
	&\leq
	ck e^{-\frac{\kappa_2 s_1}{2}} \quad \forall p\in [1,\infty],
	\\
	\norm{\nabla\bff{u}^\epsilon(s_2)-\nabla\bff{u}^\epsilon(s_1)}{\bb{L}^{p}}
	%+
	%\norm{\partial_t \bff{u}^\epsilon(s_2)-\partial_t \bff{u}^\epsilon(s_1)}{\bb{W}^{1,p}}
	&\leq
	ck e^{-\frac{\kappa_2 s_1}{2}} \quad \forall p\in [1,4],
	%ck e^{-\kappa_2 s_1} \quad \forall p\in [1, 6],
	\end{aligned}
\end{equation}
for any $s_1$ and~$s_2$ satisfying~$\hat t \le t_{j-1} \le s_1\leq s_2 \le t_j$ for 
some~$j=1,\ldots,N$.

We are now ready to prove the main result of this section.
%---------------------------------
\begin{theorem}\label{the:err long}
	Suppose that $ \epsilon < \kappa_1/ \kappa_2$. Assume that the strong solution $\bff{u}^\epsilon$ of the $\epsilon$-LLBE corresponding to initial data $\bff{u}_0^\epsilon\in \bb{H}^2_\Delta$ exists, satisfying the decay estimates in Lemma~\ref{lem:dec}. Assume that $k\ell_h\leqs 1$, where $\ell_h$ is defined in~\eqref{equ:ell h}. For any $n=1,\ldots,N$, where $t_n\in [0,T]$, and $\alpha$ satisfying~\eqref{equ:condition alpha}, we have
	\[
	\norm{\bff{u}_h^{\epsilon, (n)}-\bff{u}^\epsilon(t_n)}{\bb{H}^1}
	\leq c(h^\alpha+k),
	\]
	where $c$ is a constant independent of $n$, $h$, $k$, and $T$ (but may depend on $\epsilon$).
\end{theorem}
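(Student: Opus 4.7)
The plan is to bootstrap the proof of Theorem~\ref{the:err} by exploiting the exponential decay estimates in Lemma~\ref{lem:dec}. The exponential blow-up $e^{cT}$ in Theorem~\ref{the:err} originated from the factor $ck\sum_j \norm{\bff{\theta}^{(j-1)}}{\bb{L}^2}^2$ in the discrete recursion, arising because the coefficients $\norm{\bff{u}^\epsilon}{L^\infty_T(\bb{H}^{1+\alpha})}^2$ and similar quantities appearing in the bounds for $H_1(\bff{u}_h^{(j)},\bff{u}^\epsilon,\bff{\theta}^{(j)})$ and $H_2(\bff{u}_h^{(j)},\bff{u}^\epsilon,\bff{\theta}^{(j)})$ were treated as constants. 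Under the stronger regularity $\bff{u}_0^\epsilon\in\bb{H}^2_\Delta$ assumed here, these quantities can be replaced by pointwise-in-time norms that decay like $e^{-\kappa_2 t_j}$ by~\eqref{equ:decay u}--\eqref{equ:decay delta dt u}.

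First I would split the time interval as $[0,T] = [0,\hat t]\cup[\hat t,T]$, where $\hat t$ is the constant determined by Lemma~\ref{lem:dec}. On $[0,\hat t]$, Theorem~\ref{the:err} applies directly with $T=\hat t$, yielding a bound of the form $c e^{c\hat t}(h^\alpha+k) \le c(h^\alpha+k)$ where $c$ depends on $\hat t$ and $\epsilon$ but not on the final time. This gives in particular the required estimate at $t_n = \hat t$, so we can take this as the new initial datum.

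For $t_n\in[\hat t,T]$, I would re-examine the proof of Lemma~\ref{lem:aux lem} and replace each occurrence of $\norm{\bff{w}}{L^\infty_T(\bb{H}^{1+\alpha})}^2$, $\norm{\bff{w}}{W^{1,\infty}_T(\bb{H}^1)}^2$, etc.\ by localised versions $\sup_{s\in[t_{j-1},t_j]}\norm{\bff{u}^\epsilon(s)}{\bb{H}^{1+\alpha}}^2$ and $\sup_{s\in[t_{j-1},t_j]}\norm{\pa_t\bff{u}^\epsilon(s)}{\bb{H}^1}^2$, and use~\eqref{equ:u tau Lp} to localise the time-difference bounds such as $\norm{\nabla\bff{u}^\epsilon(t_j)-\nabla\bff{u}^\epsilon(s)}{\bb{L}^2}\leq ck e^{-\kappa_2 t_{j-1}/2}$. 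Then Lemma~\ref{lem:dec} yields bounds of the form $|H_1(\bff{u}_h^{(j)},\bff{u}^\epsilon,\bff{\theta}^{(j)})| + |H_2(\bff{u}_h^{(j)},\bff{u}^\epsilon,\bff{\theta}^{(j)})| \leqs k(h^{2\alpha}+k^2)e^{-\kappa_2 t_{j-1}} + k e^{-\kappa_2 t_{j-1}}\norm{\bff{\theta}^{(j-1)}}{\bb{L}^2}^2 + \delta k \norm{\bff{\theta}^{(j)}}{\bb{H}^1}^2$. A similar refinement of $R_1,\ldots,R_5$ in~\eqref{equ:theta sum} using~\eqref{equ:rho et Wp} and~\eqref{equ:u tau Lp} leads, after absorbing the $\delta$ terms into the LHS, to the discrete recursion
\begin{equation*}
\norm{\bff{\theta}^{(j)}}{\bb{H}^1}^2 - \norm{\bff{\theta}^{(j-1)}}{\bb{H}^1}^2
\;\leq\; c k(h^{2\alpha}+k^2) e^{-\kappa_2 t_{j-1}} + c k e^{-\kappa_2 t_{j-1}} \norm{\bff{\theta}^{(j-1)}}{\bb{H}^1}^2.
\end{equation*}
Summing this from $j$ indexing $\hat t$ up to $n$, the forcing telescopes to $c(h^{2\alpha}+k^2)\sum_j k e^{-\kappa_2 t_{j-1}} \leq c(h^{2\alpha}+k^2)$ (a convergent discrete integral of the decaying exponential, bounded independently of $n$ and $T$), and the Gronwall coefficient is now $\sum_j k e^{-\kappa_2 t_{j-1}} \leq c$, so the discrete Gronwall inequality of~\cite{Sug69} produces a constant independent of $T$.

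The principal obstacle, as in Theorem~\ref{the:err}, is sustaining the inductive $\bb{L}^\infty$ bound $\norm{\bff{u}_h^{(j-1)}}{\bb{L}^\infty}\le M$ uniformly in $j$, since this is used both to close the energy estimate for $H_1,H_2$ and to absorb $\norm{\big|\bff{u}_h^{(j-1)}\big|\big|\bff{\theta}^{(j)}\big|}{\bb{L}^2}^2$ terms. The induction is closed using the inverse inequality~\eqref{equ:inverse Vh} together with the stability of $P_h$ in $\bb{L}^\infty$, exactly as in the proof of Theorem~\ref{the:err}, but with the new time-uniform error bound replacing the $T$-dependent one; the time-step restriction $k\ell_h\leqs 1$ is precisely what is needed to close this induction with a constant $M$ independent of $n$. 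Combining the two regimes via the triangle inequality and~\eqref{equ:rho et Wp} yields the claimed uniform-in-time estimate.
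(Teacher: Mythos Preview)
Your proposal is correct and follows essentially the same strategy as the paper: split the time axis at a threshold time, invoke Theorem~\ref{the:err} on the initial segment, and on the tail replace the $L^\infty_T$-norms in the estimates for $R_1,\ldots,R_5$ and $H_1,H_2$ by the pointwise decaying norms from Lemma~\ref{lem:dec} and~\eqref{equ:rho et Wp},~\eqref{equ:u tau Lp}, so that both the forcing and the Gronwall coefficient become summable exponentials.

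The only notable difference is in how the recursion is closed. You keep the term $cke^{-\kappa_2 t_{j-1}}\norm{\bff{\theta}^{(j-1)}}{\bb{H}^1}^2$ on the right and apply the discrete Gronwall inequality with summable coefficients $\sum_j ke^{-\kappa_2 t_{j-1}}\le c$. The paper instead exploits the dissipation term $k\kappa_2\norm{\bff{\theta}^{(j)}}{\bb{L}^2}^2$ explicitly: it takes $j_0$ large enough that $ce^{-ct_{j-1}}<\kappa_2/4$, defines $a_j:=(1+k\kappa_2)\norm{\bff{\theta}^{(j)}}{\bb{L}^2}^2+(\epsilon+k\kappa_1)\norm{\nabla\bff{\theta}^{(j)}}{\bb{L}^2}^2$, and obtains the clean telescoping $a_j-a_{j-1}\le c(1+M)k(h^{2\alpha}+k^2)e^{-ct_{j-1}}$ directly, without Gronwall. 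Both routes give a $T$-independent constant; the paper's is marginally sharper since it avoids the extra factor $\exp(\sum_j ke^{-\kappa_2 t_{j-1}})$, but this is cosmetic.
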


%---------------------------------
\begin{proof}
	Repeating the arguments in the proof of Theorem~\ref{the:err}, but
	replacing~\eqref{equ:rho rhot} and~\eqref{equ:uj uj1} by~\eqref{equ:rho
	et Wp} and~\eqref{equ:u tau Lp}, respectively, we obtain, similarly
	to~\eqref{equ:the 33},
	\begin{align*}
		&\frac{1}{2} \left(\norm{\bff{\theta}^{\epsilon, (j)}}{\bb{L}^2}^2 - \norm{\bff{\theta}^{\epsilon, (j-1)}}{\bb{L}^2}^2\right)
		+
		\frac{1}{2} \norm{\bff{\theta}^{\epsilon, (j)}- \bff{\theta}^{\epsilon, (j-1)}}{\bb{L}^2}^2
			+
		\frac{\epsilon}{2} \left(\norm{\nabla \bff{\theta}^{\epsilon, (j)}}{\bb{L}^2}^2 - \norm{\nabla \bff{\theta}^{\epsilon, (j-1)}}{\bb{L}^2}^2\right)
		\nn \\
		&\quad
		+
		\frac{\epsilon}{2} \norm{\nabla \bff{\theta}^{\epsilon, (j)}- \nabla \bff{\theta}^{\epsilon, (j-1)}}{\bb{L}^2}^2
		+
		k\kappa_1 \norm{\nabla \bff{\theta}^{\epsilon, (j)}}{\bb{L}^2}^2
		+
		k\kappa_2 \norm{\bff{\theta}^{\epsilon, (j)}}{\bb{L}^2}^2
		+
		k\kappa_2\mu \norm{\abs{\bff{u}_h^{(j-1)}} \abs{\bff{\theta}^{\epsilon, (j)}}}{\bb{L}^2}^2
		\\
		&\leq
		c(1+M)k(h^{2\alpha}+k^2) e^{-c t_{j-1}}
		+ 
		cke^{-c t_{j-1}} \norm{\bff{\theta}^{\epsilon, (j-1)}}{\bb{L}^2}^2
		\\
		&\quad
		+
		\delta k \norm{\bff{\theta}^{\epsilon, (j)}}{\bb{L}^2}^2
		+
		\delta k \norm{\nabla \bff{\theta}^{\epsilon, (j-1)}}{\bb{L}^2}^2
		+
		\delta k \norm{\nabla \bff{\theta}^{\epsilon, (j)}}{\bb{L}^2}^2
		+
		\delta k \norm{\abs{\bff{u}_h^{\epsilon, (j-1)}} \abs{\bff{\theta}^{\epsilon, (j)}}}{\bb{L}^2}^2,
	\end{align*}
	where $M$ is defined in~\eqref{equ:uhj infty}.
	We now choose $\delta< \frac{1}{8} \min\{\kappa_1,\kappa_2, \kappa_2\mu\}$, and let $j\geq j_0$ with
	$j_0$ sufficiently large so that $ce^{-ct_{j-1}}<\kappa_2/4$. In this manner, the last three terms on the right-hand
	side can be absorbed to corresponding terms on the left-hand side.
	Putting
	\[
	a_j:= 
	(1+k\kappa_2) \norm{\bff{\theta}^{\epsilon, (j)}}{\bb{L}^2}^2
	+ 
	(\epsilon+k\kappa_1) \norm{\nabla \bff{\theta}^{\epsilon, (j)}}{\bb{L}^2}^2,
	\]
	we deduce
	\[
		a_j - a_{j-1}
		\le
		c(1+M) k (h^{2\alpha}+k^2) e^{-ct_{j-1}},
		\quad \forall j \ge j_0.
	\]
	Summing over~$j$ from~$j_0$ to~$n$ yields
	\begin{align*}
		a_n
		&\le
		a_{j_0-1} 
		+ 
		c(1+M) k(h^{2\alpha}+k^2) \sum_{j=j_0}^{\infty} e^{-ct_{j-1}}
		\\
		&\le 
		ce^{cj_0 k} (h^{2\alpha}+k^2)
		+ 
		ck(h^{2\alpha}+k^2) \sum_{j=j_0}^{\infty} e^{-c(j-1)k},
	\end{align*}
	where in the last step we used Theorem~\ref{the:err}.
	The constant $c$ depends only on
	the coefficients of the equation. Since~$k\sum_{j=j_0}^{\infty}
	e^{-c(j-1)k}$ is bounded by a constant depending on~$j_0$, we obtain the
	required estimate for~$\bff{\theta}^{\epsilon, (n)}$. The required result then
	follows by induction and the triangle inequality, similar to the proof of Theorem~\ref{the:err}.
\end{proof}

%%%%%%%%%%%%%%%%%%%%%%%%%%%%%%%%%%%%%%%%%%%%%%%%%%%%%%%%%%%%%%%%%%%
\section{Numerical simulations} \label{sec:num sim}
%%%%%%%%%%%%%%%%%%%%%%%%%%%%%%%%%%%%%%%%%%%%%%%%%%%%%%%%%%%%%%%%%%%

To assess convergence, we perform several simulations using the open-source package~\textsc{FEniCS}~\citep{AlnaesEtal15}. The results are presented in this section. Recall that $\bff{u}_h^{(n)}$ and $\bff{u}_h^{\epsilon, (n)}$ are computed by Algorithm~\ref{alg:fem llbe} and Algorithm~\ref{alg:fem eps llbe}, respectively. We carry out four numerical observations to assess the convergence with respect to $h$, $k$, and $\epsilon$ separately:
{
\begin{enumerate}

\item[(O1)] Snapshots at selected times: We run the simulation with a fixed $\epsilon=0.001$, $h=1/8$, and $k=2.5\times 10^{-3}$ to obtain snapshots of the magnetic spin field $\bff{u}_h^{\epsilon, (n)}$ at selected times (i.e., selected values of $n$).

\item[(O2)] Convergence with respect to $h$:
Since the exact solution of the equation is not known, we use extrapolation to verify the order of convergence experimentally. We fix $\epsilon=0.001$ and $k=2.5\times 10^{-3}$, then vary $h$. For $s=0$ or $1$, the extrapolated spatial orders of convergence are computed by
\begin{equation}\label{equ:hrate}
	h\text{-rate}_s :=  
    \log_2 \left[\frac{\max_n \norm{\bff{e}_{2h}^{(n)}}{\bb{H}^s}}{\max_n \norm{\bff{e}_{h}^{(n)}}{\bb{H}^s}}\right]
    \quad\text{and}\quad
    h\text{-rate}_s^\epsilon :=
    \log_2 \left[\frac{\max_n \norm{\bff{e}_{2h}^{\epsilon, (n)}}{\bb{H}^s}}{\max_n \norm{\bff{e}_{h}^{\epsilon, (n)}}{\bb{H}^s}}\right],
\end{equation}
where
$\bff{e}_h^{(n)} := \bff{u}_{h}^{(n)}-\bff{u}_{h/2}^{(n)}$ and $\bff{e}_h^{\epsilon, (n)} := \bff{u}_{h}^{\epsilon, (n)}-\bff{u}_{h/2}^{\epsilon, (n)}$, depending on the context. We perform log-log plots of $\norm{\bff{e}_h^{(n)}}{\bb{H}^s}$ (or $\norm{\bff{e}_h^{\epsilon, (n)}}{\bb{H}^s}$) against $1/h$.  It is expected that $h\text{-rate}_s \approx 2-s$ and $h\text{-rate}_s^\epsilon \approx 2-s$ (Theorems~\ref{the:err} and~\ref{rem:error}).

\item[(O3)] Convergence with respect to $k$:
The temporal order of convergence is verified by comparing numerical solutions against a reference solution $\bff{u}_\mathrm{ref}(T)$ of the LLBE (or $\bff{u}^\epsilon_\mathrm{ref}(T)$ of the $\epsilon$-LLBE), computed with a significantly refined time step. We fix the spatial resolution, while varying $k=T/(10\times 2^j)$, where $j=0,1,\ldots,4$. Let $\bff{f}_k^{(N)}= \bff{u}_\mathrm{ref}(T)- \bff{u}_{h,k}^{(N)}$ and $\bff{f}_k^{\epsilon, (N)}= \bff{u}_\mathrm{ref}^\epsilon(T)- \bff{u}_{h,k}^{\epsilon, (N)}$ be the errors at the fixed final time $T$. We perform log-log plots of $\norm{\bff{f}_k^{(N)}}{\bb{H}^s}$ (or $\norm{\bff{f}_k^{\epsilon, (N)}}{\bb{H}^s}$) against $1/k$. The temporal convergence rate is observed by computing
\begin{equation}\label{equ:krate}
    k\text{-rate}_s:= \log_2 \left[\frac{\norm{\bff{f}_{2k}^{(N)}}{\bb{H}^s}}{\norm{\bff{f}_k^{(N)}}{\bb{H}^s}} \right]
    \quad \text{and} \quad
    k\text{-rate}^\epsilon_s:= \log_2 \left[\frac{\norm{\bff{f}_{2k}^{\epsilon,(N)}}{\bb{H}^s}}{\norm{\bff{f}_k^{\epsilon,(N)}}{\bb{H}^s}} \right],
    \quad
    s=0,1.
\end{equation}
It is expected that $k\text{-rate}_s\approx 1$ (Theorem~\ref{the:err llb}) and $k\text{-rate}^\epsilon_s\approx 1$ (Theorems~\ref{the:err} and~\ref{rem:error}) for $s=0,1$.

\item[(O4)] Convergence with respect to $\epsilon$: The convergence with respect to $\epsilon$ is assessed by comparing numerical solutions $\bff{u}_h^{\epsilon, (N)}$ against a reference solution $\bff{u}_\mathrm{ref}(T):= \bff{u}_{h_0}^{(N)}$, where $h_0$ and the time step are chosen to be sufficiently small, the values of which will be specified in each simulation. We vary the parameter $\epsilon$ through successive halving, while keeping $h$ and $k$ fixed. Let $\bff{g}^{\epsilon, (N)}:= \bff{u}_\mathrm{ref}(T)- \bff{u}_h^{\epsilon, (N)}$. A log-log plot of $\norm{\bff{g}^{\epsilon, (N)}}{\bb{H}^s}$ against $1/\epsilon$ is performed. The convergence rate with respect to $\epsilon$ is assessed by computing
\begin{equation}\label{equ:epsrate}
    \epsilon\text{-rate}_s:= \log_2 \left[\frac{\norm{\bff{g}^{2\epsilon, (N)}}{\bb{H}^s}}{\norm{\bff{g}^{\epsilon, (N)}}{\bb{H}^s}} \right],
    \quad
    s=0,1.
\end{equation}
It is expected that $\epsilon\text{-rate}_s\approx 1$ (Theorem~\ref{the:u eps con u}) for $s=0,1$.
\end{enumerate}

The details of observations (O1)--(O4) are provided within the description of each simulation below.
}

\subsection{Simulation 1: Square domain}
We fix the domain $\mathscr{D}:= [0,1]^2\subset \bb{R}^2$. The coefficients in~\eqref{equ:LLB pro} are taken to be $\kappa_1=5.0, \kappa_2=2.0, \mu=1.0$, and $\gamma=50.0$. The initial data $\bff{u}_0$ is given by
\begin{equation*}
	\bff{u}_0(x,y)= \big(\cos(2\pi x),\, \sin(2\pi y),\, 2\cos(2\pi x) \sin(2\pi y) \big).
\end{equation*}

{
We carry out four numerical observations as explained above:

\begin{enumerate}
\item[(O1)] First, we run the simulation using Algorithm~\ref{alg:fem eps llbe} as shown in Figure~\ref{fig:snapshots field 2d}. In this configuration, one could see the formation of a Bloch wall around time $t=0.2$, which is dissipating as time progresses. Eventually, the magnetisation vectors will decay to $\bff{0}$ as $t\to\infty$. The plots showing the decay of $\bff{u}_h^{\epsilon,(n)}$ in the $\bb{L}^2$, $\bb{L}^\infty$, and $\bb{H}^1$-norms are shown in Figures~\ref{fig:norms L} and~\ref{fig:norms H}.

\item[(O2)] We perform several numerical simulations with various values of $h$ to compute $\bff{e}_h^{\epsilon, (n)}$. The plot of $\max_n \norm{\bff{e}_h^{\epsilon, (n)}}{\bb{H}^s}$ against $1/h$ (with $h=2^{-j}$, where $j=2,3,\ldots,7$) to observe $h\text{-rate}_s^\epsilon$ (defined in~\eqref{equ:hrate}) is shown in Figure~\ref{fig:order}, confirming the spatial order of convergence.

\item[(O3)] To assess the temporal order of convergence, we set $T=0.2$ and compute the reference solution $\bff{u}^\epsilon_{\mathrm{ref}}(T)$ with $h=1/64$ and $k=T/640$. The plot of $\norm{\bff{f}_k^{\epsilon, (N)}}{\bb{H}^s}$ against $1/k$ (with $k=T/(10\times 2^j)$, where $j=0,1,\ldots,4$) to observe $k\text{-rate}_s^\epsilon$ (defined in~\eqref{equ:krate}) is shown in Figure~\ref{fig:order exp 1a k}.

The same experiments as above are carried out with $\epsilon=0$, i.e., we use Algorithm~\ref{alg:fem llbe} to compute $\bff{u}_h^{(n)}$. The corresponding results are presented in Figures~\ref{fig:snapshots field 2d zero}, \ref{fig:order sim4}, and~\ref{fig:order exp 1b k}. We see that the order of convergence does not deteriorate compared to the previous simulation with $\epsilon>0$, indicating that the order of convergence in Theorem~\ref{the:err llb} may be sub-optimal. Qualitatively, the solution $\bff{u}^\epsilon$ in Figure~\ref{fig:snapshots field 2d} and the solution $\bff{u}$ in Figure~\ref{fig:snapshots field 2d zero} look similar.

\item[(O4)] Finally, to observe $\epsilon\text{-rate}_s$, we compute the reference solution $\bff{u}_\mathrm{ref}(T)$ at $T=0.2$ with $\epsilon=0$, $h=1/64$, and $k=2\times 10^{-3}$. Subsequently, with $h=1/16$ and $k=2\times 10^{-2}$, we vary $\epsilon=2^{-j}$, for $j=3,4,\ldots, 9$, to compute the corresponding solutions $\bff{u}_{h}^{\epsilon, (N)}$, and plot $\norm{\bff{g}^{\epsilon, (N)}}{\bb{H}^s}$ against $1/\epsilon$. This last procedure is repeated with $h=1/64$ and $k=2\times 10^{-3}$ to demonstrate how the mesh size and time step influence the $\epsilon$-convergence. The resulting convergence behaviour as $\epsilon\to 0^+$ is displayed in Figure~\ref{fig:order exp 1 eps}.
\end{enumerate}
}

\begin{figure}[hbt!]
	\centering
	\begin{subfigure}[b]{0.25\textwidth}
		\centering
		\includegraphics[width=\textwidth]{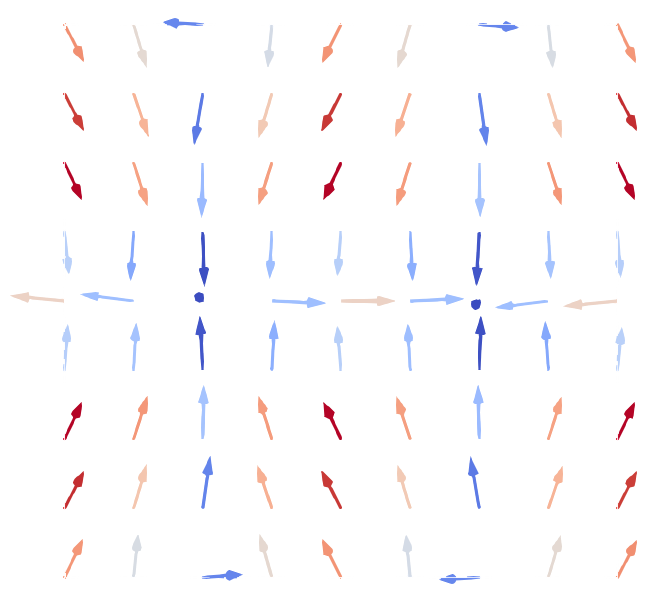}
		\caption{$t=0$}
	\end{subfigure}
	\begin{subfigure}[b]{0.25\textwidth}
		\centering
		\includegraphics[width=\textwidth]{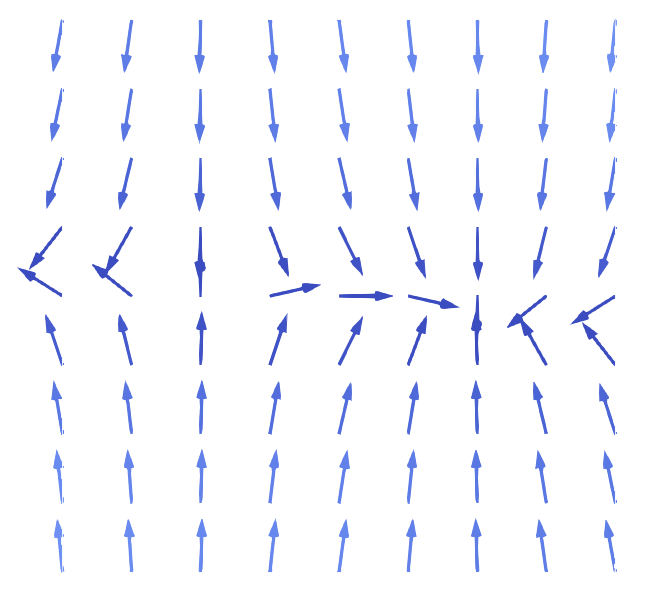}
		\caption{$t=0.025$}
	\end{subfigure}
	\begin{subfigure}[b]{0.25\textwidth}
		\centering
		\includegraphics[width=\textwidth]{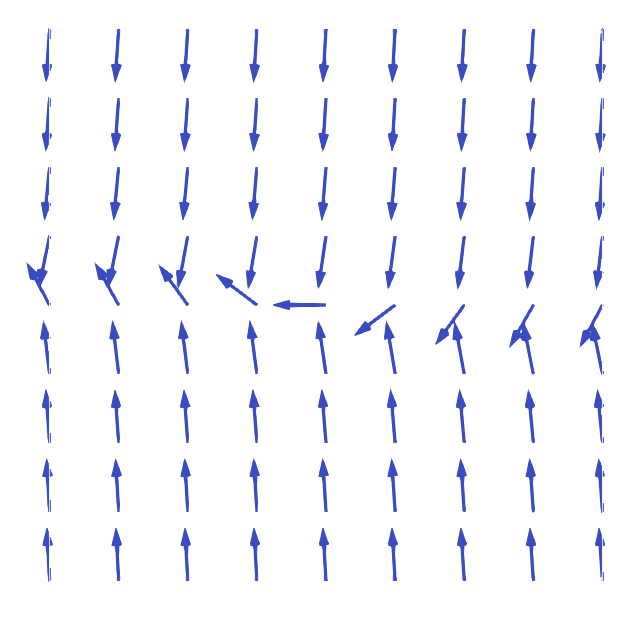}
		\caption{$t=0.2$}
	\end{subfigure}
	\begin{subfigure}[b]{0.08\textwidth}
		\centering
		\includegraphics[width=\textwidth]{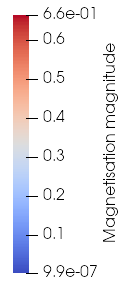}
	\end{subfigure}
	\begin{subfigure}[b]{0.25\textwidth}
		\centering
		\includegraphics[width=\textwidth]{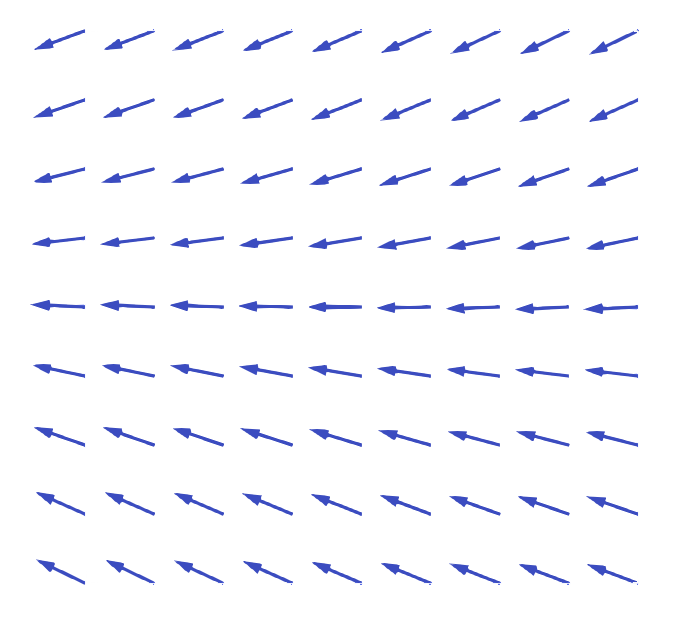}
		\caption{$t=0.3$}
	\end{subfigure}
	\begin{subfigure}[b]{0.25\textwidth}
		\centering
		\includegraphics[width=\textwidth]{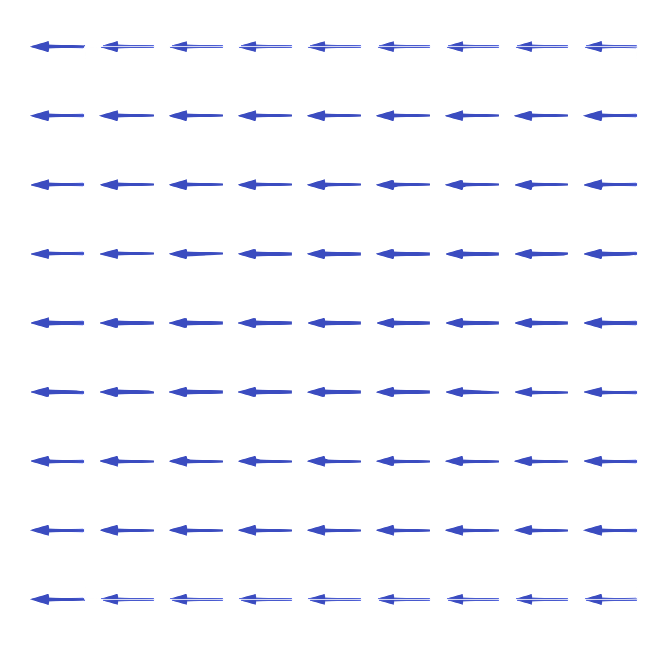}
		\caption{$t=0.4$}
	\end{subfigure}
	\begin{subfigure}[b]{0.25\textwidth}
		\centering
		\includegraphics[width=\textwidth]{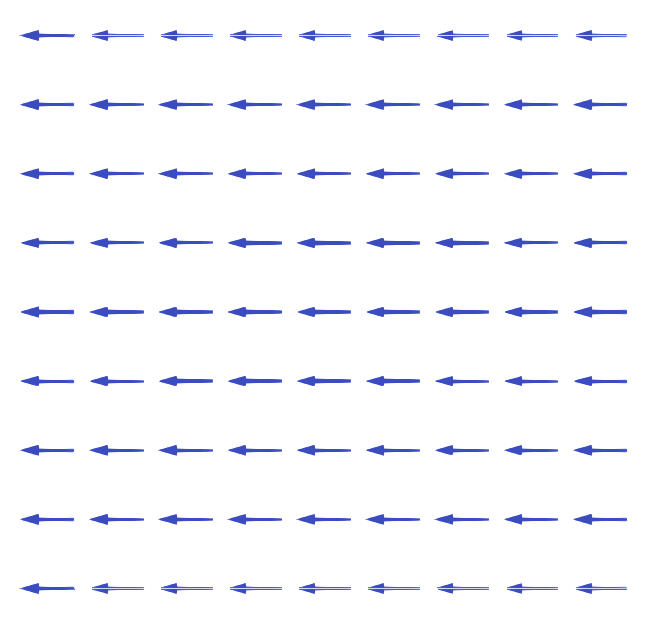}
		\caption{$t=0.5$}
	\end{subfigure}
	\begin{subfigure}[b]{0.08\textwidth}
		\centering
		\includegraphics[width=\textwidth]{legend_2d.png}
	\end{subfigure}
	\caption{Snapshots of the magnetic spin field $\bff{u}_h^{\epsilon, (n)}$ (projected onto $\bb{R}^2$) at given times using Algorithm~\ref{alg:fem eps llbe} in Simulation 1. The colours indicate relative magnitude of the vectors.}
	\label{fig:snapshots field 2d}
\end{figure}

\begin{figure}[!hbt]
	\centering
	\begin{subfigure}[b]{0.49\textwidth}
		\centering
		\includegraphics[width=\textwidth]{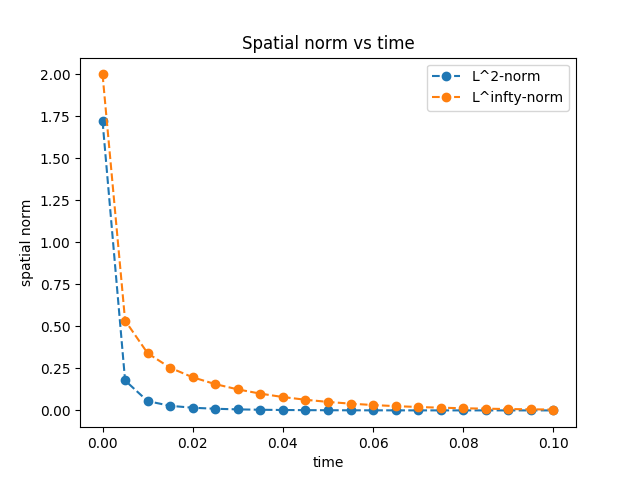}
		\caption{Plot of $\norm{\bff{u}_h^{\epsilon,(n)}}{\bb{L}^2}$ and
			$\norm{\bff{u}_h^{\epsilon,(n)}}{\bb{L}^\infty}$ against $n$.
			Theoretical result: Theorem~\ref{the:ut inf} and Lemma~\ref{lem:dec}.}
		\label{fig:norms L}
	\end{subfigure}
	\begin{subfigure}[b]{0.49\textwidth}
		\centering
		\includegraphics[width=\textwidth]{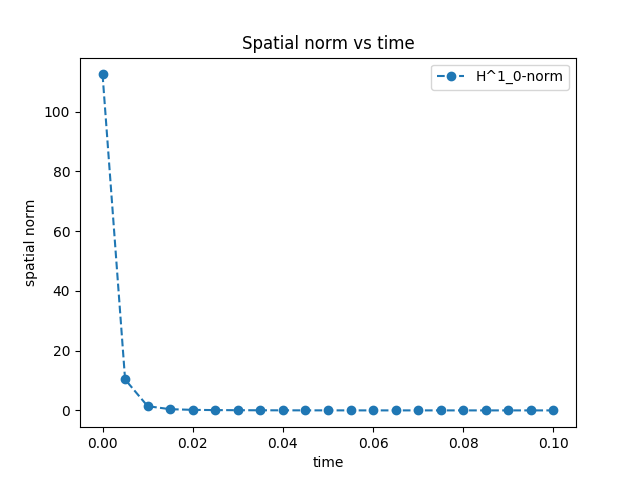}
		\caption{Plot of $\norm{\nabla \bff{u}_h^{\epsilon,(n)}}{\bb{L}^2}$ against $n$. Theoretical result: Lemma~\ref{lem:dec}.}
		\label{fig:norms H}
	\end{subfigure}
	\caption{Plots of various norms of $\bff{u}_h^{\epsilon,(n)}$ against $n$ in Simulation 1}
\end{figure}

\begin{figure}[hbt!]
	\begin{subfigure}[b]{0.42\textwidth}
		\centering
		\begin{tikzpicture}
			\begin{axis}[
				title=Log-log plot of $\max_n \norm{\bff{e}_h^{\epsilon, (n)}}{\bb{H}^s}$ against $1/h$,
				height=1.4\textwidth,
				width=1\textwidth,
				xlabel= $1/h$,
				ylabel= $\max_n \norm{\bff{e}_h^{\epsilon, (n)}}{\bb{H}^s}$,
				xmode=log,
				ymode=log,
				legend pos=south west,
				legend cell align=left,
				]
				\addplot+[mark=*,red] coordinates {(4,0.741)(8,0.393)(16,0.180)(32,0.0884)(64,0.0438)(128,0.022)};
				%\addplot+[mark=*,green] coordinates {(4,0.106)(8,0.0304)(16,0.0086)(32,0.00197)(64,0.000531)(128,0.000135)};
				\addplot+[mark=*,blue] coordinates {(4,0.072)(8,0.0199)(16,0.00579)(32,0.00143)(64,0.000369)(128,0.0000923)};
				\addplot+[dashed,no marks,red,domain=40:128]{5.5/x};
				\addplot+[dashed,no marks,blue,domain=40:128]{4.5/x^2};
				\legend{\small{$s=1$}, \small{$s=0$}, \small{order 1 line}, \small{order 2 line}}
			\end{axis}
		\end{tikzpicture}
		\caption{Spatial order of convergence of Algorithm~\ref{alg:fem eps llbe} in Simulation 1.}
		\label{fig:order}
	\end{subfigure}
	\hspace{0.5cm}
	\begin{subfigure}[b]{0.42\textwidth}
		\centering
		\begin{tikzpicture}
			\begin{axis}[
				title=Log-log plot of $\norm{\bff{f}_k^{\epsilon, (N)}}{\bb{H}^s}$ against $N$,
				height=1.4\textwidth,
				width=1\textwidth,
				xlabel= $N$,
				ylabel= $\norm{\bff{f}_k^{\epsilon, (N)}}{\bb{H}^s}$,
				xmode=log,
				ymode=log,
				legend pos=south west,
				legend cell align=left,
				]
				\addplot+[mark=*,red] coordinates {(10,0.001)(20,0.000302)(40,0.000108)(80,0.0000426)(160,0.0000163)};
				\addplot+[mark=*,blue] coordinates {(10,0.00032)(20,0.000096)(40,0.0000345)(80,0.0000135)(160,0.00000516)};
				\addplot+[dashed,no marks,black,domain=60:160]{0.007/x};
				\legend{\small{$s=1$}, \small{$s=0$}, \small{order 1 line}}
			\end{axis}
		\end{tikzpicture}
		\caption{Temporal order of convergence of Algorithm~\ref{alg:fem eps llbe} in Simulation 1.}
		\label{fig:order exp 1a k}
	\end{subfigure}
	\caption{Spatial and temporal orders of convergence of Algorithm~\ref{alg:fem eps llbe} in Simulation 1. Theoretical result: Theorems~\ref{the:err} and~\ref{rem:error}.}
\end{figure}

\begin{figure}[hbt!]
	\centering
	\begin{subfigure}[b]{0.25\textwidth}
		\centering
		\includegraphics[width=\textwidth]{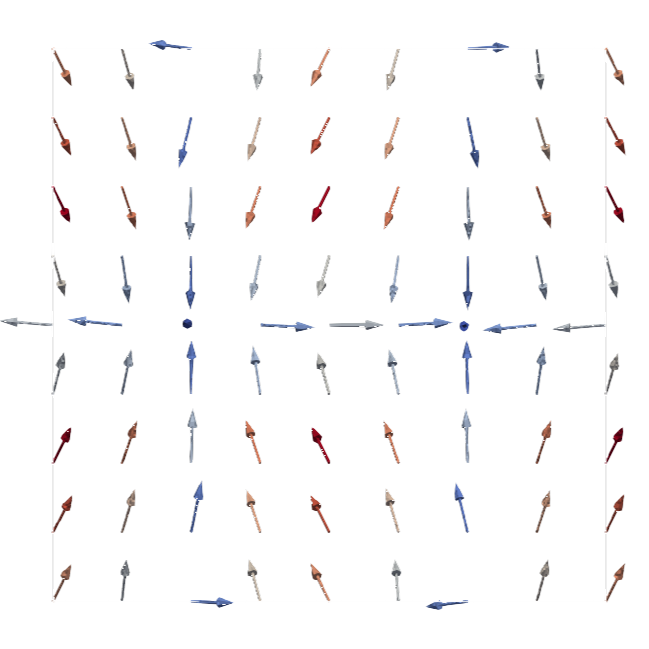}
		\caption{$t=0$}
	\end{subfigure}
	\begin{subfigure}[b]{0.25\textwidth}
		\centering
		\includegraphics[width=\textwidth]{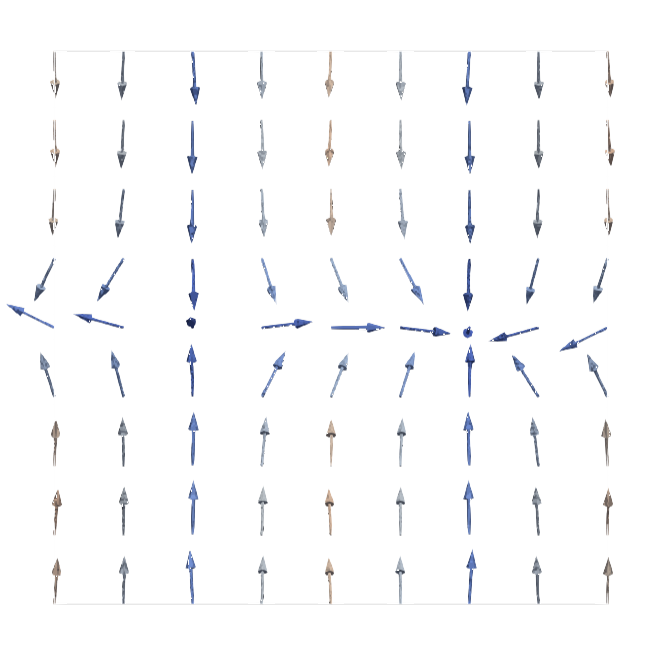}
		\caption{$t=0.025$}
	\end{subfigure}
	\begin{subfigure}[b]{0.25\textwidth}
		\centering
		\includegraphics[width=\textwidth]{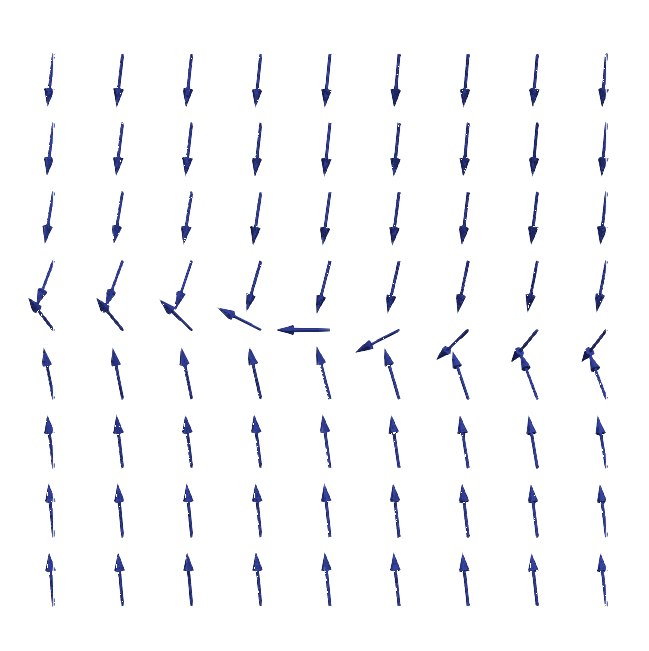}
		\caption{$t=0.2$}
	\end{subfigure}
	\begin{subfigure}[b]{0.08\textwidth}
		\centering
		\includegraphics[width=\textwidth]{legend_2d.png}
	\end{subfigure}
	\begin{subfigure}[b]{0.25\textwidth}
		\centering
		\includegraphics[width=\textwidth]{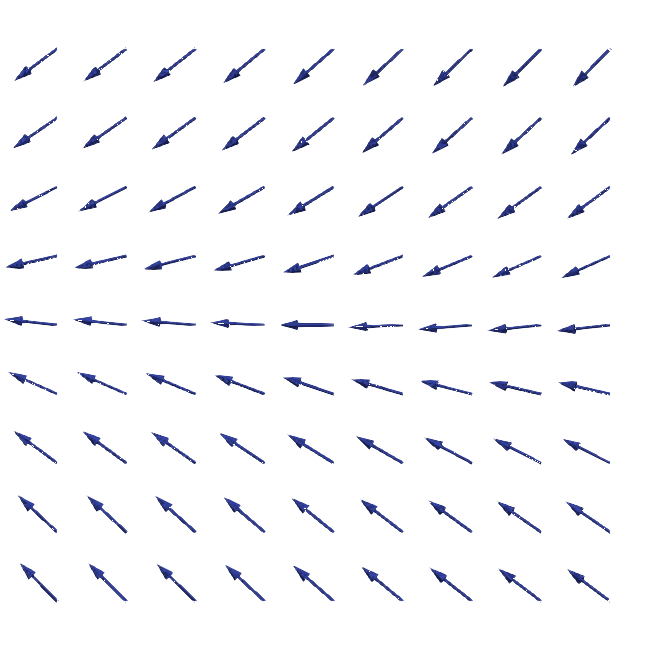}
		\caption{$t=0.3$}
	\end{subfigure}
	\begin{subfigure}[b]{0.25\textwidth}
		\centering
		\includegraphics[width=\textwidth]{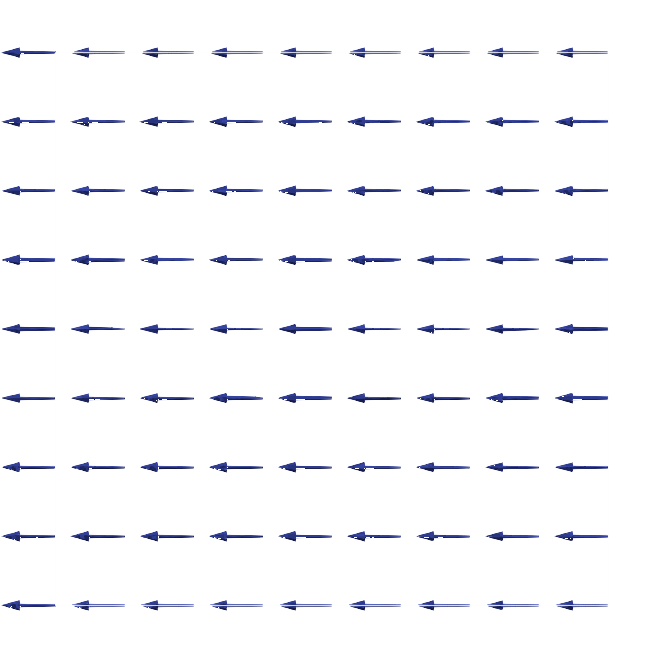}
		\caption{$t=0.4$}
	\end{subfigure}
	\begin{subfigure}[b]{0.25\textwidth}
		\centering
		\includegraphics[width=\textwidth]{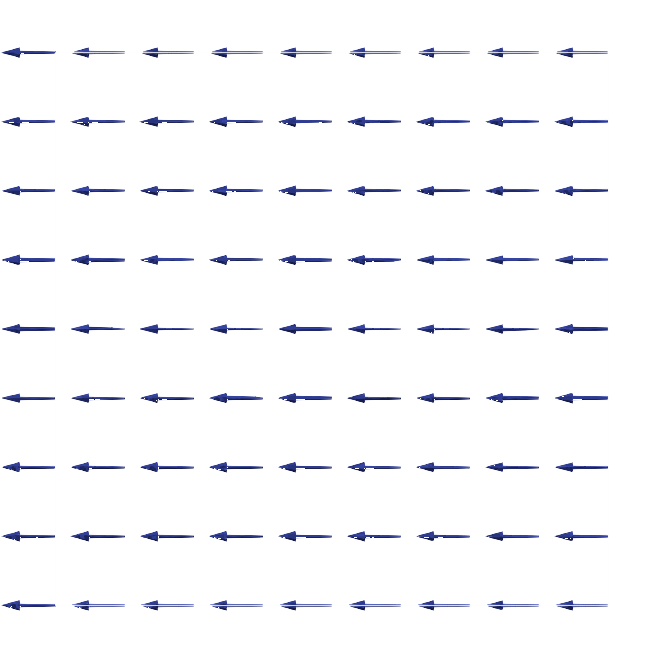}
		\caption{$t=0.5$}
	\end{subfigure}
	\begin{subfigure}[b]{0.08\textwidth}
		\centering
		\includegraphics[width=\textwidth]{legend_2d.png}
	\end{subfigure}
	\caption{Snapshots of the magnetic spin field $\bff{u}_h^{(n)}$ (projected onto $\bb{R}^2$) at given times using Algorithm~\ref{alg:fem llbe} in Simulation 1. The colours indicate relative magnitude of the vectors.}
	\label{fig:snapshots field 2d zero}
\end{figure}

\begin{figure}[hbt!]
	\begin{subfigure}[b]{0.42\textwidth}
		\centering
		\begin{tikzpicture}
			\begin{axis}[
				title=Log-log plot of $\max_n \norm{\bff{e}_h^{(n)}}{\bb{H}^s}$ against $1/h$,
				height=1.4\textwidth,
				width=1\textwidth,
				xlabel= $1/h$,
				ylabel= $\max_n \norm{\bff{e}_h^{(n)}}{\bb{H}^s}$,
				xmode=log,
				ymode=log,
				legend pos=south west,
				legend cell align=left,
				]
				\addplot+[mark=*,red] coordinates {(4,0.691)(8,0.36)(16,0.167)(32,0.0829)(64,0.0412)(128,0.021)};
				%\addplot+[mark=*,green] coordinates {(4,0.2)(8,0.055)(16,0.0125)(32,0.0035)(64,0.0010)(128,0.00028)};
				\addplot+[mark=*,blue] coordinates {(4,0.11)(8,0.04)(16,0.0095)(32,0.0027)(64,0.00071)(128,0.000175)};
				\addplot+[dashed,no marks,red,domain=40:128]{7/x};
				\addplot+[dashed,no marks,blue,domain=40:128]{9/x^2};
				\legend{\small{$s=1$}, \small{$s=0$}, \small{order 1 line}, \small{order 2 line}}
			\end{axis}
		\end{tikzpicture}
		\caption{Spatial order of convergence of Algorithm~\ref{alg:fem llbe} in Simulation 1.}
		\label{fig:order sim4}
	\end{subfigure}
	\hspace{0.5cm}
	\begin{subfigure}[b]{0.42\textwidth}
		\centering
		\begin{tikzpicture}
			\begin{axis}[
				title=Log-log plot of $\norm{\bff{f}_k^{(N)}}{\bb{H}^s}$ against $N$,
				height=1.4\textwidth,
				width=1\textwidth,
				xlabel= $N$,
				ylabel= $\norm{\bff{f}_k^{(N)}}{\bb{H}^s}$,
				xmode=log,
				ymode=log,
				legend pos=south west,
				legend cell align=left,
				]
				\addplot+[mark=*,red] coordinates {(10,0.00094)(20,0.00028)(40,0.000099)(80,0.0000388)(160,0.0000148)};
				\addplot+[mark=*,blue] coordinates {(10,0.00030)(20,0.000089)(40,0.0000317)(80,0.0000123)(160,0.00000468)};
				\addplot+[dashed,no marks,black,domain=60:160]{0.007/x};
				\legend{\small{$s=1$}, \small{$s=0$}, \small{order 1 line}}
			\end{axis}
		\end{tikzpicture}
		\caption{Temporal order of convergence of Algorithm~\ref{alg:fem llbe} in Simulation 1.}
		\label{fig:order exp 1b k}
	\end{subfigure}
	\caption{Spatial and temporal orders of convergence of Algorithm~\ref{alg:fem llbe} in Simulation 1. Theoretical result: Theorem~\ref{the:err llb}.}
\end{figure}

\begin{figure}[hbt!]
	\begin{center}
	\begin{tikzpicture}
		\begin{axis}[
			title=Log-log plot of $\norm{\bff{g}^{\epsilon, (N)}}{\bb{H}^s}$ against $1/\epsilon$,
			height=0.5\textwidth,
			width=0.5\textwidth,
			xlabel= $1/\epsilon$,
			ylabel= $\norm{\bff{g}^{\epsilon, (N)}}{\bb{H}^s}$,
			xmode=log,
			ymode=log,
			legend pos=outer north east,
			legend cell align=left,
			]
			\addplot+[dashed,mark=*,mark options={fill=red},red] coordinates {(16,0.00433)(32,0.00171)(64,0.000888)(128,0.000583)(256,0.000457)(512, 0.000400)};
			\addplot+[dashed,mark=*,mark options={fill=blue},blue] coordinates {(16,0.00128)(32,0.000542)(64,0.000282)(128,0.000185)(256,0.000145)(512,0.000127)};
			\addplot+[mark=triangle*,mark options={fill=red},red] coordinates {(16,0.00359)(32,0.000743)(64,0.00022)(128,0.0000847)(256,0.0000369)(512,0.0000172)};
			\addplot+[mark=triangle*,mark options={fill=blue},blue] coordinates {(16,0.00114)(32,0.000236)(64,0.0000707)(128,0.000027)(256,0.0000117)(512,0.00000548)};
			\addplot+[dotted,no marks,black,domain=128:512]{0.0016/x};
			%\addplot+[dashed,no marks,blue,domain=8:32]{6.5/x^2};
			\legend{\small{$s=1$ $(h=1/16, k=2\times 10^{-2})$}, \small{$s=0$ $(h=1/16, k=2\times 10^{-2})$}, \small{$s=1$ $(h=1/64, k= 2\times 10^{-3})$}, \small{$s=0$ $(h=1/64, k=2\times 10^{-3})$}, \small{order 1 line}}
		\end{axis}
	\end{tikzpicture}
\end{center}
	\caption{Order of convergence with respect to $\epsilon$ in Simulation 1. Theoretical result: Theorem~\ref{the:u eps con u}.}
	\label{fig:order exp 1 eps}
\end{figure}

\subsection{Simulation 2: Cube domain}
We fix the domain $\mathscr{D}:= [0,1]^3\subset \bb{R}^3$. The coefficients in~\eqref{equ:LLB pro} are taken to be $\kappa_1=5.0, \kappa_2=2.0, \mu=1.0, \gamma=50.0$. The initial data $\bff{u}_0$ is given by
\begin{equation*}
	\bff{u}_0(x,y,z)= \big(2\cos(2\pi x),\, \sin(2\pi y),\, 2\cos(2\pi y) \sin(2\pi z) \big).
\end{equation*}

{
As before, we carry out four numerical observations:
\begin{enumerate}
\item[(O1)] The result of this simulation is displayed in Figure~\ref{fig:snapshots field 3d}. 

\item[(O2)] The log-log plot with $h=2^{-j}$, where $j=1,2,3,4$, is shown in Figure~\ref{fig:order 3d} to observe $h\text{-rate}_s^\epsilon$, verifying the spatial order of convergence.

\item[(O3)] To assess $k\text{-rate}_s^\epsilon$, we set $T=0.05$ and compute the reference solution $\bff{u}^\epsilon_{\mathrm{ref}}(T)$ with $h=1/32$ and $k=T/640$. The log-log plot with $k=T/(10\times 2^j)$, where $j=0,1,\ldots,4$, is shown in Figure~\ref{fig:order 3d temp}.

\item[(O4)] To evaluate $\epsilon\text{-rate}_s$, we first compute the reference solution $\bff{u}_\mathrm{ref}(T)$ at $T=0.1$ by setting $\epsilon=0$, $h=1/8$, and $k=5\times 10^{-3}$. We then take $h=1/4$, $k=10^{-2}$, and compute $\norm{\bff{g}^{\epsilon, (N)}}{\bb{H}^s}$ for $\epsilon=2^{-j}$ ($j=4,5,\ldots, 9$). This procedure is repeated with $h=1/8$ and $k=5\times 10^{-3}$. The resulting log-log plot is displayed in Figure~\ref{fig:order 3d eps}.

\end{enumerate}
}

\begin{figure}[hbt!]
	\centering
	\begin{subfigure}[b]{0.25\textwidth}
		\centering
		\includegraphics[width=1.01\textwidth]{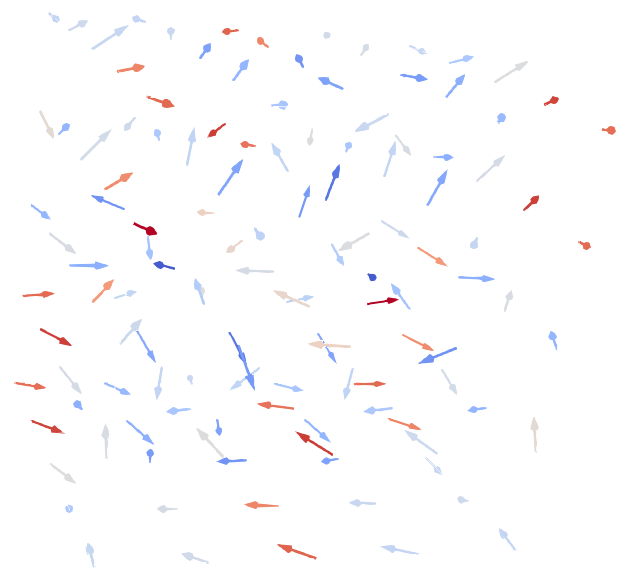}
		\caption{$t=0$}
	\end{subfigure}
	\begin{subfigure}[b]{0.25\textwidth}
		\centering
		\includegraphics[width=0.96\textwidth]{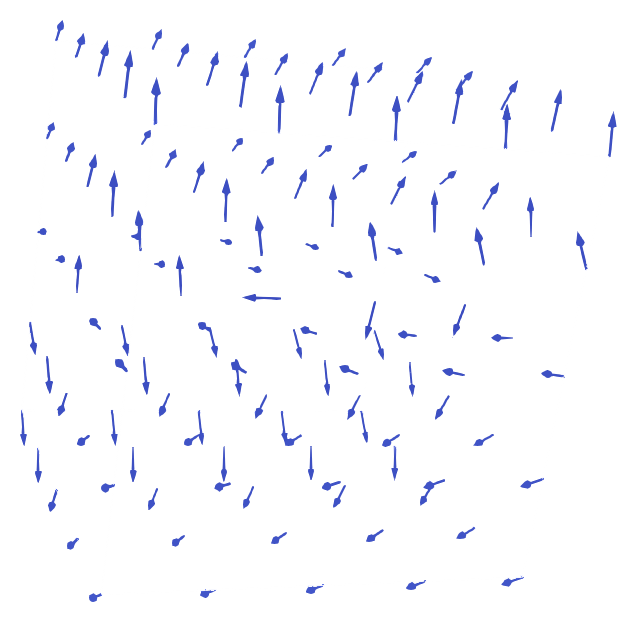}
		\caption{$t=0.075$}
	\end{subfigure}
	\begin{subfigure}[b]{0.25\textwidth}
		\centering
		\includegraphics[width=0.95\textwidth]{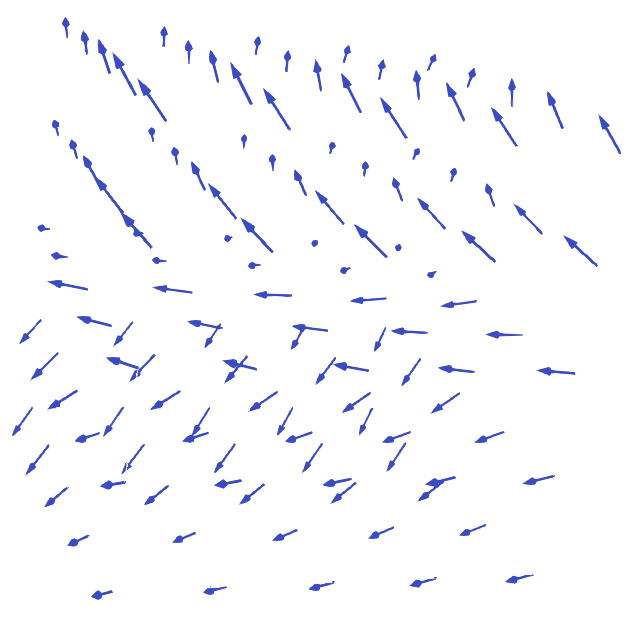}
		\caption{$t=0.2$}
	\end{subfigure}
	\begin{subfigure}[b]{0.08\textwidth}
		\centering
		\includegraphics[width=\textwidth]{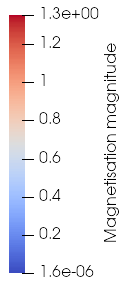}
	\end{subfigure}
	\begin{subfigure}[b]{0.25\textwidth}
		\centering
		\includegraphics[width=\textwidth]{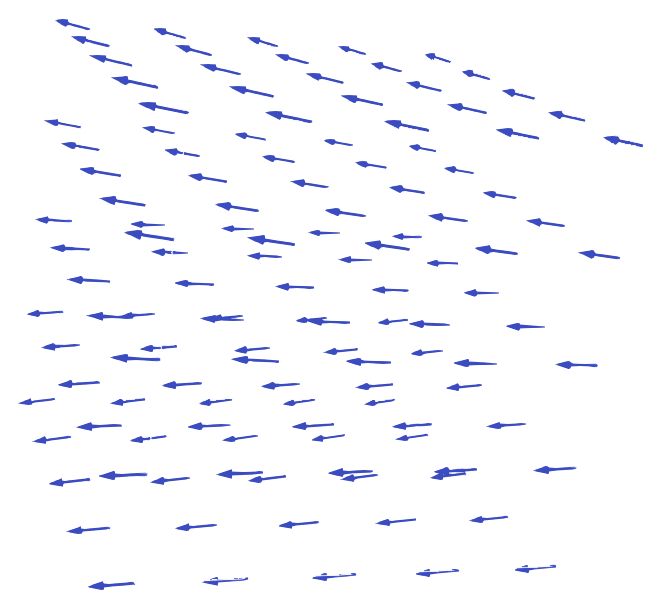}
		\caption{$t=0.25$}
	\end{subfigure}
	\begin{subfigure}[b]{0.25\textwidth}
		\centering
		\includegraphics[width=0.98\textwidth]{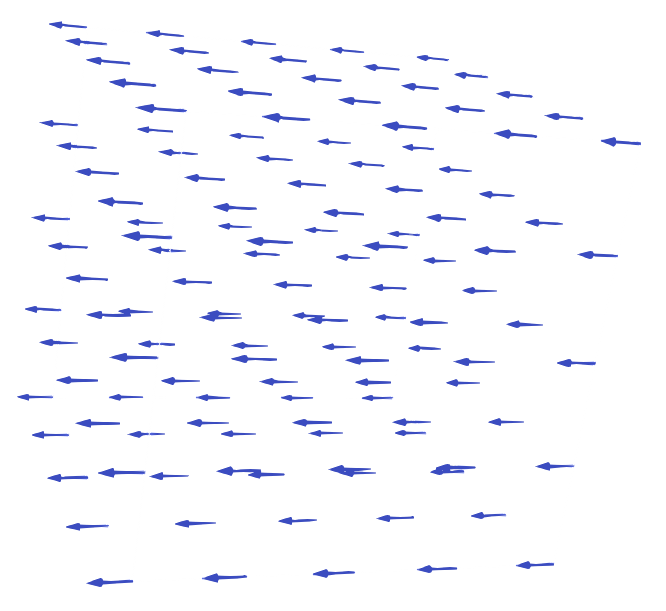}
		\caption{$t=0.4$}
	\end{subfigure}
	\begin{subfigure}[b]{0.25\textwidth}
		\centering
		\includegraphics[width=0.95\textwidth]{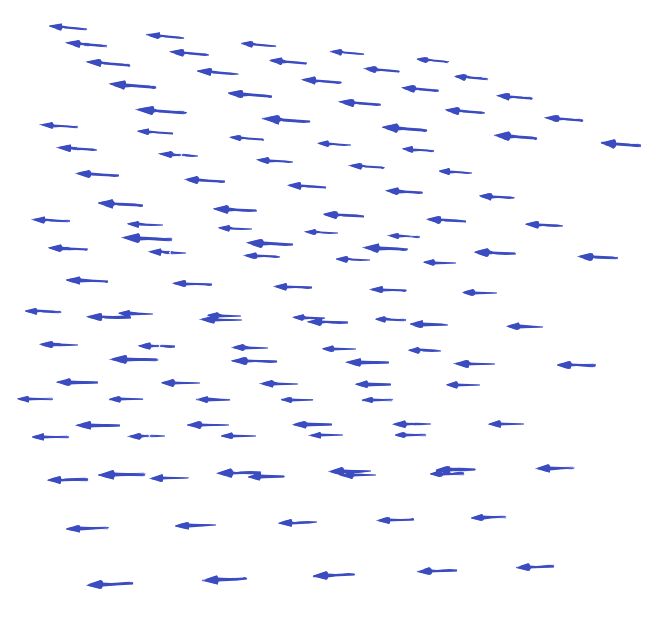}
		\caption{$t=0.5$}
	\end{subfigure}
	\begin{subfigure}[b]{0.08\textwidth}
		\centering
		\includegraphics[width=\textwidth]{legend_3d.png}
	\end{subfigure}
	\caption{Snapshots of the magnetic spin field $\bff{u}_h^{\epsilon, (n)}$ at given times in Simulation~2. The colours indicate relative magnitude of the vectors.}
	\label{fig:snapshots field 3d}
\end{figure}

\begin{figure}[hbt!]
	\begin{subfigure}[b]{0.42\textwidth}
		\centering
		\begin{tikzpicture}
			\begin{axis}[
				title=Log-log plot of $\max_n \norm{\bff{e}_h^{\epsilon, (n)}}{\bb{H}^s}$ against $1/h$,
				height=1.4\textwidth,
				width=1\textwidth,
				xlabel= $1/h$,
				ylabel= $\max_n \norm{\bff{e}_h^{\epsilon, (n)}}{\bb{H}^s}$,
				xmode=log,
				ymode=log,
				legend pos=south west,
				legend cell align=left,
				]
				\addplot+[mark=*,red] coordinates {(2,0.883)(4,0.470)(8,0.231)(16,0.115)};
				%\addplot+[mark=*,green] coordinates {(2,0.3)(4,0.185)(8,0.057)(16,0.0146)};
				\addplot+[mark=*,blue] coordinates {(2,0.185)(4,0.096)(8,0.03)(16,0.00745)};
				\addplot+[dashed,no marks,red,domain=6:16]{3.2/x};
				\addplot+[dashed,no marks,blue,domain=6:16]{4/x^2};
				\legend{\small{$s=1$}, \small{$s=0$}, \small{order 1 line}, \small{order 2 line}}
			\end{axis}
		\end{tikzpicture}
		\caption{Spatial order of convergence of Algorithm~\ref{alg:fem eps llbe} in Simulation 2.}
		\label{fig:order 3d}
	\end{subfigure}
	\hspace{0.5cm}
	\begin{subfigure}[b]{0.42\textwidth}
	\centering
	\begin{tikzpicture}
		\begin{axis}[
			title=Log-log plot of $\norm{\bff{f}_k^{\epsilon, (N)}}{\bb{H}^s}$ against $N$,
			height=1.4\textwidth,
			width=1\textwidth,
			xlabel= $N$,
			ylabel= $\norm{\bff{f}_k^{\epsilon, (N)}}{\bb{H}^s}$,
			xmode=log,
			ymode=log,
			legend pos=south west,
			legend cell align=left,
			]
			\addplot+[mark=*,red] coordinates {(10,1.95)(20,1.71)(40,1.43)(80,1.046)(160,0.625)};
			\addplot+[mark=*,blue] coordinates {(10,0.54)(20,0.49)(40,0.41)(80,0.29)(160,0.16)};
			\addplot+[dashed,no marks,black,domain=60:160]{50/x};
			\legend{\small{$s=1$}, \small{$s=0$}, \small{order 1 line}}
		\end{axis}
	\end{tikzpicture}
	\caption{Temporal order of convergence of Algorithm~\ref{alg:fem eps llbe} in Simulation 2.}
	\label{fig:order 3d temp}
	\end{subfigure}
	\caption{Spatial and temporal orders of convergence of Algorithm~\ref{alg:fem eps llbe} in Simulation 2. Theoretical result: Theorems~\ref{the:err} and~\ref{rem:error}.}
\end{figure}

\begin{figure}[hbt!]
\begin{center}
	\begin{tikzpicture}
		\begin{axis}[
			title=Log-log plot of $\norm{\bff{g}^{\epsilon, (N)}}{\bb{H}^s}$ against $1/\epsilon$,
			height=0.5\textwidth,
			width=0.5\textwidth,
			xlabel= $1/\epsilon$,
			ylabel= $\norm{\bff{g}^{\epsilon, (N)}}{\bb{H}^s}$,
			xmode=log,
			ymode=log,
			legend pos=outer north east,
			legend cell align=left,
			]
			\addplot+[dashed,mark=*,mark options={fill=red},red] coordinates {(16,0.0814)(32,0.015)(64,0.00468)(128,0.00347)(256,0.00335)(512, 0.0033)};
			\addplot+[dashed,mark=*,mark options={fill=blue},blue] coordinates {(16,0.0163)(32,0.00328)(64,0.0011)(128,0.000964)(256,0.000961)(512,0.00096)};
			\addplot+[mark=triangle*,mark options={fill=red},red] coordinates {(16,0.081)(32,0.015)(64,0.00435)(128,0.0013)(256,0.00054)(512,0.000234)};
			\addplot+[mark=triangle*,mark options={fill=blue},blue] coordinates {(16,0.016)(32,0.0032)(64,0.0011)(128,0.000379)(256,0.000145)(512,0.000063)};
			\addplot+[dotted,no marks,black,domain=110:512]{0.08/x};
			%\addplot+[dashed,no marks,blue,domain=8:32]{6.5/x^2};
			\legend{\small{$s=1$ $(h=1/4, k=10^{-2})$}, \small{$s=0$ $(h=1/4, k=10^{-2})$}, \small{$s=1$ $(h=1/8, k= 5\times 10^{-3})$}, \small{$s=0$ $(h=1/8, k=5\times 10^{-3})$}, \small{order 1 line}}
		\end{axis}
	\end{tikzpicture}
\end{center}
	\caption{Order of convergence with respect to $\epsilon$ in Simulation 2. Theoretical result: Theorem~\ref{the:u eps con u}.}
	\label{fig:order 3d eps}
\end{figure}

%\subsection{Simulation 3: Testing $k\text{-rate}_1$}
%We take the domain $\mathscr{D}:= [0,1]^2\subset \bb{R}^2$ and $t\in [0,T]$, where $T=0.5$ and $h=8\times 10^{-3}$. The coefficients in~\eqref{equ:LLB pro} are taken to be $\kappa_1=0.02, \kappa_2=0.04, \mu=0.5, \gamma=0.05$, and $\epsilon=0.001$. The initial data $\bff{u}_0$ is given by
%\begin{equation*}
%	\bff{u}_0(x,y)= \big(2 \sin^2(\pi x) \sin^2(\pi y),\, 4\sin^2(2\pi x) \sin^2(\pi y),\, 8\sin^2(\pi x) \sin^2(2\pi y) \big).
%\end{equation*}
%Plot of $\bff{e}^{(n)}$ against $1/k$ is shown in Figure~\ref{fig:order k}.
%
%\begin{figure}[hbt!]
%	\begin{center}
%		\begin{tikzpicture}
%			\begin{axis}[
%				title=Plot of $\bff{e}^{(n)}$ against $1/k$,
%				height=0.5\textwidth,
%				width=0.45\textwidth,
%				xlabel= $1/k$,
%				ylabel= $\bff{e}^{(n)}$,
%				xmode=log,
%				ymode=log,
%				legend pos=south west,
%				legend cell align=left,
%				]
%				\addplot+[mark=*,red] coordinates {(4,2.78)(8,2.58)(16,2.38)(32,1.7)(64,1.1)(128,0.642)};
%				\addplot+[dashed,no marks,red,domain=55:140]{100/x};
%				\legend{$\bff{e}^{(n)}$, order 1 line}
%			\end{axis}
%		\end{tikzpicture}
%	\end{center}
%	\caption{Order of convergence for simulation 3.}
%	\label{fig:order k}
%\end{figure}

\subsection{Simulation 3: L-shaped domain}
We set $\mathscr{D}:= [-1,1]^2 \setminus [0,1]^2 \subset \bb{R}^2$, which is a non-convex domain. The coefficients in~\eqref{equ:LLB pro} are taken to be $\kappa_1=0.5, \kappa_2=2.0, \mu=1.0$, and $\gamma=50.0$. The initial data $\bff{u}_0$ is
\begin{equation*}
	\bff{u}_0(x,y)= \big(2x^2,\, 2y,\, x^2-2y^2 \big).
\end{equation*}

{
Four numerical observations are carried out as before:
\begin{enumerate}
    \item[(O1)] The result of this simulation is displayed in Figure~\ref{fig:snapshots field 2d sim5}.

    \item[(O2)] The log-log plot with $h=2^{-j}$, where $j=2,3,\ldots,7$, is shown in Figure~\ref{fig:order sim5} to observe $h\text{-rate}_s^\epsilon$. As seen here, the singularity at the non-convex corner pollutes the finite element solution everywhere in $\mathscr{D}$, leading to $O(h^\alpha)$ and $O(h^{2\alpha})$ order of convergence in $\bb{H}^1$ and $\bb{L}^2$ norms, respectively, for every $\alpha<\alpha_0$. For an L-shaped domain, $\alpha_0=2/3$; see~\eqref{equ:gamma 0}.

    \item[(O3)] To verify $k\text{-rate}^\epsilon_s$, we set $T=0.1$ and compute the reference solution $\bff{u}^\epsilon_{\mathrm{ref}}(T)$ with $h=1/64$ and $k=T/640$. The log-log plot with $k=T/(10\times 2^j)$, where $j=0,1,\ldots,4$, is shown in Figure~\ref{fig:order 2d L temp}.

    \item[(O4)] To observe $\epsilon\text{-rate}_s$, we compute $\bff{u}_\mathrm{ref}(T)$ at $T=0.1$ with $\epsilon=0$, $h=1/128$, and $k=10^{-3}$. Next, with $h=1/64$, $k=2.5\times 10^{-3}$, we compute $\norm{\bff{g}^{\epsilon, (N)}}{\bb{H}^s}$ with $\epsilon=2^{-j}$, for $j=3,4,\ldots, 8$. This procedure is repeated with $h=1/128$ and $k=10^{-3}$. The log-log plot is displayed in Figure~\ref{fig:order L eps}.
\end{enumerate}
}

\begin{figure}[hbt!]
	\centering
	\begin{subfigure}[b]{0.27\textwidth}
		\centering
		\includegraphics[width=\textwidth]{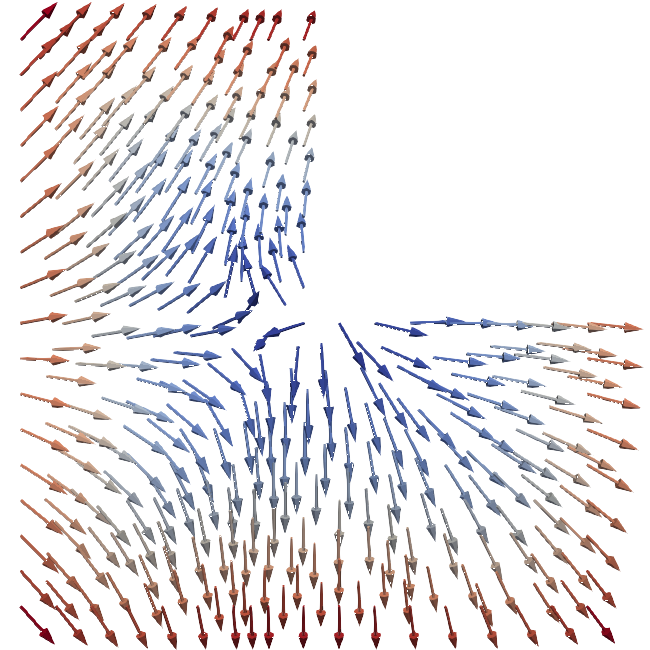}
		\caption{$t=0$}
	\end{subfigure}
	\begin{subfigure}[b]{0.27\textwidth}
		\centering
		\includegraphics[width=\textwidth]{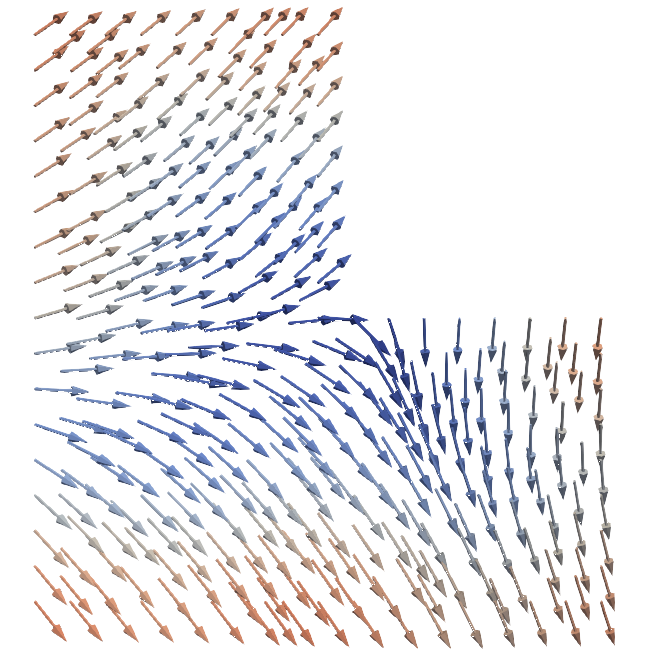}
		\caption{$t=0.025$}
	\end{subfigure}
	\begin{subfigure}[b]{0.27\textwidth}
		\centering
		\includegraphics[width=\textwidth]{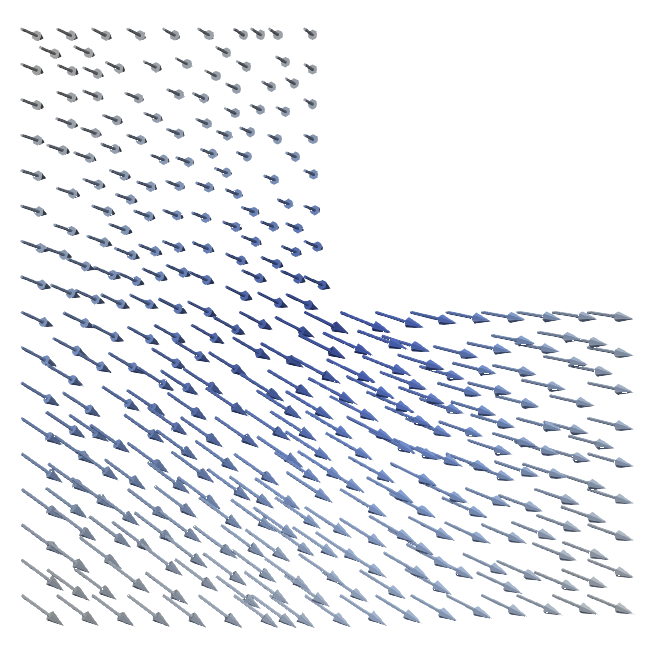}
		\caption{$t=0.1$}
	\end{subfigure}
	\begin{subfigure}[b]{0.1\textwidth}
		\centering
		\includegraphics[width=\textwidth]{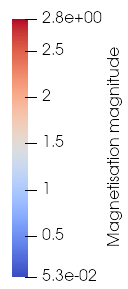}
	\end{subfigure}
	\begin{subfigure}[b]{0.27\textwidth}
		\centering
		\includegraphics[width=\textwidth]{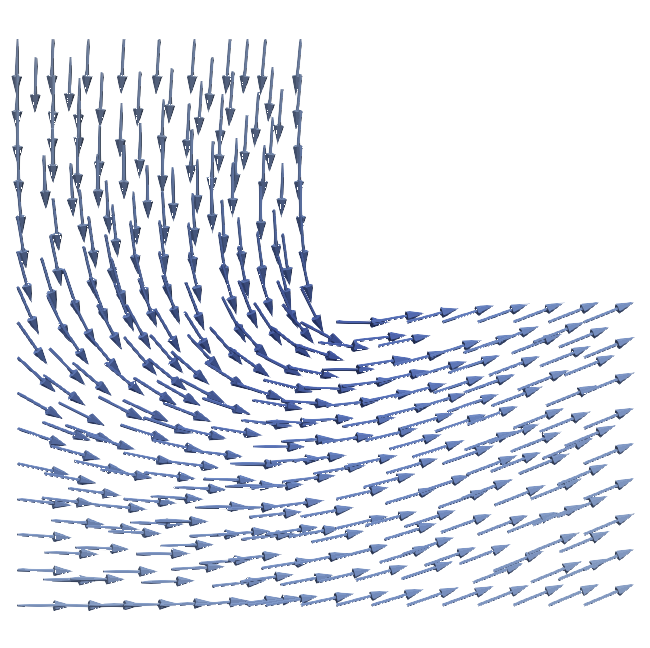}
		\caption{$t=0.2$}
	\end{subfigure}
	\begin{subfigure}[b]{0.27\textwidth}
		\centering
		\includegraphics[width=\textwidth]{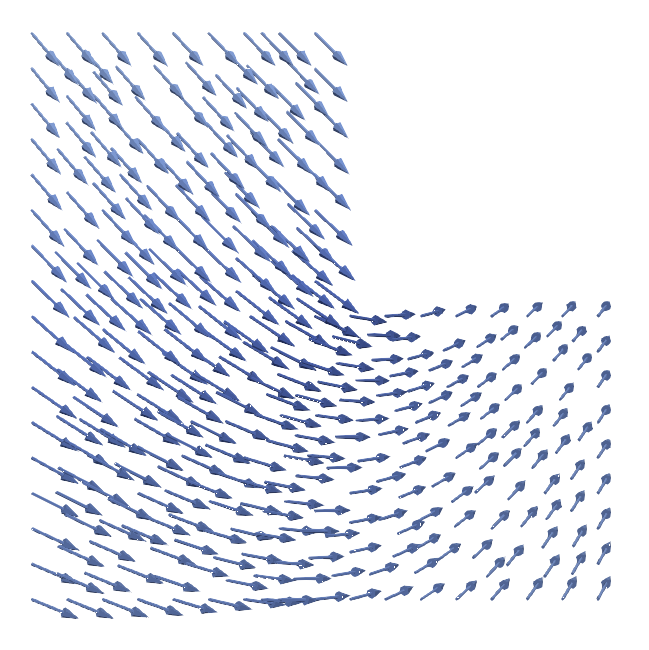}
		\caption{$t=0.3$}
	\end{subfigure}
	\begin{subfigure}[b]{0.27\textwidth}
		\centering
		\includegraphics[width=\textwidth]{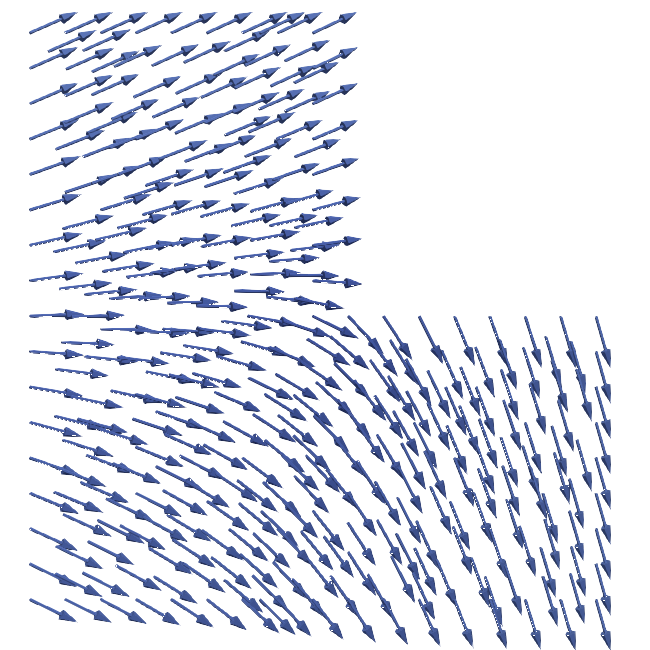}
		\caption{$t=0.5$}
	\end{subfigure}
	\begin{subfigure}[b]{0.1\textwidth}
		\centering
		\includegraphics[width=\textwidth]{sim5_legend.png}
	\end{subfigure}
	\caption{Snapshots of the magnetic spin field $\bff{u}_h^{\epsilon, (n)}$ (projected onto $\bb{R}^2$) at given times in Simulation 3. The colours indicate relative magnitude of the vectors.}
	\label{fig:snapshots field 2d sim5}
\end{figure}

\begin{figure}[hbt!]
	\begin{subfigure}[b]{0.42\textwidth}
		\centering
		\begin{tikzpicture}
			\begin{axis}[
				title=Log-log plot of $\max_n \norm{\bff{e}_h^{\epsilon, (n)}}{\bb{H}^s}$ against $1/h$,
				height=1.45\textwidth,
				width=1\textwidth,
				xlabel= $1/h$,
				ylabel= $\max_n \norm{\bff{e}_h^{\epsilon, (n)}}{\bb{H}^s}$,
				xmode=log,
				ymode=log,
				legend pos=south west,
				legend cell align=left,
				]
				\addplot+[mark=*,red] coordinates {(4,1.95)(8,1.2)(16,0.7)(32,0.4)(64,0.22)(128,0.14)};
				%\addplot+[mark=*,green] coordinates {(4,0.33)(8,0.13)(16,0.08)(32,0.05)(64,0.03)(128,0.02)};
				\addplot+[mark=*,blue] coordinates {(4,0.25)(8,0.07)(16,0.022)(32,0.0058)(64,0.0022)(128,0.0008)};
				\addplot+[dashed,no marks,red,domain=30:140]{6.2/(x^2)^(1/3)};
				\addplot+[dashed,no marks,blue,domain=30:140]{1.2/(x^2)^(2/3)};
				\legend{\small{$s=1$}, \small{$s=0$}, \small{order 2/3 line}, \small{order 4/3 line}}
			\end{axis}
		\end{tikzpicture}
		\caption{Spatial order of convergence of Algorithm~\ref{alg:fem eps llbe} in Simulation 3.}
		\label{fig:order sim5}
	\end{subfigure}
	\hspace{0.5cm}
	\begin{subfigure}[b]{0.42\textwidth}
		\centering
		\begin{tikzpicture}
			\begin{axis}[
				title=Log-log plot of $\norm{\bff{f}_k^{\epsilon, (N)}}{\bb{H}^s}$ against $N$,
				height=1.45\textwidth,
				width=1\textwidth,
				xlabel= $N$,
				ylabel= $\norm{\bff{f}_k^{\epsilon, (N)}}{\bb{H}^s}$,
				xmode=log,
				ymode=log,
				legend pos=south west,
				legend cell align=left,
				]
				\addplot+[mark=*,red] coordinates {(10,2.44)(20,2.39)(40,2.00)(80,1.48)(160,0.86)};
				\addplot+[mark=*,blue] coordinates {(10,1.78)(20,1.76)(40,1.48)(80,1.07)(160,0.60)};
				\addplot+[dashed,no marks,black,domain=80:160]{180/x};
				\legend{\small{$s=1$}, \small{$s=0$}, \small{order 1 line}}
			\end{axis}
		\end{tikzpicture}
		\caption{Temporal order of convergence of Algorithm~\ref{alg:fem eps llbe} in Simulation 3.}
		\label{fig:order 2d L temp}
	\end{subfigure}
	\caption{Spatial and temporal orders of convergence of Algorithm~\ref{alg:fem eps llbe} in Simulation 3. Theoretical results: Theorems~\ref{the:err} and~\ref{rem:error}.}
\end{figure}

\begin{figure}[hbt!]
	\begin{center}
		\begin{tikzpicture}
			\begin{axis}[
				title=Log-log plot of $\norm{\bff{g}^{\epsilon, (N)}}{\bb{H}^s}$ against $1/\epsilon$,
				height=0.5\textwidth,
				width=0.5\textwidth,
				xlabel= $1/\epsilon$,
				ylabel= $\norm{\bff{g}^{\epsilon, (N)}}{\bb{H}^s}$,
				xmode=log,
				ymode=log,
				legend pos=outer north east,
				legend cell align=left,
				]
				\addplot+[dashed,mark=*,mark options={fill=red},red] coordinates {(4,3.0)(8,2.09)(16,1.65)(32,1.3)(64,1.14)(128,1.12)(256,1.12)};
				\addplot+[dashed,mark=*,mark options={fill=blue},blue] coordinates {(4,1.0)(8,0.702)(16,0.693)(32,0.664)(64,0.702)(128,0.720)(256,0.750)};
				\addplot+[mark=triangle*,mark options={fill=red},red] coordinates {(4,3.6)(8,1.9)(16,1.3)(32,0.76)(64,0.39)(128,0.2)(256,0.11)};
				\addplot+[mark=triangle*,mark options={fill=blue},blue] coordinates {(4,0.9)(8,0.52)(16,0.31)(32,0.19)(64,0.11)(128,0.069)(256,0.038)};
				\addplot+[dotted,no marks,black,domain=64:256]{14/x};
				%\addplot+[dashed,no marks,blue,domain=8:32]{6.5/x^2};
				\legend{\small{$s=1$ $(h=1/64, k=2.5\times 10^{-3})$}, \small{$s=0$ $(h=1/64, k=2.5\times 10^{-3})$}, \small{$s=1$ $(h=1/128, k= 10^{-3})$}, \small{$s=0$ $(h=1/128, k=10^{-3})$}, \small{order 1 line}}
			\end{axis}
		\end{tikzpicture}
	\end{center}
	\caption{Order of convergence with respect to $\epsilon$ in Simulation 3. Theoretical result: Theorem~\ref{the:u eps con u}.}
	\label{fig:order L eps}
\end{figure}

\subsection{Simulation 4: Domain with Fichera corner}
We take $\mathscr{D}:= [-1,1]^3 \setminus [0,1]^3 \subset \bb{R}^3$, which is a non-convex domain with Fichera corner. The coefficients in~\eqref{equ:LLB pro} are taken to be $\kappa_1=0.5, \kappa_2=2.0, \mu=1.0$, and $\gamma=50.0$. The initial data $\bff{u}_0$ is
\begin{equation*}
	\bff{u}_0(x,y)= \big(2x^2,\, 2z,\, x^2-2y^2 \big).
\end{equation*}

{
Four numerical observations are carried out as before:
\begin{enumerate}
    \item[(O1)] The result of this simulation is displayed in Figure~\ref{fig:snapshots field 3d sim6}.

    \item[(O2)] The log-log plot with $h=2^{-j}$, where $j=2,3,4,5$, is shown in Figure~\ref{fig:order sim6} to observe $h\text{-rate}_s^\epsilon$. While non-convex three-dimensional polyhedral domain is not covered under our error analysis, it appears that an order of convergence similar to that in Simulation 3 is attained here.

    \item[(O3)] To verify $k\text{-rate}^\epsilon_s$, we set $T=0.1$ and compute the reference solution $\bff{u}^\epsilon_{\mathrm{ref}}(T)$ with $h=1/16$ and $k=T/320$. The log-log plot with $k=T/(10\times 2^j)$, where $j=0,1,\ldots,4$, is shown in Figure~\ref{fig:order 3d L temp}.

    \item[(O4)] To observe $\epsilon\text{-rate}_s$, we compute $\bff{u}_\mathrm{ref}(T)$ at $T=0.1$ with $\epsilon=0$, $h=1/25$, and $k=10^{-3}$. Next, with $h=1/8$, $k=2.5\times 10^{-3}$, we compute $\norm{\bff{g}^{\epsilon, (N)}}{\bb{H}^s}$ with $\epsilon=2^{-j}$, for $j=3,4,\ldots, 7$. This is repeated with $h=1/25$ and $k=10^{-3}$. The resulting log-log plot is displayed in Figure~\ref{fig:order 3d L eps}.
\end{enumerate}
}

\begin{figure}[hbt!]
	\centering
	\begin{subfigure}[b]{0.27\textwidth}
		\centering
		\includegraphics[width=\textwidth]{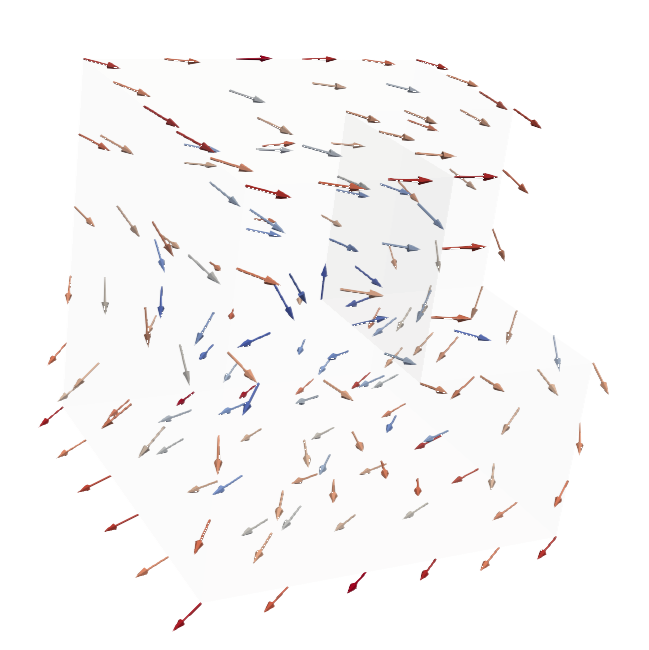}
		\caption{$t=0$}
	\end{subfigure}
	\begin{subfigure}[b]{0.27\textwidth}
		\centering
		\includegraphics[width=\textwidth]{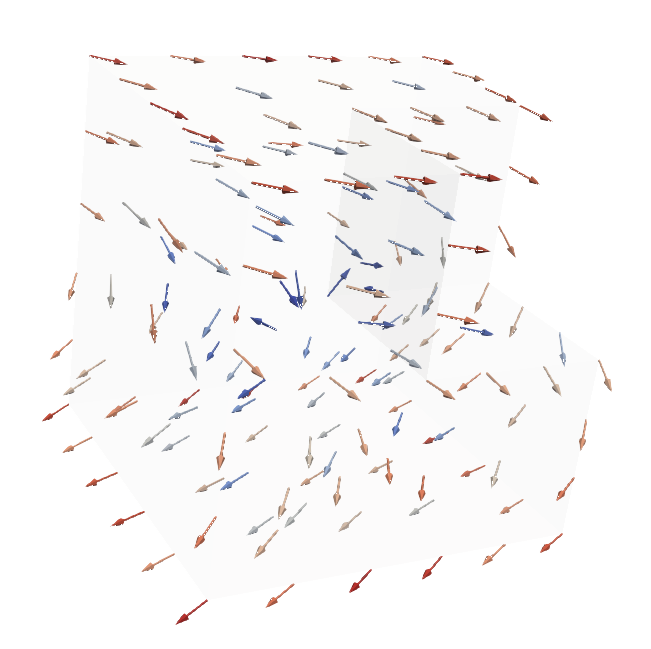}
		\caption{$t=0.025$}
	\end{subfigure}
	\begin{subfigure}[b]{0.27\textwidth}
		\centering
		\includegraphics[width=\textwidth]{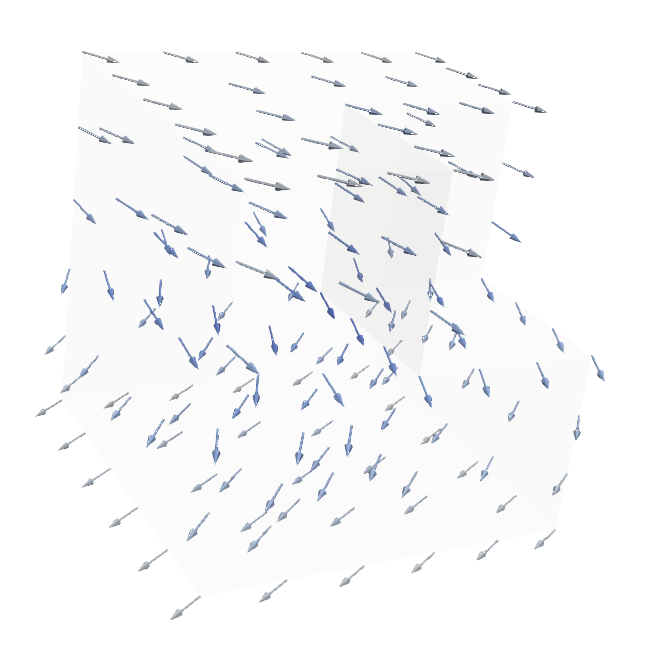}
		\caption{$t=0.1$}
	\end{subfigure}
	\begin{subfigure}[b]{0.1\textwidth}
		\centering
		\includegraphics[width=\textwidth]{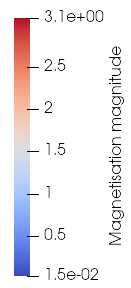}
	\end{subfigure}
	\begin{subfigure}[b]{0.27\textwidth}
		\centering
		\includegraphics[width=\textwidth]{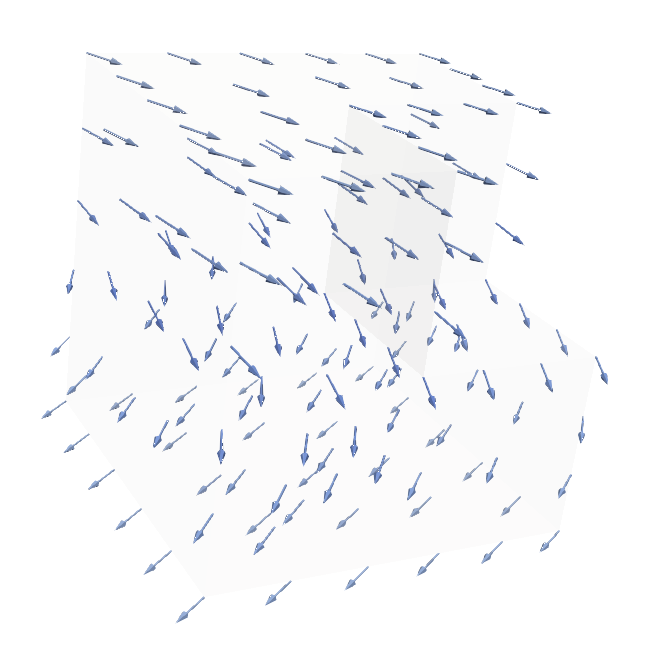}
		\caption{$t=0.2$}
	\end{subfigure}
	\begin{subfigure}[b]{0.27\textwidth}
		\centering
		\includegraphics[width=\textwidth]{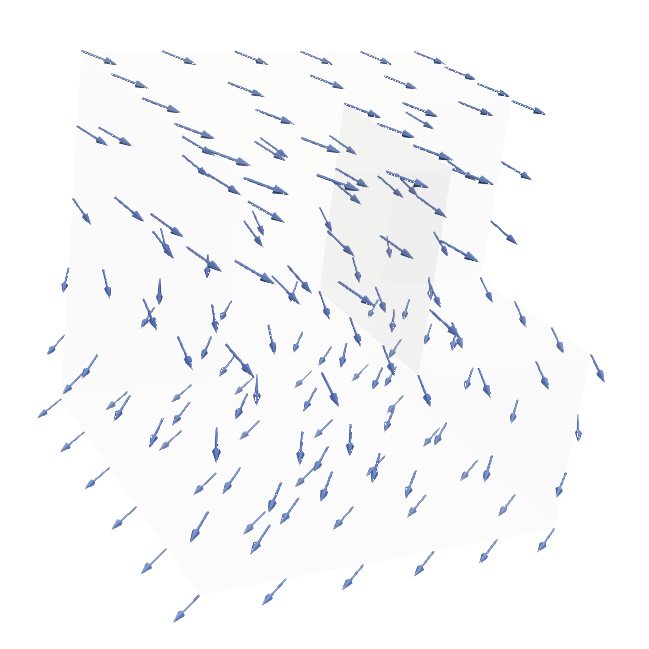}
		\caption{$t=0.3$}
	\end{subfigure}
	\begin{subfigure}[b]{0.27\textwidth}
		\centering
		\includegraphics[width=\textwidth]{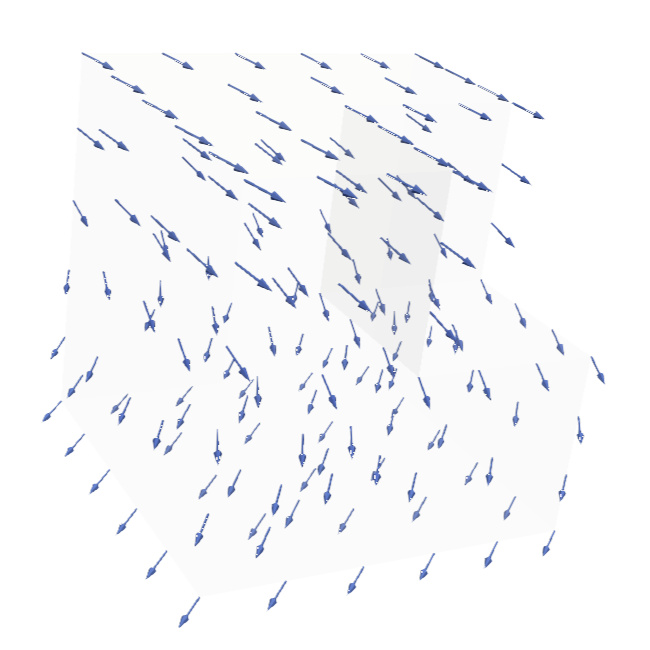}
		\caption{$t=0.5$}
	\end{subfigure}
	\begin{subfigure}[b]{0.1\textwidth}
		\centering
		\includegraphics[width=\textwidth]{sim6_legende.png}
	\end{subfigure}
	\caption{Snapshots of the magnetic spin field $\bff{u}_h^{\epsilon, (n)}$ at given times in Simulation 4. The colours indicate relative magnitude of the vectors.}
	\label{fig:snapshots field 3d sim6}
\end{figure}

\begin{figure}[hbt!]
	\begin{subfigure}[b]{0.42\textwidth}
		\centering
		\begin{tikzpicture}
			\begin{axis}[
				title=Log-log plot of $\max_n \norm{\bff{e}_h^{\epsilon, (n)}}{\bb{H}^s}$,
				height=1.4\textwidth,
				width=1\textwidth,
				xlabel= $1/h$,
				ylabel= $\max_n \norm{\bff{e}_h^{\epsilon, (n)}}{\bb{H}^s}$,
				xmode=log,
				ymode=log,
				legend pos=south west,
				legend cell align=left,
				]
				\addplot+[mark=*,red] coordinates {(4,2.23)(8,1.76)(16,1.19)(32,0.7)};
				%\addplot+[mark=*,green] coordinates {(4,0.38)(8,0.25)(16,0.14)(32,0.08)};
				\addplot+[mark=*,blue] coordinates {(4,0.34)(8,0.22)(16,0.1)(32,0.04)};
				\addplot+[dashed,no marks,red,domain=10:32]{5/(x^2)^(1/3)};
				\addplot+[dashed,no marks,blue,domain=10:32]{6/(x^2)^(2/3)};
				\legend{\small{$s=1$}, \small{$s=0$}, \small{order 2/3 line}, \small{order 4/3 line}}
			\end{axis}
		\end{tikzpicture}
		\caption{Spatial order of convergence of Algorithm~\ref{alg:fem eps llbe} in Simulation 4.}
		\label{fig:order sim6}
	\end{subfigure}
	\hspace{0.5cm}
	\begin{subfigure}[b]{0.42\textwidth}
		\centering
		\begin{tikzpicture}
			\begin{axis}[
				title=Log-log plot of $\norm{\bff{f}_k^{\epsilon, (N)}}{\bb{H}^s}$ against $N$,
				height=1.4\textwidth,
				width=1\textwidth,
				xlabel= number of time steps $N$,
				ylabel= $\norm{\bff{f}_k^{\epsilon, (N)}}{\bb{H}^s}$ ,
				xmode=log,
				ymode=log,
				legend pos=south west,
				legend cell align=left,
				]
				\addplot+[mark=*,red] coordinates {(10,0.38)(20,0.28)(40,0.22)(80,0.17)(160,0.12)};
				\addplot+[mark=*,blue] coordinates {(10,0.085)(20,0.061)(40,0.048)(80,0.038)(160,0.025)};
				\addplot+[dashed,no marks,black,domain=60:160]{7/x};
				\legend{\small{$s=1$}, \small{$s=0$}, \small{order 1 line}}
			\end{axis}
		\end{tikzpicture}
		\caption{Temporal order of convergence of Algorithm~\ref{alg:fem eps llbe} in Simulation 4.}
		\label{fig:order 3d L temp}
	\end{subfigure}
	\caption{Spatial and temporal orders of convergence of Algorithm~\ref{alg:fem eps llbe} in Simulation 4. Theoretical results: Theorems~\ref{the:err} and~\ref{rem:error}.}
\end{figure}

\begin{figure}[hbt!]
	\begin{center}
		\begin{tikzpicture}
			\begin{axis}[
				title=Log-log plot of $\norm{\bff{g}^{\epsilon, (N)}}{\bb{H}^s}$ against $1/\epsilon$,
				height=0.5\textwidth,
				width=0.5\textwidth,
				xlabel= $1/\epsilon$,
				ylabel= $\norm{\bff{g}^{\epsilon, (N)}}{\bb{H}^s}$,
				xmode=log,
				ymode=log,
				legend pos=outer north east,
				legend cell align=left,
				]
				\addplot+[dashed,mark=*,mark options={fill=red},red] coordinates {(8,3.19)(16,2.51)(32,1.8)(64,1.26)(128,1.02)};
				\addplot+[dashed,mark=*,mark options={fill=blue},blue] coordinates {(8,0.52)(16,0.37)(32,0.26)(64,0.19)(128,0.17)};
				\addplot+[mark=triangle*,mark options={fill=red},red] coordinates {(8,3.1)(16,2.22)(32,1.34)(64,0.69)(128,0.34)};
				\addplot+[mark=triangle*,mark options={fill=blue},blue] coordinates {(8,0.49)(16,0.32)(32,0.2)(64,0.1)(128,0.05)};
				\addplot+[dotted,no marks,black,domain=30:128]{16/x};
				%\addplot+[dashed,no marks,blue,domain=8:32]{6.5/x^2};
				\legend{\small{$s=1$ $(h=1/8, k=2.5\times 10^{-3})$}, \small{$s=0$ $(h=1/8, k=2.5\times 10^{-3})$}, \small{$s=1$ $(h=1/25, k=10^{-3})$}, \small{$s=0$ $(h=1/25, k=10^{-3})$}, \small{order 1 line}}
			\end{axis}
		\end{tikzpicture}
	\end{center}
	\caption{Order of convergence with respect to $\epsilon$ in Simulation 4. Theoretical result: Theorem~\ref{the:u eps con u}.}
	\label{fig:order 3d L eps}
\end{figure}

\section*{Acknowledgements}
The authors are partially supported by the Australian Research Council under grant
number DP190101197, DP200101866, and DP220100937.
Agus L. Soenjaya is also supported by the Australian Government Research
Training Program (RTP) Scholarship awarded at the University of New South Wales, Sydney.
{We would like to thank the referees for their valuable feedback which improves the quality of the manuscript.}

\appendix
\section{Auxiliary results}\label{sec:aux}

Some auxiliary results which are needed in the analysis are collected in this section.
Firstly, if~$\bff{u}\in \bb{H}^1_\Delta \cap \bb{H}^{1+\alpha}$ and $\bff{v}\in \bb{H}^{\alpha}$ (where $\alpha$ satisfies \eqref{equ:condition alpha}),
and~$\pa \bff{u}/\pa\bff{\nu}=0$ on $\pa\mathscr{D}$ in the sense of trace (where~$\bff{\nu}$ is the outward normal vector
to~$\pa \mathscr{D}$), then we can use integration-by-parts formula to obtain
\begin{align}\label{equ:vec Gre 1}
		-\inpro{\Delta\bff{u}}{\bff{v}}_{\bb{L}^2}
		&=
		-\sum_{i=1}^3
		\inpro{\Delta u_i}{v_i}_{L^2}
		=
		\sum_{i=1}^3
		\inpro{\nabla u_i}{\nabla v_i}_{\bb{L}^2}
		=
		\inpro{\nabla\bff{u}}{\nabla\bff{v}}_{\bb{L}^2}.
\end{align}

Moreover, if $\bff{v}\in\bb{H}^{1+\alpha}$, then by using the well-known fact~\citep{BreMir01} that $\bb{H}^s(\mathscr{D})$ is an algebra under multiplication if~$s>d/2$, we have $\bff{v}\times\bff{u}\in \bb{H}^{1+\alpha}$. Hence from \eqref{equ:vec Gre 1}, we infer
\begin{align}
        \label{equ:vec Gre 2}
		\inpro{\bff{u}\times\Delta\bff{u}}{\bff{v}}_{\bb{L}^2}
		&=
		\inpro{\Delta\bff{u}}{\bff{v}\times\bff{u}}_{\bb{L}^2}
		\stackrel{\eqref{equ:vec Gre 1}}{=}
		-
		\inpro{\nabla\bff{u}}{\nabla(\bff{v}\times\bff{u})}_{\bb{L}^2}
		=
		-
		\inpro{\bff{u}\times\nabla\bff{u}}{\nabla\bff{v}}_{\bb{L}^2}.
\end{align}

For a bounded Lipschitz domain, certain embedding theorems and the Gagliardo--Nirenberg inequalities hold (see~\citep{BreMir18, BreMir19, DiPalVal12}), which we will use extensively in the analysis.

Next, we gather some elliptic regularity (and the associated regularity shift) theorems for bounded polytopal domains. For a polygonal domain, refer to~\citep{Dau92, Li22}. For a polyhedral domain, see~\citep{Dau92, JerKen89}.

\begin{theorem}[two-dimensional polygonal domains]\label{the:ell reg polyg}
Suppose that $\mathscr{D}$ is a bounded polygonal domain which has $m$ re-entrant corners with opening angles $\omega_j \in (\pi,2\pi)$, for $j=1,2,\ldots,m$. Let $\eta_j=\pi/\omega_j$ so that $\eta_j\in (\frac12,1)$, and let $\eta_0 := \min_j \eta_j$. Let $\bff{f}\in \widetilde{\bb{H}}^{-1+\eta_0}$ be given. Then for any $\eta \in (0, \eta_0)$, there exists a unique weak solution $\bff{v}\in \bb{H}^{1+\eta}$ to the elliptic boundary value problem
\begin{subequations}\label{equ:elliptic}
	\begin{alignat}{2}
		-\Delta \bff{v}+ \bff{v} &=\bff{f}
		\qquad&
		&\text{in } \mathscr{D},
		\label{equ:elliptic equ}
		\\
		\frac{\pa\bff{v}}{\pa\bff{\nu}} &= 0
		\qquad&
		&\text{on } \pa \mathscr{D},
		\label{equ:elliptic equ bou con}
	\end{alignat}
\end{subequations}
satisfying the regularity shift estimate
\begin{align}\label{equ:shift polyg H1s}
	\norm{\bff{v}}{\bb{H}^{1+\eta}} \leq c \left( \norm{\bff{v}}{\widetilde{\bb{H}}^{-1+\eta}} + \norm{\Delta \bff{v}}{\widetilde{\bb{H}}^{-1+\eta}} \right). 
\end{align}
%Next, if $\bff{f}\in \bb{W}^{-1,p}$ for some $p\in [\frac43 ,4]$, then there exists a unique weak solution $\bff{v}\in \bb{W}^{1,p}$ to the problem~\eqref{equ:elliptic} which satisfies the estimate
%\begin{align}\label{equ:shift polyg W1p}
%	\norm{\bff{v}}{\bb{W}^{1,p}} \leq c \left( \norm{\bff{v}}{\bb{W}^{-1,p}} + \norm{\Delta \bff{v}}{\bb{W}^{-1,p}} \right),
%\end{align}
%where $c$ depends only on the domain $\mathscr{D}$.
\end{theorem}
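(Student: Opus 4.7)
The plan is to combine the standard Lax--Milgram theory with the well-developed corner asymptotic expansion machinery for second-order elliptic operators on two-dimensional polygonal domains, as in the work of Kondrat'ev, Grisvard, and Dauge. First, I would establish existence and uniqueness of a weak solution $\bff{v}\in \bb{H}^1$. The bilinear form $a(\bff{u},\bff{w}) := \inpro{\nabla \bff{u}}{\nabla \bff{w}}_{\bb{L}^2} + \inpro{\bff{u}}{\bff{w}}_{\bb{L}^2}$ is evidently continuous and coercive on $\bb{H}^1$, and since $\widetilde{\bb{H}}^{-1+\eta_0} \hookrightarrow \widetilde{\bb{H}}^{-1}$, the linear functional $\bff{w} \mapsto \langle \bff{f}, \bff{w}\rangle$ is bounded on $\bb{H}^1$. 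The Lax--Milgram lemma then delivers a unique $\bff{v}\in\bb{H}^1$ satisfying the natural (homogeneous Neumann) variational formulation of~\eqref{equ:elliptic}, with $\norm{\bff{v}}{\bb{H}^1} \leqs \norm{\bff{f}}{\widetilde{\bb{H}}^{-1}}$.

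Next, I would upgrade the regularity to $\bb{H}^{1+\eta}$. Away from the corners, interior and smooth-boundary Neumann regularity (see~\cite{Gri11}) imply that $\bff{v}$ is as regular as the data $\bff{f}$ allows. The nontrivial issue occurs near each re-entrant corner $P_j$, where, introducing polar coordinates $(r_j,\theta_j)$ centred at $P_j$ with $\theta_j\in(0,\omega_j)$, the Kondrat'ev/Grisvard decomposition gives, for every sufficiently smooth cut-off $\chi_j$ supported near $P_j$, a splitting
\begin{equation*}
\chi_j \bff{v} = \bff{v}_j^{\mathrm{reg}} + \sum_{m: \, m\eta_j < \eta_0} c_{j,m} \, r_j^{m\eta_j} \cos(m\eta_j \theta_j) \, \bff{\psi}_{j,m},
\end{equation*}
where $\bff{v}_j^{\mathrm{reg}} \in \bb{H}^2$ and $\bff{\psi}_{j,m}$ is a smooth $\bb{R}^3$-valued coefficient vector. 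The exponents $m\eta_j$ are the eigenvalues of the Neumann Laplacian on the arc $(0,\omega_j)$. The leading singular mode belongs to $\bb{H}^{1+\eta}$ locally for every $\eta<\eta_j$, and all higher modes belong to $\bb{H}^{1+\eta_0}$. Gluing the local expansions via a partition of unity subordinate to the corners and the interior yields $\bff{v}\in\bb{H}^{1+\eta}$.

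To obtain the quantitative shift estimate~\eqref{equ:shift polyg H1s}, I would argue by duality and interpolation. The operator $-\Delta + I$ with homogeneous Neumann data defines a bounded map
\begin{equation*}
\mathcal{A}: \bb{H}^{1+\eta}\cap\{\tfrac{\partial\bff{v}}{\partial\bff{\nu}}=0\} \longrightarrow \widetilde{\bb{H}}^{-1+\eta}, \qquad \bff{v}\mapsto -\Delta\bff{v}+\bff{v},
\end{equation*}
and the corner analysis outlined above shows it is an isomorphism for every $\eta\in(0,\eta_0)$. The closed graph (or open mapping) theorem then produces a constant $c$ depending only on $\mathscr{D}$ and $\eta$ with $\norm{\bff{v}}{\bb{H}^{1+\eta}} \le c \norm{\mathcal{A}\bff{v}}{\widetilde{\bb{H}}^{-1+\eta}}$. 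Since $\mathcal{A}\bff{v}=-\Delta\bff{v}+\bff{v}$, bounding $\norm{\bff{v}}{\widetilde{\bb{H}}^{-1+\eta}}$ and $\norm{\Delta\bff{v}}{\widetilde{\bb{H}}^{-1+\eta}}$ separately on the right-hand side gives~\eqref{equ:shift polyg H1s}.

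The main obstacle is the singular analysis at the re-entrant corners: one must precisely identify the singularity exponents $\eta_j=\pi/\omega_j$ of the Neumann Laplacian on the sector, justify that all modes with $m\eta_j\ge\eta_0$ contribute only $\bb{H}^{1+\eta_0}$-regular terms, and verify that the decomposition is globally consistent and stable with respect to the data $\bff{f}$ in the $\widetilde{\bb{H}}^{-1+\eta_0}$-topology. This technical step is carried out in detail in Dauge~\cite{Dau92} and Li~\cite{Li22}, upon which the result ultimately relies.
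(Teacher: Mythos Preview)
Your proposal is correct and follows the standard Kondrat'ev--Grisvard--Dauge corner-expansion approach, which is precisely the machinery underlying the references~\cite{Dau92, Li22} cited in the paper. Note, however, that the paper does not actually prove this theorem: it is stated in the appendix as a known auxiliary result with a pointer to the literature, so there is no ``paper's own proof'' to compare against beyond the citations your sketch already invokes.
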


\begin{theorem}[convex polyhedral domain]\label{the:ell reg polyhed}
Suppose that $\mathscr{D}$ is a \emph{convex} polyhedral domain. %Let $p\in [\frac32, 3]$. Then given $\bff{f}\in \bb{W}^{-1,p}$, there exists a unique weak solution to the problem \eqref{equ:elliptic} satisfying
%\begin{align}\label{equ:shift polyh W1p}
%	\norm{\bff{v}}{\bb{W}^{1,p}} \leq c \left( \norm{\bff{v}}{\bb{W}^{-1,p}} + \norm{\Delta \bff{v}}{\bb{W}^{-1,p}} \right).
%\end{align}
%Furthermore, 
If $\bff{f}\in \bb{L}^p$ for some $p\in (1,2]$, then there exists a unique weak solution $\bff{v}\in \bb{W}^{2,p}$ to the same problem, satisfying
\begin{align}\label{equ:shift polyh W2p}
	\norm{\bff{v}}{\bb{W}^{2,p}} \leq c \left( \norm{\bff{v}}{\bb{L}^{p}} + \norm{\Delta \bff{v}}{\bb{L}^{p}} \right).
\end{align}
Here, the constant $c$ depends only on the domain $\mathscr{D}$.
\end{theorem}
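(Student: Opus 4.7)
The plan is to reduce the statement to the established theory of the Neumann Laplacian on convex polyhedra and then quantify the regularity via a closed-graph argument. First, I would establish existence and uniqueness of a weak solution $\bff{v}\in\bb{H}^1$ by Lax--Milgram applied to the bilinear form $a(\bff{v},\bff{w}):=\inpro{\nabla\bff{v}}{\nabla\bff{w}}_{\bb{L}^2}+\inpro{\bff{v}}{\bff{w}}_{\bb{L}^2}$, which is continuous and coercive on $\bb{H}^1$. Since $d\le 3$ and $p\in(1,2]$, we have $\bff{f}\in \bb{L}^p\hookrightarrow \widetilde{\bb{H}}^{-1}$ by the Sobolev embedding $\bb{H}^1\hookrightarrow \bb{L}^{p'}$ with $p':=p/(p-1)$, so the right-hand side is admissible and $\norm{\bff{v}}{\bb{H}^1}\le c\norm{\bff{f}}{\bb{L}^p}$.

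Second, to upgrade the regularity I would invoke the classical Neumann regularity theorems for convex polyhedra due to Grisvard~\cite{Gri11}, Dauge~\cite{Dau92}, and Jerison--Kenig~\cite{JerKen89}. The key fact is that on a convex polyhedron every dihedral edge has opening angle strictly less than $\pi$, and the solid angle at each vertex lies in the convex-cone regime. Under these conditions, the operator $(I-\Delta):\{\bff{w}\in\bb{W}^{2,p}:\pa\bff{w}/\pa\bff{\nu}=0\text{ on }\partial\mathscr{D}\}\to \bb{L}^p$ is a bounded linear isomorphism for every $p\in(1,2]$. Combining this with Step~1, and using uniqueness in $\bb{H}^1$ to identify the two solutions, the weak solution $\bff{v}$ automatically belongs to $\bb{W}^{2,p}$, so $\Delta \bff{v}=\bff{v}-\bff{f}\in\bb{L}^p$. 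The estimate \eqref{equ:shift polyh W2p} then follows from the continuity of the inverse map and the triangle inequality, possibly via a direct cut-off argument that localises near each face, edge, and vertex.

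The main obstacle is Step~2: ordinary Calder\'on--Zygmund theory for smooth domains does not apply directly, because of the edge and vertex singularities of $\partial\mathscr{D}$. One must instead employ a Kondrat'ev-type decomposition into regular and singular parts at each edge and vertex, and verify that the singularity exponents—governed by the opening dihedral angles along the edges and by the first non-trivial eigenvalue of the Laplace--Beltrami operator on the intersection of each vertex cone with the unit sphere—exceed the critical threshold for $\bb{W}^{2,p}$ membership whenever $p\leq 2$. Convexity of $\mathscr{D}$ is precisely what guarantees this threshold condition; for non-convex polyhedra the singular part would in general fail to lie in $\bb{W}^{2,p}$, and only a fractional shift in the spirit of Theorem~\ref{the:ell reg polyg} could be salvaged.
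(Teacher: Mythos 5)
The paper itself offers no proof of this theorem: it is stated as an imported auxiliary result, with the sentence preceding it pointing to \cite{Dau92} and \cite{JerKen89} for polyhedral domains. Your proposal is therefore best read as an expanded version of the same appeal to the literature, and the mechanism you describe --- Lax--Milgram for the variational solution, then a Kondrat'ev-type splitting into regular and singular parts at each edge and vertex, with convexity guaranteeing that the edge openings are below $\pi$ and the vertex exponents clear the $\bb{W}^{2,p}$ threshold for $p\le 2$ --- is an accurate account of how the result is actually established in those references. Since the hard analytic content (the isomorphism property of $I-\Delta$ on the Neumann domain in $\bb{W}^{2,p}$) is precisely the statement being proved, your Step~2 is not so much a proof as a correct attribution; that is consistent with how the paper treats the theorem.

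There is one concrete gap in your Step~1. You claim $\bb{L}^p\hookrightarrow\widetilde{\bb{H}}^{-1}$ for all $p\in(1,2]$ via the embedding $\bb{H}^1\hookrightarrow\bb{L}^{p'}$ with $p'=p/(p-1)$. For $d=3$ this embedding requires $p'\le 6$, i.e.\ $p\ge 6/5$, so for $p\in(1,6/5)$ the datum $\bff{f}\in\bb{L}^p$ need not define a bounded functional on $\bb{H}^1$ and Lax--Milgram does not apply directly; relatedly, $\bb{W}^{2,p}\not\hookrightarrow\bb{H}^1$ in that range, so the identification of the $\bb{W}^{2,p}$ solution with the variational one is not automatic either. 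To cover the full range $p\in(1,2]$ you would need either a density argument (solve for $\bff{f}_j\in\bb{L}^2$ approximating $\bff{f}$ in $\bb{L}^p$ and pass to the limit using the a priori $\bb{W}^{2,p}$ bound) or a transposition/very-weak formulation. This is a repairable technicality, and it is moot for the paper's actual use of \eqref{equ:shift polyh W2p}, which is only ever invoked with $p=2$, where convexity alone gives the $\bb{H}^2$ result by Grisvard's classical theorem \cite{Gri11}.
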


We also need the following technical lemma.

\begin{lemma}\label{lem:tec lem}
	Let $d$ be the dimension of the domain $\mathscr{D}$. For any $\bff{v}\in \bb{H}^1$, $\bff{w}\in \bb{W}^{1,2d}$, and $\delta>0$, the
	following inequality holds:
	\[
	\Big|
	\inpro{\bff{v}\times\nabla\bff{w}}{\nabla\bff{v}}_{\bb{L}^2}
	\Big|
	\le
	\Phi(\bff{w})
	\norm{\bff{v}}{\bb{L}^2}^2
	+
	\delta \norm{\nabla\bff{v}}{\bb{L}^2}^2
	\]
	where
	\begin{equation}\label{equ:Phi}
		\Phi(\bff{w})
		:=
		\begin{cases}
			\frac{c^2}{\delta}
			\norm{\nabla\bff{w}}{\bb{L}^2}^2
			+
			\frac{c^2}{4\delta^3}
			\norm{\nabla\bff{w}}{\bb{L}^2}^4,
			\quad & d=1,
			\\[1ex]
			\frac{3^3 c^4}{2^8\delta^3}
			\norm{\nabla\bff{w}}{\bb{L}^4}^4 
			+ \delta,
			\quad & d=2,
			\\[1ex]
			\frac{3^3 c^4}{2^8\delta^3}
			\norm{\nabla\bff{w}}{\bb{L}^6}^4 
			+ \delta,
			\quad & d=3.
		\end{cases}
	\end{equation}
	The positive constant~$c$ (given by the Gagliardo--Nirenberg inequalities) depends
	only on the spatial domain $\mathscr{D}$.
\end{lemma}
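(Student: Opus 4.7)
\bigskip
\noindent\textbf{Proof proposal for Lemma~\ref{lem:tec lem}.}
The plan is to reduce the trilinear form to a product of three norms via a dimension-adapted H\"older splitting, control the $\bff{v}$-factor by a Gagliardo--Nirenberg interpolation, and finally tune a weighted Young's inequality so that the $\|\nabla\bff{v}\|_{\bb{L}^2}$-piece can be absorbed into the prescribed $\delta\|\nabla\bff{v}\|_{\bb{L}^2}^2$ term.

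First, I would apply H\"older's inequality with exponent triple $(p,2d,2)$, where $1/p + 1/(2d) + 1/2 = 1$, to obtain
\begin{equation*}
	\big|\inpro{\bff{v}\times\nabla\bff{w}}{\nabla\bff{v}}_{\bb{L}^2}\big|
	\le
	\|\bff{v}\|_{\bb{L}^p}\|\nabla\bff{w}\|_{\bb{L}^{2d}}\|\nabla\bff{v}\|_{\bb{L}^2},
\end{equation*}
so that $p=\infty$ if $d=1$, $p=4$ if $d=2$, and $p=3$ if $d=3$. In each case $\|\bff{v}\|_{\bb{L}^p}$ is controlled by the Gagliardo--Nirenberg inequality~\cite{BreMir18,BreMir19} in the form $\|\bff{v}\|_{\bb{L}^p}\le c\,\|\bff{v}\|_{\bb{L}^2}^{1/2}\|\nabla\bff{v}\|_{\bb{L}^2}^{1/2}$ (plus, on a bounded Lipschitz domain, a lower-order term $c\|\bff{v}\|_{\bb{L}^2}$). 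Combining these two steps yields
\begin{equation*}
	\big|\inpro{\bff{v}\times\nabla\bff{w}}{\nabla\bff{v}}_{\bb{L}^2}\big|
	\le
	c\,\|\bff{v}\|_{\bb{L}^2}^{1/2}\|\nabla\bff{v}\|_{\bb{L}^2}^{3/2}\|\nabla\bff{w}\|_{\bb{L}^{2d}}
	+
	\text{(lower-order terms when $d\ge 2$)}.
\end{equation*}

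The final step is to apply Young's inequality $AB\le \frac{A^{4}}{4\eta^{4}} + \frac{3\eta^{4/3} B^{4/3}}{4}$ with the choice $A=\|\bff{v}\|_{\bb{L}^2}^{1/2}$ and $B=c\,\|\nabla\bff{v}\|_{\bb{L}^2}^{3/2}\|\nabla\bff{w}\|_{\bb{L}^{2d}}$, selecting $\eta$ so that $\tfrac34\eta^{4/3}c^{4/3}\|\nabla\bff{w}\|_{\bb{L}^{2d}}^{4/3}=\delta$; this produces exactly the coefficient $\tfrac{3^{3}c^{4}}{2^{8}\delta^{3}}\|\nabla\bff{w}\|_{\bb{L}^{2d}}^{4}$ attached to $\|\bff{v}\|_{\bb{L}^2}^{2}$ together with the $\delta\|\nabla\bff{v}\|_{\bb{L}^2}^{2}$ that one is allowed to have. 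For $d=1$ the additive $\|\bff{v}\|_{\bb{L}^2}$ term inside the GN inequality generates the extra piece $c\|\bff{v}\|_{\bb{L}^2}\|\nabla\bff{w}\|_{\bb{L}^2}\|\nabla\bff{v}\|_{\bb{L}^2}$, to which one more Young's inequality ($ab\le \tfrac{a^{2}}{4\delta}+\delta b^{2}$) yields the $\tfrac{c^{2}}{\delta}\|\nabla\bff{w}\|_{\bb{L}^2}^{2}$ contribution in $\Phi$. For $d\in\{2,3\}$, the lower-order contribution from GN is estimated by a separate Young's application that is absorbed into the summand $\delta\|\bff{v}\|_{\bb{L}^2}^{2}$ hidden in $\Phi$ (the ``$+\delta$'' in the formula).

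The proof is then essentially a bookkeeping exercise, and the only mild difficulty will be tracking the explicit constants across the three dimensional cases to match the precise form of $\Phi(\bff{w})$; no conceptually hard ingredient is needed beyond the Gagliardo--Nirenberg inequality on a bounded Lipschitz domain.
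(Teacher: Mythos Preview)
Your approach is essentially the same as the paper's: a dimension-adapted H\"older splitting $(p,2d,2)$, Gagliardo--Nirenberg for $\|\bff{v}\|_{\bb{L}^p}$ with interpolation exponent $1/2$, and a weighted Young inequality with exponents $(4,4/3)$. The main-term bookkeeping you describe is correct and reproduces the coefficient $\tfrac{3^3c^4}{2^8\delta^3}\|\nabla\bff{w}\|_{\bb{L}^{2d}}^4$.

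There is one small discrepancy for $d\in\{2,3\}$. You write the GN inequality with the seminorm $\|\nabla\bff{v}\|_{\bb{L}^2}^{1/2}$ and treat the additive $c\|\bff{v}\|_{\bb{L}^2}$ as a ``lower-order'' piece to be handled separately. That separate Young application produces a term $\tfrac{c^2}{4\delta}\|\nabla\bff{w}\|_{\bb{L}^{2d}}^2\|\bff{v}\|_{\bb{L}^2}^2$, \emph{not} the bare $\delta\|\bff{v}\|_{\bb{L}^2}^2$ that appears as the ``$+\delta$'' in the stated $\Phi$. To land exactly on the formula in the lemma, do what the paper does: use the GN inequality in the form $\|\bff{v}\|_{\bb{L}^p}\le c\|\bff{v}\|_{\bb{L}^2}^{1/2}\|\bff{v}\|_{\bb{H}^1}^{1/2}$ with the full $\bb{H}^1$ norm, bound $\|\nabla\bff{v}\|_{\bb{L}^2}\le\|\bff{v}\|_{\bb{H}^1}$ so the product becomes $c\|\nabla\bff{w}\|_{\bb{L}^{2d}}\|\bff{v}\|_{\bb{L}^2}^{1/2}\|\bff{v}\|_{\bb{H}^1}^{3/2}$, and then apply the single Young inequality. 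The resulting $\delta\|\bff{v}\|_{\bb{H}^1}^2$ splits cleanly as $\delta\|\bff{v}\|_{\bb{L}^2}^2+\delta\|\nabla\bff{v}\|_{\bb{L}^2}^2$, which is where the ``$+\delta$'' in $\Phi$ actually comes from. (For $d=1$ the paper does expand $\|\bff{v}\|_{\bb{H}^1}^{1/2}\le\|\bff{v}\|_{\bb{L}^2}^{1/2}+\|\nabla\bff{v}\|_{\bb{L}^2}^{1/2}$ first, as you described, splitting the $\delta$ between two separate Young applications with weights $\delta/4$ and $3\delta/4$.)
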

\begin{proof}
	We first prove the result for~$d=1$. By using H\"older's inequality, we have
	\[
	\left|\inpro{\bff{v}\times\nabla\bff{w}}{\nabla\bff{v}}\right|
	\le
	\norm{\bff{v}}{\bb{L}^\infty}
	\norm{\nabla\bff{w}}{\bb{L}^2} 
	\norm{\nabla\bff{v}}{\bb{L}^2}.
	\]
	The following Gagliardo--Nirenberg interpolation inequality
	\[
	\norm{\bff{v}}{\bb{L}^\infty}
	\le
	c
	\norm{\bff{v}}{\bb{L}^2}^{1/2}
	\norm{\bff{v}}{\bb{H}^1}^{1/2}
	\]
	implies
	\begin{align}\label{equ:v nab w}
		\left|\inpro{\bff{v}\times\nabla\bff{w}}{\nabla\bff{v}}_{\bb{L}^2}\right|
		&\le
		c
		\norm{\bff{v}}{\bb{L}^2}^{1/2}
		\left(
		\norm{\bff{v}}{\bb{L}^2}^{1/2}
		+
		\norm{\nabla\bff{v}}{\bb{L}^2}^{1/2}
		\right)
		\norm{\nabla\bff{w}}{\bb{L}^2}
		\norm{\nabla\bff{v}}{\bb{L}^2}
		\nn\\
		&=
		c
		\norm{\bff{v}}{\bb{L}^2}
		\norm{\nabla\bff{w}}{\bb{L}^2}
		\norm{\nabla\bff{v}}{\bb{L}^2}
		+
		c
		\norm{\bff{v}}{\bb{L}^2}^{1/2}
		\norm{\nabla\bff{w}}{\bb{L}^2}
		\norm{\nabla\bff{v}}{\bb{L}^2}^{3/2}.
	\end{align}
	The first term on the right-hand side of~\eqref{equ:v nab w} can be estimated by
	\[
	c\norm{\bff{v}}{\bb{L}^2}
	\norm{\nabla\bff{w}}{\bb{L}^2}
	\norm{\nabla\bff{v}}{\bb{L}^2}
	\le
	\frac{c^2}{\delta}
	\norm{\bff{v}}{\bb{L}^2}^2
	\norm{\nabla\bff{w}}{\bb{L}^2}^2
	+
	\frac{\delta}{4}
	\norm{\nabla\bff{v}}{\bb{L}^2}^2.
	\]
	For the second term on the right-hand side of~\eqref{equ:v nab w} we use 
	Young's inequality 
	\[
	ab \le \frac{a^p}{p} + \frac{b^q}{q} \quad \text{where} \quad\frac{1}{p}+\frac{1}{q} = 1,
	\]
	with
	\[
	a=\delta^{-3/4} c \norm{\bff{v}}{\bb{L}^2}^{1/2}\norm{\nabla\bff{w}}{\bb{L}^2}, \quad 
	b=\delta^{3/4}\norm{\nabla\bff{v}}{\bb{L}^2}^{3/2}, \quad
	p=4,
	\quad\text{and}\quad
	q = 4/3,
	\]
	to obtain
	\[
	c \norm{\bff{v}}{\bb{L}^2}^{1/2}
	\norm{\nabla\bff{w}}{\bb{L}^2}
	\norm{\nabla\bff{v}}{\bb{L}^2}^{3/2}
	\le
	\frac{c^2}{4\delta^3}
	\norm{\bff{v}}{\bb{L}^2}^2
	\norm{\nabla\bff{w}}{\bb{L}^2}^4
	+
	\frac{3\delta}{4}
	\norm{\nabla\bff{v}}{\bb{L}^2}^2.
	\]
	Thus,~\eqref{equ:v nab w} gives
	\begin{align*}
		\left|\inpro{\bff{v}\times\nabla\bff{w}}{\nabla\bff{v}}_{\bb{L}^2}\right|
		&\le
		c^2 \left(
		\frac{1}{\delta}
		\norm{\nabla\bff{w}}{\bb{L}^2}^2
		+
		\frac{1}{4\delta^3}
		\norm{\nabla\bff{w}}{\bb{L}^2}^4
		\right)
		\norm{\bff{v}}{\bb{L}^2}^2
		+
		\delta
		\norm{\nabla\bff{v}}{\bb{L}^2}^2.
	\end{align*}

	Next we consider the case when~$d=2$. Using H\"older's inequality again
	yields
	\[
	\left|\inpro{\bff{v}\times\nabla\bff{w}}{\nabla\bff{v}}\right|
	\le
	\norm{\bff{v}}{\bb{L}^4}
	\norm{\nabla\bff{w}}{\bb{L}^4} 
	\norm{\nabla\bff{v}}{\bb{L}^2}.
	\]
	The Gagliardo--Nirenberg interpolation inequality
	\[
	\norm{\bff{v}}{\bb{L}^4}
	\le
	c\norm{\bff{v}}{\bb{L}^2}^{1/2}\norm{\bff{v}}{\bb{H}^1}^{1/2}
	\]
	yields
	\begin{align*}
		\left|\inpro{\bff{v}\times\nabla\bff{w}}{\nabla\bff{v}}\right|
		&\le
		c
		\norm{\nabla\bff{w}}{\bb{L}^4}
		\norm{\bff{v}}{\bb{L}^2}^{1/2}
		\norm{\bff{v}}{\bb{H}^1}^{1/2}
		\norm{\nabla\bff{v}}{\bb{L}^2}
		\\
		&\le
		c
		\norm{\nabla\bff{w}}{\bb{L}^4}
		\norm{\bff{v}}{\bb{L}^2}^{1/2}
		\norm{\bff{v}}{\bb{H}^1}^{3/2}.
	\end{align*}
	Using Young's inequality again with
	\begin{alignat*}{2}
		a
		&=
		\left(\frac{4\delta}{3}\right)^{-3/4}
		c
		\norm{\nabla\bff{w}}{\bb{L}^4}
		\norm{\bff{v}}{\bb{L}^2}^{1/2}, \qquad
		&
		p &= 4,
		\\
		b
		&=
		\left(\frac{4\delta}{3}\right)^{3/4}
		\norm{\bff{v}}{\bb{H}^1}^{3/2}, \quad
		&
		q &= 4/3,
	\end{alignat*}
	we deduce 
	\begin{align*}
		\left|\inpro{\bff{v}\times\nabla\bff{w}}{\nabla\bff{v}}\right|
		&\le
		\frac{3^3}{4^4 \delta^3} c^4
		\norm{\nabla\bff{w}}{\bb{L}^4}^4
		\norm{\bff{v}}{\bb{L}^2}^{2}
		+
		\delta
		\norm{\bff{v}}{\bb{H}^1}^{2}
		\\
		&=
		\left(
		\frac{3^3}{4^4 \delta^3} c^4
		\norm{\nabla\bff{w}}{\bb{L}^4}^4
		+ \delta
		\right)
		\norm{\bff{v}}{\bb{L}^2}^{2}
		+
		\delta
		\norm{\nabla\bff{v}}{\bb{L}^2}^{2}.
	\end{align*}
	
	Finally, we consider the case~$d=3$. By H\"older's inequality, we have 
	\[
	\left|\inpro{\bff{v}\times\nabla\bff{w}}{\nabla\bff{v}}\right|
	\le
	\norm{\bff{v}}{\bb{L}^3}
	\norm{\nabla\bff{w}}{\bb{L}^6} 
	\norm{\nabla\bff{v}}{\bb{L}^2}.
	\]
	The Gagliardo--Nirenberg inequality
	\[
	\norm{\bff{v}}{\bb{L}^3}
	\le
	c
	\norm{\bff{v}}{\bb{L}^2}^{1/2} 
	\norm{\bff{v}}{\bb{H}^1}^{1/2}
	\]
	and H\"older's inequality yield
	\begin{align*} 
		\left|\inpro{\bff{v}\times\nabla\bff{w}}{\nabla\bff{v}}\right|
		&\le
		c
		\norm{\nabla\bff{w}}{\bb{L}^6}^{1/2}
		\norm{\bff{v}}{\bb{L}^2}^{1/2}
		\norm{\bff{v}}{\bb{H}^1}^{3/2}.
	\end{align*}
	Using Young's inequality, by the same argument as in the case $d=2$, we obtain
	\begin{align*}
		\left|\inpro{\bff{v}\times\nabla\bff{w}}{\nabla\bff{v}}\right|
		\leq
		\left(
		\frac{3^3}{4^4 \delta^3} c^4
		\norm{\nabla\bff{w}}{\bb{L}^6}^4
		+ \delta
		\right)
		\norm{\bff{v}}{\bb{L}^2}^{2}
		+
		\delta
		\norm{\nabla\bff{v}}{\bb{L}^2}^{2}.
	\end{align*}
	completing the proof of the lemma.
\end{proof}

\section{Further regularity of the solution}\label{sec:further regular proof}

In this section, we develop further a priori estimates on the approximate solution $\bff{u}_n^\epsilon$ to the $\epsilon$-LLBE defined in \eqref{equ:Gal LLB eps}. Our aim is to derive estimates for $\bff{u}_n^\epsilon$ which are uniform in $\epsilon\in [0,1)$, but which possibly hold only locally in time, i.e. on $[0,T^\ast]$, where~$T^\ast$ is given by
    \begin{equation}\label{equ:T ast llbe}
		\begin{cases}
			T^\ast = T, \quad & \text{if } d=1,2,
			\\
			T^\ast \le T, \quad & \text{if } d=3.
		\end{cases}
	\end{equation}
In particular, these estimates also imply further regularity properties for the solution of the $\epsilon$-LLBE and the LLBE.

\begin{lemma}\label{lem:ene est 2}
	Assume that~\eqref{equ:u0 eps con} and~\eqref{equ:u0n H2} hold for $s=\max\{1, d-1\}$.
	For any~$t\in[0,T^\ast]$, where $T^\ast$ is given by \eqref{equ:T ast llbe}, there exists a constant $c$ such that for every $n\in\bb{N}$ and $\epsilon\in [0,1)$, 
	\begin{align*}
		\norm{\pa_t\bff{u}_n^\epsilon(t)}{\bb{L}^2}^2
		&+
		\epsilon \norm{\nabla \pa_t\bff{u}_n^\epsilon(t)}{\bb{L}^2}^2
		+
		\int_0^t
		\norm{\pa_t\bff{u}_n^\epsilon(s)}{\bb{H}^1}^2 \ds
		\le c.
	\end{align*}
	The constant~$c$ is independent of~$n$ and~$\epsilon$, but may depend on~$T^\ast$.
\end{lemma}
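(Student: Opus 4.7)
The plan is to differentiate the Galerkin equation \eqref{equ:LLB eps wea equ} with respect to~$t$ and test the resulting identity against~$\pa_t\bff{u}_n^\epsilon(t)\in\mathcal{S}_n$. This is the standard strategy for extracting an $\epsilon$-uniform bound on~$\pa_t\bff{u}_n^\epsilon$, as opposed to the ``brute force'' test of the original equation with~$\pa_t\bff{u}_n^\epsilon$ used in Lemma~\ref{lem:ene est dtu}, which only yields an $\epsilon^{-\beta}$-type bound.

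After differentiation and testing, the linear part produces
\[
\tfrac12\ddt\!\left(\norm{\pa_t\bff{u}_n^\epsilon(t)}{\bb{L}^2}^2+\epsilon\norm{\nabla\pa_t\bff{u}_n^\epsilon(t)}{\bb{L}^2}^2\right)+\kappa_1\norm{\nabla\pa_t\bff{u}_n^\epsilon(t)}{\bb{L}^2}^2+\kappa_2\norm{\pa_t\bff{u}_n^\epsilon(t)}{\bb{L}^2}^2,
\]
which is exactly the desired quantity. For the nonlinear terms, the triple-product identity and the vanishing of $(\bff{a}\times\bff{b})\cdot\bff{a}$ give
$\inpro{\pa_t\bff{u}_n^\epsilon\times\Delta\bff{u}_n^\epsilon}{\pa_t\bff{u}_n^\epsilon}_{\bb{L}^2}=0$, while integration by parts as in~\eqref{equ:vec Gre 2} reduces
$\inpro{\bff{u}_n^\epsilon\times\Delta\pa_t\bff{u}_n^\epsilon}{\pa_t\bff{u}_n^\epsilon}_{\bb{L}^2}$ to $-\inpro{\pa_t\bff{u}_n^\epsilon\times\nabla\bff{u}_n^\epsilon}{\nabla\pa_t\bff{u}_n^\epsilon}_{\bb{L}^2}$, which can then be absorbed via Lemma~\ref{lem:tec lem} into $\Phi(\bff{u}_n^\epsilon)\norm{\pa_t\bff{u}_n^\epsilon}{\bb{L}^2}^2+\delta\norm{\nabla\pa_t\bff{u}_n^\epsilon}{\bb{L}^2}^2$. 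The cubic term yields $-\kappa_2\mu(2\norm{\bff{u}_n^\epsilon\cdot\pa_t\bff{u}_n^\epsilon}{L^2}^2+\norm{|\bff{u}_n^\epsilon||\pa_t\bff{u}_n^\epsilon|}{L^2}^2)$, which has a favourable sign and can be discarded.

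Choosing $\delta$ small enough to absorb the gradient term into $\kappa_1\norm{\nabla\pa_t\bff{u}_n^\epsilon}{\bb{L}^2}^2$ leads to the differential inequality
\[
\ddt\!\left(\norm{\pa_t\bff{u}_n^\epsilon(t)}{\bb{L}^2}^2+\epsilon\norm{\nabla\pa_t\bff{u}_n^\epsilon(t)}{\bb{L}^2}^2\right)+\norm{\pa_t\bff{u}_n^\epsilon(t)}{\bb{H}^1}^2\le c\,\Phi(\bff{u}_n^\epsilon(t))\,\norm{\pa_t\bff{u}_n^\epsilon(t)}{\bb{L}^2}^2.
\]
To close via Gronwall, I need (i)~an initial bound on $\norm{\pa_t\bff{u}_n^\epsilon(0)}{\bb{L}^2}^2+\epsilon\norm{\nabla\pa_t\bff{u}_n^\epsilon(0)}{\bb{L}^2}^2$ uniform in $n$ and $\epsilon$, and (ii)~control of $\int_0^{T^\ast}\Phi(\bff{u}_n^\epsilon(s))\,\ds$ uniformly in $\epsilon$. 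For~(i), evaluating the Galerkin equation at $t=0$ and testing with $\pa_t\bff{u}_n^\epsilon(0)$ gives a bound in terms of $\norm{\bff{u}_{0,n}^\epsilon}{\bb{H}^1_\Delta}$ plus a cubic term controlled by~$\bb{H}^1_\Delta$ (via $\bb{H}^1\hookrightarrow \bb{L}^6$), which is uniformly bounded by~\eqref{equ:u0n eps con}. For~(ii), mimicking the argument yielding~\eqref{equ:Phi un less}--\eqref{equ:int Phi} and using the uniform bounds~\eqref{equ:une all} (for $d=1,2$) or combining with Bihari--Gronwall on a short interval $[0,T^\ast]$ (for~$d=3$, where $T^\ast<T$ is needed precisely to keep $\int_0^{T^\ast}\Phi\,\ds$ finite uniformly in $\epsilon$) delivers the required bound.

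The main obstacle will be the $d=3$ case, where the estimate $\Phi(\bff{u}_n^\epsilon)\lesssim 1+\norm{\Delta\bff{u}_n^\epsilon}{\bb{L}^2}^4$ forces one to invoke a Bihari-type argument that only closes on a short time interval, explaining the appearance of~$T^\ast$ in~\eqref{equ:T ast llbe}. Here we must check that the $T^\ast$ obtained is \emph{independent of~$\epsilon$}, which is achieved because the dependence enters only through $\norm{\bff{u}_{0,n}^\epsilon}{\bb{H}^2_\Delta}$ (hence the hypothesis $s=\max\{1,d-1\}=2$) and the constants from Lemma~\ref{lem:ene est 1}, both uniform in $\epsilon\in[0,1)$. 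Finally, integrating the differential inequality over $[0,t]$ and applying Gronwall's inequality yields the claimed estimate with a constant $c$ depending on $T^\ast$ but not on $n$ or $\epsilon$.
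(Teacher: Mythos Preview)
Your plan is correct and matches the paper's proof essentially step for step: differentiate \eqref{equ:LLB eps wea equ} in $t$, test with $\pa_t\bff{u}_n^\epsilon$, kill the cross-product term via Lemma~\ref{lem:tec lem}, discard the sign-definite cubic contribution, bound the initial value by testing the Galerkin equation at $t=0$ with $\pa_t\bff{u}_n^\epsilon(0)$, and close with Gronwall once $\int_0^{T^\ast}\Phi(\bff{u}_n^\epsilon)\,\ds$ is controlled. The only cosmetic difference is in the $d=3$ step: the paper interpolates $\norm{\Delta\bff{u}_n^\epsilon}{\bb{L}^2}^4\le\norm{\nabla\bff{u}_n^\epsilon}{\bb{L}^2}^2\norm{\nabla\Delta\bff{u}_n^\epsilon}{\bb{L}^2}^2$ to reduce $\Phi$ to $1+\norm{\nabla\Delta\bff{u}_n^\epsilon}{\bb{L}^2}^2$ and then invokes the $L^2_{T^\ast}$ bound on $\nabla\Delta\bff{u}_n^\epsilon$ from Lemma~\ref{lem:nab Lap une}, whereas you propose to use the $L^\infty_{T^\ast}$ bound on $\Delta\bff{u}_n^\epsilon$ from the same lemma; both routes rely on the same Bihari--Gronwall argument (the $\epsilon$-analogue of Lemma~\ref{lem:H2 un}) and hence on the assumption $s=2$ when $d=3$.
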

%----------------------------------------------
\begin{proof}
	Differentiating both sides of~\eqref{equ:LLB eps wea equ} with respect 
	to~$t$ we obtain, for all~$\bff{v}\in\mathcal{S}_n$,
	\begin{align}\label{equ:dt2 une}
		\inpro{\pa^2_t\bff{u}_n^\epsilon(t)}{\bff{v}}_{\bb{L}^2}
		&+
		\epsilon \inpro{\nabla\pa^2_t\bff{u}_n^\epsilon(t)}{\nabla\bff{v}}_{\bb{L}^2}
		+
		\kappa_1 \inpro{\nabla\pa_t\bff{u}_n^\epsilon(t)}{\nabla\bff{v}}_{\bb{L}^2}
		+
		\kappa_2
		\inpro{\pa_t\bff{u}_n^\epsilon(t)}{\bff{v}}_{\bb{L}^2}
		\nn\\
		&=
		\gamma
		\inpro{\pa_t\bff{u}_n^\epsilon(t)\times\Delta\bff{u}_n^\epsilon(t)}{\bff{v}}_{\bb{L}^2}
		+
		\gamma
		\inpro{\bff{u}_n^\epsilon(t)\times\Delta\pa_t\bff{u}_n^\epsilon(t)}{\bff{v}}_{\bb{L}^2}
		\nn\\
		&\quad
		-
		\kappa_2\mu
		\inpro{\pa_t(|\bff{u}_n^\epsilon(t)|^2)\bff{u}_n^\epsilon(t)}{\bff{v}}_{\bb{L}^2}
		-
		\kappa_2\mu
		\inpro{|\bff{u}_n^\epsilon(t)|^2\pa_t\bff{u}_n^\epsilon(t)}{\bff{v}}_{\bb{L}^2}
	\end{align}
	where~$\pa_t^2:=\pa^2/\pa t^2$.
	Choosing~$\bff{v}=2\pa_t\bff{u}^\epsilon_n$ and integrating over~$(0,t)$, we
	deduce, after rearranging the terms,
	\begin{align}\label{equ:dt un nab dt}
		\nn
		&\norm{\pa_t\bff{u}^\epsilon_n(t)}{\bb{L}^2}^2
		+
		\epsilon \norm{\nabla\pa_t\bff{u}^\epsilon_n(t)}{\bb{L}^2}^2
		+ 2\kappa_1 \int_0^t \norm{\nabla \pa_t\bff{u}^\epsilon_n(s)}{\bb{L}^2}^2\ds
		+ 2\kappa_2 \int_0^t \norm{\pa_t\bff{u}^\epsilon_n(s)}{\bb{L}^2}^2\ds
		\\
		\nn
		&\quad
		+ \kappa_2\mu \int_0^t \norm{\pa_t(|\bff{u}^\epsilon_n(s)|^2)}{\bb{L}^2}^2\ds
		+2\kappa_2\mu \int_0^t \norm{|\bff{u}^\epsilon_n(s)||\pa_t\bff{u}^\epsilon_n(s)|}{\bb{L}^2}^2\ds
		\\
		&=
		\norm{\pa_t\bff{u}^\epsilon_n(0)}{\bb{L}^2}^2
		+
		\epsilon \norm{\nabla\pa_t\bff{u}^\epsilon_n(0)}{\bb{L}^2}^2
		-
		2\gamma\int_0^t
		\inpro{\pa_t\bff{u}^\epsilon_n(s)\times\nabla\bff{u}^\epsilon_n(s)}%
		{\nabla\pa_t\bff{u}^\epsilon_n(s)}\ds.
	\end{align}
	We use Lemma~\ref{lem:tec
		lem} with~$\bff{v}=\pa_t \bff{u}_n^\epsilon$,
	$\bff{w}=\bff{u}_n^\epsilon$, and $\delta=\kappa_1/4\gamma$ to estimate the last term on
	the right-hand side, and thus obtain
	\begin{align}\label{equ:dtune 2}
		\norm{\pa_t\bff{u}^\epsilon_n(t)}{\bb{L}^2}^2
		&+ \epsilon \norm{\nabla\pa_t\bff{u}^\epsilon_n(t)}{\bb{L}^2}^2
		+ \int_0^t \norm{\pa_t\bff{u}^\epsilon_n(s)}{\bb{H}^1}^2\ds
		\nn\\
		&\quad
		+ \int_0^t \norm{\pa_t(|\bff{u}^\epsilon_n(s)|^2)}{\bb{L}^2}^2\ds
		+ \int_0^t \norm{|\bff{u}^\epsilon_n(s)||\pa_t\bff{u}^\epsilon_n(s)|}{\bb{L}^2}^2\ds
		\nn\\
		&\lesssim 
		\norm{\pa_t\bff{u}^\epsilon_n(0)}{\bb{L}^2}^2
		+
		\epsilon \norm{\nabla\pa_t\bff{u}^\epsilon_n(0)}{\bb{L}^2}^2
		+
		\int_0^t
		\Phi(\bff{u}_n^\epsilon(s))
		\norm{\pa_t\bff{u}_n^\epsilon(s)}{\bb{L}^2}^{2}
		\ds,
	\end{align}
	where the constant is independent of~$\epsilon$ and~$n$. 
	
	Now, for $d=1,2$, we repeat the arguments leading to \eqref{equ:Phi un less}. For $d=3$, we use the embedding $\bb{H}^2\hookrightarrow \bb{W}^{1,6}$ and the $\bb{H}^2$ elliptic regularity to obtain
	\begin{align*}
		\Phi(\bff{u}_n^\epsilon(t)) 
		\leqs 1+ \norm{\bff{u}_n^\epsilon(t)}{\bb{H}^2}^4
		&\leqs
		1+ \norm{\bff{u}_n^\epsilon(t)}{\bb{L}^2}^4 + \norm{\Delta \bff{u}_n^\epsilon(t)}{\bb{L}^2}^4
		\\
		&\leq 
		1+ \norm{\bff{u}_n^\epsilon(t)}{\bb{L}^2}^4 + \norm{\nabla \bff{u}_n^\epsilon(t)}{\bb{L}^2}^2 \norm{\nabla\Delta \bff{u}_n^\epsilon(t)}{\bb{L}^2}^2
		\\
		&\leqs
		1+ \norm{\nabla\Delta \bff{u}_n^\epsilon(t)}{\bb{L}^2}^2,
	\end{align*}	
	where in the last step we used Lemma~\ref{lem:ene est 1}.
	Altogether, we have
	\begin{equation}\label{equ:Phi une less 123}
		\Phi(\bff{u}_n^\epsilon(t))
		\lesssim
		\begin{cases}
			1, \quad & d=1,
			\\
			1 + \norm{\Delta\bff{u}_n^\epsilon(t)}{\bb{L}^2}^2,
			\quad & d=2,
			\\
			1 + \norm{\nabla\Delta\bff{u}_n^\epsilon(t)}{\bb{L}^2}^2,
			\quad & d=3.
		\end{cases}
	\end{equation}
	Noting \eqref{equ:Phi une less 123}, we continue from~\eqref{equ:dtune 2} and apply the Gronwall inequality. For $d=1,2$ we use Lemma~\ref{lem:ene est 1}, while for $d=3$ we use Lemma~\ref{lem:nab Lap une} (which needs \eqref{equ:u0 eps con} and~\eqref{equ:u0n H2} to hold for $s=2$) to deduce that
	\begin{align}\label{equ:pat une}
		\norm{\pa_t\bff{u}^\epsilon_n(t)}{\bb{L}^2}^2
		+ \epsilon \norm{\nabla\pa_t\bff{u}^\epsilon_n(t)}{\bb{L}^2}^2
		+ \int_0^t \norm{\pa_t\bff{u}^\epsilon_n(s)}{\bb{H}^1}^2\ds
		\lesssim
		\norm{\pa_t\bff{u}^\epsilon_n(0)}{\bb{L}^2}^2
		+
		\epsilon \norm{\nabla\pa_t\bff{u}^\epsilon_n(0)}{\bb{L}^2}^2,
	\end{align}
	for~$t\in[0,T^\ast]$.
	It remains to show the boundedness of~$\norm{\pa_t\bff{u}^\epsilon_n(0)}{\bb{L}^2}^2
	+ \epsilon \norm{\nabla\pa_t\bff{u}^\epsilon_n(0)}{\bb{L}^2}^2$ in~\eqref{equ:pat une}.
	Letting~$t\to0^+$ in~\eqref{equ:LLB eps wea equ},
	choosing~$\bff{v}=\pa_t\bff{u}_n^\epsilon(0)$, and rearranging the resulting equation, we
	deduce after using H\" older's inequality
	\begin{align}\label{equ:dt une 0}
		&\norm{\pa_t\bff{u}_n^\epsilon(0)}{\bb{L}^2}^2
		+
		\epsilon
		\norm{\nabla\pa_t\bff{u}_n^\epsilon(0)}{\bb{L}^2}^2
		\nn \\
		&=
		\inpro{\kappa_1 \Delta\bff{u}_{0,n}^\epsilon
			+
			\gamma
			\bff{u}_{0,n}^\epsilon\times\Delta\bff{u}_{0,n}^\epsilon
			-\kappa_2
			(1+\mu|\bff{u}_{0,n}^\epsilon|^2)\bff{u}_{0,n}^\epsilon}
		{\pa_t\bff{u}_n^\epsilon(0)}_{\bb{L}^2}
		\nn \\
		&\lesssim
		\left(\norm{\Delta\bff{u}_{0,n}^\epsilon}{\bb{L}^2}
		+
		\norm{\bff{u}_{0,n}^\epsilon}{\bb{L}^\infty}
		\norm{\Delta\bff{u}_{0,n}^\epsilon}{\bb{L}^2}
		+
		\norm{\bff{u}_{0,n}^\epsilon}{\bb{L}^2}^2
		+
		\norm{\bff{u}_{0,n}^\epsilon}{\bb{L}^\infty}^2
		\norm{\bff{u}_{0,n}^\epsilon}{\bb{L}^2}
		\right)
		\norm{\pa_t\bff{u}_n^\epsilon(0)}{\bb{L}^2}.
	\end{align}
	The embedding~$\bb{H}^1_\Delta \hookrightarrow\bb{L}^\infty$ and~\eqref{equ:u0n eps con} yield
	\[
	\norm{\pa_t\bff{u}_n^\epsilon(0)}{\bb{L}^2}^2
	+
	\epsilon
	\norm{\nabla\pa_t\bff{u}_n^\epsilon(0)}{\bb{L}^2}^2
	\lesssim
	\norm{\pa_t\bff{u}_n^\epsilon(0)}{\bb{L}^2},
	\]
	which implies
	\[
	\norm{\pa_t\bff{u}_n^\epsilon(0)}{\bb{L}^2}^2
	+
	\epsilon
	\norm{\nabla\pa_t\bff{u}_n^\epsilon(0)}{\bb{L}^2}^2
	\lesssim
	1.
	\]
	This, together with~\eqref{equ:pat une}, proves the lemma.
\end{proof}

\begin{lemma}\label{lem:nab Lap une}
	Assume that \eqref{equ:u0 eps con} and~\eqref{equ:u0n H2} hold for $s=2$. For any~$t\in[0,T^\ast]$, there exists a constant $c$ such that for every $n\in\bb{N}$ and $\epsilon\in [0,1)$, 
	\begin{align}
		\label{equ:nab Lap une}
		\norm{\Delta\bff{u}_n^\epsilon(t)}{\bb{L}^2}^2
		+
		\epsilon \norm{\nabla\Delta\bff{u}_n^\epsilon(t)}{\bb{L}^2}^2
		+
		\int_0^t \norm{\nabla \Delta \bff{u}_n^\epsilon(s)}{\bb{L}^2}^2 \ds 
		%+
		%\epsilon^2 \norm{\Delta \partial_t \bff{u}_n^\epsilon(t)}{\bb{L}^2}^2
		&\leq 
		c.
	\end{align}
	The constant $c$ in \eqref{equ:nab Lap une} is independent of~$n$ and~$\epsilon$, but may depend
	on~$T^\ast$.
\end{lemma}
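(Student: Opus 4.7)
\smallskip

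The plan is to mirror the proof of Lemma~\ref{lem:H2 un} for the LLBE, keeping close track of the fact that all constants must remain independent of $\epsilon\in[0,1)$. The viscous regularisation contributes a non-negative term that lands on the left-hand side, so the strategy is to obtain the same Gronwall-type differential inequality as in the $\epsilon=0$ case, but with the extra $\epsilon$-weighted $H^1$-control on $\Delta \bff{u}_n^\epsilon$ as a bonus.

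Concretely, I would test~\eqref{equ:LLB eps wea equ} with $\bff{v}=-2\Delta^2\bff{u}_n^\epsilon(t)\in\mathcal{S}_n$ (the eigenfunctions $\bff{e}_i$ are chosen so that all powers of $\Delta$ preserve $\mathcal{S}_n$ and the Neumann boundary condition). Using the Green identities~\eqref{equ:vec Gre 1}--\eqref{equ:vec Gre 2}, the energy identity reads
\begin{align*}
\ddt\Big(\norm{\Delta\bff{u}_n^\epsilon(t)}{\bb{L}^2}^2
&+\epsilon\norm{\nabla\Delta\bff{u}_n^\epsilon(t)}{\bb{L}^2}^2\Big)
+2\kappa_1\norm{\nabla\Delta\bff{u}_n^\epsilon(t)}{\bb{L}^2}^2
+2\kappa_2\norm{\Delta\bff{u}_n^\epsilon(t)}{\bb{L}^2}^2 \\
&=2\gamma\inpro{\Delta\bff{u}_n^\epsilon(t)\times\nabla\bff{u}_n^\epsilon(t)}{\nabla\Delta\bff{u}_n^\epsilon(t)}_{\bb{L}^2}
+2\kappa_2\mu\inpro{\nabla\big(|\bff{u}_n^\epsilon(t)|^2\bff{u}_n^\epsilon(t)\big)}{\nabla\Delta\bff{u}_n^\epsilon(t)}_{\bb{L}^2}.
\end{align*}
This is structurally identical to~\eqref{equ:ddt Lap une}, the only new feature being the $\epsilon$-weighted derivative on the left.

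I would then estimate the two terms on the right exactly as in Lemma~\ref{lem:H2 un}: the gyromagnetic term via Lemma~\ref{lem:tec lem} applied with $\bff{v}=\Delta\bff{u}_n^\epsilon$, $\bff{w}=\bff{u}_n^\epsilon$, $\delta=\kappa_1/(3d)$, and the cubic term via H\"older, Young, and $\bb{H}^1\hookrightarrow\bb{L}^6$. Crucially, every occurrence of $\norm{\bff{u}_n^\epsilon}{\bb{H}^1}$, $\norm{\bff{u}_n^\epsilon}{\bb{L}^4}$, etc.\ must be controlled by the $\epsilon$-uniform bounds of Lemma~\ref{lem:ene est 1}, not by the $\epsilon$-dependent bounds of Lemmas~\ref{lem:ene est dtu}--\ref{lem:ene est 4}. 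After this reduction one obtains
\[
\ddt\Big(\norm{\Delta\bff{u}_n^\epsilon(t)}{\bb{L}^2}^2+\epsilon\norm{\nabla\Delta\bff{u}_n^\epsilon(t)}{\bb{L}^2}^2\Big)
+\norm{\nabla\Delta\bff{u}_n^\epsilon(t)}{\bb{L}^2}^2
\lesssim \norm{\bff{u}_n^\epsilon(t)}{\bb{L}^\infty}^4
+\Phi(\bff{u}_n^\epsilon(t))\norm{\Delta\bff{u}_n^\epsilon(t)}{\bb{L}^2}^2,
\]
with constants independent of $n$ and $\epsilon$.

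Integrating in time, the initial contribution $\norm{\Delta\bff{u}_{0,n}^\epsilon}{\bb{L}^2}^2+\epsilon\norm{\nabla\Delta\bff{u}_{0,n}^\epsilon}{\bb{L}^2}^2$ is bounded uniformly in $\epsilon$ by~\eqref{equ:u0n eps con} for $s=2$, and the $\bb{L}^\infty$-term is handled as in~\eqref{equ:dt un 43}. To close the estimate, one invokes the bound~\eqref{equ:Phi un less} with $\bff{u}_n$ replaced by $\bff{u}_n^\epsilon$: for $d=1,2$ the regularity shift~\eqref{equ:shift polyg H1s} combined with the $\epsilon$-uniform $\bb{H}^1$-bound from Lemma~\ref{lem:ene est 1} gives $\Phi(\bff{u}_n^\epsilon(t))\lesssim 1+\norm{\Delta\bff{u}_n^\epsilon(t)}{\bb{L}^2}^2$ with $\int_0^t\Phi(\bff{u}_n^\epsilon(s))\,\mathrm{d}s\lesssim 1$ uniformly in $\epsilon$, so Gronwall yields the claim on all of $[0,T]$. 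For $d=3$, convexity of $\mathscr{D}$ and $\bb{H}^2$-elliptic regularity~\eqref{equ:shift polyh W2p} give $\Phi(\bff{u}_n^\epsilon(s))\norm{\Delta\bff{u}_n^\epsilon(s)}{\bb{L}^2}^2\lesssim 1+\norm{\Delta\bff{u}_n^\epsilon(s)}{\bb{L}^2}^6$, leading to the Bihari--Gronwall inequality~\cite{Bih56} and therefore only a local-in-time bound on $[0,T^\ast]$.

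The principal obstacle is not the energy identity itself but the bookkeeping of $\epsilon$-dependence. Since Lemma~\ref{lem:ene est 2} invokes the present lemma for $d=3$ (through the estimate on $\Phi(\bff{u}_n^\epsilon)$), one must be careful that the proof above does \emph{not} rely on $\pa_t\bff{u}_n^\epsilon$-bounds to avoid circularity; the argument uses only the zeroth-order estimates of Lemma~\ref{lem:ene est 1} and the elliptic regularity machinery of Appendix~\ref{sec:aux}, so the derivation is self-contained.
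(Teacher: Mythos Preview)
Your proposal is correct and matches the paper's approach essentially verbatim: the paper also tests~\eqref{equ:LLB eps wea equ} with $2\Delta^2\bff{u}_n^\epsilon(t)$, records the resulting energy identity (identical to yours), and then simply says ``the rest of the proof follows the argument in Lemma~\ref{lem:H2 un} almost verbatim, replacing $\bff{u}_n$ by $\bff{u}_n^\epsilon$.'' Your additional remarks on $\epsilon$-uniformity and the avoidance of circularity with Lemma~\ref{lem:ene est 2} are accurate and more explicit than the paper's terse treatment.
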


\begin{proof}
	To prove~\eqref{equ:nab Lap une}, we
	set~$\bff{v}=2\Delta^2\bff{u}_n^\epsilon(t)\in\mathcal{S}_n$
	in~\eqref{equ:LLB eps wea equ} and obtain
	\begin{align*}
		&\ddt\norm{\Delta\bff{u}_n^\epsilon(t)}{\bb{L}^2}^2
		+
		\epsilon\ddt\norm{\nabla\Delta\bff{u}_n^\epsilon(t)}{\bb{L}^2}^2
		+
		2\kappa_1\norm{\nabla\Delta\bff{u}_n^\epsilon(t)}{\bb{L}^2}^2
		+
		2\kappa_2 \norm{\Delta \bff{u}_n^\epsilon(t)}{\bb{L}^2}^2
		\nn\\
		&=
		-2\gamma
		\inpro{\nabla\Big(\bff{u}_n^\epsilon(t)\times\Delta\bff{u}_n^\epsilon(t)\Big)}
		{\nabla\Delta\bff{u}_n^\epsilon(t)}_{\bb{L}^2}
		+2\kappa_2\mu
		\inpro{\nabla\Big(|\bff{u}_n^\epsilon(t)|^2\bff{u}_n^\epsilon(t)\Big)}
		{\nabla\Delta\bff{u}_n^\epsilon(t)}_{\bb{L}^2}.
	\end{align*}
	The rest of the proof follows the argument in Lemma~\ref{lem:H2 un} almost verbatim, replacing $\bff{u}_n$ by $\bff{u}_n^\epsilon$, thus leading to the required estimate.
\end{proof}

\begin{lemma}\label{lem:ene est nab dt u}
	Assume that~\eqref{equ:u0 eps con} and~\eqref{equ:u0n H2} hold for $s=2$. For any~$t\in[0,T^\ast]$ and $\alpha$ satisfying~\eqref{equ:condition alpha}, there exists a constant $c$ such that for every $n\in\bb{N}$ and $\epsilon\in [0,1)$, 
	\begin{align*}
		\norm{\nabla \pa_t \bff{u}_n^\epsilon(t)}{\bb{L}^2}^2
		&+
		\epsilon \norm{\Delta \pa_t\bff{u}_n^\epsilon(t)}{\bb{L}^2}^2
		+
		\int_0^t
		\norm{\Delta \pa_t\bff{u}_n^\epsilon(s)}{\bb{L}^2}^2 \ds
		+
		\int_0^t
		\norm{\pa_t\bff{u}_n^\epsilon(s)}{\bb{H}^{1+\alpha}}^2 \ds
		\le c,
	\end{align*}
	where the constant~$c$ is independent of~$n$ and~$\epsilon$, but may depend on $T^\ast$.
\end{lemma}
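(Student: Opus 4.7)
The strategy is to differentiate the weak formulation~\eqref{equ:LLB eps wea equ} in time to obtain~\eqref{equ:dt2 une}, and then test with $\bff{v} = -2\Delta\pa_t\bff{u}_n^\epsilon(t)$. Since $\mathcal{S}_n$ is spanned by eigenfunctions of the Neumann Laplacian, we have $\Delta\pa_t\bff{u}_n^\epsilon(t)\in\mathcal{S}_n$ and it satisfies the homogeneous Neumann boundary condition, so all integrations by parts are legitimate. The left-hand side yields
\begin{equation*}
\ddt\left(\norm{\nabla\pa_t\bff{u}_n^\epsilon(t)}{\bb{L}^2}^2 + \epsilon\norm{\Delta\pa_t\bff{u}_n^\epsilon(t)}{\bb{L}^2}^2\right) + 2\kappa_1\norm{\Delta\pa_t\bff{u}_n^\epsilon(t)}{\bb{L}^2}^2 + 2\kappa_2\norm{\nabla\pa_t\bff{u}_n^\epsilon(t)}{\bb{L}^2}^2.
\end{equation*}
The crucial structural observation is that the second cross-product term $\inpro{\bff{u}_n^\epsilon\times\Delta\pa_t\bff{u}_n^\epsilon}{\Delta\pa_t\bff{u}_n^\epsilon}_{\bb{L}^2}$ vanishes identically by the scalar triple product identity, so only $\inpro{\pa_t\bff{u}_n^\epsilon\times\Delta\bff{u}_n^\epsilon}{\Delta\pa_t\bff{u}_n^\epsilon}_{\bb{L}^2}$ remains from the gyromagnetic contribution.

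The surviving cross-product term is estimated, via H\"older's inequality, the embedding $\bb{H}^1\hookrightarrow\bb{L}^4$ (valid for $d\le 3$), and Young's inequality, as
\begin{equation*}
\left|\inpro{\pa_t\bff{u}_n^\epsilon\times\Delta\bff{u}_n^\epsilon}{\Delta\pa_t\bff{u}_n^\epsilon}_{\bb{L}^2}\right|
\le \delta\norm{\Delta\pa_t\bff{u}_n^\epsilon}{\bb{L}^2}^2 + c(\delta)\norm{\pa_t\bff{u}_n^\epsilon}{\bb{H}^1}^2\norm{\Delta\bff{u}_n^\epsilon}{\bb{H}^1}^2,
\end{equation*}
and similarly the cubic nonlinear terms are bounded using the uniform $\bb{L}^\infty$ bound on $\bff{u}_n^\epsilon$ over $[0,T^\ast]$ (supplied by Lemma~\ref{lem:nab Lap une} combined with the Sobolev embedding) together with the $L^\infty_{T^\ast}(\bb{L}^2)$-bound on $\pa_t\bff{u}_n^\epsilon$ from Lemma~\ref{lem:ene est 2}. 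The $\delta$-terms are absorbed into the left-hand side. To control the initial value, I would test the equation at $t=0$ with $-\Delta\pa_t\bff{u}_n^\epsilon(0)$, integrate by parts once, and apply Cauchy--Schwarz to get
\begin{equation*}
\norm{\nabla\pa_t\bff{u}_n^\epsilon(0)}{\bb{L}^2}^2 + \epsilon\norm{\Delta\pa_t\bff{u}_n^\epsilon(0)}{\bb{L}^2}^2 \le \norm{\nabla\mathcal{F}[\bff{u}_{0,n}^\epsilon]}{\bb{L}^2}\norm{\nabla\pa_t\bff{u}_n^\epsilon(0)}{\bb{L}^2},
\end{equation*}
where $\mathcal{F}[\bff{u}]=\kappa_1\Delta\bff{u}+\gamma\bff{u}\times\Delta\bff{u}-\kappa_2(1+\mu|\bff{u}|^2)\bff{u}$; here the right-hand side is finite precisely because the assumption $s=2$ provides a uniform bound on $\norm{\nabla\Delta\bff{u}_{0,n}^\epsilon}{\bb{L}^2}$.

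Putting everything together gives, for $\Psi_n(t):=\norm{\nabla\pa_t\bff{u}_n^\epsilon(t)}{\bb{L}^2}^2+\epsilon\norm{\Delta\pa_t\bff{u}_n^\epsilon(t)}{\bb{L}^2}^2$, a differential inequality of the form
\begin{equation*}
\ddt\Psi_n(t) + \kappa_1\norm{\Delta\pa_t\bff{u}_n^\epsilon(t)}{\bb{L}^2}^2 \le c\bigl(1 + \norm{\Delta\bff{u}_n^\epsilon(t)}{\bb{H}^1}^2\bigr)\Psi_n(t) + c\bigl(1+\norm{\pa_t\bff{u}_n^\epsilon(t)}{\bb{L}^2}^2\bigr).
\end{equation*}
Since Lemma~\ref{lem:nab Lap une} furnishes $\norm{\Delta\bff{u}_n^\epsilon}{L^2_{T^\ast}(\bb{H}^1)}\lesssim 1$ uniformly in $n$ and $\epsilon$, the Gronwall multiplier is integrable on $[0,T^\ast]$, and Gronwall's inequality gives uniform-in-$(n,\epsilon)$ bounds on $\Psi_n(t)$ and on $\int_0^t\norm{\Delta\pa_t\bff{u}_n^\epsilon(s)}{\bb{L}^2}^2\,\ds$. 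For the final term, the elliptic regularity shift~\eqref{equ:shift polyg H1s} (respectively~\eqref{equ:shift polyh W2p} in the convex three-dimensional case) applied to $\pa_t\bff{u}_n^\epsilon$, together with the embedding $\bb{L}^2\hookrightarrow\widetilde{\bb{H}}^{-1+\alpha}$, yields
\begin{equation*}
\norm{\pa_t\bff{u}_n^\epsilon(s)}{\bb{H}^{1+\alpha}}^2 \lesssim \norm{\pa_t\bff{u}_n^\epsilon(s)}{\bb{L}^2}^2 + \norm{\Delta\pa_t\bff{u}_n^\epsilon(s)}{\bb{L}^2}^2,
\end{equation*}
so integrating in $s$ and invoking Lemma~\ref{lem:ene est 2} together with the bound just obtained closes the argument.

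The main obstacle is the delicate balance in the cross-product estimate: the quantity $\norm{\Delta\bff{u}_n^\epsilon}{\bb{H}^1}$ appearing as the Gronwall multiplier is controlled only in $L^2_{T^\ast}$, not in $L^\infty_{T^\ast}$, and only locally in time for $d=3$ by Lemma~\ref{lem:nab Lap une}. This is the reason the conclusion is restricted to $[0,T^\ast]$ (as defined in~\eqref{equ:T ast llbe}) and why the hypothesis $s=2$ in~\eqref{equ:u0 eps con}--\eqref{equ:u0n H2} is indispensable---it is needed both for the aforementioned Lemma~\ref{lem:nab Lap une} to be applicable and for the initial-data bound on $\norm{\nabla\pa_t\bff{u}_n^\epsilon(0)}{\bb{L}^2}$ to hold uniformly in $\epsilon$.
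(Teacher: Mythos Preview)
Your proposal is correct and follows essentially the same approach as the paper's proof: test the time-differentiated equation~\eqref{equ:dt2 une} with $-2\Delta\pa_t\bff{u}_n^\epsilon$, use the vanishing of $\inpro{\bff{u}_n^\epsilon\times\Delta\pa_t\bff{u}_n^\epsilon}{\Delta\pa_t\bff{u}_n^\epsilon}_{\bb{L}^2}$, estimate the surviving cross-product via $\bb{H}^1\hookrightarrow\bb{L}^4$, apply Gronwall with the multiplier $1+\norm{\Delta\bff{u}_n^\epsilon}{\bb{H}^1}^2$ (integrable on $[0,T^\ast]$ by Lemma~\ref{lem:nab Lap une}), bound the initial data by testing at $t=0$ with $-\Delta\pa_t\bff{u}_n^\epsilon(0)$, and close the $\bb{H}^{1+\alpha}$ estimate via elliptic regularity. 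The only cosmetic difference is that the paper handles the cubic terms with the $\bb{L}^6$ embedding and the uniform $L^\infty_T(\bb{H}^1)$ bound on $\bff{u}_n^\epsilon$, whereas you invoke the $\bb{L}^\infty$ bound coming from Lemma~\ref{lem:nab Lap une}; both routes work.
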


\begin{proof}
	Putting $\bff{v}=-2\Delta \pa_t \bff{u}_n^\epsilon$ in~\eqref{equ:dt2 une} and integrating over $(0,t)$, we obtain
	\begin{align}\label{equ:nab dt un L2}
		&\norm{\nabla \pa_t \bff{u}_n^\epsilon(t)}{\bb{L}^2}^2
		+
		\epsilon \norm{\Delta \pa_t \bff{u}_n^\epsilon(t)}{\bb{L}^2}^2
		+
		2\kappa_1 \int_0^t \norm{\Delta \pa_t \bff{u}_n^\epsilon(s)}{\bb{L}^2}^2 \ds 
		+
		2\kappa_2 \int_0^t \norm{\nabla \pa_t \bff{u}_n^\epsilon(s)}{\bb{L}^2}^2 \ds 
		\nn \\
		&=
		\norm{\nabla \pa_t \bff{u}_n^\epsilon(0)}{\bb{L}^2}^2
		+
		\epsilon \norm{\Delta \pa_t \bff{u}_n^\epsilon(0)}{\bb{L}^2}^2
		-
		2\gamma \int_0^t \inpro{\pa_t \bff{u}_n^\epsilon(s) \times \Delta \bff{u}_n^\epsilon(s)}{\Delta \pa_t \bff{u}_n^\epsilon(s)}_{\bb{L}^2} \ds 
		\nn \\
		&\quad
		+
		2\kappa_2\mu \int_0^t \inpro{\pa_t \big(|\bff{u}_n^\epsilon(s)|^2\big) \bff{u}_n^\epsilon(s)
			+
			|\bff{u}_n^\epsilon(s)|^2 \pa_t\bff{u}_n^\epsilon(s)}{\Delta \pa_t \bff{u}_n^\epsilon(s)}_{\bb{L}^2} \ds
		\nn\\
		&=
		\norm{\nabla \pa_t \bff{u}_n^\epsilon(0)}{\bb{L}^2}^2
		+
		\epsilon \norm{\Delta \pa_t \bff{u}_n^\epsilon(0)}{\bb{L}^2}^2
		+ T_1 + T_2.
	\end{align}
	We will estimate the last two terms on the right-hand side. Firstly, by H\"older's inequality we have
	\begin{align*}
		T_1
		&\leq
		2\gamma
		\int_0^t \norm{\pa_t \bff{u}_n^\epsilon(s)}{\bb{L}^4} \norm{\Delta \bff{u}_n^\epsilon(s)}{\bb{L}^4} 
		\norm{\Delta \pa_t \bff{u}_n^\epsilon(s)}{\bb{L}^2} \ds 
		\\
		&\leq
		c \int_0^t \norm{\pa_t \bff{u}_n^\epsilon(s)}{\bb{H}^1}^2 \norm{\Delta \bff{u}_n^\epsilon(s)}{\bb{H}^1}^2 
		+
		\frac{\kappa_1}{2} \int_0^t \norm{\Delta \pa_t \bff{u}_n^\epsilon(s)}{\bb{L}^2}^2 \ds,
	\end{align*}
	where in the last step we used Young's inequality and Sobolev embedding $\bb{H}^1\hookrightarrow \bb{L}^4$. The constant $c$ is independent of $\epsilon$. For the last term, by similar argument we have
	\begin{align*}
		T_2
		&\leq
		2\kappa_2 \mu
		\int_0^t \norm{\pa_t \bff{u}_n^\epsilon(s)}{\bb{L}^6}
		\norm{\bff{u}_n^\epsilon(s)}{\bb{L}^6}^2 
		\norm{\Delta \pa_t \bff{u}_n^\epsilon(s)}{\bb{L}^2} \ds 
		\\
		&\leq
		c\int_0^t \norm{\pa_t \bff{u}_n^\epsilon(s)}{\bb{H}^1}^2 \norm{ \bff{u}_n^\epsilon(s)}{\bb{H}^1}^4 
		+
		\frac{\kappa_1}{2} \int_0^t \norm{\Delta \pa_t \bff{u}_n^\epsilon(s)}{\bb{L}^2}^2 \ds.
	\end{align*}
	Substituting these into~\eqref{equ:nab dt un L2} and adding the term $\norm{\pa_t \bff{u}_n^\epsilon(t)}{\bb{L}^2}^2$ on both sides give
	\begin{align*}
		&\norm{\pa_t \bff{u}_n^\epsilon(t)}{\bb{H}^1}^2
		+
		\epsilon \norm{\Delta \pa_t \bff{u}_n^\epsilon(t)}{\bb{L}^2}^2
		+
		\kappa_1 \int_0^t \norm{\Delta \pa_t \bff{u}_n^\epsilon(s)}{\bb{L}^2}^2 \ds 
		+
		2\kappa_2 \int_0^t \norm{\nabla \pa_t \bff{u}_n^\epsilon(s)}{\bb{L}^2}^2 \ds 
		\\
		&\leq
		\norm{\nabla \pa_t \bff{u}_n^\epsilon(0)}{\bb{L}^2}^2
		+
		\epsilon \norm{\Delta \pa_t \bff{u}_n^\epsilon(0)}{\bb{L}^2}^2
		+
		\norm{\pa_t \bff{u}_n^\epsilon(t)}{\bb{L}^2}^2
		+
		c\int_0^t \left(1+\norm{\Delta \bff{u}_n^\epsilon(s)}{\bb{H}^1}^2\right) \norm{\pa_t \bff{u}_n^\epsilon(s)}{\bb{H}^1}^2 \ds,
	\end{align*}
	where in the last step we used~\eqref{equ:un H1 H2}. By Gronwall's inequality (noting Lemma~\ref{lem:ene est 2}), we obtain
	\begin{align*}
		&\norm{\pa_t \bff{u}_n^\epsilon(t)}{\bb{H}^1}^2
		+
		\epsilon \norm{\Delta \pa_t \bff{u}_n^\epsilon(t)}{\bb{L}^2}^2
		+
		\int_0^t \norm{\Delta \pa_t \bff{u}_n^\epsilon(s)}{\bb{L}^2}^2 \ds 
		+
		\int_0^t \norm{\nabla \pa_t \bff{u}_n^\epsilon(s)}{\bb{L}^2}^2 \ds 
		\\
		&\leq
		c\left(1+\norm{\nabla \pa_t \bff{u}_n^\epsilon(0)}{\bb{L}^2}^2
		+
		\epsilon \norm{\Delta \pa_t \bff{u}_n^\epsilon(0)}{\bb{L}^2}^2\right) 
		\exp \left(\int_0^t 1+\norm{\Delta \bff{u}_n^\epsilon(s)}{\bb{H}^1}^2 \ds \right)
		\\
		&\leq
		c\left(1+\norm{\nabla \pa_t \bff{u}_n^\epsilon(0)}{\bb{L}^2}^2
		+
		\epsilon \norm{\Delta \pa_t \bff{u}_n^\epsilon(0)}{\bb{L}^2}^2\right),
	\end{align*}
	where $c$ is independent of $\epsilon$, and in the last step we used~\eqref{equ:nab Lap une}. To show the boundedness of $\norm{\nabla \pa_t \bff{u}_n^\epsilon(0)}{\bb{L}^2}^2 +
	\epsilon \norm{\Delta \pa_t \bff{u}_n^\epsilon(0)}{\bb{L}^2}^2$, we can follow the same argument as in~\eqref{equ:dt une 0} with~$\bff{v}=\Delta \pa_t \bff{u}_n^\epsilon(0)$ instead. This, together with an argument similar to \eqref{equ:unt H1a}, completes the proof.
\end{proof}

To summarise, we have the following estimates for the approximate solution $\bff{u}_n^\epsilon$ of the $\epsilon$-LLBE which are uniform in $\epsilon\in [0,1)$:
\begin{itemize}
	\item 
	If~\eqref{equ:u0 eps con} holds for $s=\max\{1,d-1\}$, then
	\begin{equation}\label{equ:une dt est local}
		\norm{\pa_t\bff{u}_n^\epsilon}{L^\infty_{T^\ast}(\bb{L}^2)}
		+
        \sqrt{\epsilon} \norm{\partial_t \bff{u}_n^\epsilon}{L^\infty_{T^\ast}(\bb{H}^1)}
        +
		\norm{\pa_t\bff{u}_n^\epsilon}{L^2_{T^\ast}(\bb{H}^1)}
		\lesssim 1.
	\end{equation}
	\item 
	If, in addition, \eqref{equ:u0 eps con} holds for $s=2$, then we also have
    \begin{equation}\label{equ:une est local}
	\begin{aligned}
		\norm{\Delta \bff{u}_n^\epsilon}{L^\infty_{T^\ast}(\bb{L}^2)}
        +
        \sqrt{\epsilon} \norm{\Delta \bff{u}_n^\epsilon}{L^\infty_{T^\ast}(\bb{H}^1)}
		+
		\norm{\Delta \bff{u}_n^\epsilon}{L^2_{T^\ast}(\bb{H}^1)}
		&\lesssim 1,
		\\
		\norm{\pa_t\bff{u}_n^\epsilon}{L^\infty_{T^\ast}(\bb{H}^1)}
		+
		\norm{\pa_t\bff{u}_n^\epsilon}{L^2_{T^\ast}(\bb{H}^1_\Delta)}
		+
		\norm{\pa_t\bff{u}_n^\epsilon}{L^2_{T^\ast}(\bb{H}^{1+\alpha})}
        &\lesssim 1,
        \\
        \sqrt{\epsilon} \norm{\pa_t\bff{u}_n^\epsilon}{L^\infty_{T^\ast}(\bb{H}^1_\Delta)}
        +
        \sqrt{\epsilon} \norm{\pa_t \bff{u}_n^\epsilon}{L^2_{T^\ast}(\bb{H}^{1+\alpha})}
		&\lesssim 1.
	\end{aligned}
\end{equation}
\end{itemize}
All constants on the right-hand sides of~\eqref{equ:une dt est local} and \eqref{equ:une est local} are independent of $\epsilon$ and $n$. Consequently, the same estimates (with $\epsilon=0$) also hold for the approximate solution $\bff{u}_n$ of the LLBE.

%\bibliographystyle{abbrvnat}
%\bibliography{mybib}

\end{document}